\numberwithin{equation}{section}
\newtheorem{thm}{Theorem}[section]
\newtheorem{lem}[thm]{Lemma}
\newtheorem{prop}[thm]{Proposition}
\newtheorem{cor}[thm]{Corollary}
\theoremstyle{definition}
\newtheorem{defn}{Definition}[section]
\theoremstyle{remark}
\newtheorem{rmk}[thm]{Remark}
\let\al=\alpha
\let\ga=\gamma
\let\e=\varepsilon
\let\f=\frac
\let\Om=\Omega
\let\wt=\widetilde
\let\wh=\widehat
\let\na=\nabla
\let\pa=\partial
\newcommand{\RR}{\mathbb{R}}
\newcommand{\ZZ}{\mathbb{Z}}
\newcommand{\NN}{\mathbb{N}}
\newcommand{\CC}{\mathbb{C}}
\newcommand{\TT}{\mathbb{T}}
\newcommand{\p}{\partial}
\newcommand{\aaa}{\mathbf{a}}
\newcommand{\bbb}{\mathbf{b}}
\newcommand{\vvv}{\mathbf{v}}
\newcommand{\www}{\bm{\omega}}
\newcommand{\FF}{\mathcal{F}}
\newcommand{\GG}{\mathcal{G}}
\newcommand{\WWW}{\mathcal{W}}
\newcommand{\NNN}{\mathcal{N}}
\newcommand{\FFF}{\mathcal{F}}
\newcommand{\LLL}{\mathcal{L}}
\newcommand{\ii}{\mathrm{i}}
\newcommand{\vp}{\varphi}
\newcommand{\beq}{\begin{equation}}
	\newcommand{\eeq}{\end{equation}}
\newcommand{\ben}{\begin{eqnarray}}
	\newcommand{\een}{\end{eqnarray}}
\newcommand{\beno}{\begin{eqnarray*}}
	\newcommand{\eeno}{\end{eqnarray*}}
\begin{document}
	
	\title[Self-Similar Spiral Solution of 2-D Euler Equations]{Self-similar algebraic spiral solution of 2-D incompressible Euler equations}
	
	\author[F. Shao]{Feng Shao}
	\address{School of Mathematical Sciences, Peking University, Beijing 100871, China}
	\email{fshao@stu.pku.edu.cn}

	\author[D. Wei]{Dongyi Wei}
	\address{School of Mathematical Sciences, Peking University, Beijing 100871,  China}
	\email{jnwdyi@pku.edu.cn}
	
	\author[Z. Zhang]{Zhifei Zhang}
	\address{School of Mathematical Sciences, Peking University, Beijing 100871, China}
	\email{zfzhang@math.pku.edu.cn}
	
	\date{\today}

	\begin{abstract}
		In this paper, we prove the existence of self-similar algebraic spiral solutions of the 2-D incompressible Euler equations
		for the initial vorticity of the form $|y|^{-\frac1\mu}\ \mathring{\omega}(\theta)$ with $\mu>\f12$ and $\mathring{\omega}\in L^1(\TT)$, satisfying $m$-fold symmetry ($m\ge 2$) and a dominant condition. As an important application, we  prove the existence of weak solution
		when $\mathring{\omega}$ is a Radon measure on $\TT$ with $m$-fold symmetry, which is related to the vortex sheet solution.
	\end{abstract}
	\maketitle
	

	\section{Introduction}
	
	In this paper, we study the 2-D incompressible Euler equations in $\RR^2\times[0,+\infty)$:
	\begin{equation}\label{2DEuler}
		\begin{cases}
			\vvv_t+\vvv\cdot \nabla_y \vvv+\nabla P=0,\\
			\text{ div }\vvv=0,
		\end{cases}
	\end{equation}
	where $\vvv$ is the velocity and $P$ is the pressure. The vorticity-stream formulation of \eqref{2DEuler} takes as follows
	\begin{equation}\label{2dEuler}
		\begin{cases}
			\www_t+\vvv\cdot\nabla_y\www=0,\\
			\vvv=\nabla^\perp_y\Psi,\quad \Delta_y\Psi=\www,
		\end{cases}
	\end{equation}
	where $\www$ is the vorticity, $\Psi$ is the stream function and $\nabla^\perp_y=(-\p_{y_2}, \p_{y_1})$.
	
	For smooth initial data, the 2-D Euler equations are globally well-posed due to  the conservation of the $L^\infty$ norm of the vorticity, $\|\www(t)\|_{L^\infty}\le \|\www_0\|_{L^\infty}$. For classical results regarding the well-posedness and stability of steady solutions, we refer to \cite{Che,MB,MP}. Additionally, we highlight the recent breakthrough by \cite{BL} on the ill-posedness in borderline spaces. However, the long-time behavior of the solutions remains a long-standing problem. For relevant results, we refer to \cite{Sve, Bou, KS, BM, IJ, WZZ} and the references therein.
		
	For non-smooth initial data, a classical result due to Yudovich \cite{Yu} establishes the global existence and uniqueness of weak solutions when the initial vorticity lies in $L^\infty(\RR^2)\cap L^1(\RR^2)$, which is related to the vortex patch solution. In fact, the global existence of weak solutions also holds for the initial vorticity in $L^p(\RR^2)\cap L^1(\RR^2), 1\le p<+\infty$. However, in the latter case, the uniqueness of weak solutions remains an open question; see \cite{Bre, Vis} for recent progress. Another classical result due to Delort \cite{De} is the global existence of weak solutions when the initial vorticity is a Radon measure with a distinguished sign. The qualitative behavior of Delort's solutions remains an important open question. For recent important progress on singular vortex patch solutions, we refer to \cite{Elgindi, EJ}.	\smallskip
	
	In a series of remarkable works \cite{ELL2013, ELL2016, ELL2016-2}, Elling constructed a class of self-similar algebraic spiral solutions for the 2-D Euler equations. These solutions are significant in applications due to their prevalence in various physical phenomena \cite{van}.
	Elling considered a class of locally integrable self-similar initial data of the form
	\begin{equation}\label{initial_data}
		\www(y,t)\xrightarrow{t\to0+}|y|^{-\frac1\mu}\ \mathring{\omega}(\theta),\qquad \theta\in\TT:=\RR/(2\pi \ZZ).
	\end{equation}
	
	The main result in \cite{ELL2016} is stated as follows.
	
	\begin{thm}\label{thm:Ell}
		Given $\e>0$ and $\mu>\f23$, there exists an $N_0\in \NN$ so that a weak solution of \eqref{2DEuler} and \eqref{initial_data} exists for all initial data
		$\mathring{\omega}$ satisfying the following conditions:
		\begin{itemize}
			\item[1.] Periodicity: $\mathring{\omega}$ is $\f {2\pi} N$-periodic for $N\ge N_0$;
			
			\item[2.] Dominant rotation: the Fourier coefficients satisfy
			\beno
			|\wh{\mathring{\omega}}(0)|\ge \e\sum_{n\neq 0}|\wh{\mathring{\omega}}(n)|.
			\eeno
		\end{itemize}
	\end{thm}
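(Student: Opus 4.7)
The plan is to construct a self-similar solution of the form
\[\www(y,t)=|y|^{-1/\mu}\Omega(\theta,\sigma),\qquad \sigma=t|y|^{-1/\mu},\]
so that the prescribed initial data \eqref{initial_data} is encoded as the boundary condition $\Omega(\theta,0)=\mathring{\omega}(\theta)$. This ansatz is self-similar under the Euler scaling $(y,t)\mapsto(\lambda y,\lambda^{1/\mu}t)$, and substituting it together with the matching ansatz $\Psi(y,t)=|y|^{2-1/\mu}\Phi(\theta,\sigma)$ for the stream function reduces \eqref{2dEuler} to a stationary system on $\TT\times\RR_+$: a first-order transport equation in $(\theta,\sigma)$ for $\Omega$ coupled to an elliptic equation in $(\theta,\sigma)$ for $\Phi$. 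The $\frac{2\pi}{N}$-periodicity of $\mathring\omega$ is propagated by the flow, so I would work in the class in which only Fourier modes $n\in N\ZZ$ of $\Omega$ in $\theta$ are active.

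Next I would construct $\Omega$ by a perturbative scheme around the purely rotational profile $\Omega_{\rm rot}\equiv \wh{\mathring\omega}(0)$, which is an exact stationary solution of the self-similar system since the radial initial datum $\wh{\mathring\omega}(0)|y|^{-1/\mu}$ generates a purely angular velocity field. Writing $\Omega=\Omega_{\rm rot}+\Omega'$ and $\Phi=\Phi_0+\Phi'$, the linearization has the schematic form $\LLL\Omega'=\p_\sigma\Omega'+A\,\p_\theta\Omega'-B\,\p_\theta\Phi'$, where $A,B$ are constants proportional to $\wh{\mathring\omega}(0)$. The dominance hypothesis forces $|A|\gtrsim \e|\wh{\mathring\omega}(0)|$, while the $N$-fold symmetry makes every active mode satisfy $|n|\ge N\ge N_0$; together they render the transport symbol $|nA|$ large, so $\LLL$ is invertible mode by mode with norm $\lesssim(N\e)^{-1}$, which is small for $N_0$ large. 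The full nonlinear equation $\LLL\Omega'=\NNN(\Omega',\Phi')$ is then solved by a Banach fixed point in a small ball of a weighted Fourier space in $(\theta,\sigma)$.

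The main analytic ingredient is a bilinear estimate for $\NNN$, which is essentially the Biot--Savart product $V'\cdot\nabla\Omega'$ in the self-similar coordinates. Although Biot--Savart gains one derivative, the algebraic profile $|y|^{-1/\mu}$ decays only slowly, and $\mu>\tfrac{2}{3}$ is precisely the threshold at which this bilinear form closes in a weighted class of profiles with the regularity and decay in $\sigma$ required by the fixed point. Once $\Omega$ has been constructed, the weak solution $\www(y,t)=|y|^{-1/\mu}\Omega(\theta,t|y|^{-1/\mu})$ is defined on $\RR^2\times(0,\infty)$, and the initial condition \eqref{initial_data} is verified in $\mathcal{D}'(\RR^2)$ from $\Omega(\cdot,0)=\mathring\omega$ together with dominated convergence.

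The hard part will be this bilinear estimate: $V'$ is recovered from $\Omega'$ by an operator of order $-1$, but after the logarithmic-type change of variables its coefficients have delicate asymptotics at the endpoints $\sigma=0$ and $\sigma=\infty$, so the gain of regularity from Biot--Savart must be balanced against the loss of boundedness; threading this balance is what confines the argument to $\mu>\tfrac{2}{3}$. A secondary subtle point is controlling the low-frequency terms that survive the $N$-fold symmetry but do not individually satisfy a dominance bound, which is precisely where the quantitative lower bound $|\wh{\mathring\omega}(0)|\ge \e\sum_{n\neq 0}|\wh{\mathring\omega}(n)|$ is decisive.
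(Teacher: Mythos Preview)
First, note that Theorem~\ref{thm:Ell} is not proved in this paper: it is Elling's result, quoted from \cite{ELL2016} as background. The paper's own contribution is the stronger Theorem~\ref{mainthm}, whose proof follows (and refines) Elling's architecture. So I compare your outline to that method.

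Your self-similar reduction is equivalent to the paper's (your $\sigma=t|y|^{-1/\mu}$ is just $|x|^{-1/\mu}$ in the similarity variable $x=t^{-\mu}y$), but from there you diverge from both Elling and the paper by staying in Eulerian coordinates $(\theta,\sigma)$ and attacking the coupled transport--Poisson system directly. The device you are missing is the change to \emph{pseudo-streamline coordinates} $(\beta,\phi)$ of Section~\ref{sec_formulation}: one chooses $\phi$ constant along the characteristics of the self-similar vorticity transport, which solves that equation explicitly as $\omega=\psi_\vp^{-1/(2\mu)}\Omega(\phi)$ and collapses the whole problem to a single scalar nonlinear equation $\FFF(\psi,\Omega)=0$ for the stream function alone (equation~\eqref{nonlinear1}). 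The implicit function theorem is then applied to $\FFF$ around the explicit radial solution $(\psi_0,\Omega_0)$, and the entire analytic effort (Section~\ref{sec_linear_problem} and Appendix~\ref{AppendixB1}) goes into inverting the linearization $\LLL=\pa_\psi\FFF(\psi_0,\Omega_0)$ in weighted H\"older spaces. Your scheme instead linearizes both unknowns at once and hopes for a contraction in a Fourier space; this is not unreasonable in outline, but it bypasses the structural decoupling that makes the problem tractable.

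Two of your specific mechanisms are mislocated. The role of the symmetry is not that ``the transport symbol $|nA|$ is large so $\LLL^{-1}$ has norm $\lesssim(N\e)^{-1}$'': in the paper's analysis the linearized operator has genuine kernel at the $\pm1$ Fourier modes (Lemma~\ref{uniqueness_ODE} fails there), and $N$-fold symmetry with $N\ge2$ is used to \emph{remove} those modes, after which $\LLL$ is invertible with a bound that does not shrink with $N$. The smallness that closes the scheme is the size of $P_{\neq}\mathring\omega$ relative to $P_0\mathring\omega$, which is exactly what the dominance condition controls; your line ``$|A|\gtrsim\e|\wh{\mathring\omega}(0)|$'' is a tautology since $A$ is already proportional to $\wh{\mathring\omega}(0)$. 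Likewise, the restriction $\mu>\tfrac23$ does not come from a bilinear Biot--Savart closure in your coordinates; it arises from the indicial exponents of the linearized equation in the pseudo-streamline variable $\beta$, and the paper's improvement to $\mu>\tfrac12$ comes from replacing Elling's Wiener algebra by the weighted H\"older spaces of Section~\ref{Functional_regularity}. Your ``hard part'' paragraph therefore aims at the wrong target: the genuine difficulty is the inversion of $\LLL$ on an infinitely-wound spiral geometry (handled in the paper by explicit kernel formulas and a partition of unity over the turns of the spiral), not a product estimate in $(\theta,\sigma)$.
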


	The goal of this paper is to extend Elling's remarkable result in three important aspects:
	
	\begin{itemize}
		
		\item[1.] In Theorem \ref{thm:Ell}, $\mathring{\omega}$ lies in a {Wiener} algebra. We would like to allow $\mathring{\omega}\in L^1(\TT)$ and even $\mathring{\omega}\in \mathcal{M}(\TT)$, a Radon measure. This kind of data is crucial for the of construction self-similar vortex sheet solutions. It may be relevant to the non-uniqueness problem of weak solutions of the 2-D Euler equations with initial vorticity in $L^p(\RR^2), p<+\infty$.
		
		\item[2.] In Theorem \ref{thm:Ell}, $N_0$ is a large positive integer. We would like to improve $N_0$.
		
		\item[3.] Extend the range of $\mu$ to $\mu>\f12$, which is a natural condition ensuring $\www_0\in L^1_{\text{loc}}(\RR^2)$.
		
	\end{itemize}

	To state our results, let us first introduce the definition of weak solution.
	
	\begin{defn}\label{weak_sol_def}
		A vector field $\vvv(y,t)$ is called a \textit{weak solution} of the 2-D Euler equations \eqref{2DEuler}  provided that
		\begin{enumerate}[(i)]
			\item $\vvv\in C([0,\infty); L^2_{\text{loc}}(\RR^2; \RR^2))$, i.e., for any $R>0$ we have $\vvv\in C([0,\infty); L^2(B_R; \RR^2))$, where $B_R=\{y\in\RR^2: |y|\leq R\}$;
			\item $\text{ div}_y \vvv=0$ in the sense of distributions, i.e.,
			\[\int_{\RR^2}\nabla_y\eta(y,t)\cdot \vvv(y,t)\,dy=0,\qquad \forall t\geq 0, \ \forall \eta\in C_c^\infty\left(\RR^2\times[0,\infty)\right);\]
			\item $\vvv$ solves the 2-D Euler equations \eqref{2DEuler} in the sense of distributions, i.e., for any vector-field $\mathbf w\in C_c^\infty\left(\RR^2\times[0,\infty); \RR^2\right)$ with $\text{ div}_y \mathbf w=0$ there holds
			\[\int_{\RR^2}\vvv\cdot\mathbf w \,dy\Big|_{t=0}+\int_0^\infty\int_{\RR^2}\vvv\cdot \p_t\mathbf w+\left(\vvv\otimes\vvv\right):\nabla_y\mathbf w \,dy\,dt=0,\]
			where $\vvv\otimes\vvv=\left(\vvv^i\vvv^j\right)$, $\nabla_y\mathbf w=\left(\frac{\p \mathbf w^j}{\p y_i}\right)$ and $A : B =\sum_{i,j=1}^2A_{ij}B_{ij}$.
		\end{enumerate}
	\end{defn}
	
	For $\Omega\in L^1(\TT)$, we denote
	\[P_0\Omega:=\frac1{2\pi}\int_\TT\Omega(\theta)\,d\theta,\qquad P_{\neq}\Omega(\theta):=\Omega(\theta)-P_0\Omega\in L^1(\TT).\]
	
	Now our main result is stated as follows.
	
	\begin{thm}\label{mainthm}
		Let $\mu>\frac12$ and $m\in \NN_+\setminus\{1\}$.  There exists $\varepsilon>0$, which is independent of $m\geq2$, such that for any $\frac{2\pi}m-$periodic $\mathring{\omega}\in L^1(\TT)$ with the dominant condition
		\begin{equation}\label{1.5}
			\left\|P_{\neq}\mathring{\omega}\right\|_{L^1(\TT)}\leq \varepsilon m^{\frac12}|P_0\mathring{\omega}|,
		\end{equation}
		a weak solution $\vvv$ of \eqref{2DEuler} with the initial data \eqref{initial_data} exists. Moreover, the vorticity $\www\in C([0,\infty); L^1_{\text{loc}}(\RR^2))$.
	\end{thm}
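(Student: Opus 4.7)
The plan is to construct the solution in self-similar variables by perturbing around the exact radial profile determined by the mean $P_0\mathring{\omega}$, in the spirit of Elling but adapted to $L^1$ data.

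\textbf{Self-similar reduction.} I would first introduce the ansatz
\[\www(y,t)=t^{-1}\Omega(x),\qquad \Psi(y,t)=t^{2\mu-1}\Phi(x),\qquad x=y/t^\mu,\]
under which \eqref{2dEuler} reduces to the stationary profile system
\[(\nabla^\perp_x\Phi-\mu x)\cdot\nabla_x\Omega=\Omega,\qquad \Delta_x\Phi=\Omega\quad\text{on }\RR^2,\]
together with the matching condition $|x|^{1/\mu}\Omega(x)\to \mathring{\omega}(\arg x)$ as $|x|\to\infty$. The restriction $\mu>\tfrac12$ is precisely what makes the stream function, which grows like $|x|^{2-1/\mu}$, give a locally integrable velocity, and it is also the sharp integrability threshold for the initial vorticity in $L^1_{\text{loc}}(\RR^2)$. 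The $\tfrac{2\pi}m$-symmetry of $\mathring\omega$ is carried along by the construction.

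\textbf{Base profile and linearization.} The radial function $\Omega_0(x):=(P_0\mathring{\omega})|x|^{-1/\mu}$ with stream function $\Phi_0(x)\propto |x|^{2-1/\mu}$ solves the profile system exactly, since radial vorticities are stationary for 2-D Euler. Writing $\Omega=\Omega_0+\widetilde\Omega$ and $\widetilde V=\nabla^\perp\widetilde\Phi$, the remainder satisfies $\mathcal L\widetilde\Omega=-\widetilde V\cdot\nabla\widetilde\Omega$, where $\mathcal L$ is the linearization at $\Omega_0$. In polar coordinates $(\rho,\vartheta)$ on $x$-space, expanding $\widetilde\Omega$ and $\widetilde\Phi$ in Fourier modes $e^{ik\vartheta}$, the $\tfrac{2\pi}m$-symmetry forces $k\in m\ZZ\setminus\{0\}$, and $\mathcal L$ diagonalises into a family of second-order linear ODEs in $\rho$ whose diagonal part at mode $k$ reads schematically $ik\,(\text{base differential rotation})+\mu\rho\partial_\rho+1$, coupled to the radial Poisson equation.

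\textbf{Linear estimate, fixed point, and verification.} The key linear estimate is that $\mathcal L^{-1}$ gains a factor $|k|^{-1}$ from its transport-dominant diagonal part; the Cauchy–Schwarz sum $\sum_{|k|\geq m}|k|^{-2}\lesssim m^{-1}$ over the admissible modes then produces a gain $m^{-1/2}$ when bridging between an $L^1(\TT)$ input and the $L^2$/Wiener-type output on which the Green's function estimates close. With this gain in hand, I would run a contraction-mapping scheme for $\widetilde\Omega$ in a Banach space encoding $L^1$ regularity in $\vartheta$ together with the pointwise decay $|x|^{-1/\mu}$ in $\rho$; the dominant condition \eqref{1.5} then exactly balances the $m^{-1/2}$ gain, yielding contraction for $\e$ below an absolute constant independent of $m\ge 2$. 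Undoing the change of variables, $\www(y,t)=t^{-1}\Omega(y/t^\mu)$ and $\vvv=\nabla^\perp_y\Psi$ are shown to satisfy Definition \ref{weak_sol_def} by testing the profile equation against compactly supported divergence-free fields and integrating; the continuity $\www\in C([0,\infty);L^1_{\text{loc}}(\RR^2))$ at $t=0$ follows from the asymptotic matching being valid in $L^1(\TT)$ together with dominated convergence on each $B_R$.

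\textbf{Main obstacle.} The crux is the sharp linear inversion: to extend Elling's Wiener-algebra framework to genuine $L^1$ (and ultimately Radon-measure) data on $\mathring{\omega}$. Because $L^1(\TT)$ is strictly larger than the Wiener algebra and Fourier coefficients of $L^1$ functions are merely bounded, the standard Wiener-algebra iteration does not close; one must work with the radial Green's functions directly and extract the quantitative gain $m^{-1/2}$ from the spectral gap $|k|\ge m$ forced by the $m$-fold symmetry, rather than from angular smoothness of $\mathring\omega$. This is precisely the step that must be engineered carefully to accommodate the full $L^1$-range and to produce an $\e$ that is independent of $m$.
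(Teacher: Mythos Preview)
Your strategic outline --- self-similar reduction, perturb around the radial profile, invert a linearization, exploit the spectral gap $|k|\ge m$ --- matches the paper's. But the execution you sketch diverges from the paper at two essential points, and at least one of them is a real gap.

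\textbf{Coordinates and the object being perturbed.} You propose to work in ordinary polar coordinates $(\rho,\vartheta)$, perturb the vorticity $\Omega$, and run a contraction. The paper instead passes to adapted pseudo-streamline coordinates $(\beta,\phi)$ with $\theta=\beta+\phi$ and $\psi_\beta=-\mu|x|^2$, so that the transport part of the self-similar system becomes exact (equation \eqref{omega_Omega}), and the whole problem collapses to a single scalar nonlinear equation $\FF(\psi,\Omega)=0$ for the stream function, solved by the implicit function theorem in weighted H\"older spaces $Y_m\times\WWW_m\to Z_m$ (Proposition~\ref{nonlinear_part}, Proposition~\ref{linear_part}). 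The linearized operator $\LLL$ in these coordinates has the explicit form $\frac1\mu\p_\vp(\beta H_\vp)+\frac\mu\beta H_{\phi\phi}+\frac\gamma{2\mu}\psi_\phi$, and its inversion hinges on the first-order factorization $(\beta\p_\vp\pm\ii\mu\p_\phi)$ (Proposition~\ref{linear_basic_estimate1}), followed by a high/low frequency splitting in which the nonlocal term $\frac\gamma{2\mu}\psi_\phi$ is perturbative at high modes and handled by Fredholm theory at low modes. None of this structure is visible in polar coordinates; your claim that $\mathcal L$ ``diagonalises into a family of second-order linear ODEs in $\rho$'' with a clean $|k|^{-1}$ gain is not substantiated and, for $L^1$ angular data, does not obviously hold.

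\textbf{Where $m^{1/2}$ actually comes from.} Your heuristic --- $|k|^{-1}$ from $\mathcal L^{-1}$ and Cauchy--Schwarz over $\sum_{|k|\ge m}|k|^{-2}$ to pass from $L^1$ to $L^2$/Wiener --- does not close: you still have to come back to an $L^1$-type space to iterate, and Cauchy--Schwarz alone does not buy that. The paper's mechanism is entirely different and elementary: if $f\in L^1(\TT)$ is $m$-fold with zero mean, its antiderivative $g$ satisfies $\|g\|_{L^\infty}\le Cm^{-1}\|f\|_{L^1}$ (Lemma~\ref{m_antiderivative}). This is applied to $P_{\neq}\Omega$ to write it as $\p_\phi g$ (and then $\p_\phi^2 h$) with $L^\infty$ bounds gaining $m^{-1}$, which is exactly what makes the embedding $\beta^{-2\mu}\GG_m\cdot\WWW_m\hookrightarrow Z_m$ uniform in $m$ (Proposition~\ref{prop3.6}). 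The $m^{-1/2}$ weight in $\|\Omega\|_{\WWW_m}:=|P_0\Omega|+m^{-1/2}\|P_{\neq}\Omega\|_{L^1}$ is tuned precisely so that this antiderivative gain balances the $m^{1/2}$ weights in $\GG_m^0$ and $Z_m$. Thus the $m^{1/2}$ in condition~\eqref{1.5} is not a spectral-sum artifact but the sharp trade-off between an $L^1$ angular input and the $L^\infty$-based (H\"older) spaces in which the nonlinear map is $C^2$. Without this antiderivative device (or an equivalent), your iteration in an ``$L^1$ in $\vartheta$'' space will not close.
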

	
	\begin{rmk}
The velocity $\vvv$ is strongly continuous at $t=0$. Given the initial vorticity  $\www_0(y)=|y|^{-\frac1\mu}\mathring{\omega}(\theta)$, the initial velocity can be recovered by $\vvv_0=\nabla_y^\perp\Psi_0$, with $\Psi_0(y)=|y|^{2-\frac1\mu}B(\theta)$, where $B$ is the only $\frac{2\pi}m$-periodic function satisfying $\gamma^2B+B''=\mathring{\omega}$, $\gamma=2-\frac1\mu$. See Proposition \ref{v_initial} for the details.
	
	\end{rmk}
	
	Let us give more remarks about our result.
	
	\begin{enumerate}[1.]
		
		\item It seems possible to extend our result to the case when $m\ge 1$. This will be conducted in a future work.
		
		\item We prove Theorem \ref{mainthm} using the implicit function theorem (IFT); thus, the self-similar solutions constructed in Theorem \ref{mainthm} are unique if the perturbation is sufficiently small, ensuring that the perturbed solution lies within the framework of the IFT. The uniqueness of these self-similar solutions in a broader class remains an open question.
		
		\item The stability of the self-similar type solution constructed in Theorem \ref{mainthm} is a very interesting question. This may be relevant to the non-uniqueness problem of weak solutions when the initial vorticity lies in $L^p(\RR^2), p<+\infty$, see \cite{Bre, JS, Vis, ABC} for relevant results.
		
		\item Another very interesting problem is to study the inviscid limit problem for the data considered in Theorem \ref{mainthm}. We conjecture that our solution can be obtained via the vanishing viscosity limit. Thus, it is a physical solution in the sense of \cite{Bar}.
		
		\item Let us mention a recent important result \cite{GG} about self-similar spirals for the generalized surface quasi-geostrophic equations:
		\begin{equation}\nonumber
			\begin{cases}
				\theta_t+\vvv\cdot \nabla \theta=0,\\
				\vvv=-\na^\perp(-\Delta)^{-1+\f \ga 2}\theta.
			\end{cases}
		\end{equation}
		The case $\ga=0$ corresponds to the 2-D Euler equations and $\ga=1$ to the surface quasi-geostrophic equation.
		For $\ga\in (0,1)$ and the initial data $\theta_0=r^{-(1+\al-\gamma)}\Om(\theta)$ with $\al\in (1,1+\gamma)$ and $\Om\in B_{L^p(\TT)}(1,\e), p>\f1{1-\ga}, \e$ small,
		they constructed a self-similar solution of the form
		\beno
		\theta(t,x)=\f 1 {t^{(1+\al-\ga)/(1+\al)}}\Theta\Big(\f x {t^{1/(1+\al)}}\Big).
		\eeno
		Our result deals with the important case $\ga=0$ for $\Om(\theta)\in L^1(\TT)$.
	\end{enumerate}

As an important application, we prove the existence of  weak solution of 2-D Euler equations when the initial vorticity is a measure, which is related to the vortex sheet solution. We denote by $\mathcal{M}(\TT)$ the set of signed Radon measures on $\TT$. Given a measure $\nu\in\mathcal{M}(\TT)$, we say that $\nu$ is \textit{$m$-fold symmetric} if
	\[\int_{\TT}\eta\left(\theta+\frac{2\pi}m\right)\,d\nu(\theta)=\int_{\TT}\eta(\theta)\,d\nu(\theta)\qquad \text{ for all }\eta\in C(\TT).\]
	For $\nu\in\mathcal{M}(\TT)$, we denote
	\[P_0\nu:=\frac{1}{2\pi}\nu(\TT)\in\RR, \qquad P_{\neq}\nu:=\nu-P_0\nu\in \mathcal{M}(\TT),\]
	and $\|\nu\|$ the total variation of $\nu$.
	
	\begin{cor}\label{maincor}
		Let $\mu>\frac12$ and $m\in \NN_+\setminus\{1\}$.  There exists $\varepsilon>0$, which is independent of $m\geq2$, such that for any $m$-fold symmetric $\mathring{\omega}\in \mathcal{M}(\TT)$ with the dominant condition
		\begin{equation}\label{1.6}
			\left\|P_{\neq}\mathring{\omega}\right\|\leq \varepsilon m^{\frac12}|P_0\mathring{\omega}|,
		\end{equation}
		a weak solution $\vvv$ of \eqref{2DEuler} with initial data \eqref{initial_data} exists. Moreover, the vorticity $\www\in C([0,\infty); \mathcal{D}'(\RR^2))$.
Here $ \mathcal{D}'$ is the space of distributions.
	\end{cor}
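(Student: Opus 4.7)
The natural strategy is to regularize the measure $\mathring{\omega}$, apply Theorem \ref{mainthm} to each smooth approximation, and then pass to the limit in the weak formulation.

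\emph{Step 1: Regularization preserving the hypotheses.} Let $(\rho_n)_{n\ge 1}\subset C^\infty(\TT)$ be a nonnegative approximate identity with $\int_\TT\rho_n\,d\theta=1$, and set $\mathring{\omega}_n:=\rho_n*\mathring{\omega}\in C^\infty(\TT)$. Convolution commutes with translation, so the $m$-fold symmetry of $\mathring{\omega}$ transfers to $\mathring{\omega}_n$. By Fubini and the total-variation estimate,
\[P_0\mathring{\omega}_n=P_0\mathring{\omega},\qquad \|P_{\neq}\mathring{\omega}_n\|_{L^1(\TT)}\le \|P_{\neq}\mathring{\omega}\|,\]
so the dominant condition \eqref{1.5} is inherited by $\mathring{\omega}_n$ with the same $\varepsilon$. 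Theorem \ref{mainthm} then produces a self-similar weak solution $\vvv_n$ of \eqref{2DEuler} with initial vorticity $\www_{0,n}(y)=|y|^{-1/\mu}\mathring{\omega}_n(\theta)$ and $\www_n\in C([0,\infty);L^1_{\text{loc}}(\RR^2))$.

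\emph{Step 2: Uniform bounds from the self-similar structure.} The IFT construction underlying Theorem \ref{mainthm} realizes $(\vvv_n,\www_n)$ through a profile whose size is controlled by $\|P_{\neq}\mathring{\omega}_n\|_{L^1(\TT)}$, which is bounded uniformly by $\|P_{\neq}\mathring{\omega}\|$. Via the self-similar scaling, this yields, for every $R,T>0$, uniform bounds on $\vvv_n$ in $L^\infty([0,T];L^2(B_R))$ and on $\www_n$ in $L^\infty([0,T];\mathcal M(B_R))$; the explicit time dependence of the ansatz also gives uniform time continuity on $[0,T]$ in a suitable negative-order space. Using the Euler equation to control $\p_t\vvv_n=-\mathrm{div}(\vvv_n\otimes\vvv_n)-\nabla P_n$ as a distribution, an Aubin--Lions type argument extracts a subsequence such that $\vvv_n\to\vvv$ strongly in $L^2_{\text{loc}}(\RR^2\times(0,\infty);\RR^2)$ and $\www_n\to\www$ in the weak-$*$ sense of $\mathcal M_{\text{loc}}$.

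\emph{Step 3: Limit is a weak solution with the prescribed initial data.} Strong $L^2_{\text{loc}}$ convergence of $\vvv_n$ suffices to pass $\vvv_n\otimes\vvv_n\to\vvv\otimes\vvv$ in $L^1_{\text{loc}}$, so every term in the weak formulation of Definition \ref{weak_sol_def}(iii) converges against any divergence-free $\mathbf w\in C_c^\infty$. The weak-$*$ convergence $\mathring{\omega}_n\to\mathring{\omega}$ in $\mathcal M(\TT)$ gives $\www_{0,n}\to|y|^{-1/\mu}\mathring{\omega}$ in $\mathcal D'(\RR^2)$, and combined with the uniform time continuity one obtains $\www\in C([0,\infty);\mathcal D'(\RR^2))$ attaining the required initial datum.

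\emph{Main obstacle.} The delicate point is the uniform strong $L^2_{\text{loc}}$ compactness of $\vvv_n$ near the origin, where $\vvv_{0,n}\sim|y|^{1-1/\mu}$ is only $L^2_{\text{loc}}$ for $\mu\in(\tfrac12,1)$. This reduces to verifying that the estimates in Theorem \ref{mainthm} depend on the initial data only through $|P_0\mathring{\omega}_n|$ and $\|P_{\neq}\mathring{\omega}_n\|_{L^1(\TT)}$, never through smoother norms of $\mathring{\omega}_n$. This is the feature naturally expected of the IFT scheme based at the pure rotation background, and once secured, all other steps are standard weak-compactness manipulations facilitated by the explicit self-similar structure.
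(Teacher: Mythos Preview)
Your overall plan---regularize $\mathring{\omega}$ by convolution, apply Theorem \ref{mainthm} to each $\mathring{\omega}_n$, extract uniform bounds, pass to the limit---is exactly the paper's strategy, and Step~1 is essentially identical (the paper uses $F_N=N\chi_{(0,1/N)}$, but any approximate identity works). The substantive difference, and the point where your proposal has a genuine gap, is the compactness mechanism in Step~2.

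You invoke an Aubin--Lions argument based on controlling $\p_t\vvv_n$. This is the standard route when the vorticity lies in $L^p$ with $p>1$, because then $v_n$ is bounded in $W^{1,p}_{\text{loc}}\hookrightarrow\hookrightarrow L^2_{\text{loc}}$, supplying the spatial compactness Aubin--Lions needs. Here the approximate vorticities are only uniformly bounded in $L^1_{\text{loc}}$, and $L^1$ vorticity does \emph{not} by itself give a compact embedding of the velocity into $L^2_{\text{loc}}$; this is precisely the obstruction that makes Delort-type theorems nontrivial. So the Aubin--Lions step, as written, does not close.

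The paper avoids this by working entirely at the level of the self-similar profiles $v_N=\nabla_x^\perp\psi_N$ (no time variable is needed for compactness). The IFT construction gives uniform \emph{pointwise} bounds on the stream function,
\[
|\psi_N(x)|\lesssim |x|^{2-\frac1\mu},\qquad |\nabla_x\psi_N(x)|\lesssim |x|^{1-\frac1\mu},\qquad \int_{|x|\le R}|\omega_N(x)|\,dx\lesssim_R 1,
\]
uniformly in $N$; these follow from the uniform $Y_m$-bound on the profile and the computations of Subsection~\ref{sec_phycical_variable}. Arzel\`a--Ascoli on $\psi_N$ then yields $\psi_N\to\psi$ in $L^\infty_{\text{loc}}(\RR^2)$. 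The strong $L^2_{\text{loc}}$ convergence of $v_N$ is obtained not from Aubin--Lions but from an integration-by-parts identity: with a cutoff $\rho$,
\[
\int\rho|v_N-v|^2=\int\rho\,\nabla\psi_N\cdot\nabla(\psi_N-\psi)-\int\rho\,v\cdot(v_N-v);
\]
the second term vanishes by weak $L^2$ convergence, and the first is bounded by $\|\psi_N-\psi\|_{L^\infty(B_{R+1})}\int|\mathrm{div}(\rho\nabla\psi_N)|$, which tends to zero because $\|\omega_N\|_{L^1(B_{R+1})}$ is uniformly bounded. This upgrades weak to strong convergence without any spatial Sobolev compactness. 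The time-continuity at $t=0$ is then handled by the same mechanism applied to $\Psi(\cdot,t)$ versus $\Psi_0$, again using uniform $\mathcal M$-boundedness of $\mathrm{div}(\rho\nabla\Psi(\cdot,t))$.

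In short: your ``main obstacle'' paragraph correctly identifies that everything must depend only on $|P_0\mathring{\omega}_n|$ and $\|P_{\neq}\mathring{\omega}_n\|_{L^1}$, but that observation alone does not furnish the missing compactness. The paper's key extra input is the uniform $C^1$-type control on the stream function coming from the profile estimates in the new coordinates, followed by the integration-by-parts trick above.
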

	
	\begin{rmk}
		The qualitative behavior of Delort's solution remains an important question. In a forthcoming work, we will explore the qualitative behavior of the weak solution constructed in Corollary \ref{maincor}. This is a key step toward the long-standing problem of the existence of self-similar algebraic spiral vortex sheet solutions. See \cite{Pull1, Pull2} for numerical results. For recent developments on the logarithmic spiral vortex sheet solution, we refer to \cite{CKO1, CKO2}.	
		\end{rmk}
In this paper,	$a\lesssim b$ stands for $a\leq Cb$ with some constant $C$, $a\gtrsim b$ stands for $b\lesssim a$ and $a\sim b$ stands for $a\lesssim b$ and $b\lesssim a$. In most circumstances, the constant $C$ depends only on the parameters $\alpha$ and $\mu$. Nevertheless, there are some exceptions where we explain more details accordingly, see e.g. the proof of Lemma \ref{uniqueness_ODE}.

	\section{Sketch of the proof}\label{Sec.sketch}
	
	The 2-D incompressible Euler equations \eqref{2dEuler} can be reformulated as follows: find two scalar functions $\www,\Psi$ defined for $(y,t)\in\mathbb R^2\times(0,\infty)$, such that
	\begin{equation}\label{2dEuler-vorticity}
		\begin{cases}
			\www_t+\nabla_y^\perp\Psi\cdot\nabla_y\www=0,\\
			\Delta_y\Psi=\www.
		\end{cases}
	\end{equation}
	We seek solutions of $\eqref{2dEuler-vorticity}$ which are (algebraically) \textit{self-similar}: with $x=t^{-\mu}y$,
	\begin{equation}\label{self-similar}
		\vvv(y,t)=t^{\mu-1}v(x),\qquad \www(y,t)=t^{-1}\omega(x),\qquad\Psi(y,t)=t^{2\mu-1}\psi(x)
	\end{equation}
	for some exponent $\mu>0$. Inserting \eqref{self-similar} into \eqref{2dEuler-vorticity} yields the equations
	\begin{equation}\label{2dEuler-selfsimilar}
		\begin{cases}
			\left(\nabla_x^\perp\psi-\mu x\right)\cdot\nabla_x\omega=\omega,\\
			\Delta_x\psi=\omega.
		\end{cases}
	\end{equation}
	To study spirals converging to a common origin, it is convenient to use polar coordinates:
	\[\mathbf{a}=(r,\theta),\qquad r=|x|,\qquad\theta\in\TT:=\RR/(2\pi\ZZ).\]
	Then we have
	\[\begin{cases}
		\frac1r\left(\psi_r\omega_\theta-\psi_\theta\omega_r\right)-\mu r\omega_r=\omega,\\
		\psi_{rr}+\frac1{r}\psi_{r}+\frac1{r^2}\psi_{\theta\theta}=\omega.
	\end{cases}\]
	Note that the second equation in the above system is indeed a Poisson equation in the coordinates $(\log r, \theta)$.\smallskip
	
	In Section \ref{sec_formulation}, we introduce a new system of coordinates $(\beta, \phi)\in\RR_+\times\TT$ with the relationship
	\begin{equation}\nonumber
		\theta=\beta+\phi,\qquad\psi_\beta=-\mu|x|^2=-\mu r^2.
	\end{equation}
	Under this new coordinates, the system can be transformed into $\mathcal F(\psi,\Omega)=0$, where
	\begin{equation}\label{Eq.F_expression}
		\begin{aligned}
			\mathcal{F}(\psi, \Om):&=\p_\vp\left(\frac{2\psi_\beta\psi_\vp}{\psi_{\beta\vp}}-\frac{\psi_{\beta\phi}}{\psi_{\beta\vp}}\cdot\frac{\psi_{\beta\vp}\psi_\phi-
				\psi_{\beta\phi}\psi_\vp}{2\psi_\beta}\right)\\
				&\qquad\qquad\qquad\qquad\qquad+\p_\phi\left(\frac{\psi_{\beta\vp}\psi_\phi-\psi_{\beta\phi}\psi_\vp}{2\psi_\beta}\right)+
			\frac{\psi_{\beta\vp}\psi_\vp^{-\frac1{2\mu}}}{2\mu}\Omega,
		\end{aligned}
	\end{equation}
	where $\p_\vp:=\p_\phi-\p_\beta$. See \eqref{nonlinear1}. This nonlinear equation has a special solution:
	\begin{equation}\nonumber
		\psi_0=\frac1{2\mu-1}\beta^{1-2\mu},\qquad\Omega_0=2-\mu^{-1}=:\gamma.
	\end{equation}
	
	Since the solution we find is singular in $\beta$, motivated by \cite{EJ}, we introduce the weighted H\"older type spaces. The choices of the functional spaces for the $m$-fold symmetric solution,  such as $\psi\in Y_m, \Om\in \WWW_m$ and $\mathcal{F}\in Z_m$, are quite subtle and are closely related to the properties of the solutions. Let us emphasize that our functional spaces have Banach algebraic properties, which will simplify the proof of $C^2$ regularity of the nonlinear map $\FF$ to a great extent. See Section \ref{Functional_regularity} for the definitions of various functional spaces.\smallskip

	We will apply the implicit function theorem to solve the nonlinear equation $\mathcal{F}(\psi,\Om)=0$ for $\Omega$ near $\Omega_0=\ga$.  The first step is to prove that nonlinear map
	$\mathcal{F}: Y_m\times \WWW_m\to Z_m$ is a $C^2$ map in a neighborhood of $(\psi_0, \Om_0)$, which will be conducted in Section \ref{sec_main_thm}. We remark that for the application of the implicit function theorem, it is enough to prove $C^1$ regularity of nonlinear map; however, here we need $C^2$ regularity, because we want to show that the small neighborhood given by the implicit function theorem is independent of $m$, although the functional spaces rely on $m$.
	The second step is to prove the solvability of the linearized problem around special solution $(\psi_0, \Om_0)$:
	\begin{equation}\label{Eq.Full}
		\LLL(\psi):=\frac{\pa\mathcal F}{\pa\psi}(\psi_0,\Omega_0)(\psi)=\frac1\mu\p_\vp(\beta H_\vp)+\frac\mu\beta H_{\phi\phi}+\frac\gamma{2\mu}\psi_\phi=G\in Z_m
	\end{equation}
	where $H=\psi+\frac\beta{2\mu}\psi_\beta$. That is to say, given $G\in Z_m$, we need to solve $\psi\in Y_m$ and to prove its uniqueness. This is the most difficult part of this paper.
	
	Before we sketch the proof of the invertibility of $\mathcal L$, we remark that $\mathcal L$ is a third-order differentiation operator acting on $\psi$. After introducing $H=\psi+\frac\beta{2\mu}\psi_\beta$, we convert $\mathcal L$ to a second-order operator acting on $H$, with an extra nonlocal term $\frac\gamma{2\mu}\psi_\phi$. This structure plays a crucial role in our proof of the invertibility of $\mathcal L$. 
	
First of all, we prove the uniqueness of the solution $(H,\psi)\in X_m\times Y_m$, namely,  Proposition \ref{uniqueness_prop}. The strategy is to express the linear homogeneous equation in Fourier modes with respect to $\phi\in\TT$, and then apply the theory of linear ODEs with regular singular points (see Lemma \ref{ODE_lem}) along with some properties of the generalized hypergeometric function. This is the first instance where the assumption $m\ge 2$ becomes crucial. Indeed, we can easily see that the proof of Lemma \ref{uniqueness_ODE} fails if $\mu\in\left(\frac12, 1\right]$ and $n=\pm1$,  in which case the corresponding linear homogeneous ODE may have a non-zero solution. Consequently, we cannot expect uniqueness. For details, see Subsection \ref{Uniqueness}.
	
It remains to prove the existence. The key point is to solve the simplified linearized problem: given $G\in Z_m$, find $H\in X_m$ solving
	\begin{equation}\label{Eq.simplified}
	\frac1\mu \p_\vp(\beta H_\vp)+\frac{\mu H_{\phi\phi}}{\beta}=G=\p_\vp F_1+\p_\phi F_2,
	\end{equation}
where we have used the fact that all functions in $Z_m$ can be represented as the form of $\p_\vp F_1+\p_\phi F_2$, according to the definition of $Z_m$ (see \eqref{target_Z_m}). Compared with $\LLL(\psi)$, we remove nonlocal term $\frac\gamma{2\mu}\psi_\phi$. Indeed, for high Fourier modes, the nonlocal term can be viewed as a perturbation. Notice that if $H$ solves \eqref{Eq.simplified},  then
	\[(\beta\p_\vp)^2H+(\mu\p_\phi)^2H=\left(\beta\p_\vp-\ii\mu\p_\phi\right)\frac{\mu F_1+\ii\beta F_2}{2}+\left(\beta\p_\vp+\ii\mu\p_\phi\right)\frac{\mu F_1-\ii\beta F_2}{2}.\]
	Let $Q_1, Q_2$ be solutions to $\left(\beta\p_\vp+\ii\mu\p_\phi\right)Q_1=\mu F_1+\ii\beta F_2$ and $\left(\beta\p_\vp-\ii\mu\p_\phi\right)Q_2=\mu F_1-\ii\beta F_2$, respectively. Then $H=\frac{Q_1+Q_2}{2}$. Thus, it is enough to solve the following first order system
	\begin{equation}\nonumber
		(\beta\p_\vp\pm \ii\mu\p_\phi)Q=G.
	\end{equation}
This task is achieved by Proposition \ref{linear_basic_estimate1}, whose proof is highly involved and independent of other parts of this paper. Hence, it is moved to Appendix \ref{AppendixB1}. For this first-order system, we can write down the explicit formula for the solution, which is a convolution integral formula, by using Fourier series (see \eqref{Q_expression}). Due to the special shape of our new coordinates, which consist of infinitely many circles near the physical origin (see Figure \ref{algebraic}), we introduce a partition of unity to explore the interactions among these circles more carefully (see \eqref{Q_k_expression}--\eqref{Q_k>0}). Another key point is the requirement $m\geq2$, in particular, we require $\hat G_{\pm1}=0$, where $\hat G_{\pm 1}$ is the $\pm1$ Fourier modes of $G$ in the new coordinates. Recall that our solution formula is a convolution of a kernel with the function $G$. The condition $\hat G_{\pm 1}=0$ allows us to subtract some of the non-contributing terms from the kernel to obtain better estimates on the integral formula (see \eqref{5.14} for instance). This condition is also necessary for the same reason as in the uniqueness part: we cannot show the triviality of the kernel of the linear operator, let alone the coercive estimates. For details, see Subsection \ref{section_Basic_lin}.
	
	After proving the solvability of the simplified linearized problem \eqref{Eq.simplified}, we turn to the full linearized problem \eqref{Eq.Full}. We note that the inhomogeneous problem \eqref{Eq.Full} can be converted into the following system (see Subsection \ref{Full_linear}):
	\begin{equation}\label{Eq.Q-psi}
			\begin{cases}
				\frac1\mu\p_\vp(\beta Q_\vp)+\frac{\mu Q_{\phi\phi}}{\beta}+\frac\gamma{2\mu}\psi_\phi=0,\\
				H+Q=\psi+\frac\beta{2\mu}\psi_\beta,
			\end{cases}
	\end{equation}
	where $H$ is given and $(Q,\psi)$ is the unknown. The solvability of \eqref{Eq.Q-psi} is stated in Proposition \ref{exist_2_prop}, whose proof is provided in Subsection \ref{Subsec.Proof-full}. Our strategy is to investigate the system \eqref{Eq.Q-psi} separately in high and low frequencies with respect to $\phi\in\mathbb T$.
	
	\begin{itemize}
		\item For high frequencies (higher than $N$), we note that the first equation of \eqref{Eq.Q-psi} implies $Q=-\mathcal H^N\psi_\phi$ for some operator $\mathcal H^N$ defined in \eqref{Eq.T_0^N_def}, which is provided by the existence theory for the simplified linearized problem \eqref{Eq.simplified}. The second equation of \eqref{Eq.Q-psi} implies $\psi=T_0^N(H+Q)$ for some operator $T_0^N$ defined in \eqref{Eq.T_0^N_def} (with $T_0$ defined in \eqref{H_to_F}). We define $T^N=\mathcal H^N\pa_\phi T_0^N$, and thus \eqref{Eq.Q-psi} becomes $(\operatorname{id}+T^N)(Q+H)=H$. We are able to show that $T^N$ has an operator norm that is $\mathcal O(1/N)$. Consequently, $\operatorname{id}+T^N$ is invertible for sufficiently large $N$. For details, see Proposition \ref{high_frequency}.
		
		\item For low frequencies, we can convert \eqref{Eq.Q-psi} into a finite number of third-order ODE systems for each Fourier mode. To avoid some technical arguments regarding the existence of solutions to third-order ODEs, we note that \eqref{Eq.Q-psi} is equivalent to the following system:
		\begin{equation}\label{Eq.Q-psi2}
			\begin{cases}
				\frac1\mu\p_\vp(\beta Q_\vp)+\frac\mu\beta Q_{\phi\phi}+\frac\gamma\beta\rho(\beta)Q\\
				\qquad=-\frac\gamma\beta\rho(\beta)H-\frac\gamma{2\mu}\p_\vp(\rho(\beta)\psi)-
				\frac\gamma{2\mu}\left(\rho'(\beta)-\frac{2\mu}\beta\rho(\beta)\right)\psi-\frac\gamma{2\mu}\left(1-\rho(\beta)\right)\psi_\phi\\
				H+Q=\psi+\frac{\beta}{2\mu}\psi_\beta,
			\end{cases}
		\end{equation}
		where $\rho$ is a fixed smooth bump function belonging to $C^\infty([0, \infty);[0,1])$ such that $\rho(\beta)=0$ for $\beta\in[0,1]$ and $\rho(\beta)=1$ for $\beta\geq2$. Using the existence theory for solutions to second-order ODEs (see Appendix \ref{AppendixB}), we can find $Q_1=T_1\psi$ and $Q_2=T_2H$, where $T_1$ and $T_2$ denote the solution operators, such that
		\begin{align*}
				&\frac1\mu\p_\vp(\beta \p_\vp Q_1)+\frac\mu\beta \p_\phi^2Q_{1}+\frac\gamma\beta\rho(\beta)Q_1\\
				&\ \ =-\frac\gamma{2\mu}\p_\vp(\rho(\beta)\psi)-\frac\gamma{2\mu}\left(\rho'(\beta)-\frac{2\mu}\beta\rho(\beta)\right)\psi-
				\frac\gamma{2\mu}\left(1-\rho(\beta)\right)\psi_\phi,\\
				&\qquad\qquad\frac1\mu\p_\vp(\beta \p_\vp Q_2)+\frac\mu\beta \p_\phi^2Q_{2}+\frac\gamma\beta\rho(\beta)Q_2=-\frac\gamma\beta\rho(\beta)H.
		\end{align*}
		In this way, \eqref{Eq.Q-psi} is converted to $Q=T_1\psi+T_2H$ and $\psi=T_0(H+Q)$ ($T_0$ is defined in \eqref{H_to_F}), which is equivalent to $(\operatorname{id}-T_1T_0)(H+Q)=T_2H+H$.
 By carefully choosing larger spaces, we can prove the compactness of $T_0$ in these spaces; see Lemma \ref{compact}. Consequently, Fredholm's theory implies that $\operatorname{id}-T_1T_0$ is a bijection in the larger space, with injectivity obtained using the same ideas as in Subsection \ref{Uniqueness}. Now, we have obtained the solution $(Q,\psi)$ to \eqref{Eq.Q-psi} in some larger space. A standard regularity analysis implies that $(Q,\psi)$ lies in our desired (smaller) space; see \hyperlink{step2}{\bf Step 2} in the proof of Proposition \ref{low_frequency}.
	\end{itemize}
	
	
	In Section \ref{sec_original_coor}, we recover the solutions obtained by the implicit function theorem in the original physical coordinates. We first check the invertible of the change of coordinates $x\mapsto (\beta,\phi)$. Then we check that the solutions in the original physical coordinates are actually weak solutions of the 2-D incompressible Euler equations \eqref{2DEuler} in the sense of Definition \ref{weak_sol_def}. Note that our solution is too weak to verify the vorticity formulation \eqref{2dEuler-vorticity}. \smallskip
	
	In Section \ref{sec_main_cor}, we prove Corollary \ref{maincor}. Our strategy is to regularize the initial data $\mathring{\omega}\in\mathcal{M}(\TT)$. Let $F_N=N\chi_{(0,1/N)}$,  then $F_N\geq 0$ and $\|F_N\|_{L^1(\TT)}=1$. For each $N\in\NN_+$, we define
	\begin{equation}\label{regularization}
		\mathring{\omega}_N(\theta)=(F_N*\mathring{\omega})(\theta)=N\int_{\theta-1/N}^{\theta}\,d\mathring{\omega}(\theta'),\qquad \theta\in\TT,
	\end{equation}
	then $\|\mathring{\omega}_N\|_{L^1(\TT)}\leq \|F_N\|_{L^1(\TT)}\|\mathring{\omega}\|=\|\mathring{\omega}\|$ and $\mathring{\omega}_N\rightarrow\mathring{\omega}$ in $\mathcal{M}(\TT)$. Moreover, it follows from \eqref{1.6} that \eqref{1.5} holds for each $\mathring{\omega}_N$. Hence, Theorem \ref{mainthm} is applicable to each $\mathring{\omega}_N$ and we get a sequence of weak solutions $\vvv_N$ of the 2-D Euler equations. It remains to take the limit $N\to\infty$ to get a weak solution for $\mathring{\omega}$.

	\section{New coordinates and reformulation}\label{sec_formulation}

	\subsection{New coordinates}
	We regard the first equation in \eqref{2dEuler-selfsimilar} as a linear, first order equation for $\omega$. The characteristic curves, also known as the pseudo-streamlines, are the integral curves of the vector field $\nabla_x^\perp\psi-\mu x$. Following the ideas in \cite{ELL2013, ELL2016}, it will be convenient to make a change of variables $\aaa=(r,\theta)\mapsto\bbb=(\beta,\phi)$, such that
	\begin{itemize}
		\item Pseudo-streamlines have the equation $\phi=$constant.
		\item $\theta=\beta+\phi$.
		\item For fixed $\phi$, we have
		$$\lim_{\beta\to0+}r(\beta,\phi)=+\infty,\qquad\lim_{\beta\to0+}\theta(\beta,\phi)=\phi.$$
	\end{itemize}
	Let us remark that the change of coordinates depends on the solution. So, after we construct the solution in the new variables, we need to check that the change of coordinates is non-degenerate, which will be demonstrated in Subsection \ref{sec_invertibility}.
	
	Assume that $s\mapsto x(s)$ is a pseudo-streamline, i.e.,
	\[\frac{dx(s)}{ds}=\nabla_{x}^\perp\psi(x(s))-\mu x(s).\]
	Since $\phi(x(s))$ is independent of $s$, we have
	\begin{equation}\label{streamline}
		\begin{aligned}
			0&=\phi_{x_1}x_1'(s)+\phi_{x_2}x_2'(s)\\
			&=\phi_{x_1}\left(-\psi_{x_2}-\mu x_1(s)\right)+\phi_{x_2}\left(\psi_{x_1}-\mu x_2(s)\right)\\
			&=-\phi_{x_1}\left(\psi_\beta\beta_{x_2}+\psi_\phi\phi_{x_2}\right)-\mu x_1\phi_{x_1}+\phi_{x_2}\left(\psi_\beta\beta_{x_1}+\psi_\phi\phi_{x_1}\right)-\mu x_2\phi_{x_2}\\
			&=\left(\beta_{x_1}\phi_{x_2}-\beta_{x_2}\phi_{x_1}\right)\psi_\beta-\mu(x_1\phi_{x_1}+x_2\phi_{x_2}).
		\end{aligned}
	\end{equation}
	We now observe that the $2\times2$ Jacobian matrix of the variable-transformation satisfies
	\begin{equation}\label{Jacobi-x-b}
		\begin{pmatrix} x_{1,\beta} & x_{1,\phi}\\ x_{2,\beta} & x_{2,\phi}\end{pmatrix}=\begin{pmatrix}\beta_{x_1} & \beta_{x_2}\\ \phi_{x_1} & \phi_{x_2} \end{pmatrix} ^{-1}=\frac1{\beta_{x_1}\phi_{x_2}-\beta_{x_2}\phi_{x_1}}\begin{pmatrix}\phi_{x_2} & -\beta_{x_2}\\ -\phi_{x_1} & \beta_{x_1} \end{pmatrix}.
	\end{equation}
	In view of \eqref{Jacobi-x-b}, it follows from \eqref{streamline}  that
	\begin{equation}\label{check{psi}_beta}
		\psi_\beta+\mu\left(x_1x_{2,\beta}-x_2x_{1,\beta}\right)=0.
	\end{equation}
	Using $x=(r\cos\theta,r\sin\theta)$ and $\theta=\beta+\phi$ gives
	\[x_1x_{2,\beta}-x_2x_{1,\beta}=r\cos\theta\left(r_{\beta}\sin\theta+r\cos\theta\theta_{\beta}\right)-
	r\sin\theta\left(r_{\beta}\cos\theta-r\sin\theta\theta_{\beta}\right)=r^2\theta_\beta=r^2.\]
	This along with \eqref{check{psi}_beta} yields
	\begin{equation}\label{beta-x}
		\psi_\beta=-\mu|x|^2=-\mu r^2.
	\end{equation}
	
	We rewrite the change of variable formulas as
	\[\theta=\beta+\phi,\qquad\psi_\beta=-\mu r^2=-\mu|x|^2.\]
	Hence, $2rr_\beta=-\frac1\mu\psi_{\beta\beta},\, 2rr_\phi=-\frac1\mu\psi_{\beta\phi}.$
	Denote $\p_\vp:=\p_\phi-\p_\beta$, then
	\begin{equation}\label{beta_to_r}
		\begin{pmatrix}
			r_\beta & r_\phi \\ \theta_{\beta} & \theta_{\phi}
		\end{pmatrix}=\begin{pmatrix}
			-\frac{\psi_{\beta\beta}}{2\mu r} & -\frac{\psi_{\beta\phi}}{2\mu r} \\ 1 & 1
		\end{pmatrix},
	\end{equation}
	\begin{equation}\label{r_to_beta}
		\begin{pmatrix}
			\beta_r & \beta_\theta \\ \phi_r & \phi_\theta
		\end{pmatrix}=\begin{pmatrix}
			r_\beta & r_\phi \\ \theta_{\beta} & \theta_{\phi}
		\end{pmatrix}^{-1}=\frac1{\psi_{\beta\vp}}\begin{pmatrix}
			2\mu r & \psi_{\beta\phi} \\ -2\mu r & -\psi_{\beta\beta}
		\end{pmatrix}.
	\end{equation}

	\subsection{Formulation in the new coordinates}
	Now we recall the equation \eqref{2dEuler-selfsimilar}:
	\begin{equation}\label{2dEuler-selfsimilar1}
		\begin{cases}
			\left(\nabla_x^\perp\psi-\mu x\right)\cdot\nabla_x\omega=\omega,\\
			\Delta_x\psi=\omega,
		\end{cases}
	\end{equation}
	and the change-of-variable formulas
	\begin{equation}\label{changeofvariable}
		\theta=\beta+\phi,\qquad\psi_\beta=-\mu|x|^2=-\mu r^2.
	\end{equation}
	
	In the new coordinates $(\beta,\phi)$, $\partial_\beta$ is tangential to pseudo-streamlines, so the first equation in \eqref{2dEuler-selfsimilar1} can be easily solved by the standard characteristic-curve method. We write the first equation of \eqref{2dEuler-selfsimilar1} in $(\beta,\phi)$:
	\begin{align*}
		\omega&=(-\psi_{x_2}-\mu x_1)(\omega_\beta\beta_{x_1}+\omega_\phi\phi_{x_1})+(\psi_{x_1}-\mu x_2)(\omega_\beta\beta_{x_2}+\omega_\phi\phi_{x_2})\\
		&=[\psi_{x_1}\beta_{x_2}-\psi_{x_2}\beta_{x_1}-\mu(x_1\beta_{x_1}+x_2\beta_{x_2})]\omega_\beta+\\
		&\qquad\qquad\qquad[\psi_{x_1}\phi_{x_2}-\psi_{x_2}\phi_{x_1}-\mu(x_1\phi_{x_1}+x_2\phi_{x_2})]\omega_\phi.
	\end{align*}
	Using \eqref{streamline}, we have $\psi_{x_1}\phi_{x_2}-\psi_{x_2}\phi_{x_1}-\mu(x_1\phi_{x_1}+x_2\phi_{x_2})=0$ and
	\begin{align*}
		&\ \ \ \psi_{x_1}\beta_{x_2}-\psi_{x_2}\beta_{x_1}-\mu(x_1\beta_{x_1}+x_2\beta_{x_2})\\
		&=(\psi_\beta\beta_{x_1}+\psi_\phi\phi_{x_1})\beta_{x_2}-(\psi_\beta\beta_{x_2}+\psi_\phi\phi_{x_2})\beta_{x_1}-\mu(x_1\beta_{x_1}+x_2\beta_{x_2})\\
		&=\psi_\phi(\phi_{x_1}\beta_{x_2}-\phi_{x_2}\beta_{x_1})-\mu(x_1\beta_{x_1}+x_2\beta_{x_2})\\
		&=-\mu\frac{\psi_\phi}{\psi_\beta}(x_1\phi_{x_1}+x_2\phi_{x_2})-\mu(x_1\beta_{x_1}+x_2\beta_{x_2});
	\end{align*}
	Also, it follows from \eqref{r_to_beta} that
	\[x_1\phi_{x_1}+x_2\phi_{x_2}=r\phi_r=-\frac{2\mu r^2}{\psi_{\beta\vp}}=\frac{2\psi_\beta}{\psi_{\beta\vp}},\]
	\[x_1\beta_{x_1}+x_2\beta_{x_2}=r\beta_r=\frac{2\mu r^2}{\psi_{\beta\vp}}=-\frac{2\psi_\beta}{\psi_{\beta\vp}}.\]
	Therefore, the first equation of \eqref{2dEuler-selfsimilar1} in $(\beta,\phi)$ variables is
	\[\omega=\left[-\mu\frac{\psi_\phi}{\psi_\beta}\cdot\frac{2\psi_\beta}{\psi_{\beta\vp}}+\mu\frac{2\psi_\beta}{\psi_{\beta\vp}}\right]\omega_\beta=
	-2\mu\frac{\psi_\vp}{\psi_{\beta\vp}}\omega_\beta\]
	and the solution is
	\begin{equation}\label{omega_Omega}
		\omega=\psi_\varphi^{-\frac1{2\mu}}\Omega(\phi),
	\end{equation}
	where $\Omega=\Omega(\phi)$ is some function that can be chosen freely as the data. The relationship between $\Omega$ and the initial data $\www|_{t=0}$ will be investigated later, see Proposition \ref{prop_data}.
	
	Now we write the Poisson equation in \eqref{2dEuler-selfsimilar1} in the new coordinates $(\beta, \phi)$. The Poisson equation in the polar coordinates is $r(r\psi_r)_{,r}+\psi_{\theta\theta}=r^2\omega$. Using \eqref{r_to_beta}, \eqref{changeofvariable} and $\p_\vp=\p_\phi-\p_\beta$ we can easily compute that
	\begin{equation}\label{p_rtheta}
		r\p_r=\frac{2\psi_\beta}{\psi_{\beta\vp}}\p_\vp,\qquad \p_\theta=\p_\phi-\frac{\psi_{\beta\phi}}{\psi_{\beta\vp}}\p_\vp.
	\end{equation}
	Hence, the Poisson equation is converted to
	\[\frac{2\psi_\beta}{\psi_{\beta\vp}}\p_\vp\left(\frac{2\psi_\beta\psi_\vp}{\psi_{\beta\vp}}\right)+
	\left(\p_\phi-\frac{\psi_{\beta\phi}}{\psi_{\beta\vp}}\p_\vp\right)\left(\frac{\psi_{\beta\vp}\psi_\phi-\psi_{\beta\phi}\psi_\vp}{\psi_{\beta\vp}}\right)=
	-\frac{\psi_\beta}{\mu}\omega.\]
	Rearranging and using \eqref{omega_Omega}, we get the nonlinear equation for $\psi$ and $\Omega$ as follows
	\begin{equation}\label{nonlinear1}
		\p_\vp\left(\frac{2\psi_\beta\psi_\vp}{\psi_{\beta\vp}}-\frac{\psi_{\beta\phi}}{\psi_{\beta\vp}}\cdot\frac{\psi_{\beta\vp}\psi_\phi-
			\psi_{\beta\phi}\psi_\vp}{2\psi_\beta}\right)+\p_\phi\left(\frac{\psi_{\beta\vp}\psi_\phi-\psi_{\beta\phi}\psi_\vp}{2\psi_\beta}\right)+
		\frac{\psi_{\beta\vp}\psi_\vp^{-\frac1{2\mu}}}{2\mu}\Omega=0.
	\end{equation}
	
	\subsection{Radially symmetric solutions}\label{sec_radial}
	Radially symmetric solutions of \eqref{2dEuler-selfsimilar1}, defined for all $r>0$, can be easily constructed. As in \cite{Bre, ELL2013, ELL2016}, such solutions play a fundamental role in our analysis.
	
	In the radially symmetric case, i.e., $\omega=\omega(r)$ and $\psi=\psi(r)$, the first equation in \eqref{2dEuler-selfsimilar1} is reduced to $-\mu r\omega_r=\omega$. Hence, the vorticity has the form $\omega(r)=c_0r^{-1/\mu}$ for some constant $c_0$. In turn, the second equation in \eqref{2dEuler-selfsimilar1} yields
	\[\psi_{rr}+\frac1{r}\psi_{r}=c_0r^{-\frac1\mu}.\]
	Denote $\gamma=2-\mu^{-1}$.
	Thus, the stream function is computed as
	\[\psi(r)=c_0\left(2-\frac1\mu\right)^{-2}r^{2-\frac1\mu}=c_0\gamma^{-2}r^{2-\frac1\mu}.\]
	
	Now we rewrite this solution in terms of the new coordinates $(\beta,\phi)$. We first calculate the pseudo-streamlines of this radially symmetric solution. Assume that $s\mapsto x(s)$ is a pseudo-streamline. Then
	\[\frac{dx(s)}{ds}=\nabla_{x}^\perp\psi(x(s))-\mu x(s).\]
	Since $\psi$ is radially symmetric, it will be more convenient to write the equation for the pseudo-streamline in the polar coordinates: write $x(s)=(r(s)\cos\theta(s), r(s)\sin\theta(s))$, then we have
	\[\begin{cases}
		\frac{dr(s)}{ds}=-\mu r(s),\\
		\frac{d\theta(s)}{ds}=c_0\gamma^{-1}\left(r(s)\right)^{-\frac1\mu}.
	\end{cases}\]
	Hence, $r(s)=r(0)e^{-\mu s}$ and
	\[\theta(s)=\theta(0)-c_0\gamma^{-1}r(0)^{-\frac1\mu}+c_0\gamma^{-1}r(0)^{-\frac1\mu}e^s.\]
	Pseudo-streamlines are algebraic spirals around the origin, see Figure \ref{algebraic}.
	\begin{figure}[htbp]
		\includegraphics[width=1\textwidth]{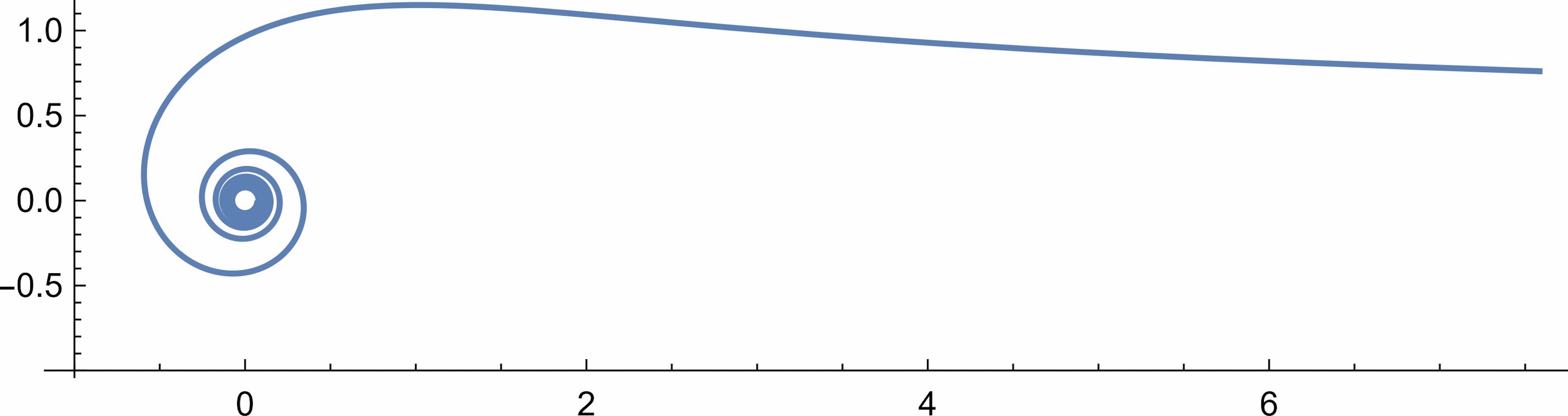}
		\caption{The algebraic spiral $\theta=\frac32 r^{-4/3}$ obtained by taking $c_0=1,\mu=3/4$.}
		\label{algebraic}
	\end{figure}
	We want to determine the relationship between $(\beta,\phi)$ and $(r,\theta)$, so that the new coordinates are adapted to our radially symmetric solutions. Since $\phi$ is constant along pseudo-streamlines, we can take
	\[\phi=\theta-c_0\gamma^{-1}r^{-\frac1\mu},\qquad\beta=c_0\gamma^{-1}r^{-\frac1\mu}.\]
	In this coordinates, our radially symmetric solutions can be written as
	\[\omega(\beta,\phi)=\gamma\beta,\qquad\psi(\beta,\phi)=c_0^{2\mu}\gamma^{-1-2\mu}\beta^{1-2\mu},\qquad \Omega(\phi)=c_0\mu^{\frac1{2\mu}}.\]
	
	Since the value of $c_0$ does not affect the subsequent analysis, taking $c_0=\gamma\mu^{-\frac1{2\mu}}$ yields a special solution of the nonlinear equation \eqref{nonlinear1}:
	\begin{equation}\label{special_solution}
		\psi_0=\frac1{2\mu-1}\beta^{1-2\mu},\qquad\Omega_0=\gamma.
	\end{equation}

	\section{Weighted functional spaces with $m$-fold symmetry}\label{Functional_regularity}
	
In this section, we introduce the functional framework for the implicit function theorem, which will be used to solve the nonlinear equation \eqref{nonlinear1}. In Subsection \ref{sec_pfs}, we introduce the weighted Hölder-type space. In Subsection 4.2, we introduce the functional spaces for $\psi$ and $\Om$. In Subsection \ref{m-fold}, we introduce the functional spaces with m-fold symmetry and establish the Banach algebraic properties for these spaces.

Throughout this section, we assume $\al\in (0,1), \mu>\f12$, and recall $\pa_{\varphi}=\pa_{\phi}-\pa_\beta$. In this section, the implicit constants in all $\lesssim$ and  $\sim$ depend only on $\alpha\in(0,1)$ and $\mu>1/2$. (In fact, some of them are independent of all parameters and all functions.) In particular, the implicit constants in Subsection \ref{m-fold} are independent of $m\in\NN_+$.

	\subsection{Weighted H\"older type spaces}\label{sec_pfs}
	
	We denote by $C_b\big((0,+\infty)\times\TT\big)$ the bounded continuous functions in $(0,+\infty)\times\TT$. For $\al\in (0,1)$, we define the weighted H\"older space
\beno
C_\beta^\al:=\Big\{f(\beta,\phi)\in C_b\big((0,+\infty)\times\TT\big): \|f\|_{C_\beta^\al}<+\infty\Big\},
\eeno
where	
\begin{align*}
		\|f\|_{C_\beta^\alpha}:=\|\langle\beta\rangle^{\alpha}f\|_{L^\infty}+\sup_{\phi\in\TT}\sup_{\substack{0<\beta_2<\beta_1<2\beta_2\\
				0<|\beta_1-\beta_2|<1}}|\beta_1+\beta_2|^\alpha\frac{|f(\beta_1,\phi)-f(\beta_2,\phi)|}{|\beta_1-\beta_2|^\alpha}.
	\end{align*}
	Here $ \langle\beta\rangle=\sqrt{\beta^2+1}$. We note that $f$ is required to be H\"older continuous only in the single $\beta$-direction.
	
As we mentioned in Section \ref{Sec.sketch}, our definition of the weighted H\"older space is motivated by \cite{EJ}, where Elgindi and Jeong used the space $\overset{\circ}{C^\al}(\mathbb R^2)$, which  satisfies  that if $f\in \overset{\circ}{C^\al}$, then $f$ is bounded and $|x|^\al f\in C_*^\al(\mathbb R^2)$, where $C_*^\al$ denots the ordinary H\"older space. Elgindi and Jeong utilized such weighted Hölder spaces to study the well-posedness of $m$-fold solutions to the 2-D incompressible Euler equations. We also note that similar, albeit distinct, weighted Hölder spaces have been employed in the recent work \cite{GG}.	 Some of the differences are as follows.
\begin{itemize}
    \item We only require $f$ to be bounded near $\beta=0$, whereas \cite{GG} requires $f$ to vanish at $\beta=0$;
    \item 
   The  Lipschitz continuity with respect to $\phi\in\mathbb T$ is not included in the definition of $Y_0$ in \eqref{Y0}, and we can prove such Lipschitz (in fact $C^1$) continuity  in Lemma \ref{lem_nonlinear}, whereas \cite[formula (46)]{GG}  requires such Lipschitz continuity directly.
\end{itemize}	

	Now we prove some useful properties of the weighted H\"older space $C_\beta^\alpha$. The following lemma shows that this norm is equivalent to the usual one.
	
	\begin{lem}[Equivalent norm on $C_\beta^\alpha$]\label{equvi_Holder_norm}
	It holds that
		\begin{equation}
			\|f\|_{C_\beta^\alpha}\sim \|\langle\beta\rangle^{\alpha}f\|_{L^\infty}+\sup_{\phi\in\TT}\sup_{\substack{\beta_1\neq\beta_2\\
					\beta_1, \beta_2>0}}|\beta_1+\beta_2|^\alpha\frac{|f(\beta_1,\phi)-f(\beta_2,\phi)|}{|\beta_1-\beta_2|^\alpha}.
		\end{equation}
	\end{lem}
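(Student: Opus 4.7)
The statement asserts that the restriction $0<\beta_2<\beta_1<2\beta_2$ and $0<|\beta_1-\beta_2|<1$ in the seminorm can be dropped without changing the norm up to equivalence. The plan is simple: one direction is trivial (the right-hand side takes a supremum over a larger set, so $\|f\|_{C_\beta^\alpha}$ is obviously dominated by the right-hand side), and for the reverse inequality I would carry out a short case analysis, showing that pairs $(\beta_1,\beta_2)$ which lie outside the ``local'' region can be controlled just by the weighted $L^\infty$ piece.

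More concretely, assume without loss of generality $\beta_1>\beta_2>0$ and split into the following three cases. \textbf{Case 1}: $\beta_1<2\beta_2$ and $\beta_1-\beta_2<1$. This is directly the original seminorm and needs no work. \textbf{Case 2}: $\beta_1\ge 2\beta_2$. Then $\beta_1-\beta_2\ge\beta_2$, hence $\beta_1+\beta_2=(\beta_1-\beta_2)+2\beta_2\le 3(\beta_1-\beta_2)$, so the prefactor $\left(\tfrac{\beta_1+\beta_2}{\beta_1-\beta_2}\right)^\alpha$ is bounded by $3^\alpha$, and I estimate
\[
|\beta_1+\beta_2|^\alpha\frac{|f(\beta_1,\phi)-f(\beta_2,\phi)|}{|\beta_1-\beta_2|^\alpha}\le 3^\alpha\bigl(|f(\beta_1,\phi)|+|f(\beta_2,\phi)|\bigr)\le 2\cdot 3^\alpha\|\langle\beta\rangle^\alpha f\|_{L^\infty}.
\]
\textbf{Case 3}: $\beta_2<\beta_1<2\beta_2$ but $\beta_1-\beta_2\ge 1$. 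Then $\beta_2>\beta_1-\beta_2\ge 1$, so $\langle\beta_2\rangle\sim\beta_2\sim\beta_1\sim\langle\beta_1\rangle$, and since $|\beta_1-\beta_2|^\alpha\ge 1$,
\[
|\beta_1+\beta_2|^\alpha\frac{|f(\beta_1,\phi)-f(\beta_2,\phi)|}{|\beta_1-\beta_2|^\alpha}\lesssim \langle\beta_1\rangle^\alpha|f(\beta_1,\phi)|+\langle\beta_2\rangle^\alpha|f(\beta_2,\phi)|\lesssim \|\langle\beta\rangle^\alpha f\|_{L^\infty}.
\]

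Combining the three cases bounds the right-hand seminorm by $\|f\|_{C_\beta^\alpha}$, which yields the equivalence. I do not expect any genuine obstacle here; the only care required is in Case 3, where one must observe that the constraint $\beta_1-\beta_2\ge 1$ combined with the ratio bound $\beta_1<2\beta_2$ forces $\beta_2\gtrsim 1$ and therefore makes $\langle\beta_j\rangle\sim\beta_j$ in the weight.
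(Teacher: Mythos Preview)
Your proposal is correct and follows essentially the same approach as the paper: the same three-case split (local pair; far-apart ratio $\beta_1\ge 2\beta_2$; close ratio but large gap $\beta_1-\beta_2\ge 1$), with the latter two cases reduced to the weighted $L^\infty$ term exactly as the paper does. The constants and phrasing differ slightly, but the argument is identical in substance.
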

	\begin{proof}
		The ``$\lesssim$'' part is a direct consequence of the definition. We only need to prove the ``$\gtrsim$'' part. Let $\beta_1, \beta_2>0$ be such that $\beta_1\neq \beta_2$. Without loss of generality, we assume that $0<\beta_2<\beta_1$. If $\beta_1<2\beta_2$ and $\beta_1-\beta_2<1$, then we get by the definition of the weighted H\"older norm that
		\[|\beta_1+\beta_2|^\alpha\frac{|f(\beta_1,\phi)-f(\beta_2,\phi)|}{|\beta_1-\beta_2|^\alpha}\lesssim\|f\|_{C_\beta^\alpha}.\]
		
		If $\beta_1<2\beta_2$ and $\beta_1-\beta_2\geq1$, then
		\[|\beta_1+\beta_2|^\alpha\frac{|f(\beta_1,\phi)-f(\beta_2,\phi)|}{|\beta_1-\beta_2|^\alpha}\leq (2\beta_1)^\alpha|f(\beta_1,\phi)|+(3\beta_2)^\alpha|f(\beta_2,\phi)|\lesssim\|\langle\beta\rangle^{\alpha}f\|_{L^\infty}\lesssim\|f\|_{C_\beta^\alpha}.\]
		
		If $\beta_1\geq 2\beta_2$, then $\beta_1+\beta_2\leq 2\beta_1$ and $\beta_1-\beta_2\geq\frac12\beta_1$, hence
		\[|\beta_1+\beta_2|^\alpha\frac{|f(\beta_1,\phi)-f(\beta_2,\phi)|}{|\beta_1-\beta_2|^\alpha}\leq 4^\alpha\Big(\left|f(\beta_1,\phi)\right|+\left|f(\beta_2,\phi)\right|\Big)\lesssim\|f\|_{L^\infty}\lesssim\|\langle\beta\rangle^{\alpha}f\|_{L^\infty}
\lesssim\|f\|_{C_\beta^\alpha}.\]
		
		This completes the proof of the lemma.
	\end{proof}

The following lemma will be frequently used.

\begin{lem} It holds that (for $f,\partial_{\beta}f\in C((0,\infty)\times\mathbb{T})$)
	\begin{align}\label{f1}
		\|f\|_{C_\beta^\alpha}\lesssim\|\langle\beta\rangle^{\alpha} f\|_{L^\infty}+\|\langle\beta\rangle^{\alpha-1}\beta\partial_{\beta}f\|_{L^\infty}.
	\end{align}	
\end{lem}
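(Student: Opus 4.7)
The plan is to show that $\|\langle\beta\rangle^\alpha f\|_{L^\infty}$ is already dominated by the right-hand side, so it suffices to control the H\"older seminorm in the $\beta$-direction by $M:=\|\langle\beta\rangle^{\alpha-1}\beta\partial_\beta f\|_{L^\infty}$. Fix $\phi\in\TT$ and $\beta_2<\beta_1$ satisfying $\beta_1<2\beta_2$ and $\beta_1-\beta_2<1$. Since $\partial_\beta f$ is continuous on $(0,\infty)\times\TT$, the mean value theorem in the $\beta$-variable gives a $\beta_\ast\in(\beta_2,\beta_1)$ with
\[|f(\beta_1,\phi)-f(\beta_2,\phi)|=(\beta_1-\beta_2)\,|\partial_\beta f(\beta_\ast,\phi)|\leq(\beta_1-\beta_2)\,\frac{\langle\beta_\ast\rangle^{1-\alpha}}{\beta_\ast}\,M.\]

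The key observation is that under the constraints $\beta_1<2\beta_2$ and $\beta_1-\beta_2<1$, one has $\beta_1+\beta_2\sim\beta_\ast$ and $\beta_1-\beta_2\leq\min(1,\beta_\ast)$. Indeed, the chain $\beta_2\leq\beta_\ast<\beta_1<2\beta_2$ yields the first equivalence, while $\beta_1-\beta_2<\beta_2\leq\beta_\ast$ combined with $\beta_1-\beta_2<1$ gives the second. Multiplying the displayed inequality through by $(\beta_1+\beta_2)^\alpha/(\beta_1-\beta_2)^\alpha$ therefore reduces the problem to checking
\[\beta_\ast^{\alpha}\,\min(1,\beta_\ast)^{1-\alpha}\,\frac{\langle\beta_\ast\rangle^{1-\alpha}}{\beta_\ast}\lesssim 1.\]

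This is a short case split. When $\beta_\ast\leq 1$ one has $\langle\beta_\ast\rangle\sim 1$ and the left-hand side equals $\beta_\ast^{\alpha}\cdot\beta_\ast^{1-\alpha}\cdot\beta_\ast^{-1}=1$; when $\beta_\ast>1$ one has $\langle\beta_\ast\rangle\sim\beta_\ast$ and the left-hand side becomes $\beta_\ast^{\alpha}\cdot 1\cdot\beta_\ast^{-\alpha}=1$. Taking the supremum over all admissible $(\beta_1,\beta_2,\phi)$ completes the estimate of the H\"older seminorm, and combining with the trivial bound for $\|\langle\beta\rangle^\alpha f\|_{L^\infty}$ yields \eqref{f1}. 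No serious obstacle is anticipated; the only delicate point is recognizing why the weights $(\beta_1+\beta_2)^\alpha$ and $(\beta_1-\beta_2)^{1-\alpha}$ balance precisely against the weight $\beta^{-1}\langle\beta\rangle^{1-\alpha}$ attached to $\beta\partial_\beta f$, which is exactly what the two regimes of the case split encode.
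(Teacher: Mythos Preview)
Your proof is correct and follows essentially the same approach as the paper: apply the mean value theorem in $\beta$, use $\beta_\ast\sim\beta_1+\beta_2$ and $\beta_1-\beta_2\lesssim\min(1,\beta_\ast)$, then split into the regimes $\beta_\ast\le 1$ and $\beta_\ast>1$. The only cosmetic difference is that the paper packages the final quantity as $\bigl(\frac{\langle\beta_2\rangle}{\beta_2}|\beta_1-\beta_2|\bigr)^{1-\alpha}$ before the case split, whereas you write it as $\beta_\ast^{\alpha}\min(1,\beta_\ast)^{1-\alpha}\langle\beta_\ast\rangle^{1-\alpha}/\beta_\ast$; the two are equivalent. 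One minor remark: the equality in the mean value theorem is only guaranteed for real-valued $f$, so for complex-valued $f$ you should write the inequality (as the paper does), but this does not affect the argument.
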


\begin{proof}
If $0<\beta_2<\beta_1<2\beta_2$, $\beta_1-\beta_2<1$ and $\phi\in\TT$, then by the mean value theorem, there exists $\beta_0\in (\beta_2, \beta_1)$ such that $|f(\beta_1, \phi)-f(\beta_2, \phi)|\leq|\p_\beta f(\beta_0, \phi)||\beta_1-\beta_2|$, hence
 \[|\beta_1+\beta_2|^\alpha\frac{|f(\beta_1,\phi)-f(\beta_2,\phi)|}{|\beta_1-\beta_2|^\alpha}\leq \frac{\langle\beta_0\rangle^{1-\al}}{\beta_0}|\beta_1-\beta_2|^{1-\al}|\beta_1+\beta_2|^\al\|\langle\beta\rangle^{\alpha-1}\beta\partial_{\beta}f\|_{L^\infty}.\]
Thanks to $\beta_0\sim \beta_2\sim \beta_1+\beta_2$ and $\langle\beta_0\rangle\sim \langle\beta_2\rangle$, we have
 \[|\beta_1+\beta_2|^\alpha\frac{|f(\beta_1,\phi)-f(\beta_2,\phi)|}{|\beta_1-\beta_2|^\alpha}\lesssim \left(\frac{\langle\beta_2\rangle}{\beta_2}|\beta_1-\beta_2|\right)^{1-\al}\|\langle\beta\rangle^{\alpha-1}\beta\partial_{\beta}f\|_{L^\infty}.\]
 Finally, if $\beta_2<1$, then by $\beta_2<\beta_1<2\beta_2$ we have
 $\frac{\langle\beta_2\rangle}{\beta_2}|\beta_1-\beta_2|\lesssim \frac{|\beta_1-\beta_2|}{\beta_2}\lesssim 1;$
 if $\beta_2\geq1$, then by $0<\beta_1-\beta_2<1$ we have
 $\frac{\langle\beta_2\rangle}{\beta_2}|\beta_1-\beta_2|\lesssim|\beta_1-\beta_2|\lesssim1.$ Hence, \eqref{f1} follows.
  \end{proof}

The following lemma shows the algebraic property of  $C_\beta^\al$.

	\begin{lem}[Algebraic property]\label{AppendixA_Algebra}
		Let $\alpha\in(0,1)$ and $f=f(\beta,\phi): (0,+\infty)\times\TT\to\RR$, $g=g(\beta,\phi): (0,+\infty)\times\TT\to\RR$.
		\begin{enumerate}[(1)]
			\item If $ \langle\beta\rangle^{-\alpha}f\in C_{\beta}^\alpha$ and $ g\in C_{\beta}^\alpha$, then $ fg\in C_{\beta}^\alpha$ and
			\[\| fg\|_{C_\beta^\alpha}\lesssim\|\langle\beta\rangle^{-\alpha} f\|_{C_\beta^\alpha}\| g\|_{C_\beta^\alpha}.\]
			\item If $ f\in C_{\beta}^\alpha$ and $ g\in C_{\beta}^\alpha$, then $ fg\in C_{\beta}^\alpha$ and
			\[\| fg\|_{C_\beta^\alpha}\lesssim\| f\|_{C_\beta^\alpha}\| g\|_{C_\beta^\alpha}.\]
		\end{enumerate}
	\end{lem}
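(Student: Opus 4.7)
\medskip

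\noindent\textbf{Proof Proposal.} The plan is to prove both parts by a standard Leibniz-type pointwise decomposition, working with the equivalent form of $\|\cdot\|_{C_\beta^\alpha}$ given by Lemma \ref{equvi_Holder_norm} so that the H\"older seminorm is taken over all pairs $\beta_1\neq\beta_2>0$. Throughout, I will freely use the elementary bound $\|h\|_{L^\infty}\le\|\langle\beta\rangle^\alpha h\|_{L^\infty}\le\|h\|_{C_\beta^\alpha}$, which follows from $\langle\beta\rangle\ge 1$.

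For part (2), the weighted supremum bound $\|\langle\beta\rangle^\alpha fg\|_{L^\infty}\le\|\langle\beta\rangle^\alpha f\|_{L^\infty}\|g\|_{L^\infty}$ is immediate and dominated by $\|f\|_{C_\beta^\alpha}\|g\|_{C_\beta^\alpha}$. For the H\"older part I substitute the identity
\[
(fg)(\beta_1,\phi)-(fg)(\beta_2,\phi)=(f(\beta_1,\phi)-f(\beta_2,\phi))g(\beta_1,\phi)+f(\beta_2,\phi)(g(\beta_1,\phi)-g(\beta_2,\phi))
\]
into the seminorm, factor $\|g\|_{L^\infty}$ and $\|f\|_{L^\infty}$ out of the two resulting terms, and close the estimate using the bounds on $\|f\|_{L^\infty},\|g\|_{L^\infty}$ above.

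For part (1), set $\tilde f:=\langle\beta\rangle^{-\alpha}f\in C_\beta^\alpha$; then $\|f\|_{L^\infty}=\|\langle\beta\rangle^\alpha\tilde f\|_{L^\infty}\le\|\tilde f\|_{C_\beta^\alpha}$, so the weighted supremum bound $\|\langle\beta\rangle^\alpha fg\|_{L^\infty}\le\|f\|_{L^\infty}\|\langle\beta\rangle^\alpha g\|_{L^\infty}$ is again immediate. The $f(\beta_2)(g(\beta_1)-g(\beta_2))$-piece of the H\"older seminorm is handled exactly as in part (2). The delicate piece is $(f(\beta_1)-f(\beta_2))g(\beta_1)$: I split
\[
f(\beta_1,\phi)-f(\beta_2,\phi)=\langle\beta_1\rangle^\alpha\bigl(\tilde f(\beta_1,\phi)-\tilde f(\beta_2,\phi)\bigr)+\bigl(\langle\beta_1\rangle^\alpha-\langle\beta_2\rangle^\alpha\bigr)\tilde f(\beta_2,\phi).
\]
The first summand pairs with $|g(\beta_1,\phi)|$ via $\langle\beta_1\rangle^\alpha|g(\beta_1,\phi)|\le\|\langle\beta\rangle^\alpha g\|_{L^\infty}$, while the $\tilde f$-difference absorbs the factor $|\beta_1+\beta_2|^\alpha/|\beta_1-\beta_2|^\alpha$ against $\|\tilde f\|_{C_\beta^\alpha}$. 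For the second summand I use $|x^\alpha-y^\alpha|\le|x-y|^\alpha$ (valid for $x,y\ge 0$ and $\alpha\in(0,1]$) together with $|\langle\beta_1\rangle-\langle\beta_2\rangle|\le|\beta_1-\beta_2|$ to cancel the $|\beta_1-\beta_2|^\alpha$ in the denominator; the remaining weight is handled by $|\beta_1+\beta_2|^\alpha\langle\beta_1\rangle^{-\alpha}\langle\beta_2\rangle^{-\alpha}\le 2^\alpha$, which follows from $|\beta_1+\beta_2|\le 2\max(\langle\beta_1\rangle,\langle\beta_2\rangle)$, and $|\tilde f(\beta_2,\phi)|\le\langle\beta_2\rangle^{-\alpha}\|\tilde f\|_{C_\beta^\alpha}$, $|g(\beta_1,\phi)|\le\langle\beta_1\rangle^{-\alpha}\|g\|_{C_\beta^\alpha}$.

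The main obstacle, more bookkeeping than substance, is the weight redistribution in part (1): the naive Leibniz split forces a mismatch between the $\langle\beta\rangle^\alpha$-weights sitting on $f$ at the two endpoints, and one must absorb the discrepancy $\langle\beta_1\rangle^\alpha-\langle\beta_2\rangle^\alpha$ using the H\"older continuity of $x\mapsto x^\alpha$. This is the only place where the assumption $\alpha\in(0,1)$ genuinely enters the argument.
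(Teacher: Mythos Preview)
Your proof is correct. For part (1) you use essentially the same three-term decomposition as the paper, but you handle the cross term $(\langle\beta_1\rangle^\alpha-\langle\beta_2\rangle^\alpha)\tilde f(\beta_2)g(\beta_1)$ differently: the paper restricts to the range $\beta_2<\beta_1<2\beta_2$ from the definition of the norm and proves a scaling estimate $\frac{|\langle\beta t\rangle^\alpha-\langle\beta\rangle^\alpha|}{\langle\beta t\rangle^\alpha\langle\beta\rangle^\alpha}\lesssim\frac{|t-1|^\alpha}{|t+1|^\alpha}$ for $t\in(1,2)$ via a derivative bound, whereas you work with the equivalent norm from Lemma~\ref{equvi_Holder_norm} over all pairs and use the elementary chain $|\langle\beta_1\rangle^\alpha-\langle\beta_2\rangle^\alpha|\le|\langle\beta_1\rangle-\langle\beta_2\rangle|^\alpha\le|\beta_1-\beta_2|^\alpha$ together with $(\beta_1+\beta_2)^\alpha\le 2^\alpha\langle\beta_1\rangle^\alpha\langle\beta_2\rangle^\alpha$. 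Your route is slightly cleaner here and avoids the auxiliary computation.

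The genuine difference is in part (2): the paper does not prove it directly but reduces it to part (1) by showing $\|\langle\beta\rangle^{-2\alpha}\|_{C_\beta^\alpha}\lesssim 1$ (via the derivative criterion \eqref{f1}), so that $\|\langle\beta\rangle^{-\alpha}f\|_{C_\beta^\alpha}\lesssim\|f\|_{C_\beta^\alpha}$ and then (1) applies. Your direct Leibniz argument is self-contained and does not need \eqref{f1}; the paper's reduction is shorter once (1) is available, but at the cost of importing another lemma.
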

	\begin{proof}
		\begin{enumerate}[(1)]
			\item Firstly, we have
			\[\left\|\langle\beta\rangle^{\alpha}fg\right\|_{L^\infty}\leq \left\|f\right\|_{L^\infty}\left\|\langle\beta\rangle^{\alpha}g\right\|_{L^\infty}\leq \|\langle\beta\rangle^{-\alpha} f\|_{C_\beta^\alpha}\| g\|_{C_\beta^\alpha}.\]
			For simplicity, we denote $\tilde f=\langle\beta\rangle^{-\alpha}f$. For any $\beta_1, \beta_2>0$ such that $0<\beta_2<\beta_1<2\beta_2$, $0<|\beta_1-\beta_2|<1$, we have
			\begin{align*}
				&\ \ \ \left| f(\beta_1,\phi)g(\beta_1,\phi)-  f(\beta_2,\phi)g(\beta_2,\phi)\right|=\left| \langle\beta_1\rangle^\alpha\tilde f(\beta_1,\phi)g(\beta_1,\phi)-  \langle\beta_2\rangle^\alpha\tilde f(\beta_2,\phi)g(\beta_2,\phi)\right|\\
				&\qquad\leq\left| \tilde f(\beta_1,\phi)- \tilde f(\beta_2,\phi)\right|\langle\beta_1\rangle^\alpha|g(\beta_1,\phi)|+\left|\langle\beta_1\rangle^\alpha-\langle\beta_2\rangle^\alpha\right|\left|\tilde f(\beta_2,\phi)\right||g(\beta_1,\phi)|\\
				&\qquad\qquad\qquad\qquad+\langle\beta_2\rangle^\alpha\left|\tilde f(\beta_2,\phi)\right|\left| g(\beta_1,\phi)- g(\beta_2,\phi)\right|\\
				&\qquad \leq\frac{\left|\langle\beta_1\rangle^\alpha-\langle\beta_2\rangle^\alpha\right|}{\langle\beta_1\rangle^\alpha\langle\beta_2\rangle^\alpha}\left\| \tilde f\right\|_{C_\beta^\alpha}\|g\|_{C_\beta^\alpha}+2\frac{ |\beta_1-\beta_2|^\alpha}{ |\beta_1+\beta_2|^\alpha}\left\| \tilde f\right\|_{C_\beta^\alpha}\|g\|_{C_\beta^\alpha}\lesssim \frac{ |\beta_1-\beta_2|^\alpha}{ |\beta_1+\beta_2|^\alpha}\left\| \tilde f\right\|_{C_\beta^\alpha}\|g\|_{C_\beta^\alpha},
			\end{align*}
			where in the last inequality we have used the fact
			\begin{equation*}\label{4.5}
				\frac{\left|\langle\beta t\rangle^\alpha-\langle\beta \rangle^\alpha\right|}{\langle\beta t\rangle^\alpha\langle\beta \rangle^\alpha}\lesssim \frac{|t-1|^\alpha}{|t+1|^\alpha},\qquad \forall\ \beta>0, \forall \ t\in(1,2).
			\end{equation*}
			Indeed, for all $\beta>0$ and $t\in(1,2)$, since $\left|\frac{d}{dt}\left(\langle\beta t\rangle^\alpha\right)\right|\leq \alpha\langle\beta t\rangle^{\alpha-1}\beta\leq \alpha\langle\beta \rangle^{\alpha-1}\beta$, we have
			\begin{align*}
				\frac{\left|\langle\beta t\rangle^\alpha-\langle\beta \rangle^\alpha\right|}{\langle\beta t\rangle^\alpha\langle\beta \rangle^\alpha}\leq \frac{\alpha\langle\beta \rangle^{\alpha-1}\beta|t-1|}{\langle\beta \rangle^\alpha}\leq \alpha|t-1|\leq \alpha|t-1|^\alpha\leq \alpha3^\alpha\frac{|t-1|^\alpha}{|t+1|^\alpha}.
			\end{align*}
			
			\item It follows from \eqref{f1} that
			\[\left\|\langle\beta\rangle^{-2\alpha}\right\|_{C_\beta^\alpha}\lesssim\left\|\langle\beta\rangle^{-\alpha}\right\|_{L^\infty}+
\left\|\langle\beta\rangle^{\alpha-1}\beta\p_\beta\left(\langle\beta\rangle^{-2\alpha}\right)\right\|_{L^\infty}\lesssim
1+\left\|\langle\beta\rangle^{\alpha-1}\beta^2\langle\beta\rangle^{-2\alpha-2}\right\|_{L^\infty}\lesssim1.\]
			Hence, $\|\langle\beta\rangle^{-\alpha} f\|_{C_\beta^\alpha}\lesssim\left\|\langle\beta\rangle^{-2\alpha}\right\|_{C_\beta^\alpha}\left\|f\right\|_{C_\beta^\alpha}\lesssim
\left\|f\right\|_{C_\beta^\alpha}$, and
			\[\| fg\|_{C_\beta^\alpha}\lesssim\|\langle\beta\rangle^{-\alpha}f\|_{C_\beta^\alpha}\| g\|_{C_\beta^\alpha}\lesssim\| f\|_{C_\beta^\alpha}\| g\|_{C_\beta^\alpha}.\]
		\end{enumerate}
		
	This concludes the proof of the lemma.
	\end{proof}

\subsection{Functional spaces for $\psi$ and $\Omega$}
	
In the sequel, we use the convention that if $\mathcal{X}$ and $\mathcal{Y}$ are Banach spaces embedded in the same linear Hausdorff space, then $\mathcal X+\mathcal Y$ is the Banach space with the induced norm
	\[\|u\|_{\mathcal X+\mathcal Y}:=\inf\left\{\|x\|_{\mathcal X}+\|y\|_{\mathcal Y}: u=x+y, x\in\mathcal X, y\in\mathcal Y\right\}.\]
	
	Define
	$\mathcal{G}^0:=C_\beta^\alpha$
	with the norm $\|f\|_{\GG^0}:=\left\| f\right\|_{C_\beta^\alpha}$. We also define
	\[\GG^-:=\GG^0\oplus\CC,\qquad \GG:=\GG^0\oplus C(\TT).\]
	Here $\CC$ denotes the space of constant functions, and we use $\oplus$ to denote the direct sum because $\GG^0\cap\CC=\GG^0\cap C(\TT)=\{0\}$.
		
	Let $\mu>\f12$. We introduce an auxiliary space $X_0$ defined by
	\[X_0:=\left\{H=H(\beta,\phi): \ \beta^{2\mu}H_\vp, \beta^{2\mu-1}H_\phi, \beta^{2\mu-1}H\in \GG^0\right\}\]
	with the norm
	\[\|H\|_{X_0}:=\|\beta^{2\mu}H_\vp\|_{\GG^0}+\|\beta^{2\mu-1}H_\phi\|_{\GG^0}+\|\beta^{2\mu-1}H\|_{\GG^0};\]
	and $$X:=X_0\oplus\langle\beta^{1-2\mu}\rangle,$$
	where $\langle\beta^{1-2\mu}\rangle$ denotes the one dimensional linear space generated by the function $\beta^{1-2\mu}$ with the standard norm.	We stress that $X_0\subset C^1$.

	Now we can define the functional space for $\psi$. Let
	\begin{equation}\label{Y0}Y_0:=\left\{\psi=\psi(\beta,\phi): H=\psi+\frac\beta{2\mu}\psi_\beta\in X_0, \beta^{2\mu-1}\psi\in C_b\big((0,+\infty)\times \TT\big)\right\}\end{equation}
	with the norm
	\[\|\psi\|_{Y_0}:=\left\|H\right\|_{X_0}+\|\beta^{2\mu-1}\psi\|_{L^\infty};\]
	and we define
	\[Y:=Y_0\oplus\langle\beta^{1-2\mu}\rangle.\]
	
The functional space for $\Omega$ is $\WWW:=L^1(\TT)$ and the target space is
	\begin{equation}\label{target_Z}
		\begin{aligned}
			Z:=\Big\{G=G(\beta,\phi):&\  G=\p_\vp F_1+\p_\phi F_2 \text{ weakly for some continuous functions}\\
			&\text{ $F_1$ and $F_2$ such that } \beta^{2\mu-1}F_1\in\GG^-, \beta^{2\mu}F_2\in\GG^0\Big\}
		\end{aligned}
	\end{equation}
	with the norm
	\[\|G\|_Z:=\inf\left(\|\beta^{2\mu-1}F_1\|_{\GG^-}+\|\beta^{2\mu}F_2\|_{\GG^0}\right),\]
	where the infimum is taken over all decompositions of $G$ in \eqref{target_Z}.
	
	\begin{rmk}
		As we mentioned in Section \ref{Sec.sketch}, we apply the implicit function theorem (IFT) to solve $\mathcal F(\psi, \Omega)=0$ for $\Omega$ near $\Omega_0=\gamma$. To this end, we need to construct functional spaces $Y_m, \WWW_m$ and $Z_m$ such that $\LLL:Y_m\times\WWW_m\to Z_m$ is a $C^2$ map in a neighborhood of $(\psi_0, \Omega_0)$. We apply IFT for $Y_m, \WWW_m$ and $Z_m$, rather than the functional spaces $Y, \WWW, Z$ defined in this subsection. In fact,  $Y_m, \WWW_m$ and $Z_m$ consists of all $m$-fold functions in $Y, \WWW$ and $Z$, respectively, with norms adapted slightly, depending on $m$, see the next subsection. We introduce the auxiliary spaces $X_0, Y_0$, $Y, \WWW$ and $Z$ to obtain estimates that are uniform with respect to $m$ for future analysis.	
 \end{rmk}
	
	For further usage, we introduce an equivalent norm on $X_0$. Define the functional space
	\[X_0':=\left\{H=H(\beta,\phi): \ \beta^{2\mu}H_\vp, \beta^{2\mu-1}H_\phi\in \mathcal{G}^0, \langle\beta\rangle^\alpha\beta^{2\mu-1}H\in C_b\big((0,+\infty)\times \TT\big)\right\},\]
	with the norm
	\[\|H\|_{X_0'}:=\|\beta^{2\mu}H_\vp\|_{\mathcal{G}^0}+\|\beta^{2\mu-1}H_\phi\|_{\mathcal{G}^0}+
	\|\langle\beta\rangle^\alpha\beta^{2\mu-1}H\|_{L^\infty}.\]

	\begin{lem}\label{X=tildeX}
		The two functional spaces $X_0$ and $X_0'$ are the same as sets, and their norms are equivalent:
		\[\|H\|_{X_0}\sim \|H\|_{X_0'},\qquad H\in X_0=X_0'.\]
	\end{lem}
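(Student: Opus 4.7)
The plan is to establish the two inclusions with equivalent norms separately; one direction is immediate and the other uses the differential criterion \eqref{f1} on the weighted H\"older space.

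First, the inclusion $X_0 \hookrightarrow X_0'$ with $\|H\|_{X_0'} \lesssim \|H\|_{X_0}$ is essentially tautological. Indeed, by the very definition of the $\GG^0 = C_\beta^\alpha$ norm, we have $\|\langle\beta\rangle^\alpha \beta^{2\mu-1}H\|_{L^\infty} \leq \|\beta^{2\mu-1}H\|_{\GG^0}$, so each of the three terms in $\|H\|_{X_0'}$ is bounded by the corresponding (or stronger) term in $\|H\|_{X_0}$.

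The substantive direction is $X_0' \hookrightarrow X_0$. Given $H \in X_0'$, I only need to control the H\"older seminorm of $f := \beta^{2\mu-1}H$ in the $\beta$-direction (the $L^\infty$ piece with weight $\langle\beta\rangle^\alpha$ is already part of the $X_0'$ norm). The key identity is to use $\partial_\beta = \partial_\phi - \partial_\vp$ to eliminate $H_\beta$, which yields
\[
\beta\,\partial_\beta\bigl(\beta^{2\mu-1}H\bigr) = (2\mu-1)\beta^{2\mu-1}H + \beta^{2\mu}H_\phi - \beta^{2\mu}H_\vp.
\]
Multiplying by $\langle\beta\rangle^{\alpha-1}$ and estimating each term using $\langle\beta\rangle^{\alpha-1}\beta \leq \langle\beta\rangle^{\alpha}$ and $\langle\beta\rangle^{\alpha-1} \leq \langle\beta\rangle^{-1}\langle\beta\rangle^{\alpha}$, one gets
\[
\bigl\|\langle\beta\rangle^{\alpha-1}\beta\,\partial_\beta(\beta^{2\mu-1}H)\bigr\|_{L^\infty} \lesssim \|\langle\beta\rangle^\alpha\beta^{2\mu-1}H\|_{L^\infty} + \|\beta^{2\mu-1}H_\phi\|_{\GG^0} + \|\beta^{2\mu}H_\vp\|_{\GG^0} = \|H\|_{X_0'}.
\]
Then applying the a priori bound \eqref{f1} to $f = \beta^{2\mu-1}H$ gives
\[
\|\beta^{2\mu-1}H\|_{C_\beta^\alpha} \lesssim \|\langle\beta\rangle^\alpha\beta^{2\mu-1}H\|_{L^\infty} + \bigl\|\langle\beta\rangle^{\alpha-1}\beta\,\partial_\beta(\beta^{2\mu-1}H)\bigr\|_{L^\infty} \lesssim \|H\|_{X_0'},
\]
which combined with the other two terms (which are identical in both norms) yields $\|H\|_{X_0} \lesssim \|H\|_{X_0'}$.

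No step looks genuinely hard here; the only minor subtlety is remembering that $\partial_\vp = \partial_\phi - \partial_\beta$ is the definition in force (so $\partial_\beta$ is not one of our primitive derivatives and must be rewritten), and checking that the weight bookkeeping $\langle\beta\rangle^{\alpha-1}\beta^{2\mu}(\cdot) = \langle\beta\rangle^{-1}\langle\beta\rangle^\alpha\beta \cdot \beta^{2\mu-1}(\cdot)$ yields uniformly bounded prefactors. Since $\beta\langle\beta\rangle^{-1} \leq 1$ on $(0,\infty)$, all prefactors are uniformly controlled, and the desired equivalence follows.
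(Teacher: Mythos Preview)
Your proof is correct and follows essentially the same approach as the paper: both directions are handled identically, with the nontrivial inclusion $X_0'\hookrightarrow X_0$ obtained by writing $\beta\partial_\beta(\beta^{2\mu-1}H)$ via the identity $\partial_\beta=\partial_\phi-\partial_\vp$ and then applying \eqref{f1}. The only cosmetic difference is that the paper first bounds $\langle\beta\rangle^\alpha|\partial_\beta(\beta^{2\mu-1}H)|\lesssim(1+\beta^{-1})\|H\|_{X_0'}$ and then multiplies by $\beta\langle\beta\rangle^{-1}$, whereas you multiply by $\langle\beta\rangle^{\alpha-1}\beta$ from the outset; the weight bookkeeping is equivalent.
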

	 \begin{proof}
 For $H\in X_0$, by the definition of $C_\beta^\alpha $ norm, we have
  \[\|\langle\beta\rangle^\alpha\beta^{2\mu-1}H\|_{L^\infty}\leq\|\beta^{2\mu-1}H\|_{C_\beta^\alpha}.\]
  This proves $\|H\|_{X_0'}\lesssim\|H\|_{X_0}$.

  Assume that $H\in X_0'$. Recall that $\p_\vp=\p_\phi-\p_\beta$, hence
  \begin{align*}
   \p_\beta(\beta^{2\mu-1}H)&=(2\mu-1)\beta^{-1}\beta^{2\mu-1}H+\beta^{2\mu-1}H_\beta\\
   &=(2\mu-1)\beta^{-1}\beta^{2\mu-1}H+\beta^{2\mu-1}H_\phi-\beta^{-1}\beta^{2\mu}H_\vp.
  \end{align*}
  By the definition of $C_\beta^\alpha $ norm, we have
  \begin{align*}
   \langle\beta\rangle^\alpha\left|\p_\beta(\beta^{2\mu-1}H)\right|&\lesssim \beta^{-1}\|\langle\beta\rangle^\alpha\beta^{2\mu-1}H\|_{L^\infty}+
   \|\langle\beta\rangle^\alpha\beta^{2\mu-1}H_\phi\|_{L^\infty}+
   \beta^{-1}\|\langle\beta\rangle^\alpha\beta^{2\mu}H_\vp\|_{L^\infty}\\
   &\lesssim (1+\beta^{-1})\|H\|_{X_0'},
  \end{align*}
 which implies that $\|\langle\beta\rangle^{\al-1}\beta\p_\beta(\beta^{2\mu-1}H)\|_{L^\infty}\lesssim \|H\|_{X_0'}.$ Now it follows from \eqref{f1} that \\
 $\beta^{2\mu-1}H\in C_{\beta}^\alpha$ and $\|\beta^{2\mu-1}H\|_{C_\beta^\alpha}\lesssim\|H\|_{X_0'}$. This proves $\|H\|_{X_0}\lesssim\|H\|_{X_0'}$.
 \end{proof}

	\subsection{Functional spaces with $m$-fold symmetry}\label{m-fold}
	Given a positive integer $m$, we say that a function $f=f(\beta,\phi): (0,+\infty)\times\TT\to\CC$ is \textit{$m$-fold symmetric} if the following holds for all $(\beta,\phi)\in(0,+\infty)\times\TT$:
	$$f\left(\beta, \phi+\frac{2\pi}m\right)=f\left(\beta,\phi\right).$$
	Note that if $f$ can be expressed in the form of Fourier series with respect to $\theta=\beta+\phi\in\TT$:
	\[f(\beta,\phi)=\sum_{n\in\ZZ}\hat f_n(\beta)e^{\ii n(\beta+\phi)},\qquad \hat f_n(\beta):=\frac1{2\pi}\int_\TT f(\beta, \phi)e^{-\ii n(\beta+\phi)}\,d\phi,\]
	then $f$ is $m$-fold symmetric if and only if $\hat f_n=0$ for all $n$ such that $m\nmid n$, that is to say,
	\[f(\beta,\phi)=\sum_{n\in\ZZ, m\mid n}\hat f_n(\beta)e^{\ii n(\beta+\phi)}.\]
	
	For notational convenience, we define the Fourier projections $P_0$ and $P_{\neq}$ by
	\[(P_0f)(\beta):=\frac1{2\pi}\int_\TT f(\beta,\phi)\,d\phi, \qquad (P_{\neq}f)(\beta,\phi):=f(\beta,\phi)-(P_0f)(\beta).\]
	
	We start with the Banach algebras. Define
	\[\GG^0_m:=\left\{f=f(\beta,\phi)\in\GG^0: f \text{ is $m$-fold symmetric}\right\}\]
	with the norm
	\[\|f\|_{\GG_m^0}=\|P_0f\|_{\GG^0}+m^{1/2}\|P_{\neq}f\|_{\GG^0}.\]
	{{Then $\|f\|_{C_{\beta}^{\alpha}}=\|f\|_{\GG^0}\lesssim \|f\|_{\GG_m^0}$.}}

	Note that $P_0, P_{\neq}$ are bounded linear operators on $\GG^0$, since the Fourier projection is applied to the variable $\phi$. Also note that letting $m=1$ reproduces the space $\GG^0$ defined in the previous subsection. We define
	\[\GG^-_m:= \GG^0_m\oplus \CC,\qquad \GG_m:=\GG^0_m\oplus V_m,\]
	where $V_m=C_m(\TT):=\{g=g(\phi)\in C(\TT): g \text{ is $m$-fold symmetric}\}$ and
	\[\|g\|_{V_m}:=|P_0g|+m^{1/2}\|P_{\neq}g\|_{L^\infty(\TT)}.\]
	Note that if $g=g(\phi)\in C(\TT)$ then $P_0g$ is a complex number and $P_{\neq}g$ is a function in the variable $\phi$, with $\|g\|_{L^\infty(\TT)}\leq|P_0g|+\|P_{\neq}g\|_{L^\infty(\TT)}\lesssim \|g\|_{V_m}$. We also have
	\[\|f\|_{\GG_m}=\|P_0f\|_{\GG}+m^{1/2}\|P_{\neq}f\|_{\GG}.\]
	
	Let's  show the Banach algebra properties of $\GG_m^0$ and $\GG_m$: $\GG_m$ is a unital Banach algebra and $\GG_m^0$ is an ideal of $\GG_m$, i.e.,
	\begin{equation*}
		\GG_m\cdot \GG_m\hookrightarrow \GG_m, \qquad \GG_m^0\cdot\GG_m\hookrightarrow\GG_m^0,
	\end{equation*}
	where the embeddings are uniform in $m\in\NN_+$.
	
	\begin{lem}\label{algebra}
		The following embeddings  are uniform in $m\in\NN_+$:
		\begin{align}
			\GG_m^0\cdot \GG_m^0&\hookrightarrow \GG_m^0, \label{G^0_G^0}\\
			\GG_m^0\cdot V_m & \hookrightarrow \GG_m^0, \label{G^0_V}\\
			V_m\cdot V_m & \hookrightarrow V_m, \label{V_V}\\
			\GG_m^0\cdot\GG_m &\hookrightarrow\GG_m^0, \label{G^0_G}\\
			\GG_m\cdot \GG_m &\hookrightarrow \GG_m. \label{G_G}
		\end{align}
	\end{lem}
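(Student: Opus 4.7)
The plan is to prove the five embeddings by first decomposing each factor into its mean (zero-mode) and non-mean ($\phi$-oscillatory) parts via $f = P_0 f + P_{\neq}f$, multiplying out, and then carefully tracking how the $m^{1/2}$-weights in the definitions of $\|\cdot\|_{\GG_m^0}$, $\|\cdot\|_{V_m}$, $\|\cdot\|_{\GG_m}$ interact. The two main tools are (i) the unweighted algebra property $\|fg\|_{\GG^0}\lesssim\|f\|_{\GG^0}\|g\|_{\GG^0}$, which follows from Lemma \ref{AppendixA_Algebra}(2), and (ii) the boundedness of the Fourier projections $P_0,P_{\neq}$ on $\GG^0$, $L^\infty(\TT)$, and on $C(\TT)$, each with operator norm $\lesssim 1$ independent of $m$ (this is immediate since both operators are contractions in $L^\infty$ and they commute with $\beta$-differences at fixed $\phi$).

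Next I would establish the pointwise-in-mean identity
\[
P_0(fg) = (P_0f)(P_0g) + P_0\!\left[(P_{\neq}f)(P_{\neq}g)\right],\qquad P_{\neq}(fg) = (P_0f)(P_{\neq}g) + (P_{\neq}f)(P_0g) + P_{\neq}\!\left[(P_{\neq}f)(P_{\neq}g)\right],
\]
which holds because $\int_\TT (P_0f)(P_{\neq}g)\,d\phi = (P_0f)\int_\TT P_{\neq}g\,d\phi = 0$. For \eqref{G^0_G^0}, I use $\|P_{\neq}f\|_{\GG^0}\le m^{-1/2}\|f\|_{\GG_m^0}$ and $\|P_0f\|_{\GG^0}\le\|f\|_{\GG_m^0}$: then the ``cross'' terms $(P_0f)(P_{\neq}g)$ and $(P_{\neq}f)(P_0g)$ each pick up one factor $m^{-1/2}$, while the ``double non-mean'' term $(P_{\neq}f)(P_{\neq}g)$ picks up $m^{-1}$. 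After multiplying the $P_{\neq}$-part by the prefactor $m^{1/2}$ coming from the definition of $\|\cdot\|_{\GG_m^0}$, all factors of $m$ cancel, yielding a bound $\lesssim\|f\|_{\GG_m^0}\|g\|_{\GG_m^0}$ independent of $m$.

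For the mixed embeddings \eqref{G^0_V} and \eqref{V_V}, I would exploit that a function $g\in V_m$ depends only on $\phi$, so multiplication by $g$ acts at each fixed $\phi$ as multiplication by the scalar $g(\phi)$; this gives the trivial but essential inequality $\|f\,g\|_{\GG^0}\le\|g\|_{L^\infty(\TT)}\|f\|_{\GG^0}$, with an analogous inequality at the $V_m$ level. Inserting this in place of Lemma \ref{AppendixA_Algebra}(2) in the same mean/non-mean expansion, and using $\|P_{\neq}g\|_{L^\infty(\TT)}\le m^{-1/2}\|g\|_{V_m}$, the identical $m$-counting argument produces \eqref{G^0_V} and \eqref{V_V}.

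Finally, \eqref{G^0_G} and \eqref{G_G} follow by writing elements of $\GG_m = \GG_m^0\oplus V_m$ as sums $f=f^{(0)}+f^{(V)}$: in \eqref{G^0_G}, $(f)(h^{(0)}+h^{(V)}) = fh^{(0)}+fh^{(V)}$ lies in $\GG_m^0$ by \eqref{G^0_G^0} and \eqref{G^0_V}; in \eqref{G_G}, $(f^{(0)}+f^{(V)})(h^{(0)}+h^{(V)})$ decomposes into three terms in $\GG_m^0$ (by \eqref{G^0_G^0} and two applications of \eqref{G^0_V}) plus one term $f^{(V)}h^{(V)}\in V_m$ (by \eqref{V_V}), hence lies in $\GG_m$. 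The main obstacle—and the point where bookkeeping matters—is (1): one must see that the prefactor $m^{1/2}$ in $\|P_{\neq}(fg)\|_{\GG^0}$ is always matched by at least one $m^{-1/2}$ coming from a $P_{\neq}$-factor appearing in every term of the expansion of $P_{\neq}(fg)$, and that the leftover $m^{1/2}\cdot m^{-1} = m^{-1/2}$ in the double-non-mean term is harmless rather than divergent. Once this is observed, the proof is a single template applied five times.
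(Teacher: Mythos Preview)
Your proposal is correct and follows essentially the same approach as the paper: decompose into mean and non-mean parts, use the unweighted algebra property of $C_\beta^\alpha$ (Lemma~\ref{AppendixA_Algebra}), track the $m^{\pm 1/2}$ weights through the identity for $P_{\neq}(fg)$, and deduce \eqref{G^0_G}, \eqref{G_G} from \eqref{G^0_G^0}--\eqref{V_V} by splitting along $\GG_m=\GG_m^0\oplus V_m$. The only cosmetic difference is that the paper bounds $\|P_0(f_1f_2)\|_{C_\beta^\alpha}$ directly by $\|f_1f_2\|_{C_\beta^\alpha}\lesssim\|f_1\|_{C_\beta^\alpha}\|f_2\|_{C_\beta^\alpha}$ rather than first expanding $P_0(fg)$ via your identity, but this leads to the same estimate.
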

	\begin{proof}
		Let $f_1=f_1(\beta,\phi)\in \GG_m^0$ and $f_2=f_2(\beta,\phi)\in \GG_m^0$. Recall that $P_0$ is the Fourier projection applied to the variable $\phi$. By Lemma \ref{AppendixA_Algebra}, we obtain
		\[\| P_0(f_1f_2)\|_{C_\beta^\alpha}\lesssim \| f_1f_2\|_{C_\beta^\alpha}\lesssim\| f_1\|_{C_\beta^\alpha}\| f_2\|_{C_\beta^\alpha}\lesssim \|f_1\|_{\GG_m^0}\|f_2\|_{\GG_m^0}.\]
		As for $P_{\neq}(f_1f_2)$, noting that
		\[P_{\neq}(f_1f_2)=P_0f_1\cdot P_{\neq}f_2+P_0f_2\cdot P_{\neq}f_1+P_{\neq}\left(P_{\neq}f_1\cdot P_{\neq}f_2\right),\]
		we get by Lemma \ref{AppendixA_Algebra} that
		\begin{align*}
			\| P_{\neq}(f_1f_2)\|_{C_\beta^\alpha}&\lesssim \|P_0f_1\|_{C_\beta^\alpha}\| P_{\neq}f_2\|_{C_\beta^\alpha}+\|P_0f_2\|_{C_\beta^\alpha}\| P_{\neq}f_1\|_{C_\beta^\alpha}+\| P_{\neq}f_1\|_{C_\beta^\alpha}\| P_{\neq}f_2\|_{C_\beta^\alpha}\\
			&\lesssim m^{-1/2}\|f_1\|_{\GG_m^0}\|f_2\|_{\GG_m^0}.
		\end{align*}
		Therefore, we have $\|f_1f_2\|_{\GG_m^0}\lesssim \|f_1\|_{\GG_m^0}\|f_2\|_{\GG_m^0}$. This shows \eqref{G^0_G^0}.
		
		The proof of \eqref{G^0_V} is very similar to \eqref{G^0_G^0},
		with the algebraic properties of $C_{\beta}^\alpha$ replaced by the trivial embedding $C_{\beta}^\alpha\cdot C(\TT)\hookrightarrow C_{\beta}^\alpha$.
		The proof of \eqref{V_V} is very similar to  \eqref{G^0_G^0} and we only need to replace the algebraic properties of $C_{\beta}^\alpha$ by the trivial embedding $C(\TT)\cdot C(\TT)\hookrightarrow C(\TT)$. \eqref{G^0_G} follows directly from the definition $\GG_m=\GG^0_m\oplus V_m$ and \eqref{G^0_G^0}, \eqref{G^0_V}.
		
		Now we prove \eqref{G_G}. By \eqref{G^0_G}, we have $\GG_m^0\cdot \GG_m\hookrightarrow \GG_m^0\hookrightarrow \GG_m$. It remains to show that $V_m\cdot \GG_m\hookrightarrow \GG_m$, which follows directly from the definition $\GG_m=\GG^0_m\oplus V_m$ and \eqref{G^0_V}, \eqref{V_V}.
	\end{proof}
	
	Now we define the $m$-fold version of the spaces $X$ and $Y$. Let
	\[X_{0,m}:=\left\{H=H(\beta,\phi)\in X_0: H \text{ is $m$-fold symmetric}\right\}\]
	with the norm
	\[\|H\|_{X_{0,m}}:=\|P_0H\|_{X_0}+m^{1/2}\|P_{\neq}H\|_{X_0},\]
	and we define $X_m:=X_{0,m}\oplus\langle\beta^{1-2\mu}\rangle$. Note that letting $m=1$ we recover the spaces $X_0$ and $X$ defined previously. Let
	\[Y_{0,m}:=\left\{\psi=\psi(\beta,\phi)\in Y_0: \psi \text{ is $m$-fold symmetric}\right\}\]
	with the norm
	\[\|\psi\|_{Y_{0,m}}:=\|P_0\psi\|_{Y_0}+m^{1/2}\|P_{\neq}\psi\|_{Y_0},\]
	and we define $Y_m:=Y_{0,m}\oplus\langle\beta^{1-2\mu}\rangle$. Note that letting $m=1$ we recover the spaces $Y_0$ and $Y$ defined previously.
	
	Finally, the functional space for $m$-fold $\Omega$ is $$\WWW_m:=L^1_m(\TT):=\left\{\Omega=\Omega(\phi)\in L^1(\TT): \Omega \text{ is $m$-fold symmetric}\right\},$$
	with the norm
	\[\|\Omega\|_{\WWW_m}:=|P_0\Omega|+m^{-1/2}\|P_{\neq}\Omega\|_{L^1(\TT)}.\]
	{{Then $\|\Omega\|_{L^1(\TT)}\lesssim m^{1/2}\|\Omega\|_{\WWW_m}$.}} And the target space is
	\begin{equation}\label{target_Z_m}
		\begin{aligned}
			Z_m:&=\Big\{G=G(\beta,\phi): G=\p_\vp F_1+\p_\phi F_2 \text{ weakly for some continuous $m$-fold}\\
			&\qquad\ \ \text{functions $F_1$ and $F_2$ such that } \beta^{2\mu-1}F_1\in\GG_m^-, \beta^{2\mu}F_2\in\GG_m^0\Big\}
		\end{aligned}
	\end{equation}
	with the norm
	\[\|G\|_{Z_m}:=\inf\left(\|\beta^{2\mu-1}F_1\|_{\GG_m^-}+\|\beta^{2\mu}F_2\|_{\GG_m^0}\right),\]
	where the infimum is taken over all decompositions of $G$ in \eqref{target_Z_m}.\smallskip
	
	A remarkable property of these functional spaces is the following proposition.
	
	\begin{prop}\label{prop3.6}
		The following embeddings  are uniform in $m\in\NN_+$:
		\begin{align}
			V_m\cdot \WWW_m&\hookrightarrow\WWW_m,\label{V_W}\\
			\beta^{-2\mu}\GG_m^0\cdot \WWW_m&\hookrightarrow Z_m,\label{G^0_W}\\
			\beta^{-2\mu}\WWW_m&\hookrightarrow Z_m,\label{W_Z}\\
			\beta^{-2\mu}\GG_m\cdot \WWW_m&\hookrightarrow Z_m.\label{G_W}
		\end{align}
	\end{prop}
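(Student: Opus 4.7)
I will prove the four embeddings in the order \eqref{V_W}, \eqref{W_Z}, \eqref{G^0_W}, then deduce \eqref{G_W} from the others by writing $g=g_1+g_2\in \GG_m^0\oplus V_m$: the term $\beta^{-2\mu}g_1\Omega$ lies in $Z_m$ by \eqref{G^0_W}, while $\beta^{-2\mu}g_2\Omega=\beta^{-2\mu}(g_2\Omega)$ lies in $\beta^{-2\mu}\WWW_m\subset Z_m$ by \eqref{V_W} followed by \eqref{W_Z}. A recurring tool in \eqref{W_Z} and \eqref{G^0_W} is the unique mean-zero $\frac{2\pi}{m}$-periodic antiderivative $\Lambda\in C_m(\TT)$ of $P_\neq\Omega$, whose $\frac{2\pi}{m}$-periodicity uses $\int_0^{2\pi/m}P_\neq\Omega\,d\phi=0$ (a consequence of $m$-fold symmetry) and which satisfies
\[\|\Lambda\|_{L^\infty(\TT)}\lesssim m^{-1}\|P_\neq\Omega\|_{L^1(\TT)}\lesssim m^{-1/2}\|\Omega\|_{\WWW_m}.\]

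For \eqref{V_W}, expand $g\Omega$ into the four cross-products of the $P_0,P_\neq$ parts, use the key balance $\|P_\neq g\|_{L^\infty}\|P_\neq\Omega\|_{L^1}\lesssim m^{-1/2}\cdot m^{1/2}\|g\|_{V_m}\|\Omega\|_{\WWW_m}$, and verify $|P_0(g\Omega)|+m^{-1/2}\|P_\neq(g\Omega)\|_{L^1}\lesssim\|g\|_{V_m}\|\Omega\|_{\WWW_m}$ with an $m$-independent constant.

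For \eqref{W_Z}, split $\beta^{-2\mu}\Omega=\beta^{-2\mu}P_0\Omega+\beta^{-2\mu}\Lambda'(\phi)$. The constant piece $\beta^{-2\mu}P_0\Omega=\p_\vp\bigl(\beta^{1-2\mu}P_0\Omega/(2\mu-1)\bigr)$ is absorbed into $F_1$ with $\beta^{2\mu-1}F_1=P_0\Omega/(2\mu-1)\in\CC\subset\GG_m^-$. The oscillatory piece is delicate: the naive $F_2=\beta^{-2\mu}\Lambda(\phi)$ satisfies $\p_\phi F_2=\beta^{-2\mu}\Lambda'$ but $\beta^{2\mu}F_2=\Lambda(\phi)\notin\GG_m^0$, because a pure $\phi$-function lacks the required $\langle\beta\rangle^\alpha$-decay. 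The fix is a smooth cutoff $\rho\in C^\infty([0,\infty);[0,1])$ with $\rho\equiv 1$ on $[0,1]$ and $\rho\equiv 0$ on $[2,\infty)$, together with the mean-zero antiderivative $\Lambda_{-1}$ of $\Lambda$ (which obeys $\|\Lambda_{-1}\|_{L^\infty}\lesssim m^{-1}\|\Lambda\|_{L^\infty}\lesssim m^{-3/2}\|\Omega\|_{\WWW_m}$): setting
\begin{align*}
F_1&=\tfrac{\beta^{1-2\mu}}{2\mu-1}P_0\Omega+\beta^{-2\mu}(1-\rho(\beta))\Lambda(\phi),\\
F_2&=\beta^{-2\mu}\rho(\beta)\Lambda(\phi)-\bigl[2\mu\beta^{-2\mu-1}(1-\rho(\beta))+\beta^{-2\mu}\rho'(\beta)\bigr]\Lambda_{-1}(\phi),
\end{align*}
a direct computation using $\p_\vp=\p_\phi-\p_\beta$ and $\Lambda_{-1}'=\Lambda$ gives $\p_\vp F_1+\p_\phi F_2=\beta^{-2\mu}\Omega$. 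After multiplication by $\beta^{2\mu-1}$ and $\beta^{2\mu}$, the profiles $\beta^{2\mu-1}F_1=P_0\Omega/(2\mu-1)+\beta^{-1}(1-\rho)\Lambda$ and $\beta^{2\mu}F_2=\rho\Lambda-[2\mu\beta^{-1}(1-\rho)+\rho']\Lambda_{-1}$ are either compactly supported in $\beta$ or decay like $\beta^{-1}$ at infinity, hence lie in $\GG_m^-$ and $\GG_m^0$ respectively; the $m^{1/2}$-weight in those norms is offset by the $m^{-1/2}$- and $m^{-3/2}$-gains in $\|\Lambda\|_{L^\infty}$ and $\|\Lambda_{-1}\|_{L^\infty}$, giving a bound uniform in $m$.

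For \eqref{G^0_W}, split $\beta^{-2\mu}f\Omega=(P_0\Omega)\beta^{-2\mu}f+\beta^{-2\mu}fP_\neq\Omega$. The first piece reduces to proving $\beta^{-2\mu}f\in Z_m$ with norm $\lesssim\|f\|_{\GG_m^0}$: decompose $f=P_0 f+P_\neq f$, take
\[F_1(\beta)=\int_\beta^{\infty}(\beta')^{-2\mu}P_0 f(\beta')\,d\beta'\]
(convergent at infinity since $2\mu>1$ and $|P_0 f|\lesssim\langle\beta\rangle^{-\alpha}$; its $\beta\to 0^+$ asymptotic produces the constant $P_0 f(0)/(2\mu-1)\in\CC$ plus a $\GG_m^0$ remainder), and take $F_2=\beta^{-2\mu}H$ with $H$ the mean-zero $\phi$-antiderivative of $P_\neq f$, for which the length-$\tfrac{2\pi}{m}$ integration yields $\|H\|_{\GG_m^0}\lesssim m^{-1}\|f\|_{\GG_m^0}$. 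The second piece is split as $fP_\neq\Omega=M(\beta)+(fP_\neq\Omega-M)$ with $M(\beta):=\tfrac1{2\pi}\int_\TT P_\neq f\cdot P_\neq\Omega\,d\phi$ satisfying $|M|\lesssim\|f\|_{\GG_m^0}\|\Omega\|_{\WWW_m}\langle\beta\rangle^{-\alpha}$; the same $F_1$- and $F_2$-schemes apply, the antiderivative bound on the $\phi$-mean-zero remainder giving exactly the $m^{-1/2}$-gain needed to balance the $m^{1/2}$-weight of $\GG_m^0$. The main difficulty throughout is the one from \eqref{W_Z}: pure $\phi$-functions are not in $\GG_m^0$, so every natural choice of $F_2$ must be modified by a cutoff-plus-antiderivative correction, and the $m$-powers must be tracked precisely to keep all constants uniform in $m\geq 2$.
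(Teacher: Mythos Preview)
Your approach is essentially that of the paper: the same $P_0/P_{\neq}$ decompositions, the same antiderivative lemma (Lemma~\ref{m_antiderivative}) for the $m^{-1}$ gain, and the same bump-function correction for \eqref{W_Z} (your $\rho$ is the paper's $1-\rho$). Two minor corrections are in order. First, in your $F_1$-analysis for \eqref{G^0_W}, the limit $P_0 f(0)$ need not exist for $f\in C_\beta^\alpha$ (e.g.\ $f(\beta)=\sin(\ln\beta)$ near $0$), so the ``constant $P_0 f(0)/(2\mu-1)$'' is ill-defined; this is moot, however, since $\beta^{2\mu-1}F_1$ already lies in $C_\beta^\alpha=\GG_m^0\subset\GG_m^-$ directly (as the paper checks via \eqref{f1}), with no constant to split off. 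Second, the cutoff correction is needed only in \eqref{W_Z}, not in \eqref{G^0_W}: in the latter case $\beta^{2\mu}F_2$ inherits genuine $\beta$-dependence from $f$ and hence already belongs to $\GG_m^0$, so the paper's Step~2 uses no cutoff at all. Your organization of \eqref{G^0_W} (first split $\Omega=P_0\Omega+P_{\neq}\Omega$, then handle each product) is slightly more roundabout than the paper's single split of $f\Omega$ into $P_0(f\Omega)$ and $P_{\neq}(f\Omega)$, but the estimates are the same.
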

	To prove Proposition \ref{prop3.6}, we need the following lemma.
	
	\begin{lem}\label{m_antiderivative}
		Let $f=f(\phi)\in L^1(\TT)$ be an $m$-fold symmetric function  such that $\int_\TT f(\phi)\,d\phi=0$. Then there exists $g\in C_m(\TT)$ such that $f=g'$, $\int_\TT g(\phi)\,d\phi=0$ and
		\[\|g\|_{L^\infty(\TT)}\leq\frac Cm\|f\|_{L^1(\TT)},\]
		where $C>0$ is an absolute constant independent of $m$ and $f$.
	\end{lem}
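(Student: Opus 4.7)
The strategy is to construct $g$ as an antiderivative of $f$ on the fundamental period $[0,2\pi/m]$ and then exploit $m$-fold symmetry to show that (i) the antiderivative extends to an $m$-fold symmetric continuous function, and (ii) its sup norm is naturally reduced by a factor $1/m$ because the integration happens over an interval of length $2\pi/m$ on which $f$ still has zero mean.

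Concretely, I would first observe that since $f$ is $m$-fold symmetric with zero mean on $\TT$, the shifted integral $\int_a^{a+2\pi/m} f(\phi)\,d\phi$ equals $\frac{1}{m}\int_\TT f\,d\phi=0$ for every $a\in\RR$. Consequently, defining
\[
G(\phi):=\int_0^\phi f(s)\,ds,
\]
we get $G(\phi+2\pi/m)=G(\phi)+\int_\phi^{\phi+2\pi/m}f(s)\,ds=G(\phi)$, so $G$ is continuous on $\RR$ and $(2\pi/m)$-periodic, hence $m$-fold symmetric on $\TT$.

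Next I would bound $\|G\|_{L^\infty(\TT)}$. For $\phi\in[0,2\pi/m]$, the $m$-fold symmetry of $|f|$ gives
\[
|G(\phi)|\leq\int_0^\phi|f(s)|\,ds\leq\int_0^{2\pi/m}|f(s)|\,ds=\frac{1}{m}\|f\|_{L^1(\TT)},
\]
and by the periodicity of $G$ this bound extends to all $\phi\in\TT$. Finally, I would set $g:=G-c$ where $c=\frac{1}{2\pi}\int_\TT G\,d\phi$ is a constant. Then $g\in C_m(\TT)$ is $m$-fold symmetric, $g'=G'=f$ in the distributional sense, $\int_\TT g\,d\phi=0$, and
\[
\|g\|_{L^\infty(\TT)}\leq\|G\|_{L^\infty(\TT)}+|c|\leq 2\|G\|_{L^\infty(\TT)}\leq\frac{2}{m}\|f\|_{L^1(\TT)},
\]
which gives the claimed estimate with $C=2$.

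There is no real obstacle; the only substantive point is the observation that the $m$-fold symmetry combined with zero mean forces zero mean on the fundamental interval $[0,2\pi/m]$, which simultaneously yields the periodicity of $G$ and the quantitative gain $1/m$ in the sup estimate.
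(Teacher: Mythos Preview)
Your proof is correct and follows essentially the same approach as the paper: define the antiderivative $G$ (called $g_0$ in the paper), show it is $m$-fold symmetric via $\int_\phi^{\phi+2\pi/m}f=0$, bound $\|G\|_{L^\infty}\le \frac1m\|f\|_{L^1}$, and subtract the mean. The paper obtains the $\frac1m$ bound by writing $g_0(\phi)=\frac1m\int_{\bigcup_{k=0}^{m-1}[2\pi k/m,\,\phi+2\pi k/m]}f$, whereas you reduce to $\phi\in[0,2\pi/m]$ by periodicity and use $\int_0^{2\pi/m}|f|=\frac1m\|f\|_{L^1}$; these are minor variants of the same idea.
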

	
\begin{rmk}
{{In fact, $g$ is unique and $f\mapsto g$ is a linear operator.}}
\end{rmk}

	\begin{proof}
		Define
		\[g_0(\phi)=\int_0^\phi f(\Phi)\,d\Phi,\qquad\phi\in\TT.\]
		Since $f\in L^1(\TT)$ and $\int_\TT f(\phi)\,d\phi=0$, it is easy to check that $g_0$ is $2\pi$-periodic, hence $g_0$ is a well-defined continuous function on $\TT$, and moreover, we have $\|g_0\|_{L^\infty(\TT)}\leq \|f\|_{L^1(\TT)}$ and $g'=f$ a.e.. It follows from the $\frac{2\pi}m$-periodic property of $f$ and $\int_\TT f(\phi)\,d\phi=0$ that
		\[\int_\phi^{\phi+\frac{2\pi}m}f(\Phi)\,d\Phi=0,\qquad\phi\in\TT.\]
		As a result, by the definition of $g$, we have
		\[g_0\left(\phi+\frac{2\pi}m\right)=g_0(\phi)+\int_\phi^{\phi+\frac{2\pi}m}f(\Phi)\,d\Phi=g_0(\phi),\qquad\phi\in\TT,\]
		which implies that $g_0$ is also $m$-fold symmetric, hence $g_0\in C_m(\TT)$. Now we show that $|g_0(\phi)|\leq \frac1m \|f\|_{L^1(\TT)}$ for any $\phi\in\TT$. Due to the $m$-fold symmetry of $g_0$, we can assume without loss of generality that $\phi\in\left[0,\frac{2\pi}m\right)$. By the periodicity of $f$, we have
		\[g_0(\phi)=\frac1m\int_0^\phi\sum_{k=0}^{m-1}f\left(\Phi+\frac{2\pi}m\right)\,d\Phi=\frac1m\int_{\bigcup_{k=0}^{m-1}\left[k\frac{2\pi}m, \phi+k\frac{2\pi}m\right]}f(\Phi)\,d\Phi,\]
		therefore,
		\[\left|g_0(\phi)\right|\leq \frac1m\|f\|_{L^1(\TT)},\qquad\forall\ \phi\in\TT.\]
		
		Finally, let $g=P_{\neq}g_0$, then $g$ and $g_0$ differ by only a constant, hence $g\in C_m(\TT)$, $f=g'$ in the sense of distribution and
		\[\|g\|_{L^\infty(\TT)}\leq |P_0g_0|+\|g_0\|_{L^\infty(\TT)}\leq C\|g_0\|_{L^\infty(\TT)}\leq\frac Cm\|f\|_{L^1(\TT)}.\]
		The proof of the lemma is complete.
	\end{proof}

	Now we are ready to prove Proposition \ref{prop3.6}.
	
	\begin{proof}[Proof of Proposition \ref{prop3.6}]
		{\bf Step 1.} {Proof of \eqref{V_W}.} Let $g=g(\phi)\in V_m$ and $\Omega=\Omega(\phi)\in\WWW_m$. Since
		\[P_0(g\Omega)=P_0g\cdot P_0\Omega+P_0(P_{\neq}g\cdot P_{\neq}\Omega),\]
		and $P_0$ is a bounded linear functional on $L^1(\TT)$, we have
		\begin{align*}
			|P_0(g\Omega)|&\lesssim |P_0g||P_0\Omega|+\left\|P_{\neq}g\cdot P_{\neq}\Omega\right\|_{L^1(\TT)}\lesssim |P_0g||P_0\Omega|+\|P_{\neq}g\|_{L^\infty(\TT)}\|P_{\neq}\Omega\|_{L^1(\TT)}\\
			&\lesssim \|g\|_{V_m}\|\Omega\|_{\WWW_m}+m^{-1/2}\|g\|_{V_m}\cdot m^{1/2}\|\Omega\|_{\WWW_m}\lesssim\|g\|_{V_m}\|\Omega\|_{\WWW_m}.
		\end{align*}
		For $P_{\neq}(g\Omega)$, we have
		\begin{align*}
			\|P_{\neq}(g\Omega)\|_{L^1(\TT)}&\lesssim \|g\Omega\|_{L^1(\TT)}+|P_0(g\Omega)|\lesssim \|g\|_{L^\infty(\TT)}\|\Omega\|_{L^1(\TT)}+\|g\|_{V_m}\|\Omega\|_{\WWW_m}\\
			&\lesssim m^{1/2}\|g\|_{V_m}\|\Omega\|_{\WWW_m}.
		\end{align*}
		Hence, $g\Omega\in\WWW_m$ and $\|g\Omega\|_{\WWW_m}\lesssim\|g\|_{V_m}\|\Omega\|_{\WWW_m}.$\smallskip
		
		\noindent{\bf Step 2.} {Proof of \eqref{G^0_W}.} Let $f=f(\beta,\phi)\in\GG_m^0$ and $\Omega=\Omega(\phi)\in\WWW_m$. Denote
		\[f_0(\beta):=P_0(f\Omega)(\beta)=P_0f(\beta)\cdot P_0\Omega+P_0(P_{\neq}f\cdot P_{\neq}\Omega)(\beta).\]
		Then we have $ f_0\in C_{\beta}^\alpha$ and
		\begin{align*}
			\| f_0\|_{C_\beta^\alpha}\lesssim\| P_0f\|_{C_\beta^\alpha}|P_0\Omega|+\| P_{\neq}f\|_{C_\beta^\alpha}\|P_{\neq}\Omega\|_{L^1(\TT)}\lesssim \|f\|_{\GG_m^0}\|\Omega\|_{\WWW_m}.
		\end{align*}
		By Lemma \ref{m_antiderivative}, there exists an $m$-fold function $f_1=f_1(\beta,\phi)$ such that ${{P_0f_1=0}}$, $P_{\neq}(f\Omega)=\p_\phi f_1$ and
		\begin{align*}&\sup_{\phi\in\TT}|f_1(\beta,\phi)|\lesssim\frac1m \int_{\TT}|P_{\neq}(f\Omega)|(\beta,\Phi)d\Phi\lesssim
			\frac1m \int_{\TT}|f\Omega|(\beta,\Phi)d\Phi\leq\frac1m\sup_{\Phi\in \TT}|f(\beta,\Phi)|\|\Omega\|_{L^1(\TT)},\\
			&\sup_{\phi\in\TT}|f_1(\beta_1,\phi)-f_1(\beta_2,\phi)|\lesssim\frac1m\int_{\TT}|P_{\neq}(f\Omega)(\beta_1,\Phi)-P_{\neq}(f\Omega)(\beta_2,\Phi)|d\Phi
			\\ &\quad\lesssim\frac1m\int_{\TT}|(f\Omega)(\beta_1,\Phi)-(f\Omega)(\beta_2,\Phi)|d\Phi\leq \frac1m\sup_{\Phi\in \TT}|f(\beta_1,\Phi)-f(\beta_2,\Phi)|\|\Omega\|_{L^1(\TT)}.\end{align*}
		Then by the definition of the $C_{\beta}^\alpha $ norm, we have $ f_1\in C_{\beta}^\alpha$ with
		\[\| f_1\|_{C_\beta^\alpha}\lesssim\frac1m\| f\|_{C_\beta^\alpha}\|\Omega\|_{L^1(\TT)}\lesssim m^{-1/2}\|f\|_{\GG_m^0}\|\Omega\|_{\WWW_m}. \]
		Hence, we have
		\begin{align*}
			\beta^{-2\mu}f(\beta,\phi)\Omega(\phi)&=\beta^{-2\mu}f_0(\beta)+\p_\phi(\beta^{-2\mu}f_1(\beta,\phi))\\
			&=\p_\vp\left(\int_\beta^\infty s^{-2\mu}f_0(s)\,ds\right)+\p_\phi(\beta^{-2\mu}f_1(\beta,\phi))\\
			&=:\p_\vp F_1+\p_\phi F_2,
		\end{align*}
		where $F_1=P_0F_1$ and $F_2=P_{\neq }F_2$. For $F_2$, there holds
		\[\|\beta^{2\mu}F_2\|_{\GG_m^0}=m^{1/2}\|\beta^{2\mu}F_2\|_{C_\beta^\alpha}= m^{1/2}\| f_1\|_{C_\beta^\alpha}\lesssim\|f\|_{\GG_m^0}\|\Omega\|_{\WWW_m}. \]
		For $F_1=F_1(\beta)=\int_\beta^\infty s^{-2\mu}f_0(s)\,ds$, \if0we claim that $\beta^{\alpha+2\mu-1}F_1\in C_{\beta,0}^\alpha$. Indeed, by L'Hospital's rule and $\mu>1/2$,
		\[\lim_{\beta\to0+}\beta^{\alpha+2\mu-1}F_1(\beta)=\lim_{\beta\to0+}\frac{\int_\beta^\infty s^{-2\mu}f_0(s)\,ds}{\beta^{1-\alpha-2\mu}}=\frac{1}{\alpha+2\mu-1}\lim_{\beta\to0+}\beta^{\alpha}f_0(\beta)=0,\]
		since $\beta^\alpha f_0\in C_{\beta,0}^\alpha$; also,\fi we have
		\[\left|\langle \beta\rangle^{\alpha}\beta^{2\mu-1}F_1(\beta)\right|\lesssim\langle \beta\rangle^{\alpha}\beta^{2\mu-1}\left(\int_\beta^\infty \langle s\rangle^{-\alpha}s^{-2\mu}\,ds\right)\|\langle \beta\rangle^{\alpha} f_0\|_{L^\infty}\lesssim \|\langle \beta\rangle^{\alpha} f_0\|_{L^\infty},\]
		and
		\begin{align*}
			\left|\p_\beta\left(\beta^{2\mu-1}F_1\right)\right|&\lesssim \beta^{2\mu-2}\left(\int_\beta^\infty s^{-2\mu}\,ds\right)\|f_0\|_{L^\infty}
			+\beta^{-1}\|f_0\|_{L^\infty}\lesssim\beta^{-1}\|f_0\|_{L^\infty}
		\end{align*}
		Then by \eqref{f1} and $\langle \beta\rangle^{\alpha}\geq 1\geq\langle \beta\rangle^{\alpha-1}$, we have $\beta^{2\mu-1}F_1\in C_{\beta}^\alpha$ and
		\[\|\beta^{2\mu-1}F_1\|_{\GG_m^-}=\|\beta^{2\mu-1}F_1\|_{C_\beta^\alpha}\lesssim  \|\langle \beta\rangle^{\alpha} f_0\|_{L^\infty}\leq\|f_0\|_{C_\beta^\alpha}\lesssim\|f\|_{\GG_m^0}\|\Omega\|_{\WWW_m}.\]
		Therefore, $\beta^{-2\mu}f\Omega\in Z_m$ and
		\[\|\beta^{-2\mu}f\Omega\|_{Z_m}\lesssim \|\beta^{2\mu-1}F_1\|_{\GG_m^-}+\|\beta^{2\mu}F_2\|_{\GG_m^0}\lesssim\|f\|_{\GG_m^0}\|\Omega\|_{\WWW_m}.\]
		
		\noindent{\bf Step 3.} {Proof of \eqref{W_Z}.} Let $\Omega=\Omega(\phi)\in \WWW_m$. By Lemma \ref{m_antiderivative}, we can find $g, h\in C_m(\TT)$ such that $P_0g=P_0h=0$, $P_{\neq}\Omega=\p_\phi g=\p_\phi^2h$ and
		\[\|g\|_{L^\infty(\TT)}\lesssim\frac1m\|P_{\neq}\Omega\|_{L^1(\TT)},\qquad \|h\|_{L^\infty(\TT)}\lesssim\frac1{m^2}\|P_{\neq}\Omega\|_{L^1(\TT)}.\]
		We introduce a smooth bump function
		\begin{equation}\label{bump_function}
			\rho\in C^\infty([0,\infty);[0,1])\qquad \text{ such that } \qquad \rho(\beta)=\begin{cases}
				0 & \beta\in[0,1],\\
				1 & \beta\geq 2.
			\end{cases}
		\end{equation}
		Due to $\p_\vp=\p_\phi-\p_\beta$, we obtain
		\begin{align*}
			\beta^{-2\mu}\Omega(\phi)&=\beta^{-2\mu}P_0\Omega+\beta^{-2\mu}\p_\phi g=P_0\Omega\p_\vp\left(\frac1{2\mu-1}\beta^{1-2\mu}\right)+\p_\phi\left(\beta^{-2\mu}g(\phi)\right)\\
			&=\p_\vp\left(\frac{P_0\Omega}{2\mu-1}\beta^{1-2\mu}\right)+\p_\phi\Big((1-\rho(\beta))\beta^{-2\mu}g(\phi)\Big)+
			\left(\p_\vp+\p_\beta\right)\Big(\rho(\beta)\beta^{-2\mu}g(\phi)\Big)\\
			&=\p_\vp\left(\frac{P_0\Omega}{2\mu-1}\beta^{1-2\mu}+\rho(\beta)\beta^{-2\mu}g(\phi)\right)\\
			&\qquad\qquad+\p_\phi\Big((1-\rho(\beta))\beta^{-2\mu}g(\phi)+\p_\beta(\rho(\beta)\beta^{-2\mu})h(\phi)\Big)\\
			&=:\p_\vp G_1+\p_\phi G_2.
		\end{align*}
		We also have the following estimates: for $G_1$,
		\begin{align*}
			\|\beta^{2\mu-1}G_1\|_{\GG_m^-}&=\|\beta^{2\mu-1}P_0G_1\|_{\GG^-}+m^{1/2}\|\beta^{2\mu-1}P_{\neq}{G_1}\|_{\GG^-}\\
			&=\left|\frac{P_0\Omega}{2\mu-1}\right|+m^{1/2}\left\|\frac{\rho(\beta)}{\beta}g\right\|_{\GG^0}\lesssim |P_0\Omega|+m^{1/2}\|g\|_{L^\infty(\TT)}\\
			&\lesssim |P_0\Omega|+m^{-1/2}\|P_{\neq}\Omega\|_{L^1(\TT)}\lesssim\|\Omega\|_{\WWW_m},
		\end{align*}
		and for ${G_2}$, noting that $P_0{G_2}=0$,
		\begin{align*}
			\|\beta^{2\mu}{G_2}\|_{\GG_m^0}&=m^{1/2}\|\beta^{2\mu}{G_2}\|_{\GG^0}\lesssim m^{1/2}\left(\|g\|_{L^\infty(\TT)}+\|h\|_{L^\infty(\TT)}\right)\\
			&\lesssim m^{-1/2}\|P_{\neq}\Omega\|_{L^1(\TT)}\lesssim\|\Omega\|_{\WWW_m}.
		\end{align*}
		Therefore, $\beta^{-2\mu}\Omega\in Z_m$ and $\|\beta^{-2\mu}\Omega\|_{Z_m}\lesssim\|\Omega\|_{\WWW_m}.$\smallskip
		
		Finally, {\eqref{G_W}}  follows directly from the definition $\GG_m=\GG_m^0\oplus V_m$ and \eqref{V_W}$\sim$\eqref{W_Z}.
	\end{proof}

	\section{Nonlinear problem in the new coordinates}\label{sec_main_thm}
	Let $m\geq 1$ be a positive integer. Given $\psi\in Y_m$ and $\Omega\in\WWW_m$, recall from \eqref{Eq.F_expression} that
	\begin{equation}\label{nonlin_map_main}
		\FFF(\psi,\Omega)=\p_\vp \NNN_1(\psi)+\p_\phi\NNN_2(\psi)+\NNN_3(\psi)\Omega,
	\end{equation}
	where
	\begin{equation}\label{Eq.N_123}
		\NNN_1(\psi)=\frac{2\psi_\beta\psi_\vp}{\psi_{\beta\vp}}-\frac{\psi_{\beta\phi}}{\psi_{\beta\vp}}\NNN_2(\psi),\ \  \NNN_2(\psi)=\frac{\psi_{\beta\vp}\psi_\phi-\psi_{\beta\phi}\psi_\vp}{2\psi_\beta},\ \ \NNN_3(\psi)=\frac{\psi_{\beta\vp}\psi_\vp^{-\frac1{2\mu}}}{2\mu}.
	\end{equation}

	By \eqref{special_solution} we have $\FF(\psi_0,\Omega_0)=0$.  Our goal is to solve the
	equation $\FFF(\psi,\Omega)=0$  for $\Omega$ near $\Omega_0=\gamma$. As we mentioned before, a natural idea is to use the implicit function theorem (see Theorem 1.2.1 in \cite{Chang} for example). For a Banach space $\mathcal{X}$, $x_0\in\mathcal{X}$ and a positive number $\delta>0$, we denote by $B_\delta^{(\mathcal{X})}(x_0)$ the ball in $\mathcal{X}$ of radius $\delta$ with the center $x_0$. Given $\mu>\frac12$, we define
	\begin{equation}\label{alpha_mu}
		\alpha_\mu:=\sqrt{4\mu^2-2\mu+1}-(2\mu-1)\in\left(\frac 12, 1\right).
	\end{equation}
	
	In this section, the implicit constants in all $\lesssim$ depend only on $\alpha$ and $\mu$. In particular, the implicit constants are independent of $m\in\NN_+$. The main result of this section is
	
	\begin{thm}\label{IFT}
		Assume that $m\geq 2$, $\mu>\frac12$ and $\alpha\in\left(0,\alpha_\mu\right)$. There exist $\varepsilon_\psi, \varepsilon_\Omega>0$ independent of $m\geq 2$ and a unique $C^2$ map $\Xi: B_{\varepsilon_\Omega}^{(\WWW_m)}(\Omega_0)\to B_{\varepsilon_\psi}^{(Y_m)}(\psi_0)$
		such that
		\[\FFF\big(\Xi(\Omega),\Omega)=0.\]
		Moreover, $\Xi(\Omega)$ is real-valued if $\Omega$ is real-valued.
	\end{thm}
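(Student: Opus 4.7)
The plan is to apply a quantitative implicit function theorem (e.g. Theorem 1.2.1 in Chang) to the nonlinear map $\mathcal F : Y_m\times\mathcal W_m\to Z_m$ at the base point $(\psi_0,\Omega_0)$. Since $(\psi_0,\Omega_0)$ is the explicit radial self-similar solution of Subsection \ref{sec_radial}, we already know $\mathcal F(\psi_0,\Omega_0)=0$. The three ingredients to verify are: (i) $\mathcal F$ is $C^2$ on an open neighborhood of $(\psi_0,\Omega_0)$ of a radius that is \emph{independent of $m$}; (ii) the partial Fr\'echet derivative $\mathcal L=\partial_\psi\mathcal F(\psi_0,\Omega_0):Y_m\to Z_m$ is a Banach space isomorphism with operator norm of the inverse uniformly bounded in $m\ge 2$; (iii) the solution map $\Xi$ sends real-valued data to real-valued solutions.

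For (i), I would write $\psi=\psi_0+\tilde\psi$ with $\tilde\psi\in Y_{0,m}$ and expand each of the rational expressions $\mathcal N_1,\mathcal N_2,\mathcal N_3$ in \eqref{Eq.N_123}. The potentially singular denominators $\psi_{\beta\varphi}$ and $\psi_\varphi^{1/(2\mu)}$ are evaluated at $\psi_0$ to nonvanishing quantities of the form $c\,\beta^{-2\mu-1}$ and $c'\beta^{2-2\mu}$; after factoring out these explicit weights, the remaining nonlinear expressions are analytic functions of elements of weighted versions of $\mathcal G_m$. The Banach-algebra properties Lemma \ref{algebra} together with the product-into-$Z_m$ embeddings of Proposition \ref{prop3.6}, whose constants are \emph{uniform in $m$}, then give a convergent power series for $\mathcal F$ on a ball around $(\psi_0,\Omega_0)$ of radius depending only on $\alpha$ and $\mu$. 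Because $\mathcal F$ is merely affine in $\Omega$, no smallness of $\Omega-\Omega_0$ enters here. This yields analyticity, and in particular $C^2$ regularity, with $m$-independent quantitative bounds on the first and second derivatives.

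The main obstacle is part (ii). The explicit form of $\mathcal L$, computed in \eqref{Eq.Full}, is a third-order operator in $\psi$ coupling a second-order piece in $H=\psi+\tfrac{\beta}{2\mu}\psi_\beta$ to the nonlocal term $\tfrac{\gamma}{2\mu}\psi_\phi$. Its injectivity is supplied by Proposition \ref{uniqueness_prop}, whose proof requires $m\ge 2$ to rule out resonances at Fourier modes $n=\pm 1$. Its surjectivity, and the uniform operator bound of $\mathcal L^{-1}$, is supplied by the high/low frequency decomposition described in Section \ref{Sec.sketch}: at high frequencies $|n|>N$ one inverts $\operatorname{id}+T^N$ using the operator-norm decay $\|T^N\|=O(1/N)$, while at low frequencies one converts the system to a finite collection of second-order ODE problems for the Fourier modes and inverts $\operatorname{id}-T_1T_0$ via Fredholm theory (Proposition \ref{exist_2_prop}). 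Assuming these linear-theory results, the implicit function theorem produces the $C^2$ map $\Xi$ on $B_{\varepsilon_\Omega}^{(\mathcal W_m)}(\Omega_0)$ with $\varepsilon_\psi,\varepsilon_\Omega$ depending only on the $C^2$ bounds on $\mathcal F$ and on $\|\mathcal L^{-1}\|$, all of which are $m$-independent.

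Finally, (iii) is an immediate consequence of the uniqueness clause of IFT: when $\Omega$ is real, the fact that $\mathcal F$ has real coefficients implies $\mathcal F(\overline{\Xi(\Omega)},\Omega)=0$, and since $\overline{\Xi(\Omega)}$ still lies in the small ball $B_{\varepsilon_\psi}^{(Y_m)}(\psi_0)$, the uniqueness part of IFT forces $\overline{\Xi(\Omega)}=\Xi(\Omega)$. The subtle point threaded through the whole argument is that every estimate — algebra constants, solvability bounds, radii — must be quantitative and insensitive to $m$; this is exactly what the $m^{1/2}$ and $m^{-1/2}$ weightings in the definitions of $\mathcal G_m^0$, $V_m$, $\mathcal W_m$, $Z_m$ are designed to make possible.
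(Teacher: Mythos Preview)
Your proposal is correct and follows essentially the same route as the paper: invoke Proposition~\ref{nonlinear_part} for the $m$-uniform $C^2$ regularity of $\mathcal F$, invoke Proposition~\ref{linear_part} for the $m$-uniform invertibility of $\mathcal L$, and then apply a quantitative implicit function theorem to extract $\varepsilon_\psi,\varepsilon_\Omega$ independent of $m$. Your conjugation argument for real-valuedness is a slightly more explicit rendering of the paper's one-line observation that $\mathcal F$ maps real inputs to real outputs, but the content is the same.
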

	
	\begin{rmk}
		In fact,  we can show that $\FF$ is $C^\infty$, therefore $\Xi: B_{\varepsilon_\Omega}^{(\WWW_m)}(\Omega_0)\to B_{\varepsilon_\psi}^{(Y_m)}(\psi_0)$ is also $C^\infty$. This can be seen from the proof of Proposition \ref{nonlinear_part} below.
	\end{rmk}

	\subsection{$C^2$ regularity of nonlinear map}\label{sec_nonlin}
	The main result of this subsection is as follows.
	
	\begin{prop}\label{nonlinear_part}
		Assume that $\alpha\in(0,1)$, $m\in\NN_+$ and $\mu>\frac12$. There exists a small constant $\delta>0$ independent of $m\in\NN_+$, such that
		\[ \FF\in C^2\Big(B_{\delta}^{(Y_m)}(\psi_0)\times\WWW_m; Z_m\Big),\]
		and the $C^2$ norm of $\FF$ is independent of $m\in\NN_+$.
	\end{prop}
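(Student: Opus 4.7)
The strategy is to parametrize $\psi$ near $\psi_0$ and express $\FFF$ as a convergent power series taking values in $Z_m$; once this is done, $C^2$ (in fact $C^\infty$) regularity with $m$-independent constants follows from the Banach algebra property of $\GG_m$ (Lemma \ref{algebra}) together with the embeddings of Proposition \ref{prop3.6}.

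First I would record that $\psi_{0,\beta}=-\beta^{-2\mu}$, $\psi_{0,\phi}=0$, $\psi_{0,\vp}=\beta^{-2\mu}$ and $\psi_{0,\beta\vp}=-2\mu\beta^{-2\mu-1}$. Writing $\psi=\psi_0+\tilde\psi$ with $\|\tilde\psi\|_{Y_m}\le\delta$, I introduce the normalized perturbations
\[
u:=\beta^{2\mu}\tilde\psi_\vp,\qquad v:=-\frac1{2\mu}\beta^{2\mu+1}\tilde\psi_{\beta\vp},\qquad w:=\beta^{2\mu-1}\tilde\psi_\phi,
\]
together with analogous normalizations for $\tilde\psi_\beta$ and $\tilde\psi_{\beta\phi}$. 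Expressing each raw derivative of $\tilde\psi$ through $\tilde H=\tilde\psi+\frac\beta{2\mu}\tilde\psi_\beta$ and the components of the $X_{0,m}/Y_{0,m}$ norms (invoking Lemma \ref{X=tildeX}), each such quantity lies in $\GG_m$ or $\GG_m^0$ with norm $\lesssim\|\tilde\psi\|_{Y_m}$. Hence, for $\delta$ small enough, the binomial and Neumann series
\[
\psi_\vp^{-\frac1{2\mu}}=\beta\sum_{k\ge0}\binom{-1/(2\mu)}{k}u^k,\qquad \frac1{\psi_{\beta\vp}}=-\frac{\beta^{2\mu+1}}{2\mu}\sum_{k\ge0}(-v)^k
\]
converge absolutely in the Banach algebra $\GG_m$, with radius of convergence depending only on $\alpha$ and $\mu$.

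Substituting these expansions into \eqref{Eq.N_123}, each of $\NNN_1,\NNN_2,\NNN_3$ becomes a power series in $(u,v,w,\ldots)$ whose coefficients are explicit powers of $\beta$. A direct check at $\psi_0$ gives $\beta^{2\mu-1}\NNN_1(\psi_0)=1/\mu$, $\beta^{2\mu}\NNN_2(\psi_0)=0$ and $\beta^{2\mu}\NNN_3(\psi_0)=-1$, so the zeroth-order terms land in $\GG_m^-,\GG_m^0,\GG_m$ respectively; by Lemma \ref{algebra}, the full series for $\beta^{2\mu-1}\NNN_1$, $\beta^{2\mu}\NNN_2$ and $\beta^{2\mu}\NNN_3$ converge in the same spaces. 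Consequently $\p_\vp\NNN_1+\p_\phi\NNN_2$ already displays the decomposition $\p_\vp F_1+\p_\phi F_2$ required by \eqref{target_Z_m}, and $\NNN_3(\psi)\Omega=\beta^{-2\mu}(\beta^{2\mu}\NNN_3(\psi))\Omega\in Z_m$ follows from the embedding \eqref{G_W} of Proposition \ref{prop3.6}. Linearity in $\Omega$ is immediate.

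Since $\FFF$ is then a convergent power series in $\tilde\psi$ and linear in $\Omega$, it is real-analytic; in particular $C^2$, and its first and second Fréchet derivatives are given by the usual termwise expressions with norms bounded by the majorant series. All multiplicative constants in Lemma \ref{algebra} and Proposition \ref{prop3.6} are $m$-independent, and the numerical coefficients in the expansion depend only on $\alpha$ and $\mu$, so the resulting $C^2$ bound is $m$-independent. The main obstacle, to my mind, is the weight bookkeeping: one must systematically re-express $\tilde\psi_\beta,\tilde\psi_\phi,\tilde\psi_{\beta\phi}$ etc.\ through $\tilde H$ and the three components of the $Y_m$ norm so that the normalized perturbations $u,v,w$ do belong to $\GG_m$ with $m$-uniform bounds, and then verify at every order of the expansion that the powers of $\beta$ combine exactly to the prescribed weights $\beta^{2\mu-1}$, $\beta^{2\mu}$, $\beta^{2\mu}$ in the three components of $\FFF$.
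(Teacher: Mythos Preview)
Your approach is essentially the same as the paper's: expand $\NNN_1,\NNN_2,\NNN_3$ as analytic functions of normalized derivatives of $\psi$ in the Banach algebra $\GG_m$, then use the embeddings of Proposition~\ref{prop3.6}. However, there is a genuine gap in your treatment of $\NNN_1$.

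You claim that ``by Lemma~\ref{algebra}, the full series for $\beta^{2\mu-1}\NNN_1$ converges in $\GG_m^-$.'' But Lemma~\ref{algebra} gives $\GG_m^0\cdot\GG_m\hookrightarrow\GG_m^0$ and $\GG_m\cdot\GG_m\hookrightarrow\GG_m$; it does \emph{not} give $\GG_m^-\cdot\GG_m\hookrightarrow\GG_m^-$, and indeed $\GG_m^-$ is not an ideal in $\GG_m$. Your variables $u=\beta^{2\mu}\tilde\psi_\vp$ and $v=-\tfrac1{2\mu}\beta^{2\mu+1}\tilde\psi_{\beta\vp}$ lie only in $\GG_m$ (they each carry a nontrivial $C_m(\TT)$ component as $\beta\to\infty$), so a generic monomial in $u,v$ lands in $\GG_m$, not $\GG_m^-$. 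Since the definition of $Z_m$ requires $\beta^{2\mu-1}F_1\in\GG_m^-$, this is fatal to the claim that $\p_\vp\NNN_1(\psi)\in Z_m$.

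The paper closes this gap by an algebraic rewriting that is not mere weight bookkeeping:
\[
\beta^{2\mu-1}\NNN_1(\psi)=\frac{2\beta^{2\mu}\psi_\beta\cdot\beta^{2\mu}\bigl(\psi_\vp+\tfrac\beta{2\mu}\psi_{\beta\vp}\bigr)}{\beta^{2\mu+1}\psi_{\beta\vp}}-\frac1\mu\beta^{2\mu}\psi_\beta-\frac{\beta^{2\mu}\psi_{\beta\phi}}{\beta^{2\mu+1}\psi_{\beta\vp}}\,\beta^{2\mu}\NNN_2(\psi).
\]
The point is that the special combination $\beta^{2\mu}\bigl(\psi_\vp+\tfrac\beta{2\mu}\psi_{\beta\vp}\bigr)$ belongs to $\GG_m^0$ (equivalently, in your notation, $u-v\in\GG_m^0$: the $C_m(\TT)$ parts of $u$ and $v$ coincide). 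Thus the first and third terms contain a $\GG_m^0$ factor and are absorbed into $\GG_m^0$ by the ideal property; only $-\tfrac1\mu\beta^{2\mu}\psi_\beta\in\GG_m^-$ remains. Your proposal needs this observation (proved as part of Lemma~\ref{lem_nonlinear}) to go through; it is not a consequence of weight matching alone.
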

	
Thanks to the expression \eqref{nonlin_map_main} and the Banach algebraic properties established in the previous section, it suffices to prove that all the linear operators appearing in \eqref{Eq.N_123} map into the corresponding Banach algebras continuously. This follows from Lemma \ref{lem_nonlinear} below.
	
	Throughout this subsection, we assume that $\alpha\in(0,1)$, $m\in\NN_+$ and $\mu>\frac12$.
	
	\begin{lem}\label{F<H}
		Let $H\in X_0$ and
		\begin{equation}\label{H_to_F}
			T_0H(\beta,\phi)=2\mu\beta^{-2\mu}\int_0^\beta s^{2\mu-1}H(s,\phi)\,ds, \qquad (\beta,\phi)\in(0,+\infty)\times\TT.
		\end{equation}
		Then $F=T_0H$ is the unique function in $Y_0$ such that $H=F+\frac{\beta}{2\mu}F_\beta$ and
		\begin{align*}
			&\ \|F\|_{Y_0}+\|\beta^{2\mu}F_{\beta\phi}\|_{C_\beta^\alpha}+\|\beta^{2\mu}F_{\beta}\|_{C_\beta^\alpha}+\|\beta^{2\mu-1}F_{\phi}\|_{C_\beta^\alpha}
			+\|\beta^{2\mu-1}F\|_{C_\beta^\alpha}
			\lesssim\|H\|_{X_0}.
		\end{align*}
	\end{lem}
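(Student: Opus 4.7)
The plan is to explicitly construct $F = T_0 H$, verify the ODE relation $H = F + (\beta/2\mu) F_\beta$ by direct differentiation, derive uniqueness from the structure of the kernel of this ODE, and then extract each norm estimate by combining the ODE with the equivalent $X_0$-norm supplied by Lemma \ref{X=tildeX} and the H\"older-upgrade criterion \eqref{f1}.

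For the identity, write $\beta^{2\mu} F(\beta,\phi) = 2\mu \int_0^\beta s^{2\mu-1} H(s,\phi)\, ds$ and differentiate in $\beta$ to obtain $2\mu \beta^{2\mu-1} F + \beta^{2\mu} F_\beta = 2\mu \beta^{2\mu-1} H$, which rearranges to $H = F + (\beta/2\mu) F_\beta$. For uniqueness, if $\tilde F \in Y_0$ solves the homogeneous equation $\tilde F + (\beta/2\mu) \tilde F_\beta = 0$, then $\p_\beta(\beta^{2\mu} \tilde F) = 0$, so $\tilde F = c(\phi)\beta^{-2\mu}$ for some function $c$; the requirement $\beta^{2\mu-1} \tilde F \in L^\infty$ then forces $c \equiv 0$.

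Now to the estimates. Lemma \ref{X=tildeX} gives the pointwise bound $|\beta^{2\mu-1} H(\beta,\phi)| \leq \langle \beta\rangle^{-\alpha} \|H\|_{X_0}$. Integrating, $|\beta^{2\mu-1} F(\beta,\phi)| \leq 2\mu\beta^{-1} \int_0^\beta \langle s\rangle^{-\alpha}\, ds \cdot \|H\|_{X_0} \lesssim \langle \beta\rangle^{-\alpha} \|H\|_{X_0}$, which controls $\|\beta^{2\mu-1} F\|_{L^\infty}$ and supplies the weighted $L^\infty$ ingredient required by \eqref{f1}. To upgrade to H\"older regularity I use the algebraic identity $\beta F_\beta = 2\mu(H - F)$ (a rewriting of the ODE) to compute
\[
\beta \p_\beta\bigl(\beta^{2\mu-1} F\bigr) = (2\mu-1)\beta^{2\mu-1}F + \beta^{2\mu-1}\cdot\beta F_\beta = -\beta^{2\mu-1} F + 2\mu\, \beta^{2\mu-1} H.
\]
The right-hand side is bounded pointwise by $C\langle \beta\rangle^{-\alpha}\|H\|_{X_0}$, so \eqref{f1} yields $\|\beta^{2\mu-1} F\|_{C_\beta^\alpha} \lesssim \|H\|_{X_0}$. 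The estimate $\|\beta^{2\mu} F_\beta\|_{C_\beta^\alpha} = 2\mu\|\beta^{2\mu-1}(H - F)\|_{C_\beta^\alpha} \lesssim \|H\|_{X_0}$ is then immediate.

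For the $\phi$-derivatives, dominated convergence, justified by the pointwise bound $|s^{2\mu-1} H_\phi(s,\phi)| \lesssim \|H\|_{X_0}$, allows differentiation under the integral: $F_\phi = T_0 H_\phi$. Applying the same chain of estimates with $H_\phi$ replacing $H$, together with $\|\beta^{2\mu-1}H_\phi\|_{C_\beta^\alpha} \leq \|H\|_{X_0}$, yields $\|\beta^{2\mu-1} F_\phi\|_{C_\beta^\alpha}$ and $\|\beta^{2\mu} F_{\beta\phi}\|_{C_\beta^\alpha}$ both bounded by $\|H\|_{X_0}$. Collecting these bounds gives $\|F\|_{Y_0} = \|H\|_{X_0} + \|\beta^{2\mu-1}F\|_{L^\infty} \lesssim \|H\|_{X_0}$ and finishes the proof. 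The mildly delicate step is the H\"older upgrade: it rests on the observation that the ODE $H = F + (\beta/2\mu)F_\beta$ exactly cancels the would-be singular factor $\beta^{-1}$ in $\p_\beta(\beta^{2\mu-1} F)$, so the weighted $L^\infty$ bound already encodes the full $C_\beta^\alpha$ regularity through \eqref{f1}.
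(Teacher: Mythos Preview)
Your proof is correct and follows essentially the same route as the paper's: verify the ODE $H=F+\tfrac{\beta}{2\mu}F_\beta$ by direct differentiation, obtain the weighted $L^\infty$ bound on $\beta^{2\mu-1}F$ by integrating the pointwise decay of $\beta^{2\mu-1}H$, upgrade to $C_\beta^\alpha$ via \eqref{f1}, and then read off $\beta^{2\mu}F_\beta$, $\beta^{2\mu-1}F_\phi$, $\beta^{2\mu}F_{\beta\phi}$ from the ODE and the relation $F_\phi=T_0H_\phi$. The only cosmetic difference is that you express $\beta\p_\beta(\beta^{2\mu-1}F)$ algebraically through the ODE, whereas the paper computes $\p_\beta(\beta^{2\mu-1}F)$ directly from the integral; the two computations are equivalent, and your uniqueness argument via $\tilde F=c(\phi)\beta^{-2\mu}$ matches the paper's observation that $\p_\beta(\beta^{2\mu}F)=2\mu\beta^{2\mu-1}H$ with $\beta^{2\mu}F|_{\beta=0}=0$.
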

	
Then $T_0: X_0\to Y_0$ is a bounded linear operator.
	
	\begin{proof}
		Clearly, $F=T_0H$ is well-defined, i.e., the integral in \eqref{H_to_F} is absolutely convergent due to $s^{2\mu-1}H(s,\phi)\in \mathcal{G}^0$. Direct computation gives the identity $H=F+\frac{\beta}{2\mu}F_\beta$.  For simplicity, we omit the variable $\phi$ here in this proof.  Moreover, all implicit constants in $\lesssim$ are independent of $\phi$ in the proof. \smallskip
		
		\begin{enumerate}[(1)]
			\item \underline{$\|F\|_{Y_0}+\|\beta^{2\mu-1}F\|_{C_\beta^\alpha}\lesssim\|H\|_{X_0}$.}
			For any $\beta>0$, we have $$|\langle\beta\rangle^{\alpha}\beta^{2\mu-1}F(\beta)|\lesssim \langle\beta\rangle^{\alpha}\beta^{-1}
			\left(\int_0^\beta \langle s\rangle^{-\alpha}\,ds\right)\|\langle\beta\rangle^{\alpha}\beta^{2\mu-1}H\|_{L^\infty}\lesssim
			\|\langle\beta\rangle^{\alpha}\beta^{2\mu-1}H\|_{L^\infty},$$
			and 
			\begin{align*}
				\left|\p_\beta\left(\beta^{2\mu-1}F\right)(\beta)\right|&\lesssim\beta^{-2}\int_0^\beta s^{2\mu-1}|H(s)|\,ds+\beta^{-1}|\beta^{2\mu-1}H(\beta)|\\
				&\lesssim \beta^{-1} \|\beta^{2\mu-1}H\|_{L^\infty}\lesssim \beta^{-1}\|\langle\beta\rangle^{\alpha}\beta^{2\mu-1}H\|_{L^\infty}.
			\end{align*}
			Therefore, by \eqref{f1}, we have $\beta^{2\mu-1}F\in C_{\beta}^\alpha$ and
			\beno
			\|\beta^{2\mu-1}F\|_{C_\beta^\alpha}\lesssim
			\| \langle\beta\rangle^{\alpha}\beta^{2\mu-1}H\|_{L^\infty}\lesssim\| \beta^{2\mu-1}H\|_{C_\beta^\alpha}\lesssim\|H\|_{X_0}.
			\eeno
			We also have
			\[\|F\|_{Y_0}=\|H\|_{X_0}+\|\beta^{2\mu-1}F\|_{L^\infty}\lesssim\|H\|_{X_0}+\|\beta^{2\mu-1}F\|_{C_\beta^\alpha}\lesssim\|H\|_{X_0}.\]
			
			\item \underline{$\|\beta^{2\mu-1}F_\phi\|_{C_\beta^\alpha}\lesssim\|H\|_{X_0}$.} The same argument as in $(1)$ gives that $\beta^{2\mu-1}F_\phi\in C_{\beta}^\alpha$ and  $\|\beta^{2\mu-1}F_\phi\|_{C_\beta^\alpha}\lesssim\|\beta^{2\mu-1}H_\phi\|_{C_\beta^\alpha}\lesssim\|H\|_{X_0}$
			{(as $F=T_0H\Rightarrow F_\phi=T_0H_\phi$)}.
			
			\item \underline{$\|\beta^{2\mu}F_\beta\|_{C_\beta^\alpha}\lesssim\|H\|_{X_0}$.} The identity $H=F+\frac{\beta}{2\mu}F_\beta$ implies $F_\beta=2\mu\beta^{-1}H-2\mu\beta^{-1}F$, hence $\beta^{2\mu}F_\beta=2\mu \beta^{2\mu-1}H-2\mu \beta^{2\mu-1}F\in C_{\beta}^\alpha$ and $$\|\beta^{2\mu}F_\beta\|_{C_\beta^\alpha}\lesssim \|\beta^{2\mu-1}H\|_{C_\beta^\alpha}+\|\beta^{2\mu-1}F\|_{C_\beta^\alpha}\lesssim\|H\|_{X_0}.$$
			\item \underline{$\|\beta^{2\mu}F_{\beta\phi}\|_{C_\beta^\alpha}\lesssim\|H\|_{X_0}$.} The same argument as in $(3)$ gives that $\beta^{2\mu}F_{\beta\phi}\in C_{\beta}^\alpha$ and
			\beno
			\|\beta^{2\mu}F_{\beta\phi}\|_{C_\beta^\alpha}\lesssim \|\beta^{2\mu-1}H_\phi\|_{C_\beta^\alpha}+\|\beta^{2\mu-1}F_\phi\|_{C_\beta^\alpha}\lesssim\|H\|_{X_0}.
			\eeno
			\if0		\item \underline{$\left\|\langle\beta\rangle^{-\alpha}\beta^{2\mu}F_{\vp}\right\|_{C_\beta^\alpha}\lesssim\|H\|_{X_0}$.} By
			\eqref{f1} we have \begin{align*}
				\left\|\langle\beta\rangle^{-\alpha}\beta^{2\mu}F_{\vp}\right\|_{C_\beta^\alpha}\lesssim
				 \left\|\beta^{2\mu}F_{\vp}\right\|_{L^{\infty}}+\left\|\beta\partial_{\beta}(\langle\beta\rangle^{-\alpha}\beta^{2\mu}F_{\vp})\right\|_{L^{\infty}}\lesssim
				\left\|\beta^{2\mu}F_{\vp}\right\|_{L^{\infty}}+\left\|\beta^{2\mu+1}F_{\beta\vp}\right\|_{L^{\infty}},
			\end{align*}Applying $\p_\vp$ to the both sides of $H=F+\frac{\beta}{2\mu}F_\beta$ gives that $H_\vp=F_\vp-\frac1{2\mu}F_\beta+\frac\beta{2\mu}F_{\beta\vp}$. Then we have $\left\|\beta^{2\mu+1}F_{\beta\vp}\right\|_{L^{\infty}}\lesssim\left\|\beta^{2\mu}F_{\vp}\right\|_{L^{\infty}}+\left\|\beta^{2\mu}F_{\beta}\right\|_{L^{\infty}}
			+\left\|\beta^{2\mu}H_{\vp}\right\|_{L^{\infty}}$ and \begin{align*}
				\left\|\langle\beta\rangle^{-\alpha}\beta^{2\mu}F_{\vp}\right\|_{C_\beta^\alpha}\lesssim
				\left\|\beta^{2\mu}F_{\vp}\right\|_{L^{\infty}}+\left\|\beta^{2\mu}F_{\beta}\right\|_{L^{\infty}}
				+\left\|\beta^{2\mu}H_{\vp}\right\|_{L^{\infty}},
			\end{align*} By $(3)$ we have $\left\|\beta^{2\mu}F_{\beta}\right\|_{L^{\infty}}\leq\|\beta^{2\mu}F_\beta\|_{C_\beta^\alpha}\lesssim\|H\|_{X_0}$.
			
			We prove the inequality by considering the two cases $\beta\in(0,2)$ and $\beta\geq1$ respectively.
			\begin{enumerate}
				\item \underline{$\beta\in(0,2)$.} Since $F_\vp=F_\phi-F_\beta$,
				\[\frac{\beta^\alpha}{\langle\beta\rangle^\alpha}\beta^{2\mu}F_{\vp}=\frac{\beta}{\langle\beta\rangle^\alpha}\beta^{\alpha+2\mu-1}F_\phi-
				\frac{1}{\langle\beta\rangle^\alpha}\beta^{\alpha+2\mu}F_\beta.\]
				Since
				\[\frac{\beta}{\langle\beta\rangle^\alpha}, \frac{1}{\langle\beta\rangle^\alpha}\in \left(\text{Lip}_\beta\cap L^\infty\right)(\beta\in(0,2)),\qquad \beta^{\alpha+2\mu-1}F_\phi, \beta^{\alpha+2\mu}F_\beta\in C_{\beta,0}^\alpha,\]
				we know that $\frac{\beta^\alpha}{\langle\beta\rangle^\alpha}\beta^{2\mu}F_{\vp}\in C_{\beta,0}^\alpha(\beta\in(0,2))$ and
				\[\left\|\frac{\beta^\alpha}{\langle\beta\rangle^\alpha}\beta^{2\mu}F_{\vp}\right\|_{C_\beta^\alpha(\beta\in(0,2))}\lesssim
				\|\beta^{\alpha+2\mu-1}F_\phi\|_{C_\beta^\alpha}+\|\beta^{\alpha+2\mu}F_\beta\|_{C_\beta^\alpha}\lesssim\|H\|_{X_0}.\]
				\item \underline{$\beta\geq 1$.} Applying $\p_\vp$ to the both sides of $H=F+\frac{\beta}{2\mu}F_\beta$ gives that $H_\vp=F_\vp-\frac1{2\mu}F_\beta+\frac\beta{2\mu}F_{\beta\vp}$, i.e., $\p_\beta\left(\beta^{2\mu}F_\vp\right)=\beta^{2\mu-1}(2\mu H_\vp+F_\beta)$, hence
				\[\beta^{2\mu}F_\vp(\beta)=F_\vp(1)+\int_1^\beta s^{2\mu-1}\left(2\mu H_\vp+F_\beta\right)(s)\,ds,\qquad \beta\geq 1,\]
				and thus
				\[\frac{\beta^\alpha}{\langle\beta\rangle^\alpha}\beta^{2\mu}F_\vp(\beta)=\frac{\beta^\alpha}{\langle\beta\rangle^\alpha}F_\vp(1)+
				\frac{\beta^\alpha}{\langle\beta\rangle^\alpha}\int_1^\beta s^{2\mu-1}\left(2\mu H_\vp+F_\beta\right)(s)\,ds,\qquad \beta\geq 1.\]
				Since $\frac{\beta^\alpha}{\langle\beta\rangle^\alpha}\in \left(\text{Lip}_\beta\cap L^\infty\right)(\beta\geq 1), \left|F_\vp(1)\right|\lesssim\left\|\frac{\beta^\alpha}{\langle\beta\rangle^\alpha}\beta^{2\mu}F_{\vp}\right\|_{L^\infty(\beta\in(0,2))}\lesssim\|H\|_{X_0}$,
				\[\left|\int_1^\beta s^{2\mu-1}\left(2\mu H_\vp+F_\beta\right)(s)\,ds\right|\lesssim\left(\int_1^\infty s^{-1-\alpha}\,ds\right)\|H\|_{X_0}\lesssim\|H\|_{X_0},\ \ \beta\geq 1,\]
				and
				\begin{align*}
					\left|\int_{\beta_1}^{\beta_2}s^{2\mu-1}\left(2\mu H_\vp+F_\beta\right)(s)\,ds\right|&\lesssim\left|\int_{\beta_1}^{\beta_2}s^{-1-\alpha}\,ds\right|\|H\|_{X_0}\\&\lesssim\left|\beta_1-\beta_2\right|\|H\|_{X_0},\qquad \beta_1, \beta_2\geq 1,
				\end{align*}
				we get $\frac{\beta^\alpha}{\langle\beta\rangle^\alpha}\beta^{2\mu}F_{\vp}\in \left(\text{Lip}_\beta\cap L^\infty\right)(\beta\geq 1)\subset C_\beta^\alpha(\beta\geq 1)$ and
				\[\left\|\frac{\beta^\alpha}{\langle\beta\rangle^\alpha}\beta^{2\mu}F_{\vp}\right\|_{C_\beta^\alpha(\beta\geq 1)}\lesssim\|H\|_{X_0}.\]
			\end{enumerate}
			Therefore, $\frac{\beta^\alpha}{\langle\beta\rangle^\alpha}\beta^{2\mu}F_{\vp}\in C_{\beta,0}^\alpha$ and $\left\|\frac{\beta^\alpha}{\langle\beta\rangle^\alpha}\beta^{2\mu}F_{\vp}\right\|_{C_\beta^\alpha}\lesssim\|H\|_{X_0}$.
			
			\item \underline{$\left\|\langle\beta\rangle^{-\alpha}\beta^{2\mu+1}F_{\beta\vp}\right\|_{C_\beta^\alpha}\lesssim\|H\|_{X_0}$.} Applying $\p_\vp$ to the both sides of $H=F+\frac{\beta}{2\mu}F_\beta$ gives that $H_\vp=F_\vp-\frac1{2\mu}F_\beta+\frac\beta{2\mu}F_{\beta\vp}$, i.e., $F_{\beta\vp}=2\mu\beta^{-1}H_\vp-2\mu \beta^{-1}F_\vp+\beta^{-1}F_\beta$. Hence
			\[\langle\beta\rangle^{-\alpha}\beta^{2\mu+1}F_{\beta\vp}=2\mu\langle\beta\rangle^{-\alpha}\beta^{2\mu}H_\vp-2\mu \langle\beta\rangle^{-\alpha}\beta^{2\mu}F_{\vp}+\langle\beta\rangle^{-\alpha}\beta^{2\mu}F_\beta.\]
			Since $\langle\beta\rangle^{-\alpha}\in\text{Lip}_\beta\cap L^\infty$, $\beta^{2\mu}H_\vp\in C_{\beta}^\alpha$ and by $(3)$, $\beta^{2\mu}F_\beta\in C_{\beta}^\alpha$, we have \\$\langle\beta\rangle^{-\alpha}\beta^{2\mu}H_\vp,  \langle\beta\rangle^{-\alpha}\beta^{2\mu}F_\beta\in C_{\beta}^\alpha$ and
			\[\left\|\langle\beta\rangle^{-\alpha}\beta^{2\mu}H_\vp\right\|_{C_\beta^\alpha}+
			\left\|\langle\beta\rangle^{-\alpha}\beta^{2\mu}F_\beta\right\|_{C_\beta^\alpha}\lesssim\|H\|_{X_0}.\]
			Finally, $(5)$ implies that $\langle\beta\rangle^{-\alpha}\beta^{2\mu}F_{\vp}\in C_{\beta,0}^\alpha$, therefore $\langle\beta\rangle^{-\alpha}\beta^{2\mu+1}F_{\beta\vp}\in C_{\beta,0}^\alpha$ and
			\[\left\|\langle\beta\rangle^{-\alpha}\beta^{2\mu+1}F_{\beta\vp}\right\|_{C_\beta^\alpha}\lesssim\|H\|_{X_0}.\]\fi
		\end{enumerate}
		
	The uniqueness follows by noting $\pa_\beta(\beta^{2\mu}F)=2\mu \beta^{2\mu-1}H$ and $\beta^{2\mu}F|_{\beta=0}=0$.
	\end{proof}

\begin{lem} It holds that (for $F,\partial_{\beta}F\in C((0,\infty)\times\mathbb{T})$)
		\begin{align}\label{f3}
			\|F\|_{\GG}\lesssim\|\langle\beta\rangle^{\alpha-1} F\|_{L^\infty}+\|\langle\beta\rangle^{\alpha}\beta\partial_{\beta}F\|_{L^\infty}.
		\end{align}	
	\end{lem}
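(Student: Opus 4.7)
My plan is to exploit the decomposition $\GG = \GG^0 \oplus C(\TT)$ by splitting $F$ into its limit at $\beta = +\infty$ (which contributes to $C(\TT)$) and a remainder that decays like $\langle\beta\rangle^{-\alpha}$ (which lies in $\GG^0 = C_\beta^\alpha$). Throughout, I denote $A := \|\langle\beta\rangle^{\alpha-1} F\|_{L^\infty} + \|\langle\beta\rangle^{\alpha}\beta\partial_\beta F\|_{L^\infty}$ and want $\|F\|_\GG \lesssim A$.

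The first step extracts the limit at infinity. The bound $\|\langle\beta\rangle^{\alpha}\beta\partial_\beta F\|_{L^\infty}\le A$ implies $|\partial_\beta F(\beta,\phi)|\lesssim A\beta^{-1}\langle\beta\rangle^{-\alpha}$, which is integrable on $[1,\infty)$. Thus for each $\phi\in\TT$ the limit
\[F_\infty(\phi):=\lim_{\beta\to\infty}F(\beta,\phi)=F(1,\phi)+\int_1^\infty \partial_s F(s,\phi)\,ds\]
exists, and the estimate $|F(\beta,\phi)-F_\infty(\phi)|\le\int_\beta^\infty|\partial_s F(s,\phi)|\,ds\lesssim A\langle\beta\rangle^{-\alpha}$ holds uniformly in $\phi$ for $\beta\ge 1$. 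Continuity of $F$ combined with this uniform convergence yields $F_\infty\in C(\TT)$. Using $|F(1,\phi)|\lesssim\|\langle\beta\rangle^{\alpha-1}F\|_{L^\infty}\le A$ we obtain $\|F_\infty\|_{L^\infty(\TT)}\lesssim A$.

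The second step controls the remainder $F_1 := F - F_\infty$ in $C_\beta^\alpha$. For $\beta\ge 1$, the tail estimate above directly gives $\langle\beta\rangle^\alpha|F_1(\beta,\phi)|\lesssim A$; for $\beta<1$, we have $\langle\beta\rangle^\alpha\sim 1$ and $|F_1|\le|F|+|F_\infty|\lesssim A$ since $\langle\beta\rangle^{1-\alpha}\le 2^{1-\alpha}$ there. Hence $\|\langle\beta\rangle^\alpha F_1\|_{L^\infty}\lesssim A$. Since $\partial_\beta F_1=\partial_\beta F$ and $\langle\beta\rangle^{\alpha-1}\le\langle\beta\rangle^\alpha$, we also get $\|\langle\beta\rangle^{\alpha-1}\beta\partial_\beta F_1\|_{L^\infty}\le A$. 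Now applying the already-established inequality \eqref{f1} to $F_1$ produces $\|F_1\|_{C_\beta^\alpha}\lesssim A$, i.e., $F_1\in\GG^0$ with norm $\lesssim A$. Combining $F=F_1+F_\infty$ with the definition of the direct-sum norm gives $\|F\|_\GG\le\|F_1\|_{\GG^0}+\|F_\infty\|_{L^\infty(\TT)}\lesssim A$, as required.

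The only mild obstacle is making sense of $F_\infty$ and showing it lies in $C(\TT)$; this is exactly where the stronger weight $\langle\beta\rangle^\alpha\beta$ on $\partial_\beta F$ (compared to the $\langle\beta\rangle^{\alpha-1}\beta$ in \eqref{f1}) becomes essential, since it is precisely this decay that renders $\partial_\beta F$ integrable at $+\infty$ and permits the splitting. After that, the argument reduces to a direct application of \eqref{f1}, so no new machinery is needed.
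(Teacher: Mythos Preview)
Your proof is correct and follows essentially the same approach as the paper: both define the limit at infinity via $F(1,\phi)+\int_1^\infty\partial_\beta F(s,\phi)\,ds$, subtract it to obtain a remainder $F_1$, bound $\|\langle\beta\rangle^\alpha F_1\|_{L^\infty}$ by splitting into $\beta<1$ and $\beta\ge1$, and then invoke \eqref{f1}. If anything, you are slightly more explicit than the paper in verifying $F_\infty\in C(\TT)$ via uniform convergence.
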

	
	\begin{proof}
		We define
		\begin{equation}\label{fdef}
			\displaystyle f(\phi):= F(1,\phi)+\int_1^\infty F_\beta(s,\phi)\,ds,\qquad \forall\ \phi\in\TT.
		\end{equation}	
		Note that $|F(1,\phi)|\lesssim\|\langle\beta\rangle^{\alpha-1} F\|_{L^\infty} $ and
		\begin{align*}
			\int_1^\infty \left|F_\beta(s,\phi)\right|\,ds&\lesssim
			\|\langle\beta\rangle^{\alpha}\beta\partial_{\beta}F\|_{L^\infty}\int_1^\infty\frac1{s^{1+\alpha}}\,ds
			\lesssim\|\langle\beta\rangle^{\alpha}\beta\partial_{\beta}F\|_{L^\infty}<+\infty.
		\end{align*}
		Thus, $f$ is well-defined and
		$$\|f\|_{L^\infty(\TT)}\lesssim\|\langle\beta\rangle^{\alpha-1} F\|_{L^\infty}+\|\langle\beta\rangle^{\alpha}\beta\partial_{\beta}F\|_{L^\infty}.$$
		Let $F_1(\beta,\phi)=F(\beta,\phi)-f(\phi)$. For $0< \beta\leq 1$ we have\begin{align*}
			\langle\beta\rangle^{\alpha-1}|F_1(\beta,\phi)|&\lesssim|F_1(\beta,\phi)|\leq|F(\beta,\phi)|+|f(\phi)|\lesssim\|\langle\beta\rangle^{\alpha-1} F\|_{L^\infty}+\|f\|_{L^\infty(\TT)}\\&\lesssim\|\langle\beta\rangle^{\alpha-1} F\|_{L^\infty}+\|\langle\beta\rangle^{\alpha}\beta\partial_{\beta}F\|_{L^\infty}.
		\end{align*}
		For $\beta\geq 1$ we have
		\begin{align*}
			F(\beta,\phi)= &F(1,\phi)+\int_1^{\beta} F_\beta(s,\phi)=f(\phi)-\int_{\beta}^\infty F_\beta(s,\phi)\,ds,\\
			F_1(\beta,\phi)=&F(\beta,\phi)-f(\phi)=-\int_{\beta}^\infty F_\beta(s,\phi)\,ds.
		\end{align*}
	Thus, we get
	\begin{align*}
			|F_1(\beta,\phi)|\leq&\int_{\beta}^\infty |F_\beta(s,\phi)|\,ds\lesssim
			\|\langle\beta\rangle^{\alpha}\beta\partial_{\beta}F\|_{L^\infty}\int_{\beta}^\infty\frac1{s^{1+\alpha}}\,ds
			\lesssim \langle\beta\rangle^{-\alpha}\|\langle\beta\rangle^{\alpha}\beta\partial_{\beta}F\|_{L^\infty}.
		\end{align*}		
		This shows that
		\beno
		\|\langle\beta\rangle^{\alpha} F_1\|_{L^\infty}\lesssim\|\langle\beta\rangle^{\alpha-1} F\|_{L^\infty}+\|\langle\beta\rangle^{\alpha}\beta\partial_{\beta}F\|_{L^\infty} .\eeno
We also have  $ \partial_{\beta}F_1=\partial_{\beta}F$ and $ \|\langle\beta\rangle^{\alpha-1}\beta\partial_{\beta}F_1\|_{L^\infty}=\|\langle\beta\rangle^{\alpha-1}\beta\partial_{\beta}F\|_{L^\infty}
		\leq\|\langle\beta\rangle^{\alpha}\beta\partial_{\beta}F\|_{L^\infty}$. Now it follows from \eqref{f1} that $F_1\in C_{\beta}^{\alpha}$ and
		\beno
		\|F_1\|_{C_{\beta}^{\alpha}}\lesssim\|\langle\beta\rangle^{\alpha-1} F\|_{L^\infty}+\|\langle\beta\rangle^{\alpha}\beta\partial_{\beta}F\|_{L^\infty}.
		\eeno
		
		Recalling that $ \GG:=\GG^0\oplus C(\TT)$, $\mathcal{G}^0:=C_\beta^\alpha$, $F=F_1+f$, we infer that
		\begin{align*}
			\|F\|_{\GG}=\|F_1\|_{C_{\beta}^{\alpha}}+\|f\|_{L^\infty(\TT)}\lesssim\|\langle\beta\rangle^{\alpha-1} F\|_{L^\infty}+\|\langle\beta\rangle^{\alpha}\beta\partial_{\beta}F\|_{L^\infty}.
		\end{align*}This completes the proof.
	\end{proof}	
	\begin{lem}\label{f_phi}
		Let $H\in X_0$ and $F=T_0 H$ be given by \eqref{H_to_F}. Then $\left\|\beta^{2\mu}F_{\vp}\right\|_{\GG}\lesssim\|H\|_{X_0}.$
	\end{lem}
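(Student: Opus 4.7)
The plan is to apply the equivalent-norm criterion \eqref{f3} directly to the function $\beta^{2\mu}F_\vp$. That is, I will show both
\[\|\langle\beta\rangle^{\alpha-1}\,\beta^{2\mu}F_\vp\|_{L^\infty}\lesssim\|H\|_{X_0}\qquad\text{and}\qquad \|\langle\beta\rangle^{\alpha}\,\beta\,\pa_\beta(\beta^{2\mu}F_\vp)\|_{L^\infty}\lesssim\|H\|_{X_0},\]
from which \eqref{f3} yields the desired bound $\|\beta^{2\mu}F_\vp\|_\GG\lesssim \|H\|_{X_0}$.

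For the $L^\infty$ piece, I will use $\pa_\vp=\pa_\phi-\pa_\beta$ to write $\beta^{2\mu}F_\vp=\beta\cdot\beta^{2\mu-1}F_\phi-\beta^{2\mu}F_\beta$. Lemma \ref{F<H} gives $\|\beta^{2\mu-1}F_\phi\|_{C_\beta^\alpha}+\|\beta^{2\mu}F_\beta\|_{C_\beta^\alpha}\lesssim\|H\|_{X_0}$; in particular, by definition of the $C_\beta^\alpha$ norm, both $\langle\beta\rangle^\alpha\beta^{2\mu-1}F_\phi$ and $\langle\beta\rangle^\alpha\beta^{2\mu}F_\beta$ are bounded by $\|H\|_{X_0}$. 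Multiplying by $\langle\beta\rangle^{\alpha-1}$ then gives a bound by $\langle\beta\rangle^{-1}\beta\lesssim 1$ in the first term and $\langle\beta\rangle^{-1}\lesssim 1$ in the second.

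For the derivative piece, the key computation is to differentiate the identity $H=F+\tfrac{\beta}{2\mu}F_\beta$ in $\vp$, using $\pa_\vp\beta=-1$, to obtain $\beta F_{\beta\vp}=2\mu H_\vp-2\mu F_\vp+F_\beta$. Then
\[\beta\,\pa_\beta(\beta^{2\mu}F_\vp)=2\mu\beta^{2\mu}F_\vp+\beta^{2\mu+1}F_{\beta\vp}=2\mu\beta^{2\mu}H_\vp+\beta^{2\mu}F_\beta,\]
and the cancellation eliminates the problematic $\beta^{2\mu}F_\vp$ term. Since $\|\beta^{2\mu}H_\vp\|_{C_\beta^\alpha}\leq\|H\|_{X_0}$ and $\|\beta^{2\mu}F_\beta\|_{C_\beta^\alpha}\lesssim\|H\|_{X_0}$ by Lemma \ref{F<H}, applying the weighted $L^\infty$ part of the $C_\beta^\alpha$ norm delivers the required bound with weight $\langle\beta\rangle^\alpha$.

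The only real subtlety is the algebraic identity $\beta\,\pa_\beta(\beta^{2\mu}F_\vp)=2\mu\beta^{2\mu}H_\vp+\beta^{2\mu}F_\beta$; everything else is bookkeeping with the weighted norms already controlled by Lemma \ref{F<H}. Once this identity is in place, the lemma follows in a few lines from \eqref{f3}.
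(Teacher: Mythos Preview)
Your proposal is correct and follows essentially the same route as the paper: apply \eqref{f3}, handle the $\langle\beta\rangle^{\alpha-1}$-weighted $L^\infty$ bound via $F_\vp=F_\phi-F_\beta$ together with Lemma~\ref{F<H}, and for the derivative term use the identity $\beta\,\pa_\beta(\beta^{2\mu}F_\vp)=2\mu\beta^{2\mu}H_\vp+\beta^{2\mu}F_\beta$ obtained by applying $\pa_\vp$ to $H=F+\tfrac{\beta}{2\mu}F_\beta$. The paper's proof is organized the same way, with the same key cancellation.
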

	
	\begin{proof}
		Lemma \ref{F<H} shows that $F\in Y_0$ and $\|F\|_{Y_0}\lesssim\|H\|_{X_0}$. It follows from \eqref{f3} that\begin{align}\label{f4}
			\|\beta^{2\mu}F_\vp\|_{\GG}\lesssim\|\langle\beta\rangle^{\alpha-1}\beta^{2\mu}F_\vp\|_{L^\infty}+
			\|\langle\beta\rangle^{\alpha}\beta\partial_{\beta}\left(\beta^{2\mu}F_\vp\right)\|_{L^\infty}.
		\end{align}
		Applying $\p_\vp$ to both sides of $H=F+\frac{\beta}{2\mu}F_\beta$ gives that $H_\vp=F_\vp-\frac1{2\mu}F_\beta+\frac\beta{2\mu}F_{\beta\vp}$, i.e., $\p_\beta\left(\beta^{2\mu}F_\vp\right)=\beta^{2\mu-1}(2\mu H_\vp+F_\beta)$, then we get by Lemma \ref{F<H} that\begin{align}\label{f5}
			&\|\langle\beta\rangle^{\alpha}\beta\partial_{\beta}\left(\beta^{2\mu}F_\vp\right)\|_{L^\infty}=
			\|\langle\beta\rangle^{\alpha}\beta^{2\mu}(2\mu H_\vp+F_\beta)\|_{L^\infty}\\ \notag&\lesssim\|\langle\beta\rangle^{\alpha}\beta^{2\mu}H_\vp\|_{L^\infty}+\|\langle\beta\rangle^{\alpha}\beta^{2\mu}F_\beta\|_{L^\infty}
			\lesssim\|\beta^{2\mu}H_\vp\|_{C_{\beta}^{\alpha}}+\|\beta^{2\mu}F_\beta\|_{C_{\beta}^{\alpha}}\lesssim\|H\|_{X_0}.
		\end{align}
		Since $F_\vp=F_\phi-F_\beta$, we get by Lemma \ref{F<H} that
		\begin{align}\label{f6}
			\|\langle\beta\rangle^{\alpha-1}\beta^{2\mu}F_\vp\|_{L^\infty}\lesssim& \|\langle\beta\rangle^{\alpha}\beta^{2\mu-1}F_\phi\|_{L^{\infty}}+\|\langle\beta\rangle^{\alpha}\beta^{2\mu}F_\beta\|_{L^{\infty}}\\
			\notag{\lesssim}& \|\beta^{2\mu-1}F_\phi\|_{C_\beta^\alpha}+\|\beta^{2\mu}F_\beta\|_{C_\beta^\alpha}\lesssim\|H\|_{X_0}.
		\end{align}Here we used $\langle\beta\rangle^{\alpha-1}\beta^{2\mu}\lesssim\min(\langle\beta\rangle^{\alpha}\beta^{2\mu-1},\langle\beta\rangle^{\alpha}\beta^{2\mu}) $.
		Now the result follows from \eqref{f4}, \eqref{f5}, \eqref{f6}.
	\end{proof}

		\begin{lem}\label{lem_nonlinear}
		If $\psi\in Y_m$, then we have
		\begin{align*}
			&\ \|\beta^{2\mu-1}\psi\|_{\GG_m^-}+\|\beta^{2\mu}\psi_\beta\|_{\GG_m^-}+\|\beta^{2\mu-1}\psi_\phi\|_{\GG_m^0}+\|\beta^{2\mu}\psi_{\beta\phi}\|_{\GG_m^0}\\
			&\qquad+\left\|\beta^{2\mu}\left(\psi_\vp+\frac\beta{2\mu}\psi_{\beta\vp}\right)\right\|_{\GG_m^0}+\|\beta^{2\mu}\psi_\vp\|_{\GG_m}+
			\|\beta^{2\mu+1}\psi_{\beta\vp}\|_{\GG_m}\lesssim \|\psi\|_{Y_m},
		\end{align*}
		where the implicit constant is independent of $m\in\NN_+$.
	\end{lem}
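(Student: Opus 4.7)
Write any $\psi\in Y_m=Y_{0,m}\oplus\langle\beta^{1-2\mu}\rangle$ as $\psi=\psi_1+c\beta^{1-2\mu}$ with $\psi_1\in Y_{0,m}$ and $c\in\CC$, so that $\|\psi_1\|_{Y_{0,m}}+|c|\lesssim\|\psi\|_{Y_m}$. I will handle the two pieces separately, and within the $\psi_1$-piece reduce everything to the single-function estimates already established in Lemmas \ref{F<H} and \ref{f_phi}.

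\smallskip
For the special piece $\psi_*=c\beta^{1-2\mu}$, direct computation yields $\psi_{*,\phi}=\psi_{*,\beta\phi}=0$, while
\[\beta^{2\mu-1}\psi_*=c,\quad \beta^{2\mu}\psi_{*,\beta}=c(1-2\mu),\quad \beta^{2\mu}\psi_{*,\vp}=c(2\mu-1),\quad \beta^{2\mu+1}\psi_{*,\beta\vp}=2\mu c(1-2\mu)\]
are all constants; these lie in $\CC\subset\GG_m^-$ and in $V_m\subset\GG_m$ with the desired estimates. The crucial observation is the identity
\[\psi_{*,\vp}+\tfrac{\beta}{2\mu}\psi_{*,\beta\vp}=c(2\mu-1)\beta^{-2\mu}+c(1-2\mu)\beta^{-2\mu}=0,\]
so the combination of term $5$ vanishes on the special piece; this is essential because $\GG_m^0$ does not contain nonzero constants.

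\smallskip
For $\psi_1\in Y_{0,m}$, let $H_1=\psi_1+\frac{\beta}{2\mu}\psi_{1,\beta}\in X_{0,m}$. By the uniqueness in Lemma \ref{F<H}, $\psi_1=T_0H_1$. Since the Fourier projections $P_0, P_{\neq}$ in $\phi$ commute with $T_0$ and with $\p_\beta,\p_\phi$, and since by definition $\|\psi_1\|_{Y_{0,m}}=\|P_0\psi_1\|_{Y_0}+m^{1/2}\|P_{\neq}\psi_1\|_{Y_0}$, it suffices to prove the $m$-free estimates on the components $P_0\psi_1$ and $P_{\neq}\psi_1$ and then sum with weights $1$ and $m^{1/2}$. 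The bounds
\[\|\beta^{2\mu-1}\psi_1\|_{C_\beta^\alpha},\ \|\beta^{2\mu-1}\psi_{1,\phi}\|_{C_\beta^\alpha},\ \|\beta^{2\mu}\psi_{1,\beta}\|_{C_\beta^\alpha},\ \|\beta^{2\mu}\psi_{1,\beta\phi}\|_{C_\beta^\alpha}\lesssim\|H_1\|_{X_0}\]
are exactly Lemma \ref{F<H}, and $\|\beta^{2\mu}\psi_{1,\vp}\|_{\GG}\lesssim\|H_1\|_{X_0}$ is Lemma \ref{f_phi}.

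\smallskip
The remaining terms (and term $2$) follow from algebraic identities. Since $\p_\vp\beta=-1$, differentiating $H=\psi+\frac{\beta}{2\mu}\psi_\beta$ in $\vp$ gives
\[H_\vp=\psi_\vp-\tfrac{1}{2\mu}\psi_\beta+\tfrac{\beta}{2\mu}\psi_{\beta\vp},\]
whence
\[\beta^{2\mu}\!\left(\psi_\vp+\tfrac{\beta}{2\mu}\psi_{\beta\vp}\right)=\beta^{2\mu}H_\vp+\tfrac{1}{2\mu}\beta^{2\mu}\psi_\beta,\qquad \beta^{2\mu+1}\psi_{\beta\vp}=2\mu\beta^{2\mu}H_\vp-2\mu\beta^{2\mu}\psi_\vp+\beta^{2\mu}\psi_\beta;\]
combined with $\beta^{2\mu}\psi_\beta=2\mu(\beta^{2\mu-1}H-\beta^{2\mu-1}\psi)$ (which handles term $2$) and the fact $\beta^{2\mu}H_{1,\vp}\in\GG_m^0$ built into the definition of $X_{0,m}$, all the claimed norms are bounded, using $\GG_m^0\hookrightarrow\GG_m$ and $\GG_m^-\hookrightarrow\GG_m$ where needed.

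\smallskip
\textbf{Expected obstacle.} Technically nothing is deep — the proof is a careful orchestration of the previously proven Lemmas \ref{F<H}, \ref{f_phi} combined with Fourier decomposition in $\phi$. The only delicate points are (i) verifying the identity $\psi_{*,\vp}+\frac{\beta}{2\mu}\psi_{*,\beta\vp}\equiv 0$ so that term $5$ lands in $\GG_m^0$ (not merely $\GG_m$) despite the radial piece $\beta^{1-2\mu}\notin Y_{0,m}$, and (ii) tracking the $m^{1/2}$ weights so that the final estimates are uniform in $m\in\NN_+$, which requires applying the single-mode lemmas to $P_0\psi_1$ and $P_{\neq}\psi_1$ separately before reassembling.
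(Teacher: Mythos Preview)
Your proposal is correct and follows essentially the same approach as the paper: split $\psi=\psi_1+c\beta^{1-2\mu}$, handle the radial piece by direct computation (including the key cancellation $\psi_{*,\vp}+\tfrac{\beta}{2\mu}\psi_{*,\beta\vp}=0$), and for $\psi_1=T_0H_1$ apply Lemmas~\ref{F<H} and~\ref{f_phi} to $P_0\psi_1$ and $P_{\neq}\psi_1$ separately, then reassemble with the $m^{1/2}$ weight and close the remaining terms via the identity $H_\vp=\psi_\vp-\tfrac{1}{2\mu}\psi_\beta+\tfrac{\beta}{2\mu}\psi_{\beta\vp}$. The paper's proof is organized in the same way, with only cosmetic differences in the order of estimates.
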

	\begin{proof}
		For $\psi\in Y_m$, we decompose $\psi$ as $\psi=F+c_0\beta^{1-2\mu}$, where $F\in Y_{0,m}$ and $c_0\in\CC$ by recalling that $Y_m$ is the direct sum of $Y_{0,m}$ and $\langle\beta^{1-2\mu}\rangle$. Since $F\in Y_{0,m}\subset Y_0$, the function $H:=F+\frac{\beta}{2\mu}F_\beta$ belongs to $X_0$ by the definition of the space $Y_0$ and hence lies in $X_{0,m}$, because $H$ is $m$-fold symmetric. Also, \eqref{H_to_F} holds (i.e., $F=T_0H$). Hence, $P_0F=T_0P_0H$, $P_{\neq}F=T_0P_{\neq}H$.
		
		It follows from Lemma \ref{F<H} that
		\begin{align*}
			\|\beta^{2\mu-1}\psi\|_{\GG_m^-}&\lesssim \|\beta^{2\mu-1}F\|_{\GG_m^0}+|c_0|\lesssim\|\beta^{2\mu-1}P_0F\|_{\GG^0}+m^{1/2}\|\beta^{2\mu-1}P_{\neq}F\|_{\GG^0}+|c_0|\\
			&\lesssim \|P_0H\|_{X_0}+m^{1/2}\|P_{\neq}H\|_{X_0}+|c_0|\lesssim\|H\|_{X_{0,m}}+|c_0|\lesssim\|\psi\|_{Y_m};
		\end{align*}
		and similarly one has
		 \[\|\beta^{2\mu}H_\vp\|_{\GG_m^0}+\|\beta^{2\mu}F_\beta\|_{\GG_m^0}+\|\beta^{2\mu-1}F_\phi\|_{\GG_m^0}+\|\beta^{2\mu}F_{\beta\phi}\|_{\GG_m^0}\lesssim\|H\|_{X_{0,m}}\lesssim\|\psi\|_{Y_m},\]
		\[\|\beta^{2\mu}\psi_\beta\|_{\GG_m^-}+\|\beta^{2\mu-1}\psi_\phi\|_{\GG_m^0}+\|\beta^{2\mu}\psi_{\beta\phi}\|_{\GG_m^0}\lesssim\|\psi\|_{Y_m}.\]
		
		It follows from Lemma \ref{f_phi}, $P_0F=T_0P_0H$, $P_{\neq}F=T_0P_{\neq}H$ that
		\begin{equation}\label{Eq.5.10}
			\begin{aligned}
				\left\|\beta^{2\mu}F_\vp\right\|_{\GG_m}&=\left\|\beta^{2\mu}\p_\beta(P_0F)\right\|_{\GG}+m^{1/2}\left\|\beta^{2\mu}\p_\vp (P_{\neq}F)\right\|_{\GG}\\
				&\lesssim \|P_0H\|_{X_0}+m^{1/2}\|P_{\neq}H\|_{X_0}\lesssim \|H\|_{X_{0,m}}\lesssim \|\psi\|_{Y_m}.
			\end{aligned}
		\end{equation}
		Now, note that $\beta^{2\mu}\psi_\vp=\beta^{2\mu}F_\vp+c_0(2\mu-1)$, hence by \eqref{Eq.5.10},
		\begin{align*}
			\left\|\beta^{2\mu}\psi_\vp\right\|_{\GG_m}&\leq\left\|\beta^{2\mu}F_\vp\right\|_{\GG_m}+|c_0(2\mu-1)|\lesssim\|\psi\|_{Y_m}+|c_0|\lesssim \|\psi\|_{Y_m}.
		\end{align*}Applying $\p_\vp$ to both sides of $H=F+\frac{\beta}{2\mu}F_\beta$ gives that $H_\vp=F_\vp-\frac1{2\mu}F_\beta+\frac\beta{2\mu}F_{\beta\vp}$, then\begin{align*}
			\left\|\beta^{2\mu}\left(F_\vp+\frac\beta{2\mu}F_{\beta\vp}\right)\right\|_{\GG_m^0}&=
			\left\|\beta^{2\mu}\left(H_\vp+\frac1{2\mu}F_{\beta}\right)\right\|_{\GG_m^0}\lesssim \|\beta^{2\mu}H_\vp\|_{\GG_m^0}+\|\beta^{2\mu}F_\beta\|_{\GG_m^0}\lesssim \|\psi\|_{Y_m}.
		\end{align*}
		Now, note that $\psi_\vp+\frac\beta{2\mu}\psi_{\beta\vp}=F_\vp+\frac\beta{2\mu}F_{\beta\vp}$ and $ \GG_m^0\subset\GG_m$, then\begin{align*}
			&\left\|\beta^{2\mu}\left(\psi_\vp+\frac\beta{2\mu}\psi_{\beta\vp}\right)\right\|_{\GG_m}
			 \leq\left\|\beta^{2\mu}\left(\psi_\vp+\frac\beta{2\mu}\psi_{\beta\vp}\right)\right\|_{\GG_m^0}=\left\|\beta^{2\mu}\left(F_\vp+\frac\beta{2\mu}F_{\beta\vp}\right)\right\|_{\GG_m^0}\lesssim \|\psi\|_{Y_m},\\
			&\|\beta^{2\mu+1}\psi_{\beta\vp}\|_{\GG_m}\leq 2\mu\left\|\beta^{2\mu}\left(\psi_\vp+\frac\beta{2\mu}\psi_{\beta\vp}\right)\right\|_{\GG_m}+2\mu\left\|\beta^{2\mu}\psi_\vp\right\|_{\GG_m}\lesssim \|\psi\|_{Y_m}.
		\end{align*}This completes the proof.
	\end{proof}

	Now we are in a position to prove Proposition \ref{nonlinear_part}.

	\begin{proof}[Proof of Proposition \ref{nonlinear_part}]
	The facts that $ \GG_m^0\subset\GG_m^-\subset\GG_m$
	and $\GG_m$ is a unital Banach algebra with the embedding norm independent of $m\in\NN_+$(by  Lemma \ref{algebra}) will be used repeatedly.
	\begin{enumerate}[(1)]
			\item \underline{$\NNN_3(\psi)\Omega:B_{\delta}^{(Y_m)}(\psi_0)\times\WWW_m\longrightarrow Z_m$ is $C^2$.} Taylor expansion of $x\mapsto x^{-\frac1{2\mu}}$ around $x=1$ gives an analytic function from $U$ to $\GG_m$ on a neighborhood $U\subset \GG_m$ of $1=\beta^{2\mu}\p_\vp\psi_0$, where $U$ is independent of $m\in\NN_+$ since $\|f\|_{L^\infty}\lesssim\|f\|_{\GG_m}$. Lemma \ref{lem_nonlinear} shows that $\|\beta^{2\mu}\psi_\vp\|_{\GG_m}\lesssim\|\psi\|_{Y_m}$, hence for $\delta>0$ small enough, we have
			\[\psi\mapsto \left(\beta^{2\mu}\psi_\vp\right)^{-\frac1{2\mu}}\in C^2\left(B_{\delta}^{(Y_m)}(\psi_0); \GG_m\right),\]
			with $C^2$ norm independent of $m\in\NN_+$. By Lemma \ref{lem_nonlinear}, the linear map $\psi\mapsto \beta^{2\mu+1}\psi_{\beta\vp}$ is in $C^2(Y_m; \GG_m)$. Hence, by $\beta^{-2\mu}\GG_m\cdot \WWW_m\hookrightarrow Z_m$ due to Proposition \ref{prop3.6}, we have
			\[\NNN_3(\psi)\Omega=\frac1{2\mu}\beta^{-2\mu}\cdot \beta^{2\mu+1}\psi_{\beta\vp}\cdot\left(\beta^{2\mu}\psi_\vp\right)^{-\frac1{2\mu}}\cdot\Omega\in C^2\Big(B_{\delta}^{(Y_m)}(\psi_0)\times\WWW_m; Z_m\Big),\]
			with $C^2$ norm independent of $m\in\NN_+$.
			
			\item \underline{$\p_\phi\NNN_2(\psi):B_{\delta}^{(Y_m)}(\psi_0)\longrightarrow Z_m$ is $C^2$.} The argument is similar. We write
			\[\beta^{2\mu}\NNN_2(\psi)=\frac{\beta^{2\mu+1}\psi_{\beta\vp}\cdot \beta^{2\mu-1}\psi_\phi-\beta^{2\mu}\psi_{\beta\phi}\cdot \beta^{2\mu}\psi_\vp}{2\beta^{2\mu}\psi_\beta}.\]
			Expanding $x\mapsto x^{-1}$ around $-1=\beta^{2\mu}\p_\beta\psi_0$, then by using Lemma \ref{lem_nonlinear} and $\GG_m\cdot \GG_m^0\hookrightarrow\GG_m^0$, we obtain
			$\beta^{2\mu}\NNN_2\in C^2\left(B_{\delta}^{(Y_m)}(\psi_0); \GG_m^0\right)$ with $C^2$ norm independent of $m\in\NN_+$. Then the desired result follows from the definition of $Z_m$.
			
			\item \underline{$\p_\vp\NNN_1(\psi):B_{\delta}^{(Y_m)}(\psi_0)\longrightarrow Z_m$ is $C^2$.} We write
			\[\beta^{2\mu-1}\NNN_1(\psi)=\frac{2\beta^{2\mu}\psi_\beta\cdot \beta^{2\mu}\left(\psi_\vp+\frac\beta{2\mu}\psi_{\beta\vp}\right)}{\beta^{2\mu+1}\psi_{\beta\vp}}-\frac1{\mu}\beta^{2\mu}\psi_\beta-
			\frac{\beta^{2\mu}\psi_{\beta\phi}}{\beta^{2\mu+1}\psi_{\beta\vp}}\beta^{2\mu}\NNN_2(\psi).\]
			Expanding  $x\mapsto x^{-1}$ around $-2\mu=\beta^{2\mu+1}\p_{\beta\vp}\psi_0$, then by using Lemma \ref{lem_nonlinear}, $\beta^{2\mu}\NNN_2\in C^2\left(B_{\delta}^{(Y_m)}(\psi_0); \GG_m^0\right)$ and $\GG_m\cdot \GG_m^0\hookrightarrow\GG_m^0$, we obtain $\beta^{2\mu-1}\NNN_1\in C^2\left(B_{\delta}^{(Y_m)}(\psi_0); \GG_m^-\right)$ with $C^2$ norm independent of $m\in\NN_+$. The desired result follows from the definition of $Z_m$.
					\end{enumerate}\smallskip
		
		Summing up, we conclude the proof of Proposition \ref{nonlinear_part}.
	\end{proof}

	\subsection{Proof of Theorem \ref{IFT}}
	
	The proof relies on Proposition \ref{nonlinear_part} and the following key Proposition \ref{linear_part}, which shows the solvability of the linearized problem.
	
	\begin{prop}\label{linear_part}
		Assume that $m\geq 2$, $\mu>\frac12$ and $\alpha\in\left(0,\alpha_\mu\right)$, where $\alpha_\mu$ is given by \eqref{alpha_mu}. The linearized operator $\LLL=\frac{\p\FF}{\p\psi}(\psi_0, \Omega_0):Y_m\to Z_m$ is given by
		\[\LLL(\psi)=\frac1\mu \p_\vp(\beta H_\vp)+\frac{\mu H_{\phi\phi}}{\beta}+\frac{\gamma}{2\mu}\psi_\phi,\]
		where $H=\psi+\frac\beta{2\mu}\psi_\beta$. Moreover, the operator $\LLL:Y_m\to Z_m$is bijective and has a bounded inverse whose norm is independent of $m\geq2$.
	\end{prop}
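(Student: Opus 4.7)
The plan is to proceed in four stages: compute the linearization explicitly, establish uniqueness, solve a simplified linearized problem, and then handle the full problem via a frequency splitting.

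First, I would compute $\mathcal{L} = \frac{\partial \mathcal{F}}{\partial \psi}(\psi_0, \Omega_0)$ by differentiating each term in \eqref{nonlin_map_main}. Using $\psi_0 = \frac{1}{2\mu-1}\beta^{1-2\mu}$, we have $\partial_\phi \psi_0 = 0$, $\partial_\beta \psi_0 = -\beta^{-2\mu}$, $\partial_\varphi \psi_0 = \beta^{-2\mu}$, $\partial_{\beta\varphi}\psi_0 = -2\mu\beta^{-2\mu-1}$, and $\Omega_0 = \gamma$. Plugging these into the derivatives of $\mathcal{N}_1, \mathcal{N}_2, \mathcal{N}_3$ and performing algebraic simplifications should yield the claimed formula, where the choice of variable $H = \psi + \frac{\beta}{2\mu}\psi_\beta$ collapses what is a priori a third-order differential operator on $\psi$ into a second-order operator on $H$ plus the nonlocal term $\frac{\gamma}{2\mu}\psi_\phi$; verifying this structural identity is a clean but instructive calculation.

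For uniqueness, I would expand $\psi \in Y_m$ in a Fourier series with respect to $\phi \in \TT$; because $\psi$ is $m$-fold symmetric only modes with $m \mid n$ appear. The homogeneous equation $\mathcal{L}(\psi) = 0$ decouples into a family of linear ODEs in $\beta$ (one for each mode $n$) with a regular singular point at $\beta = 0$ and at infinity. I would use Frobenius-type analysis together with properties of generalized hypergeometric functions to classify solutions, and then rule out all nontrivial solutions satisfying the required weighted Hölder/boundedness conditions at $\beta \to 0^+$ and $\beta \to \infty$. Crucially the hypothesis $m \geq 2$ forces $|n| \geq 2$ for nonzero modes, avoiding the degenerate case $n = \pm 1$ where (for $\mu \in (\tfrac12, 1]$) the admissible decay exponents collide and nontrivial kernel elements exist.

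To establish existence, I would first drop the nonlocal term $\frac{\gamma}{2\mu}\psi_\phi$ and solve the simplified problem $\frac{1}{\mu}\partial_\varphi(\beta H_\varphi) + \frac{\mu}{\beta} H_{\phi\phi} = G$ for $G \in Z_m$. The crucial observation is the factorization
\[
(\beta \partial_\varphi)^2 + (\mu \partial_\phi)^2 = (\beta \partial_\varphi + \ii \mu \partial_\phi)(\beta \partial_\varphi - \ii \mu \partial_\phi),
\]
which reduces matters to two first-order complex transport-type equations $(\beta\partial_\varphi \pm \ii \mu \partial_\phi) Q = \tilde{G}$. These I would solve by passing to Fourier series in $\phi$ and writing the solution as an explicit convolution kernel in $\beta$. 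The main obstacle is obtaining bounds in the spaces $X_m, Z_m$ that are uniform in $m$: the kernel has nontrivial contributions from the infinitely many spiral loops shown in Figure \ref{algebraic}, and naive estimates diverge. I would introduce a dyadic partition of unity adapted to these loops to localize interactions, and crucially exploit the fact that $m$-fold symmetry with $m \geq 2$ forces $\hat G_{\pm 1} = 0$; this allows subtracting non-contributing mean terms from the kernel, yielding sharper convolution estimates (this is where I expect the most delicate work to lie).

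Finally, to handle the full operator with the nonlocal term $\frac{\gamma}{2\mu}\psi_\phi$, I would reformulate the problem as the coupled system \eqref{Eq.Q-psi} in $(Q,\psi)$, and split into high and low frequencies in $\phi$ at some large threshold $N$. On high modes (with $|n| \geq Nm$), the nonlocal term is a small perturbation: writing it as $T^N = \mathcal{H}^N \partial_\phi T_0^N$ where $\mathcal{H}^N$ is the solution operator for the simplified problem and $T_0$ is the operator from Lemma \ref{F<H}, one shows $\|T^N\| = O(1/N)$, so $\operatorname{id} + T^N$ is invertible by Neumann series. On the finite-dimensional low-mode block, I would pass to a slightly larger ambient space where the analogue of $T_0$ becomes compact, apply Fredholm theory, use injectivity from the uniqueness step to conclude bijectivity, and then bootstrap regularity back into $Y_m, Z_m$. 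Combining the two pieces yields a bounded inverse whose norm is independent of $m \geq 2$, since the constants in the simplified-problem estimates and the threshold $N$ can be chosen independently of $m$.
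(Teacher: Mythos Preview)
Your proposal is correct and follows essentially the same approach as the paper: the four stages---explicit Gâteaux differentiation, Fourier-mode uniqueness via Frobenius/hypergeometric analysis, the simplified problem via the first-order factorization $(\beta\partial_\varphi)^2+(\mu\partial_\phi)^2=(\beta\partial_\varphi+\ii\mu\partial_\phi)(\beta\partial_\varphi-\ii\mu\partial_\phi)$ with partition-of-unity kernel estimates exploiting $\hat G_{\pm1}=0$, and the high/low frequency split combining a Neumann series with Fredholm compactness---match the paper's structure exactly. One small imprecision: the high-frequency threshold in the paper is a fixed integer $N_0$ independent of $m$ (not $Nm$), and the $m$-independence of the inverse comes from the fact that all estimates on the simplified-problem solution operator $\mathcal H$ and on $T_0$ are uniform in $m$.
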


	The proof of Proposition \ref{linear_part} is rather complicated and will be presented next section.

	\begin{proof}[Proof of Theorem \ref{IFT}]
		By Proposition \ref{nonlinear_part}, $\FFF\in C^1\Big(B_{\delta}^{(Y_m)}(\psi_0)\times\WWW_m; Z_m\Big)$ and \\$\frac{\p\FF}{\p\psi}: B_{\delta}^{(Y_m)}(\psi_0)\times\WWW_m\to L(Y_m, Z_m)$ is continuous uniformly in $m\in\NN_+$, where $L(Y_m, Z_m)$ denotes the set of bounded linear operators from $Y_m$ to $Z_m$. Moreover, ${\FF(\psi_0,\Omega_0)=0}$. By Proposition \ref{linear_part}, $\frac{\p\FF} {\p\psi}(\psi_0,\Omega_0)$ is a linear isomorphism from $Y_m$ to $Z_m$, whose inverse has the norm independent of $m\geq2$. Hence, the proof of implicit function theorem for Banach spaces yields the existence of $\varepsilon_\psi, \varepsilon_\Omega>0$ which are independent of $m\geq 2$ and the $C^1$ regularity of $\Xi$. Using Proposition \ref{nonlinear_part} again, we know that $\Xi$ is $C^2$.
		
		It is obvious from \eqref{nonlin_map_main} that $\FF$ maps real-valued $\psi,\Omega$ into real-valued distributions, so the implicit function theorem yields a real-valued $\Xi(\Omega)$ for real-valued $\Omega$.
	\end{proof}

	\section{Solvability of the linearized problem}\label{sec_linear_problem}

	In this section, we prove Proposition \ref{linear_part}. The first step is to compute the Fr\'echet derivative $\frac{\p\FF}{\p\psi}(\psi_0, \Omega_0): Y_m\to Z_m$. Recall the nonlinear map
	\begin{equation*}
		\FFF(\psi,\Omega)=\p_\vp \NNN_1(\psi)+\p_\phi\NNN_2(\psi)+\NNN_3(\psi)\Omega,
	\end{equation*}
	where
	\[\NNN_1(\psi)=\frac{2\psi_\beta\psi_\vp}{\psi_{\beta\vp}}-\frac{\psi_{\beta\phi}}{\psi_{\beta\vp}}\NNN_2(\psi),\ \  \NNN_2(\psi)=\frac{\psi_{\beta\vp}\psi_\phi-\psi_{\beta\phi}\psi_\vp}{2\psi_\beta},\ \ \NNN_3(\psi)=\frac{\psi_{\beta\vp}\psi_\vp^{-\frac1{2\mu}}}{2\mu};\]
	and the special solution $\psi_0=\frac1{2\mu-1}\beta^{1-2\mu}$, $\Omega_0=\gamma=2-\frac1\mu$.
	
	By Proposition \ref{nonlinear_part}, the nonlinear map $\FF: B_{\delta}^{(Y_m)}(\psi_0)\times\WWW_m\to Z_m$ is $C^1$. As a consequence, to compute the Fr\'echet derivative $\frac{\p\FF}{\p\psi}(\psi_0, \Omega_0): Y_m\to Z_m$, it suffices to compute the G\^{a}teaux derivative. For $\psi\in Y_m$, we have
	\begin{align*}
		\frac{d}{dt}\Big|_{t=0}\NNN_2(\psi_0+t\psi)&=\frac{d}{dt}\Big|_{t=0}\frac{\left(-2\mu\beta^{-2\mu-1}+t\psi_{\beta\vp}\right)t\psi_\phi-
			t\psi_{\beta\phi}\left(\beta^{-2\mu}+t\psi_\vp\right)}{2\left(-\beta^{-2\mu}+t\psi_{\beta}\right)}\\
		&=\frac{-2\mu\beta^{-2\mu-1}\psi_\phi-\beta^{-2\mu}\psi_{\beta\phi}}{-2\beta^{-2\mu}}=\frac12\left(\psi_{\beta\phi}+\frac{2\mu}\beta\psi_\phi\right);\\
		\frac{d}{dt}\Big|_{t=0}\NNN_1(\psi_0+t\psi)&=\frac{d}{dt}\Big|_{t=0}\frac{2\left(-\beta^{-2\mu}+t\psi_\beta\right)
			\left(\beta^{-2\mu}+t\psi_\vp\right)}{-2\mu\beta^{-2\mu-1}+t\psi_{\beta\vp}}\\
		&\qquad-\NNN_2(\psi_0)\frac{d}{dt}\Big|_{t=0}\frac{t\psi_{\beta\phi}}{-2\mu\beta^{-2\mu-1}+t\psi_{\beta\vp}}-
		\frac{\p_{\beta\phi}\psi_0}{\p_{\beta\vp}\psi_0}\frac{d}{dt}\Big|_{t=0}\NNN_2(\psi_0+t\psi)\\
		&=\frac1\mu\beta^{1-2\mu}\frac{d}{dt}\Big|_{t=0}\left(1-t\beta^{2\mu}\psi_\beta\right)\left(1+t\beta^{2\mu}\psi_\vp\right)
		\left(1-\frac1{2\mu}t\beta^{2\mu+1}\psi_{\beta\vp}\right)^{-1}\\
		&=\frac\beta\mu\left(\frac\beta{2\mu}\psi_{\beta\vp}+\psi_\vp-\psi_\beta\right);\\
		\frac{d}{dt}\Big|_{t=0}\NNN_3(\psi_0+t\psi)&=\frac1{2\mu}\frac{d}{dt}\Big|_{t=0}
		 \left(-2\mu\beta^{-2\mu-1}+t\psi_{\beta\vp}\right)\left(\beta^{-2\mu}+t\psi_\vp\right)^{-\frac1{2\mu}}=\frac1{2\mu}\left(\psi_\vp+\beta\psi_{\beta\vp}\right),
	\end{align*}
	where we used $\NNN_2(\psi_0)=0$, $\p_{\beta\phi}\psi_0=0$. Hence,
	\begin{align*}
		\frac{d}{dt}\Big|_{t=0}\FF(\psi_0+t\psi, \Omega_0)&=\frac1\mu\p_\vp\left(\beta\left(\frac\beta{2\mu}\psi_{\beta\vp}+\psi_\vp-\psi_\beta\right)\right)+
		\frac\mu\beta\p_\phi\left(\psi_\phi+\frac\beta{2\mu}\psi_{\beta\phi}\right)\\
		&\qquad+\frac\gamma{2\mu}\left(\psi_\vp+\beta\psi_{\beta\vp}\right).
	\end{align*}
	For $\psi\in Y_m$, let $H=\psi+\frac\beta{2\mu}\psi_\beta\in X_m$, then we can rewrite the above expression as
	\[\frac{d}{dt}\Big|_{t=0}\FF(\psi_0+t\psi, \Omega_0)=\frac1\mu\p_\vp(\beta H_\vp)+\frac\mu\beta H_{\phi\phi}+\frac\gamma{2\mu}\psi_\phi=:\LLL(\psi).\]
	This proves the first part of Proposition \ref{linear_part}. It remains to show that $\LLL: Y_m\to Z_m$ is an isomorphism and the norm of $\LLL^{-1}$ is independent of $m\geq 2$. We restate it as the follows.
	
	\begin{prop}\label{linear_thm}
		Let $\alpha\in \left(0,\alpha_\mu\right)$, $m\geq 2$ and $\mu>\frac12$, where $\alpha_\mu$ is given by \eqref{alpha_mu}. For any $G=\p_\vp F_1+\p_\phi F_2\in Z_m$, there exists a unique solution $\psi\in Y_m$ to the linearized equation
		\begin{equation}\label{linear_eq}
			\begin{cases}
				\frac1\mu \p_\vp(\beta H_\vp)+\frac{\mu H_{\phi\phi}}{\beta}+\frac{\gamma}{2\mu}\psi_\phi=\p_\vp F_1+\p_\phi F_2,\\
				H=\psi+\frac\beta{2\mu}\psi_\beta\in X_m.
			\end{cases}
		\end{equation}
		Moreover, there holds  $\beta^{2\mu-1}F_1\in\GG_m^-, \beta^{2\mu}F_2\in\GG_m^0$ and
		\begin{equation}\label{linear_est}
			\|\psi\|_{Y_m}\lesssim\|\beta^{2\mu-1}F_1\|_{\GG_m^-}+\|\beta^{2\mu}F_2\|_{\GG_m^0}.
		\end{equation}
	\end{prop}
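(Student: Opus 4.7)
The plan is to recast \eqref{linear_eq} as the coupled system
\begin{equation*}
\begin{cases}
\frac1\mu\p_\vp(\beta Q_\vp)+\frac{\mu Q_{\phi\phi}}{\beta}+\frac{\gamma}{2\mu}\psi_\phi=0,\\[2pt]
H+Q=\psi+\frac{\beta}{2\mu}\psi_\beta,
\end{cases}
\end{equation*}
where the prescribed $H\in X_m$ is obtained by solving the \emph{simplified} equation $\frac1\mu\p_\vp(\beta H_\vp)+\frac{\mu H_{\phi\phi}}{\beta}=G$ for the given right-hand side $G=\p_\vp F_1+\p_\phi F_2\in Z_m$. The point of introducing the auxiliary $Q$ is that, once this simplified problem is solvable, the nonlocal term $\frac{\gamma}{2\mu}\psi_\phi$ is absorbed into a single unknown pair $(Q,\psi)$, with $\psi = T_0(H+Q)$ recovered from $H+Q$ by the integration operator of Lemma \ref{F<H}.

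For the simplified problem, I would use the complex factorization
$(\beta\p_\vp)^2+(\mu\p_\phi)^2=(\beta\p_\vp-\mathrm{i}\mu\p_\phi)(\beta\p_\vp+\mathrm{i}\mu\p_\phi)$,
reducing the matter to inverting the first-order transport-type operators $\beta\p_\vp\pm\mathrm{i}\mu\p_\phi$. Expanding in Fourier modes in $\phi$, one can write down an explicit convolution-type solution formula; the assumption $m\ge 2$ (hence $\hat G_{\pm1}=0$) is what allows one to subtract off the non-contributing part of the kernel and recover kernel estimates in the weighted H\"older spaces. This yields an operator $\mathcal H$ mapping $G\mapsto H$ with bounds in the norms defining $X_m$ and $Z_m$ that are uniform in $m$.

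The hard part is solving the full coupled system. I would split in $\phi$-frequency at a threshold $N$ to be chosen. On the high-frequency part, the first equation gives $Q=-\mathcal H^N\psi_\phi$ and the second gives $\psi=T_0^N(H+Q)$; composing, the system becomes $(\mathrm{id}+T^N)(H+Q)=H$ with $T^N=\mathcal H^N\p_\phi T_0^N$. Because each $\phi$-derivative on the simplified kernel costs a factor of the inverse frequency, the operator norm of $T^N$ is $O(1/N)$, so a Neumann series inverts $\mathrm{id}+T^N$ for $N$ large and independent of $m$. On the low-frequency part (finitely many modes), I would add a compactly-perturbative term $\frac{\gamma}{\beta}\rho(\beta)Q$ on both sides with the bump $\rho$ from \eqref{bump_function}; this converts the first equation into a genuinely second-order elliptic ODE system in $\beta$ for each Fourier mode, yielding bounded solution operators $T_1,T_2$ for which $(\mathrm{id}-T_1T_0)(H+Q)=T_2H+H$. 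Compactness of $T_0$ between a working space and a larger auxiliary space (established in a lemma like Lemma \ref{compact}) permits Fredholm theory.

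Injectivity, needed both in the Fredholm step and for the final uniqueness, is the subtle point: I would analyze the homogeneous equation mode-by-mode in $\phi$, so that it reduces to linear ODEs in $\beta$ with regular singular points at $\beta=0$ and at $\beta=\infty$. The indicial exponents and a generalized-hypergeometric-function comparison (cf.\ the role of Lemma \ref{uniqueness_ODE} in the announced Proposition \ref{uniqueness_prop}) force the only admissible $X_m$-solution to vanish; crucially, the argument breaks down exactly when $n=\pm1$ and $\mu\in(\tfrac12,1]$, which is precisely where $m\ge 2$ (eliminating $|n|=1$ modes) is invoked. Combining high- and low-frequency existence with this uniqueness yields a $Q\in X_m$ and $\psi\in Y_m$ solving \eqref{linear_eq}, together with bounds \eqref{linear_est} whose constants, by construction, are independent of $m\ge 2$.
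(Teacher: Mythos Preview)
Your proposal is correct and follows essentially the same approach as the paper: the paper also solves the simplified equation via the factorization into first-order operators (Propositions \ref{linear_basic_estimate1} and \ref{linear_schauder_prop}), handles the full coupled system by the high/low frequency split with Neumann series plus Fredholm theory (Propositions \ref{high_frequency} and \ref{low_frequency}), and proves uniqueness via the hypergeometric ODE analysis (Lemma \ref{uniqueness_ODE}). The only point you gloss over is that the zero Fourier mode---which carries the constant $c_0$ in $\GG_m^-$ and produces the $\beta^{1-2\mu}$ summand of $X_m$ and $Y_m$---must be split off and handled by an elementary explicit integration before invoking Proposition \ref{linear_schauder_prop} (which requires $\widehat{F_i}{}_{,0}=0$); the paper does exactly this at the start of its proof of Proposition \ref{linear_thm}.
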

	
	The proof of Proposition \ref{linear_thm} involves several steps. In Subsection \ref{Uniqueness}, we prove the uniqueness part.  In Subsection \ref{section_Basic_lin}, we consider a simplified linearized equation, i.e., \eqref{linear_Poisson_eq}, in which we ignore the nonlocal term $\frac{\gamma}{2\mu}\psi_\phi$. In Subsection \ref{Full_linear}, we consider the full linearized problem. We note that for high frequencies, the nonlocal term is a small perturbation of \eqref{linear_Poisson_eq}; and for low frequencies, we use the compactness method to construct solutions.
	
	\subsection{Uniqueness of the solution}\label{Uniqueness}
	In this subsection, we prove the uniqueness part of Proposition \ref{linear_thm}.
	\begin{prop}\label{uniqueness_prop}
		Assume that $\alpha\in(0,1)$, $m\geq 2$ and $\mu>1/2$. If $(H, \psi)\in X_m\times Y_m$ solves
		\begin{equation}\label{homogeneous_lin}
			\begin{cases}
				\frac1\mu \p_\vp(\beta H_\vp)+\frac{\mu H_{\phi\phi}}{\beta}+\frac{\gamma}{2\mu}\psi_\phi=0,\\
				H=\psi+\frac\beta{2\mu}\psi_\beta,
			\end{cases}
		\end{equation}
		then $H=0, \psi=0$.
	\end{prop}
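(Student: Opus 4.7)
The plan is to Fourier-decompose the system in the angular variable $\theta=\beta+\phi$ and analyze each mode as an ODE. Writing $H=\sum_{n\in m\ZZ}\hat H_n(\beta)e^{\ii n(\beta+\phi)}$ and similarly $\psi=\sum_{n\in m\ZZ}\hat\psi_n(\beta)e^{\ii n(\beta+\phi)}$, the $m$-fold symmetry retains only modes with $n=0$ or $|n|\ge m\ge 2$. A short computation shows that $\p_\vp=\p_\phi-\p_\beta$ acts as $-\p_\beta$ on each Fourier coefficient, since the $\ii n$ contribution from $\p_\phi$ cancels against that from $\p_\beta e^{\ii n\beta}$. Consequently, \eqref{homogeneous_lin} decouples into the family of ODE systems
\begin{equation*}
\frac{1}{\mu}(\beta\hat H_n')'-\frac{\mu n^2}{\beta}\hat H_n+\frac{\ii n\gamma}{2\mu}\hat\psi_n=0,\qquad \hat H_n=\left(1+\frac{\ii n\beta}{2\mu}\right)\hat\psi_n+\frac{\beta}{2\mu}\hat\psi_n'.
\end{equation*}

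The zero mode is handled quickly: $(\beta\hat H_0')'=0$ gives $\hat H_0=c\log\beta+d$, and since the $\langle\beta^{1-2\mu}\rangle$ summand of $X_m$ contributes only to the radial ($n=0$) mode, the condition $H\in X_m$ excludes both $\log\beta$ and any nonzero constant, forcing $c=d=0$; then $\hat\psi_0+\frac{\beta}{2\mu}\hat\psi_0'=0$ has general solution $C\beta^{-2\mu}$, ruled out by $\psi\in Y_m$. For $n\ne 0$, I would eliminate $\hat\psi_n$ using the first equation to obtain a third-order linear ODE in $\hat H_n$ alone, with a regular singular point at $\beta=0$; a direct computation shows that its indicial polynomial factors as $(s-(1-2\mu))(s-\mu|n|)(s+\mu|n|)$. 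Membership in $X_m$ requires $\beta^{2\mu-1}\hat H_n$ to be bounded at the origin for $n\ne 0$, and since $|n|\ge 2>\gamma=2-1/\mu$ one has $-\mu|n|<1-2\mu$, so the Frobenius exponent $-\mu|n|$ is inadmissible. By Lemma \ref{ODE_lem}, the admissible local solutions near $\beta=0$ span a two-dimensional subspace; an analogous asymptotic analysis at $\beta=\infty$, where the oscillatory mode $e^{-\ii n\beta}$ is excluded by the weighted H\"older decay, cuts out a second two-dimensional subspace.

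The crux is to match these two two-dimensional subspaces and show that their intersection is trivial. I plan to recast the third-order ODE as a generalized (confluent) hypergeometric equation via the substitution $z=-\ii n\beta$, so that its solutions can be expressed explicitly in terms of hypergeometric functions and the connection between the local frames at $0$ and at $\infty$ becomes a Kummer-type formula involving ratios of gamma functions. The main obstacle is verifying that the determinant of the relevant $2\times 2$ minor of this connection matrix does not vanish; this nonvanishing is precisely what forces the kernel to be trivial. The hypotheses $|n|\ge m\ge 2$ and $\mu>1/2$ enter essentially here: for $|n|=1$ together with $\mu\in(1/2,1]$ the indicial exponents $1-2\mu$ and $-\mu$ can collide (or the corresponding gamma-function ratio develops a pole), producing genuine nontrivial solutions and accounting for the failure of uniqueness noted in the paper. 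Once $\hat H_n=\hat\psi_n=0$ is established for every admissible mode, reassembly of the Fourier series yields $H=\psi=0$.
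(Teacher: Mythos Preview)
Your Fourier reduction and zero-mode analysis are correct, and reducing each mode to a third-order hypergeometric-type ODE is exactly the paper's route. But your plan for $n\ne 0$ has a dimension-counting gap: two two-dimensional subspaces of a three-dimensional solution space always intersect in at least a line, so no $2\times 2$ minor of the connection matrix can possibly certify a trivial kernel. The missing ingredient is the hypothesis $\psi\in Y_m$, which you never invoke for $n\ne 0$. From the first equation one recovers $\hat\psi_n=\tfrac{2}{in\gamma}\bigl(\mu^2 n^2\beta^{-1}\hat H_n-(\beta\hat H_n')'\bigr)$, and the Frobenius branch $\hat H_n\sim\beta^{1-2\mu}$ then gives $\hat\psi_n\sim c\,\beta^{-2\mu}$ with $c\propto \mu^2n^2-(2\mu-1)^2\ne 0$ for $|n|\ge 2$, so $\beta^{2\mu-1}\hat\psi_n$ blows up at the origin and violates $\psi\in Y_m$. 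Using both constraints therefore cuts the admissible subspace at $\beta=0$ to one dimension, not two.

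The paper exploits this from the outset by working with $\Psi=\beta^{2\mu}\hat\psi_n$ rather than $\hat H_n$: the third-order equation for $\Psi$ has indicial roots $2\mu+n\mu,\;0,\;2\mu-n\mu$, and the condition $\Psi/\beta\in L^\infty$ coming from $Y_m$ already isolates the single branch $\Psi_1\sim\beta^{2\mu+n\mu}$ near the origin. That solution is then identified explicitly as $\beta^{2\mu+n\mu}\,{}_2F_2(\alpha_1{+}1,\alpha_2{+}1;\,2\mu{+}n\mu{+}1,\,2n\mu{+}1;\,-in\beta)$ with $\alpha_1+\alpha_2=2n\mu$ and $\alpha_1\alpha_2=2\mu-1$, and the standard ${}_2F_2$ asymptotics give $\Psi_1(\beta)\sim\beta^{\,2\mu-1+\sqrt{n^2\mu^2-2\mu+1}}$ as $\beta\to\infty$. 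For $|n|\ge 2$ and $\mu>1/2$ this exponent exceeds $1$, contradicting $\Psi/\beta\in L^\infty$ unless the coefficient of $\Psi_1$ vanishes. Thus the argument closes with a single asymptotic check and no connection-matrix determinant is required.
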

	
	\begin{proof}
		We expand $H\in X_m$ and $\psi\in Y_m$ in the form of Fourier series: $$\displaystyle \psi(\beta,\phi)=\sum_{n\in\ZZ, m\mid n}\hat \psi_n(\beta)e^{\ii n(\beta+\phi)}, \qquad H(\beta,\phi)=\sum_{n\in\ZZ, m\mid n}\hat H_n(\beta)e^{\ii n(\beta+\phi)},$$
		where the Fourier coefficients are defined by
		\[\hat \psi_n(\beta)=\frac1{2\pi}\int_\TT \psi(\beta,\phi)e^{-\ii n(\beta+\phi)}\,d\phi,\qquad \hat H_n(\beta)=\frac1{2\pi}\int_\TT H(\beta,\phi)e^{-\ii n(\beta+\phi)}\,d\phi.\]
		Due to $\p_\vp=\p_\phi-\p_\beta$, the homogeneous linear equation \eqref{homogeneous_lin} can be rewritten in modes:
		\[\begin{cases}
			(\beta\p_\beta)^2\hat H_n-\mu^2n^2\hat H_n+\frac\gamma2\ii n\beta \hat \psi_n=0,\\
			\hat H_n=\hat \psi_n+\frac\beta{2\mu}\left(\p_\beta\hat \psi_n+\ii n\hat \psi_n\right),
		\end{cases} m\mid n.\]
		Thanks to $\gamma=2-\frac1\mu$, we get
		\begin{equation*}
			\begin{cases}
				\left(\beta\p_\beta-2\mu+1\right)^2\left(2\mu\beta^{2\mu-1}\hat H_n\right)-\mu^2n^2\left(2\mu\beta^{2\mu-1}\hat H_n\right)+(2\mu-1)\ii n\beta^{2\mu}\hat \psi_n=0,\\
				2\mu\beta^{2\mu-1}\hat H_n=\p_\beta\left(\beta^{2\mu}\hat \psi_n\right)+\ii n\beta^{2\mu}\hat \psi_n.
			\end{cases}
		\end{equation*}
		Since $(H, \psi)\in X_m\times Y_m$, we know that
		\[\beta^{2\mu-1}\hat H_n\in L^\infty,\qquad \beta^{2\mu-1}\hat \psi_n\in L^\infty.\]
		As a result, Lemma \ref{uniqueness_ODE} below implies that $\hat H_n=0$ and $\hat \psi_n=0$, and therefore $H=0, \psi=0$. This completes the proof of the uniqueness.
	\end{proof}

	\begin{lem}\label{uniqueness_ODE}
		Let $n\in\ZZ\setminus\{\pm1\}$ and $\mu>1/2$. If $Q\in L^\infty((0,\infty))$ and $\Psi$ solve
		\begin{equation}\label{homogen_ODE_freq}
			\begin{cases}
				\left(\beta\p_\beta-2\mu+1\right)^2Q-\mu^2n^2Q+(2\mu-1)\ii n\Psi=0,\\
				Q=\Psi_\beta+\ii n\Psi
			\end{cases}\beta>0,
		\end{equation}
		and $\Psi(\beta)/\beta\in L^\infty((0,+\infty))$, then $Q=0$, $\Psi=0$.
	\end{lem}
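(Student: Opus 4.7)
My plan is to handle $n=0$ directly and for $n\in\ZZ\setminus\{0,\pm1\}$ to reduce the coupled system \eqref{homogen_ODE_freq} to a single third-order linear ODE for $\Psi$, then combine Frobenius analysis at the regular singular point $\beta=0$ with asymptotic analysis at the irregular singular point $\beta=\infty$, using connection formulas for the generalized hypergeometric function (Lemma~\ref{ODE_lem}). For $n=0$ the first equation is Euler-type with general solution $Q=C_1\beta^{2\mu-1}+C_2\beta^{2\mu-1}\log\beta$; both branches grow at infinity because $\mu>1/2$, forcing $Q\equiv 0$, whereupon $\Psi_\beta=0$ combined with $\Psi/\beta\in L^\infty$ forces $\Psi\equiv 0$. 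For $n\neq 0$, complex conjugation sends a solution at parameter $n$ to one at $-n$ while preserving the $L^\infty$ hypotheses, so it suffices to treat $n\ge 2$. Writing $\delta=\beta\p_\beta$, the first equation factors as $(\delta-s_+)(\delta-s_-)Q=-(2\mu-1)\ii n\Psi$ with $s_\pm=(2\mu-1)\pm\mu n$; substituting $Q=\beta^{-1}(\delta+\ii n\beta)\Psi$ from the second equation and using $(\delta-c)\beta^{-1}=\beta^{-1}(\delta-c-1)$ produces
\[
(\delta-s_+-1)(\delta-s_--1)\delta\,\Psi+\ii n\beta\bigl[(\delta-s_+)(\delta-s_-)+(2\mu-1)\bigr]\Psi=0,
\]
a generalized hypergeometric equation of type ${}_2F_2$ (in the variable $\ii n\beta$) with a regular singular point at $\beta=0$ and an irregular singular point at $\beta=\infty$.

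The indicial exponents at $\beta=0$ are $0,\ s_++1,\ s_-+1$. The bound $\Psi/\beta\in L^\infty$ excludes the exponent $0$ (whose mode has $\Psi\sim$ nonzero constant), and for $n\ge 2$ the bound $Q\in L^\infty$ excludes the exponent $s_-+1$, since $s_-=(2\mu-1)-\mu n\le -1$ and $Q\sim\beta^{s_-}$ in that mode. Hence any admissible solution is a scalar multiple of the unique Frobenius solution $\Psi_\star$ with leading power $\beta^{s_++1}$, which by Lemma~\ref{ODE_lem} extends analytically to $(0,\infty)$. The hypothesis $n\neq\pm 1$ is essential here: for $n=\pm 1$ and $\mu\le 1$ one has $s_-\in(-1,0]$, the $s_-+1$ mode remains bounded, and uniqueness genuinely fails, in agreement with the remark in Section~\ref{Sec.sketch}.

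At $\beta=\infty$ the characteristic equation $\lambda^2(\lambda+\ii n)=0$ produces three asymptotic solutions: two algebraic modes $\Psi\sim\beta^{\sigma_\pm}$ with $\sigma_\pm=(2\mu-1)\pm\sqrt{\mu^2n^2-(2\mu-1)}$, and one oscillatory mode $\Psi\sim e^{-\ii n\beta}\bigl(1+O(\beta^{-1})\bigr)$. For $n\ge 2,\mu>1/2$, one checks that $\sigma_+>1$ (so the $\sigma_+$-mode has both $\Psi/\beta$ and $Q\sim\beta^{\sigma_+}$ unbounded at infinity), while $\sigma_-<0$ and the oscillatory mode both yield bounded $\Psi$ with $Q\to 0$; hence the admissible subspace at $\infty$ is two-dimensional. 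Combined with the one-dimensional admissible subspace at $0$, the count $1+2=3=\dim(\text{solution space})$ shows that in generic linear position the two subspaces must intersect trivially. Verifying this genericity is the main obstacle: using the explicit connection formula for ${}_2F_2$, the coefficient of the forbidden $\sigma_+$-mode in $\Psi_\star$ expressed in the asymptotic basis at $\infty$ is a product/quotient of Gamma functions, and its non-vanishing for $n\ge 2$, $\mu>1/2$ follows from locating the poles and zeros of the Gamma factors, once again using $n\neq\pm 1$ to rule out resonances that would produce cancellations. Once the connection coefficient is shown nonzero, $\Psi_\star$ cannot appear with a nonzero scalar, so $\Psi\equiv 0$, and then $Q=\Psi_\beta+\ii n\Psi\equiv 0$ by the second equation.
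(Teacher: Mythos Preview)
Your approach is essentially the same as the paper's: both handle $n=0$ directly, reduce $|n|\ge 2$ to a third-order ODE for $\Psi$, use Frobenius analysis at $\beta=0$ to single out the solution with leading exponent $s_++1=2\mu+n\mu$, and then kill it via the large-$\beta$ asymptotics of the associated ${}_2F_2$. The only difference is packaging: the paper explicitly computes the Frobenius recursion to identify $\Psi_1(\beta)=\beta^{2\mu+n\mu}\,{}_2F_2(\alpha_1+1,\alpha_2+1;\,2\mu+n\mu+1,\,2n\mu+1;\,-\ii n\beta)$ (with $\alpha_1+\alpha_2=2n\mu$, $\alpha_1\alpha_2=2\mu-1$) and then simply quotes the NIST asymptotic $\Psi_1\sim\beta^{\sigma_+}$, whereas you frame the same fact as a dimension count plus a nonvanishing connection coefficient---an equivalent but slightly less direct formulation, since with the explicit ${}_2F_2$ parameters in hand the Gamma-product coefficient can be read off and checked nonzero without any genericity discussion.
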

	
	We start with a classical ODE lemma regarding linear ordinary differential equations with regular singular points, whose proof is omitted here and we refer the readers to \cite{Fedoryuk}, section 2.1 of Chapter 1.
	
	\begin{lem}\label{ODE_lem}
		Consider the $3$-rd order linear equation
		\begin{equation}\label{3-ODE}
			w'''+\frac{p_1(z)}{z}w''+\frac{p_2(z)}{z^2}w'+\frac{p_3(z)}{z^3}w=0,
		\end{equation}
		where $p_1, p_2, p_3$ are holomorphic functions. Then $z=0$ is a regular singular point of  \eqref{3-ODE}. The indicial polynomial $p(\lambda)$ is given by
		\[p(\lambda)=\lambda(\lambda-1)(\lambda-2)+p_1(0)\lambda(\lambda-1)+p_2(0)\lambda+p_3(0).\]
		Assume that the indicial polynomial has three real roots $\lambda_1>\lambda_2\geq \lambda_3$. Then, by the standard Frobenius' method, these roots correspond to a fundamental system of solutions $\{w_1, w_2, w_3\}$ of the linear equation \eqref{3-ODE}, where $w_1$ can be expressed as a formal series of the Frobenius form which is convergent near $z=0$: $$w_1(z)=z^{\lambda_1}\sum_{k=0}^\infty a_k z^k,\qquad a_0=1\neq 0,$$and they have the asymptotic behavior near $z=0$ as follows (below the implicit constants in $\sim$ are independent of $z$)
		\begin{align*}
			w_1(z)&\sim z^{\lambda_1},\qquad  w_2(z)\sim z^{\lambda_2},\\
			w_3(z)&\sim z^{\lambda_3} \ \ \text{ if } \ \ \lambda_2\neq \lambda_3, \qquad w_3(z)\sim z^{\lambda_3}\ln z \ \ \text{ if } \ \ \lambda_2=\lambda_3.
		\end{align*}
	\end{lem}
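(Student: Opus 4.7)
The plan is to prove Lemma \ref{ODE_lem} by the classical Frobenius method. First I would multiply the equation by $z^3$ and seek formal solutions of the form $w(z)=z^{\lambda}\sum_{k=0}^\infty a_k z^k$ with $a_0=1$. Substituting this ansatz into \eqref{3-ODE} and extracting the coefficient of $z^{\lambda+k}$ yields a recursion of the shape
\[
p(\lambda+k)\,a_k+R_k(a_0,\ldots,a_{k-1})=0,
\]
where $p$ is the indicial polynomial stated in the lemma and $R_k$ is a linear expression in the earlier $a_j$'s whose coefficients come from the Taylor expansions of $p_1,p_2,p_3$ at $z=0$. Setting $k=0$ recovers the indicial equation $p(\lambda)=0$, so the admissible leading exponents are exactly the roots $\lambda_1,\lambda_2,\lambda_3$; this is the sense in which $z=0$ is a regular singular point.

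For the solution $w_1$ associated to $\lambda_1$: since $\lambda_1$ is the largest real root of $p$, we have $p(\lambda_1+k)\neq 0$ for every integer $k\ge1$, and the recursion uniquely determines all $a_k$ from $a_0=1$. I would then establish convergence of the series on a neighborhood of $z=0$ by a standard majorant-series argument, using geometric decay of the Taylor coefficients of $p_1,p_2,p_3$ (by holomorphy) together with the cubic growth $p(\lambda_1+k)\sim k^3$ to obtain $|a_k|\le C^k$ for some $C>0$. This produces a solution satisfying the stated asymptotic $w_1\sim z^{\lambda_1}$.

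The construction of $w_2$ and $w_3$ bifurcates. In the generic case where none of the differences $\lambda_i-\lambda_j$ is a positive integer, the same Frobenius procedure run at $\lambda_2$ and $\lambda_3$ yields convergent series $w_j\sim z^{\lambda_j}$. In the resonant case $\lambda_1-\lambda_2\in\NN_+$ (resp.\ $\lambda_1-\lambda_3\in\NN_+$ or $\lambda_2-\lambda_3\in\NN_+$) the recursion breaks at the resonant index, so I would augment the ansatz by a term of the form $c\,w_1\log z$ (resp.\ $c\,w_2\log z$) and solve for the constant $c$ to absorb the obstruction. In each such case the resulting solution still satisfies $w_j\sim z^{\lambda_j}$ because the logarithmic contribution is of order $z^{\lambda_i}\log z$ with $\lambda_i>\lambda_j$, hence subdominant near $z=0$. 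The one case in which the logarithm enters the leading asymptotic is $\lambda_2=\lambda_3$: then the indicial polynomial has a repeated root at $\lambda_3$, a second solution at this exponent cannot be a pure power series, and the ansatz $w_3=c\,w_2\log z+z^{\lambda_3}\sum_{k\ge 0}b_k z^k$ forces $c\neq 0$ at the leading order, producing precisely $w_3\sim z^{\lambda_3}\log z$.

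Linear independence of $\{w_1,w_2,w_3\}$ is immediate from the distinct leading orders of growth/decay near $0$, so they form a fundamental system of \eqref{3-ODE}. The main obstacle is not the existence of the series themselves but the bookkeeping around the resonant integer-difference cases, where one must verify that the inevitable logarithmic corrections in the subleading terms do not pollute the stated leading-order asymptotics. Since this is entirely classical material, in the final write-up I would record only the construction and extraction of asymptotics in the non-resonant case and refer the reader to \cite{Fedoryuk} for the complete resonant analysis.
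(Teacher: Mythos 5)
Your sketch is a correct outline of the classical Frobenius method, and it matches the paper's treatment: the paper gives no proof of this lemma at all, deferring entirely to \cite{Fedoryuk} (Section 2.1 of Chapter 1), which contains exactly the construction you describe. Your handling of the delicate points (resonant integer differences contributing only subdominant logarithmic terms attached to the larger exponent, and the forced logarithm in the leading asymptotics when $\lambda_2=\lambda_3$) is consistent with the statement, and like the paper you ultimately refer the resonant bookkeeping to the same reference, so the two approaches coincide.
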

	
	Now we are ready to prove Lemma \ref{uniqueness_ODE}.
	
	\begin{proof}[Proof of Lemma \ref{uniqueness_ODE}]
		For $n=0$, the function $Q\in L^\infty((0,\infty))$ satisfies the equation
		\beno
		\left(\beta\p_\beta-2\mu+1\right)^2Q=0,\quad  \text{i.e.},\quad (\beta\p_\beta)^2\left(\beta^{1-2\mu}Q\right)=0,
		\eeno
		hence, $Q=C_1\beta^{2\mu-1}\ln\beta+C_2\beta^{2\mu-1}$ for some constants $C_1, C_2$. Since $\mu>1/2$ and $Q\in L^\infty((0,\infty))$, we have $C_1=C_2=0$, hence $Q=0$. It follows from $\Psi_\beta=Q=0$ and $\lim\limits_{\beta\to0+}\Psi(\beta)=0$ that $\Psi=0$.
		
		Now we show the result for $|n|\geq 2$. We only prove the lemma for $n\geq 2$, as for $n\leq -2$ the proof is similar. So we assume that $n\geq 2$. The equation \eqref{homogen_ODE_freq} can be rewritten as
		\begin{align*}
			\Psi'''+\frac{(1-a_n^+-a_n^-)+\ii n\beta}{\beta}\Psi''&+\frac{a_n^+a_n^-+\ii n(1-a_n^+-a_n^-)\beta}{\beta^2}\Psi'\\
			&+\frac{0+\ii n\left(a_n^+a_n^-+2\mu-1\right)\beta}{\beta^3}\Psi=0,\qquad\beta>0,
		\end{align*}
		where $a_n^+=2\mu-1+n\mu, a_n^-=2\mu-1-n\mu$. Lemma \ref{ODE_lem} implies that $\beta=0$ is a regular singular point. We have the indicial polynomial
		\begin{align*}
			p(\lambda)&=\lambda(\lambda-1)(\lambda-2)+\lambda(\lambda-1)(1-a_n^+-a_n^-)+\lambda a_n^+a_n^-\\
			&=\lambda\left(\lambda-(2\mu+n\mu)\right)\left(\lambda-(2\mu-n\mu)\right),
		\end{align*}
		whose roots are $\lambda_1=2\mu+n\mu, \lambda_2=0, \lambda_3=2\mu-n\mu$, with $\lambda_1>\lambda_2\geq \lambda_3$. By Lemma \ref{ODE_lem}, they correspond to a fundamental system of solutions $\{\Psi_1, \Psi_2, \Psi_3\}$ of the linear equation \eqref{homogen_ODE_freq}, with the asymptotic behavior as $\beta\to0+$: $\Psi_1(\beta)\sim \beta^{2\mu+n\mu}$, $\Psi_2(\beta)\sim 1$, and $\Psi_3(\beta)\sim \ln \beta$ for $n=2$, $\Psi_3(\beta)\sim \beta^{2\mu-n\mu}$ for $n\geq 3$,  where the implicit constants depend on $\mu, n$ and $\Psi_1, \Psi_2, \Psi_3$ but independent of $\beta$. Since $\{\Psi_1, \Psi_2, \Psi_3\}$ is a fundamental system of solutions, there exist constants $C_1, C_2, C_3$ such that $\Psi=C_1\Psi_1+C_2\Psi_2+C_3\Psi_3$. It follows from $\lim\limits_{\beta\to0+}\Psi(\beta)=0$ that $C_2=C_3=0$, hence $\Psi=C_1\Psi_1$. Lemma \ref{ODE_lem} also implies that $\Psi_1$ can be expressed as a formal series of the Frobenius form
		\[\Psi_1(\beta)=\beta^{2\mu+n\mu}\sum_{k=0}^\infty a_k\beta^k,\qquad a_0=1\neq 0.\]
		It corresponds to a solution for $Q$ in the form
		\[Q_1(\beta)=\beta^{2\mu+n\mu}\sum_{k=-1}^\infty b_k\beta^k.\]
		Plugging the above two identities into the equation \eqref{homogen_ODE_freq}, we can deduce the recurrence relations of two sequences $\{a_k\}$ and $\{b_k\}$, then using $a_0=1$ we obtain (here we omit the tedious calculations)
		\begin{align*}
			\Psi_1(\beta)&=\beta^{2\mu+n\mu}\sum_{k=0}^\infty\frac{(\alpha_1+1)_k(\alpha_2+1)_k}{(2\mu+n\mu+1)_k(2n\mu+1)_k}\frac{(-\ii n\beta)^k}{k!}\\
			&=\beta^{2\mu+n\mu} \ _2F_2(\alpha_1+1, \alpha_2+1; 2\mu+n\mu+1, 2n\mu+1; -\ii n\beta),
		\end{align*}
		where $\alpha_1, \alpha_2\in\RR$ satisfies $\alpha_1+\alpha_2=2n\mu$ and $\alpha_1\alpha_2=2\mu-1$, $(a)_k$ denotes the (rising) Pochhammer symbol defined by
		\[(a)_k:=\begin{cases}
			1 & k=0\\
			a(a+1)\cdots(a+k-1) & k\in\NN_+
		\end{cases}\]
		for $a\in\RR$, and$\ _2F_2$ is the generalized hypergeometric function, see Chapter 16 in \cite{NIST}. By the properties of the generalized hypergeometric functions, the series defining $\Psi_1$ is convergent for all $\beta\in[0,\infty)$ and it is an analytic function. Moreover, by the general asymptotic properties of generalized hypergeometric functions, see section 16.11 in \cite{NIST}, we have
		\[\Psi_1(\beta)\sim \beta^{2\mu+n\mu}\beta^{-\left(\min\{\alpha_1, \alpha_2\}+1\right)}\sim \beta^{2\mu-1+\sqrt{n^2\mu^2-2\mu+1}},\qquad \beta\to+\infty.\]
		It follows from $\mu>1/2$ and $n\geq2$ that
		\[2\mu-1+\sqrt{n^2\mu^2-2\mu+1}\geq2\mu-1+\sqrt{4\mu^2-2\mu+1}=2\mu-1+\sqrt{(2\mu-1)^2+2\mu}>\sqrt{2\mu}>1,\]
		hence $C_1=0$, then $\Psi=0$ and $Q=0$. This proves Lemma \ref{uniqueness_ODE} .
	\end{proof}

	\subsection{Solvability of a simplified problem}\label{section_Basic_lin}
	In this subsection, we solve a simplified linearized problem. Note that all the results in this subsection are valid for all $\alpha\in(0,1)$. We remark that all solutions constructed in this subsection not only exist, but also are unique and the uniqueness can be proved by using the same idea as in the previous subsection.

	Given a positive integer $N$, we denote by $X^N$ the subspace of $X_0$ consisting of functions with Fourier modes higher than $N$. More precisely, we define
	\[X^N:=\left\{H\in X_0:\hat{H }_n(\beta)=0\  \text{ for all } |n|\leq N,\ n\in \ZZ\right\},\quad \|H\|_{X^N}:=\|H\|_{X_0}.\]
	 Here and in what follows ${\hat{H }_n(\beta):=\frac{1}{2\pi}}\int_\TT H(\beta,\phi)e^{-\ii n(\beta+\phi)}\,d\phi$ denotes the Fourier coefficient of $H$ with respect to $ \theta=\beta+\phi$.
	Similarly, we define
	\begin{align*}
		Y^N:=\left\{\psi\in Y_0:\hat{\psi}_n(\beta)=0\  \forall\ |n|\leq N,\ n\in \ZZ\right\}, \quad \|\psi\|_{Y^N}:=\|\psi\|_{Y_0},\\
		\GG^{0, N}:=\left\{f\in \GG^0:\hat{f}_n(\beta)=0\  \forall\ |n|\leq N,\ n\in \ZZ\right\}, \quad \|f\|_{\GG^{0,N}}:=\|f\|_{\GG^0}.
	\end{align*}
Note that $X^N$, $Y^N$, $\GG^{0,N}$ are closed subspaces of $X_{0}$, $Y_{0}$ and $\GG^0$, respectively.  From here up to Lemma \ref{lem6.11}, all constants in $\lesssim$ depend only on $\alpha,\mu$, hence they are independent of $m\geq2$ and $N\in\NN_+$.
\if0	{\color{red}\begin{rmk}
Let $P_{>N}H(\beta,\phi):=H(\beta,\phi)-\sum_{n=-N}^N\hat{H }_n(\beta)e^{\ii n(\beta+\phi)}$, then $P_{>N}$ is a bounded linear operator on $ C_{\beta}^\alpha=\GG^0$ and commutes with $\p_\vp$, $\p_{\phi}$.
For $G\in Z^1$ we define
\[\|G\|_{Z'}:=\inf\left(\|\beta^{2\mu-1}F_1\|_{\GG^0}+\|\beta^{2\mu}F_2\|_{\GG^0}:G=\p_\vp F_1+\p_\phi F_2,\beta^{2\mu-1}F_1\in\GG^{0,1},\beta^{2\mu}F_2\in\GG^{0,1}\right).\]
Then $\|G\|_{Z'}\lesssim \|G\|_Z$. Indeed, recall that
\[\|G\|_Z:=\inf\left(\|\beta^{2\mu-1}F_1\|_{\GG^-}+\|\beta^{2\mu}F_2\|_{\GG^0}:G=\p_\vp F_1+\p_\phi F_2,\beta^{2\mu-1}F_1\in\GG^-,\beta^{2\mu}F_2\in\GG^0 \right).\]
Then it is enough to prove that for all $G=\p_\vp F_1+\p_\phi F_2\in Z^1$ with $\beta^{2\mu-1}F_1\in\GG^-, \beta^{2\mu}F_2\in\GG^0$ we have  \begin{equation}\label{GZ}
\|G\|_{Z'}\lesssim \|\beta^{2\mu-1}F_1\|_{\GG^-}+\|\beta^{2\mu}F_2\|_{\GG^0}.\end{equation}  Recall that $\GG^-=\GG^0\oplus\mathbb{C} $, we can write $\beta^{2\mu-1}F_1=\beta^{2\mu-1}F_0+c_0$ such that
$\beta^{2\mu-1}F_0\in\GG^0,$ $c_0\in \mathbb{C}$, then $\|\beta^{2\mu-1}F_1\|_{\GG^-}=\|\beta^{2\mu-1}F_0\|_{\GG^0}+|c_0|$, $P_{>1}c_0=0 $,
\begin{align*} &\beta^{2\mu-1}P_{>1}F_1=P_{>1}(\beta^{2\mu-1}F_1)=P_{>1}(\beta^{2\mu-1}F_0)\in \GG^{0,1}\subset\GG^0,\\
&\beta^{2\mu}P_{>1}F_2=P_{>1}(\beta^{2\mu}F_2)\in \GG^{0,1}\subset\GG^0.
\end{align*} As $G=\p_\vp F_1+\p_\phi F_2\in Z^1 $ we have $G=P_{>1}G=\p_\vp P_{>1}F_1+\p_\phi P_{>1}F_2$, and
\begin{align*} \|G\|_{Z'}&\le\|\beta^{2\mu-1}P_{>1}F_1\|_{\GG^0}+\|\beta^{2\mu}P_{>1}F_2\|_{\GG^0}=
\|P_{>1}(\beta^{2\mu-1}F_0)\|_{\GG^0}+\|P_{>1}(\beta^{2\mu}F_2)\|_{\GG^0}\\ &\lesssim \|\beta^{2\mu-1}F_0\|_{\GG^0}+\|\beta^{2\mu}F_2\|_{\GG^0}\leq\|\beta^{2\mu-1}F_1\|_{\GG^-}+\|\beta^{2\mu}F_2\|_{\GG^0},\quad \text{i.e. \eqref{GZ}}.\end{align*}
 \end{rmk}}
\fi
	\begin{prop}\label{linear_schauder_prop}
		Assume that $\alpha\in(0,1)$, $m\geq 2$ and $\mu>1/2$. There exists a solution $H\in X^1$ to the equation
		\begin{equation}\label{linear_Poisson_eq}
			\frac1\mu \p_\vp(\beta H_\vp)+\frac{\mu H_{\phi\phi}}{\beta}=G=\p_\vp F_1+\p_\phi F_2
		\end{equation}
		for any $\beta^{2\mu-1}F_1\in\GG^{0,1},$
$ \beta^{2\mu}F_2\in\GG^{0,1} $. And we have
		\[\|H\|_{X_{0}}\lesssim\|\beta^{2\mu-1}F_1\|_{C_{\beta}^\alpha}+\|\beta^{2\mu}F_2\|_{C_{\beta}^\alpha}.\]
		Moreover for $n\in \ZZ$, if $\widehat{F_1}_{,n}=\widehat{F_2}_{,n}=0$, then $\widehat{H}_n=0$. In particular, if $F_1$ and $F_2$ are $m$-fold symmetric, then so does $H$.
	\end{prop}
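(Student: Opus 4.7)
The plan is to reduce the equation to a pair of first-order problems by complex factorization. Multiplying by $\mu\beta$ and using $\p_\vp\beta=-1$ to rewrite $\beta\p_\vp(\beta H_\vp)=(\beta\p_\vp)^2H$, the equation becomes
\[
(\beta\p_\vp+\ii\mu\p_\phi)(\beta\p_\vp-\ii\mu\p_\phi)H\;=\;\mu\beta(\p_\vp F_1+\p_\phi F_2).
\]
A direct expansion (using $\p_\vp(\beta F_2)=\beta\p_\vp F_2-F_2$) shows the right-hand side equals
\[
\tfrac12(\beta\p_\vp-\ii\mu\p_\phi)(\mu F_1+\ii\beta F_2)+\tfrac12(\beta\p_\vp+\ii\mu\p_\phi)(\mu F_1-\ii\beta F_2).
\]
Since the two operators $\beta\p_\vp\pm \ii\mu\p_\phi$ commute, it suffices to solve
\[
(\beta\p_\vp+\ii\mu\p_\phi)Q_1=\mu F_1+\ii\beta F_2,\qquad(\beta\p_\vp-\ii\mu\p_\phi)Q_2=\mu F_1-\ii\beta F_2,
\]
and set $H=(Q_1+Q_2)/2$.

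For the first-order equation $(\beta\p_\vp\pm\ii\mu\p_\phi)Q=g$, I would work in the Fourier variable $\theta=\beta+\phi$. Since $\p_\vp e^{\ii n\theta}=0$, the ansatz $Q=\sum_n\tilde Q_n(\beta)e^{\ii n\theta}$ reduces each mode to the linear ODE $-\beta\tilde Q_n'\mp\mu n\tilde Q_n=\hat g_n$. With the natural boundary condition (vanishing at $\beta=0$ or $\beta=\infty$ according to the sign of $\pm\mu n$), the integrating factor $\beta^{\pm\mu n}$ gives an explicit closed form for each mode, and resumming yields $Q$ as the convolution of $g$ with an explicit kernel. The existence of $Q$ together with its weighted H\"older estimate is exactly Proposition \ref{linear_basic_estimate1} in Appendix \ref{AppendixB1}; applied to $g=\mu F_1\pm\ii\beta F_2$ it produces $Q_1,Q_2$, and hence $H=(Q_1+Q_2)/2$, with $\|H\|_{X_0}$ controlled by $\|\beta^{2\mu-1}F_1\|_{C_\beta^\alpha}+\|\beta^{2\mu}F_2\|_{C_\beta^\alpha}$.

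The mode-preservation statements are immediate from this construction, since every step acts diagonally on Fourier modes in $\theta$: if $\widehat{F_1}_{,n}=\widehat{F_2}_{,n}=0$, then $\hat g_n=0$ in both first-order problems, so the $n$-th Fourier mode of each $Q_j$ vanishes, and so does $\widehat{H}_n$. In particular, the hypothesis $\beta^{2\mu-1}F_1,\beta^{2\mu}F_2\in\GG^{0,1}$ (vanishing of the $n=0,\pm1$ modes) places $H$ in $X^1$, and $m$-fold symmetry is preserved for the same reason.

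The main obstacle, packaged in Proposition \ref{linear_basic_estimate1}, is the weighted H\"older estimate itself. Two features make the analysis delicate. First, in the summed convolution kernel the contributions from $|n|=1$ are only borderline integrable and would destroy the H\"older bound; this is why the hypothesis $\hat g_{\pm1}=0$, built into $\GG^{0,1}$ (and automatic from $m$-fold symmetry with $m\geq 2$), is essential, as it allows the offending kernel terms to be subtracted without changing the solution on the relevant subspace. Second, the pseudo-streamlines wrap around the origin infinitely often (see Figure \ref{algebraic}), so the kernel couples infinitely many ``turns'' of the spiral; a partition of unity into annular pieces together with careful bookkeeping of the cross-interactions is then needed to close the estimate. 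These considerations are why the proof of Proposition \ref{linear_basic_estimate1} is long and deferred to Appendix \ref{AppendixB1}.
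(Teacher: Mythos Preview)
Your proposal is correct and follows essentially the same approach as the paper: factor the second-order operator as $(\beta\p_\vp)^2+(\mu\p_\phi)^2=(\beta\p_\vp+\ii\mu\p_\phi)(\beta\p_\vp-\ii\mu\p_\phi)$, decompose the right-hand side accordingly, solve the two first-order equations via Proposition~\ref{linear_basic_estimate1}, and set $H=(Q_1+Q_2)/2$. Your additional commentary on the Fourier-mode construction and the obstacles underlying Proposition~\ref{linear_basic_estimate1} (the role of $\hat g_{\pm1}=0$ and the partition of unity for the spiral turns) is accurate and in line with the paper's discussion in Section~\ref{Sec.sketch} and Appendix~\ref{AppendixB1}.
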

	
	 For each $N\in\NN_+$, we define
	\begin{equation*}
		Z^N:=\left\{G\in Z: G=\p_\vp F_1+\p_\phi F_2 \text{ weakly with } \beta^{2\mu-1}F_1\in \GG^{0, N}, \beta^{2\mu}F_2\in\GG^{0,N}\right\},
	\end{equation*}
	and the norm is $\|G\|_{Z^1}:=\inf\left(\|\beta^{2\mu-1}F_1\|_{\GG^0}+\|\beta^{2\mu}F_2\|_{\GG^0}\right)$, $\|G\|_{Z^N}:=\|G\|_{Z^1} (N\geq 2)$, noticing that $Z^{N}\subset Z^{N-1}$ for all $N\geq 2$.
For further usage, we denote the solution operator from $G$ to $H$ in Proposition \ref{linear_schauder_prop} by $\mathcal{H}: Z^1\to X^1$. Then $\mathcal{H}$ is a bounded linear operator. We also define the restriction
	\[\mathcal{H}^N:=\mathcal H|_{Z^N}: Z^N\to X^N,\qquad N\geq1.\]
	Then $\mathcal{H}^N$ are well-defined bounded linear operators with norms independent of $N$.\smallskip
	
\begin{rmk}
This proposition could be viewed as a partial Schauder estimate for the elliptic equation, see  \cite{Dong, JLW}  for relevant results.
 \end{rmk}		

 The proof of Proposition \ref{linear_schauder_prop} relies on the following  proposition, whose proof is quite complicated and will be given in Appendix \ref{AppendixB1}.  We suggest that readers skip Appendix \ref{AppendixB1}  when reading for the first time in order to quickly grasp the overall idea of this article.

 \begin{prop}\label{linear_basic_estimate1}
		Assume that $\alpha\in(0,1)$, $m\geq 2$ and $\mu>1/2$. There exists a solution $Q\in X^1$ to the equation
		\begin{equation}\label{linear_basic_eq}
			(\beta\p_\vp+\ii\mu\p_\phi)Q=-G,
		\end{equation}
		for any $G$ satisfying  $\beta^{2\mu-1}G\in C_{\beta}^\alpha$ and the Fourier coefficients $\hat G_0=\hat G_{\pm1}=0$, and we have
		\begin{equation}\label{linear_basic_est}
			\|Q\|_{X_0}\lesssim \|\beta^{2\mu-1}G\|_{C_{\beta}^\alpha}.
		\end{equation}
		Moreover for $n\in \ZZ$, if $\hat G_n=0$ then $\hat Q_n=0$.
		In particular, if $G$ is $m$-fold symmetric and $\hat G_0=0$, then  $\hat G_{\pm1}=0$, and $Q$ is also $m$-fold symmetric and $\hat Q_0=0$. The same results hold for the equation $(\beta\p_\vp-\ii\mu\p_\phi)Q=-G$.
	\end{prop}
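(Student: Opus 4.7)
The plan is to construct $Q$ explicitly mode-by-mode by Fourier expanding in $\theta=\beta+\phi$: write $Q=\sum_{n\in\ZZ}\hat Q_n(\beta)e^{\ii n(\beta+\phi)}$ and $G=\sum_{n\in\ZZ}\hat G_n(\beta)e^{\ii n(\beta+\phi)}$. Using $\p_\vp(\hat Q_n e^{\ii n\theta})=-\hat Q_n'(\beta)e^{\ii n\theta}$ and $\p_\phi(\hat Q_n e^{\ii n\theta})=\ii n\hat Q_n e^{\ii n\theta}$, the PDE decouples into the one-parameter family of first-order ODEs
\[
\beta\,\hat Q_n'(\beta)+\mu n\,\hat Q_n(\beta)=\hat G_n(\beta),\qquad n\in\ZZ,
\]
which I would solve in closed form by $\hat Q_n(\beta)=\beta^{-\mu n}\int_0^\beta s^{\mu n-1}\hat G_n(s)\,ds$ for $n\ge 2$ (convergent at $0$ because $\mu(n-2)\ge 0$ and $|\hat G_n(s)|\lesssim s^{1-2\mu}$), by $\hat Q_n(\beta)=-\beta^{-\mu n}\int_\beta^\infty s^{\mu n-1}\hat G_n(s)\,ds$ for $n\le -2$ (convergent at $\infty$ because $\mu>1/2$), and by $\hat Q_n\equiv 0$ for $n=0,\pm 1$, forced by the hypothesis (and consistent with the target space $X^1$). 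These choices are the unique ones giving $\|\beta^{2\mu-1}\hat Q_n\|_{L^\infty}<\infty$, since the homogeneous solutions $c\beta^{-\mu n}$ blow up at one of the endpoints once weighted by $\beta^{2\mu-1}$. The implication $\hat G_n=0\Rightarrow\hat Q_n=0$, and hence preservation of $m$-fold symmetry, is immediate from these formulas.

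Next, I would assemble the modes into a convolution representation by substituting $s=\beta e^{\mp\tau}$, which turns each mode into $\hat Q_n(\beta)=\mp\int_0^\infty e^{-\mu|n|\tau}\hat G_n(\beta e^{\mp\tau})\,d\tau$, so summing over the surviving modes $|n|\ge 2$ expresses $Q(\beta,\phi)$ as a convolution in $\theta$ against a kernel whose leading large-$\tau$ decay is $e^{-2\mu\tau}$, set by the lowest surviving Fourier mode $|n|=2$. This is the decisive role of $\hat G_{\pm 1}=0$: without dropping those modes, the decay rate would only be $e^{-\mu\tau}$, which degenerates at the endpoint $\mu=1/2$. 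To estimate the three quantities comprising $\|Q\|_{X_0}$, namely $\|\beta^{2\mu-1}Q\|_{\GG^0}$, $\|\beta^{2\mu-1}Q_\phi\|_{\GG^0}$ and $\|\beta^{2\mu}Q_\vp\|_{\GG^0}$, I would introduce a dyadic partition of unity $\{\chi_k\}_{k\in\ZZ}$ in the $\log\beta$-variable with each $\chi_k$ supported on an interval of unit length, decompose $G=\sum_k G_k$ accordingly, compute the associated $Q_k$ from the explicit formula, and estimate $\|Q_k\|_{X_0}$ piece-by-piece before summing. The $L^\infty$ contributions follow from the pointwise mode-wise bound $|\beta^{2\mu-1}\hat Q_n(\beta)|\lesssim(\mu(|n|-2)+1)^{-1}\|\beta^{2\mu-1}\hat G_n\|_{L^\infty}$ combined with the kernel decay; the $\beta$-Hölder seminorm I would split into a diagonal regime $|\beta_1-\beta_2|<\tfrac12(\beta_1+\beta_2)$ (where I differentiate the kernel in $\beta$ and use the $C_\beta^\alpha$ input on $G$, cf.\ the criterion \eqref{f1}) and an off-diagonal regime (where $L^\infty$ bounds on $Q$ suffice after exploiting the two-point structure of the seminorm); the $\beta^{2\mu}Q_\vp$ piece is ultimately handled via the equation itself, trading $\beta Q_\vp=-G-\ii\mu Q_\phi$.

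The main obstacle will be closing the Hölder seminorm on $\beta^{2\mu}Q_\vp$ (and on $\beta^{2\mu-1}Q$) as $\beta\to 0^+$: this is the regime in which the pseudo-streamlines wind infinitely around the physical origin (see Figure \ref{algebraic}), the partition-of-unity pieces accumulate, and the kernel convolution is only borderline summable. The cancellation afforded by $\hat G_0=\hat G_{\pm 1}=0$ is exactly what upgrades the kernel decay from $e^{-\mu\tau}$ to $e^{-2\mu\tau}$, and makes the accumulated sum over partition-of-unity pieces absolutely convergent. This $|n|\ge 2$ threshold mirrors the role of the same assumption in Lemma \ref{uniqueness_ODE}, where for $\mu\in(1/2,1]$ the indicial exponents at the low modes fail to produce a growth rate sharp enough to rule out nontrivial kernel elements. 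Finally, the companion equation $(\beta\p_\vp-\ii\mu\p_\phi)Q=-G$ is treated by the identical argument after interchanging the roles of positive and negative Fourier modes (equivalently, complex conjugating throughout).
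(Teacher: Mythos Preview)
Your mode-wise construction, the integral formulas for $\hat Q_n$, the resummation into a convolution, and your identification of the role of $\hat G_{\pm1}=0$ all match the paper's Appendix~\ref{AppendixB1} (see \eqref{Qn1}--\eqref{Q_expression} and, for the cancellation, \eqref{5.14}), and your reduction of the three $X_0$-norm pieces to a single one via the equation is essentially \eqref{Q3}. The substantive divergence is the partition of unity. The paper does \emph{not} partition dyadically in $\log\beta$; it partitions at \emph{unit scale in $\beta$}, writing $G=\sum_{k\ge0}G_k$ with $G_k(s,\phi)=G(s,\phi)\eta(s-k)$ supported on $[k-\tfrac34,k+\tfrac34]$. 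This scale is forced by the kernel: with $z=\beta^\mu e^{-\ii(\beta+\phi)}$ and $\gamma(s,\Phi)=s^\mu e^{-\ii(s+\Phi)}$, the differentiated kernel $\gamma/(z-\gamma)^2$ has, for $s$ ranging over a dyadic neighborhood of a large $\beta$, a family of local minima of $|z-\gamma|$ spaced $2\pi$ apart in $s$ (one at each $s$ with $s+\Phi\equiv\beta+\phi\pmod{2\pi}$), of depth $|z-\gamma|\sim |s-\beta|\,\beta^{\mu-1}$. The unit partition isolates a single such near-resonance per piece, which is what makes the Calder\'on--Zygmund subtraction $\tilde G_k(s,\Phi)-\tilde G_k(s_0,\Phi)$ centered at that one point effective (Lemmas~\ref{lem3} and \ref{lem5}); summing the resulting bounds $|Q_k^{(1)}|\lesssim(|\beta-k|+1)^{\alpha-1}(\beta+k)^{1-\alpha-2\mu}k^{-\alpha}$ over $k$ (Lemma~\ref{lem1}) then produces the $\langle\beta\rangle^{-\alpha}$ gain needed for the weighted $L^\infty$ part of the $C_\beta^\alpha$ norm.

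A dyadic diagonal piece for large $\beta$ bundles $\sim\beta$ of these near-resonances together, and your sketch gives no mechanism to handle them jointly; in practice you would be forced into a further unit-scale sub-decomposition of the diagonal piece, at which point you recover the paper's structure. So while you correctly flag $\beta\to0^+$ as delicate --- and there the dyadic and unit scales coincide, with the paper's Lemma~\ref{lem5} using exactly your three-piece split $[0,\beta/2]\cup[\beta/2,2\beta]\cup[2\beta,5]$ --- the regime $\beta\to\infty$ is where your partition choice would bite, and the criterion \eqref{f1} you cite is not what closes the H\"older seminorm there (it would require a pointwise bound on $\beta\partial_\beta$ of the relevant quantity, which fails at the resonance); rather, it is the two-scale argument of Lemma~\ref{lem1}(iii)--(iv), splitting the sum over $k$ according to whether $|k-k_0|$ is small or large relative to $|\beta_1-\beta_2|\,k_0$.
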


	\begin{proof}[Proof of Proposition \ref{linear_schauder_prop}]
		The linear equation \eqref{linear_Poisson_eq} can be rewritten as
		\[(\beta\p_\vp)^2H+(\mu\p_\phi)^2H=\left(\beta\p_\vp-\ii\mu\p_\phi\right)\frac{\mu F_1+\ii\beta F_2}{2}+\left(\beta\p_\vp+\ii\mu\p_\phi\right)\frac{\mu F_1-\ii\beta F_2}{2}.\]
		Let $Q_1, Q_2\in X^1$ be solutions to $\left(\beta\p_\vp+\ii\mu\p_\phi\right)Q_1=\mu F_1+\ii\beta F_2$ and $\left(\beta\p_\vp-\ii\mu\p_\phi\right)Q_2=\mu F_1-\ii\beta F_2$ respectively, the existence of which is ensured by Proposition \ref{linear_basic_estimate1}. Then $H=\frac{Q_1+Q_2}{2}\in X^1$ is a solution to the equation \eqref{linear_Poisson_eq}. Moreover, Proposition \ref{linear_basic_estimate1} implies that $$\|Q_1\|_{X_0}\lesssim\|\beta^{2\mu-1}(\mu F_1+\ii\beta F_2)\|_{C_\beta^\alpha}\lesssim\|\beta^{2\mu-1}F_1\|_{C_{\beta}^\alpha}+\|\beta^{2\mu}F_2\|_{C_{\beta}^\alpha}.$$
		Similar argument gives the estimate for $Q_2$, hence the estimate for $H$.
	\end{proof}

	\subsection{Solvability of the full linearized problem}\label{Full_linear}
	In this subsection,  we prove Proposition \ref{linear_thm}. It suffices to show the following proposition.
	
	\begin{prop}\label{exist_2_prop}
		Assume that $\mu>\frac12$ and $\alpha\in\left(0,\alpha_\mu\right)$, where $\alpha_\mu$ is given by \eqref{alpha_mu}. For any $H\in X_0$ such that $\hat{H}_0=\hat{H}_{\pm1}=0$, there exists $(Q, \psi)\in X_0\times Y_0$ solving the linear system
		\begin{equation}\label{exist_2_Q}
			\begin{cases}
				\frac1\mu\p_\vp(\beta Q_\vp)+\frac{\mu Q_{\phi\phi}}{\beta}+\frac\gamma{2\mu}\psi_\phi=0,\\
				H+Q=\psi+\frac\beta{2\mu}\psi_\beta,
			\end{cases}
		\end{equation}
		and we have the estimate
		\beno
		\|Q\|_{X_0}+\|\psi\|_{Y_0}\lesssim\|H\|_{X_0}.
		\eeno
	\end{prop}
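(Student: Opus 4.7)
The plan is to follow the high/low Fourier-mode splitting strategy outlined in Section \ref{Sec.sketch}. Write $H = H^{\le N} + H^{>N}$, where $H^{>N}$ collects the Fourier modes $|n| > N$ of $H$ with respect to $\theta = \beta + \phi$ and $H^{\le N}$ collects the finitely many modes $|n| \le N$, for a threshold $N \in \NN_+$ to be chosen large. The system \eqref{exist_2_Q} is linear and preserves Fourier modes in $\theta$, so I may seek $(Q,\psi)$ with the matching splitting and solve the two pieces separately, then sum. The hypothesis $\widehat{H}_0 = \widehat{H}_{\pm 1} = 0$ is respected by both pieces.

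For the high-frequency piece, I would combine the simplified-problem solver $\mathcal{H}^N : Z^N \to X^N$ from Proposition \ref{linear_schauder_prop} with the reconstruction operator $T_0$ of Lemma \ref{F<H}, restricted to modes $|n|>N$, denoted $T_0^N$. The first equation of \eqref{exist_2_Q} formally yields $Q = -\mathcal{H}^N\!\bigl(\tfrac{\ga}{2\mu}\pa_\phi \psi\bigr)$, and substituting $\psi = T_0^N(H+Q)$ from the second equation converts the high-frequency problem into the fixed-point equation
\[(\mathrm{id} + T^N)(H + Q) = H, \qquad T^N := \tfrac{\ga}{2\mu}\,\mathcal{H}^N \pa_\phi T_0^N.\]
The decisive ingredient is the operator bound $\|T^N\|_{X^N \to X^N} \lesssim 1/N$. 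Heuristically $\mathcal{H}^N$ inverts $(\beta\pa_\vp)^2 + (\mu\pa_\phi)^2$, contributing $\sim 1/n^2$ on the $n$-th mode, while $\pa_\phi T_0^N$ costs only $\sim n$, leaving an overall gain of $\sim 1/n$. Once this is established, for $N$ large enough $\mathrm{id} + T^N$ is invertible by Neumann series, giving $H+Q \in X^N$ and then $\psi = T_0^N(H+Q) \in Y^N$, $Q \in X^N$ with bounds controlled by $\|H\|_{X_0}$.

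For the low-frequency piece, which involves only finitely many modes, I would use the equivalent regularized formulation \eqref{Eq.Q-psi2}: inserting the cutoff $\rho(\beta)$ and invoking the second-order ODE theory in Appendix \ref{AppendixB}, build solution operators $T_1, T_2$ of the auxiliary problem $\frac{1}{\mu}\pa_\vp(\beta \pa_\vp Q) + \frac{\mu}{\beta}\pa_\phi^2 Q + \frac{\ga}{\beta}\rho(\beta) Q = \mathrm{RHS}$ with the right-hand sides dictated by \eqref{Eq.Q-psi2}. The low-frequency system then reduces to $(\mathrm{id} - T_1 T_0)(H+Q) = T_2 H + H$. Choosing slightly larger ambient spaces in which $T_0$ is compact (Lemma \ref{compact}), Fredholm's alternative reduces bijectivity to injectivity, which follows from the Frobenius/generalized hypergeometric analysis of Lemma \ref{uniqueness_ODE} exactly as in Proposition \ref{uniqueness_prop}. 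Here the hypothesis $\widehat{H}_0 = \widehat{H}_{\pm 1} = 0$ is essential, because the kernel of the linearization is nontrivial precisely at $n = 0, \pm 1$ when $\mu \in \bigl(\tfrac12, 1\bigr]$; the condition $\alpha < \alpha_\mu$ enters through the asymptotics at $\beta \to \infty$ used to rule out the unbounded Frobenius branch. A standard bootstrap based on the ODE structure then promotes the solution from the enlarged space back to $X_0 \times Y_0$.

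The main obstacle will be the quantitative bound $\|T^N\|_{X^N \to X^N} \lesssim 1/N$ within the weighted Hölder framework: the norm $\|\cdot\|_{C_\beta^\alpha}$ couples scales in $\beta$ and does not decouple cleanly into Fourier modes in $\phi$, so the $1/n$ gain must be extracted directly from the convolution-kernel formula behind $\mathcal{H}^N$ (as developed in Appendix \ref{AppendixB1}) rather than from a naive mode-by-mode estimate. A secondary technical point is to identify enlarged spaces in which $T_0$ is compact yet still compatible with the Fredholm setup for the low-frequency ODE system. Once both inputs are secured, summing the two pieces yields $(Q,\psi) \in X_0 \times Y_0$ solving \eqref{exist_2_Q} with the required bound $\|Q\|_{X_0} + \|\psi\|_{Y_0} \lesssim \|H\|_{X_0}$, completing the proof.
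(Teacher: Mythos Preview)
Your proposal matches the paper's proof: the high/low Fourier split, the fixed-point operator $T^N = \tfrac{\gamma}{2\mu}\mathcal{H}^N \p_\phi T_0^N$ with Neumann series for the high piece (Proposition \ref{high_frequency}), and the Fredholm/compactness argument via the regularized system \eqref{Eq.Q-psi2} for the low piece (Proposition \ref{low_frequency}), followed by the regularity bootstrap.

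The one place your expectation diverges is the mechanism for $\|T^N\|_{X^N\to X^N}\lesssim 1/N$. You anticipate extracting this from the convolution kernel of $\mathcal{H}^N$ in Appendix \ref{AppendixB1}, but the paper's route is simpler: $\mathcal{H}^N\colon Z^N\to X^N$ and $T_0^N\colon X^N\to Y^N$ are each bounded \emph{uniformly} in $N$, and the entire $1/N$ gain comes from the map $\p_\phi\colon Y^N\to Z^N$ (Lemma \ref{lem6.11}). That lemma rewrites $\p_\phi G = \p_\vp G + \p_\phi \psi_\beta$ (with $G=\psi_\phi$) and applies the reverse Bernstein inequality \eqref{reverse_bernstein2} directly in the $C_\beta^\alpha$ norm to recover the factor $1/N$. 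So the ``main obstacle'' you flag dissolves: reverse Bernstein, not additional kernel analysis, carries the smallness.
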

	
	Similar to Proposition \ref{linear_basic_estimate1}, if $\hat H_n=0$, then $\hat Q_n=\hat \psi_n=0$.
	In particular, if $H$ is $m$-fold symmetric with $P_0H=0$, then so are $Q$ and  $\psi$.
	
	\begin{proof}[Proof of Proposition \ref{linear_thm}]
		Recall that the uniqueness part of Proposition \ref{linear_thm} has been proved in Subsection \ref{Uniqueness}. We only need to prove the existence part. We decompose
		\begin{align*}
			\beta^{2\mu-1}F_1&=\beta^{2\mu-1}P_0F_1+\beta^{2\mu-1}P_{\neq}F_1=\beta^{2\mu-1}F_0+c_0+\beta^{2\mu-1}P_{\neq}F_1,\\
			\beta^{2\mu}F_2&=\beta^{2\mu}P_0F_2+\beta^{2\mu}P_{\neq}F_2.
		\end{align*}
		Then we have
		\begin{align*}
			\|\beta^{2\mu-1}F_1\|_{\GG_m^-}&=\|\beta^{2\mu-1}{F_0}\|_{C_\beta^\alpha}+|c_0|+m^{1/2}\|\beta^{2\mu-1}P_{\neq}F_1\|_{C_\beta^\alpha}\\
			\|\beta^{2\mu}F_2\|_{\GG_m^0}&=\|\beta^{2\mu}P_0F_2\|_{C_\beta^\alpha}+m^{1/2}\|\beta^{2\mu}P_{\neq}F_2\|_{C_\beta^\alpha}.
		\end{align*}
		
		It is easy to check that
		\[H_0(\beta):=\mu\int_\beta^\infty\frac{F_0(s)}{s}\,ds+\frac{c_0\mu}{2\mu-1}\beta^{1-2\mu}\]
		is a solution to $\frac1\mu\p_\beta(\beta\p_\beta H_0)=-\p_\beta(P_0F_1)$, i.e., $\frac1\mu\p_{\varphi}(\beta\p_{\varphi} H_0)=\p_{\varphi}(P_0F_1)$. We need to estimate $\|H_0\|_{X}$. Let $H_{0,1}(\beta)=\mu\int_\beta^\infty\frac{{F_0}(s)}{s}\,ds$. By the definition and Lemma \ref{X=tildeX}, we have
		\[\|H_0\|_{X}=\|H_{0,1}\|_{X_0}+\left|\frac{c_0\mu}{2\mu-1}\right|\lesssim\|\beta^{2\mu}\p_\beta H_{0,1}\|_{C_\beta^\alpha}+\|\langle\beta\rangle^\alpha\beta^{2\mu-1}H_{0,1}\|_{L^\infty}+|c_0|.\]
		Direct computation gives that $\p_\beta H_{0,1}=-\mu\frac{{F_0}}{\beta}$, hence $\|\beta^{2\mu}\p_\beta H_{0,1}\|_{C_\beta^\alpha}\lesssim\|\beta^{2\mu-1}{F_0}\|_{C_\beta^\alpha}$.
		For $\|\langle\beta\rangle^\alpha\beta^{2\mu-1}H_{0,1}\|_{L^\infty}$, we have
		\begin{align*}
			\left|H_{0,1}(\beta)\right|&\lesssim \left(\int_\beta^\infty \langle s\rangle^{-\alpha}s^{-2\mu}\,ds\right)\|\langle\beta\rangle^\alpha\beta^{2\mu-1}{F_0}\|_{L^\infty}\lesssim
			\langle\beta\rangle^{-\alpha}\beta^{1-2\mu}\|\beta^{2\mu-1}{F_0}\|_{C_\beta^\alpha},
		\end{align*}
		hence $\|\langle\beta\rangle^\alpha\beta^{2\mu-1}H_{0,1}\|_{L^\infty}\lesssim\|\beta^{2\mu-1}{F_0}\|_{C_\beta^\alpha}.$ Therefore, $\|H_0\|_X\lesssim \|\beta^{2\mu-1}F_1\|_{\GG_m^-}$. Let
		\[\psi_0(\beta)=2\mu\beta^{-2\mu}\int_0^\beta s^{2\mu-1}H_{0,1}(s)\,ds+\frac{2c_0\mu^2}{2\mu-1}\beta^{1-2\mu}=T_0(H_{0,1})(\beta)+\frac{2c_0\mu^2}{2\mu-1}\beta^{1-2\mu}.\]
		According to Lemma \ref{F<H}, $\psi_0(\beta)\in Y$ with $\|\psi_0\|_{Y}\lesssim\|H_{0,1}\|_{X_0}+|c_0|\lesssim \|\beta^{2\mu-1}F_1\|_{\GG_m^-}$ and
(recall that $\frac1\mu\p_{\varphi}(\beta\p_{\varphi} H_0)=\p_{\varphi}(P_0F_1)$ and $\p_{\phi}^2H_0=0 $, $\p_{\phi}\psi_0=0 $)
		\begin{equation*}
			\begin{cases}
				\frac1\mu\p_{\varphi}(\beta\p_{\varphi} H_0)+\frac{\mu}{\beta}\p_\phi^2H_0+\frac\gamma{2\mu}\p_\phi \psi_0=\p_{\varphi}(P_0F_1),\\
				H_0=\psi_0+\frac\beta{2\mu}\p_\beta \psi_0.
			\end{cases}
		\end{equation*}
		
		 As $\beta^{2\mu-1}F_1\in\GG_m^-,$
		$ \beta^{2\mu}F_2\in\GG_m^-$ with $m\geq 2$, we have $\beta^{2\mu-1}P_{\neq}F_1\in\GG^{0,1},$
		$ \beta^{2\mu}P_{\neq}F_2\in\GG^{0,1} $. By Proposition \ref{linear_schauder_prop}, there exists $H_{\neq}\in X_{0,m}$ such that $P_0H_{\neq}=0$, $(\widehat{H_{\neq}})_{\pm1}=0$,
		\[\frac1\mu\p_\vp\left(\beta\p_\vp H_{\neq}\right)+\frac{\mu\p_\phi^2H_{\neq}}{\beta}=\p_\vp P_{\neq}F_1+\p_\phi P_{\neq}F_2,\]
		and
		\[\|H_{\neq}\|_{X_0}\lesssim\|\beta^{2\mu-1}P_{\neq}F_1\|_{C_\beta^\alpha}+\|\beta^{2\mu}P_{\neq}F_2\|_{C_\beta^\alpha}\lesssim
		m^{-1/2}(\|\beta^{2\mu-1}F_1\|_{\GG_m^-}+\|\beta^{2\mu}F_2\|_{\GG_m^0}).\]
		By Proposition \ref{exist_2_prop}, there exists  $(Q, \psi_{\neq})\in X_m\times Y_m$  such that $P_0Q=P_0\psi_{\neq}=0$,
		\begin{align*}\frac1\mu\p_\vp(\beta Q_\vp)+\frac{\mu Q_{\phi\phi}}{\beta}+\frac\gamma{2\mu}\partial_\phi\psi_{\neq}=0,\quad
			H_{\neq}+Q=\psi_{\neq}+\frac\beta{2\mu}\partial_\beta\psi_{\neq},\end{align*} and $\|\psi_{\neq}\|_{Y_0}\lesssim\|H_{\neq}\|_{X_0}$. Then
		$(H,\psi)=(H_0+H_{\neq}+Q, \psi_{0}+\psi_{\neq})\in X_m\times Y_m$ is a solution to \eqref{linear_eq} (note that $\p_\phi P_0F_2=0$), and 	
		\begin{align*}
			\|\psi\|_{Y_m}=\|\psi_0\|_{Y}+m^{1/2}\|\psi_{\neq}\|_{Y_0}\lesssim\|\psi_0\|_{Y}+m^{1/2}\|H_{\neq}\|_{X_0}\lesssim
			\|\beta^{2\mu-1}F_1\|_{\GG_m^-}+\|\beta^{2\mu}F_2\|_{\GG_m^0}.
		\end{align*}
		
		This concludes the proof of Proposition \ref{linear_thm}.
	\end{proof}
	
\subsection{Proof of Proposition \ref{exist_2_prop}}\label{Subsec.Proof-full}

 Our strategy is to investigate the equation \eqref{exist_2_Q} separately in high frequencies  and low frequencies with respect to $\phi\in\TT$. For high frequencies, the nonlocal term $\frac\gamma{2\mu}\psi_\phi$ can be viewed as a perturbation; and for low frequencies, we can convert the equation \eqref{exist_2_Q} to a finite number of ODE systems for each frequency and then use the compactness method to construct the solution.
 	
\begin{lem}[Reverse Bernstein's inequality]\label{Reverse_bernstein}
		Let $N\geq 1$ be a positive integer. Let $f=f(\phi)\in C^1(\TT)$,
		$\int_\TT f(\phi)e^{-\ii n\phi}\,d\phi=0,$ $\forall \ |n|\leq N,$
		then
		\begin{equation}\label{reverse_bernstein1}
			\|f\|_{L^\infty(\TT)}\leq \frac CN\|f'\|_{L^\infty(\TT)},
		\end{equation}
		where $C>0$ is a positive constant not depending on $N$ and $f$.
	\end{lem}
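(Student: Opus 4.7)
The idea is to test $f$ against a suitable nonnegative trigonometric polynomial of degree $\leq N$ that is concentrated at scale $1/N$ near an arbitrary point $\phi_0\in\TT$. The hypothesis is exactly that $\int_\TT f\cdot P\,d\phi=0$ for every trigonometric polynomial $P$ of degree $\leq N$, so testing against such a localized polynomial annihilates the zeroth-order piece of $f$ and isolates $f(\phi_0)$ modulo an error controlled by the first moment of the kernel, which I will arrange to be $O(1/N)$.

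\textbf{Kernel construction.} For $n=\lfloor N/2\rfloor\geq 1$ and $\phi_0\in\TT$, I would use the translated Jackson kernel
\[
K_{\phi_0}(\phi)=c_n\left(\frac{\sin(n(\phi-\phi_0)/2)}{\sin((\phi-\phi_0)/2)}\right)^4,
\]
where $c_n>0$ is fixed by $\int_\TT K_{\phi_0}\,d\phi=2\pi$. Then $K_{\phi_0}$ is nonnegative and a trigonometric polynomial in $\phi$ of degree $2(n-1)\leq N$. Using $|\sin(x/2)|\gtrsim|x|$ for $|x|\leq\pi$ one finds $c_n\sim n^{-3}$ and
\[
K_{\phi_0}(\phi)\lesssim \min\bigl\{n,\ n^{-3}|\phi-\phi_0|^{-4}\bigr\}\qquad (|\phi-\phi_0|\leq\pi),
\]
where $|\phi-\phi_0|$ denotes the geodesic distance on $\TT$. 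Splitting the integral at $|\phi-\phi_0|=1/n$ yields the decisive first moment bound
\[
\int_\TT |\phi-\phi_0|\,K_{\phi_0}(\phi)\,d\phi \lesssim\frac{1}{n}\lesssim \frac{1}{N}.
\]

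\textbf{Conclusion.} Since $K_{\phi_0}$ is a trigonometric polynomial of degree $\leq N$ in $\phi$, the hypothesis gives $\int_\TT f(\phi)K_{\phi_0}(\phi)\,d\phi=0$. Combined with $\int_\TT K_{\phi_0}\,d\phi=2\pi$, this rewrites as
\[
2\pi f(\phi_0)=\int_\TT\bigl(f(\phi_0)-f(\phi)\bigr)K_{\phi_0}(\phi)\,d\phi.
\]
Applying $|f(\phi_0)-f(\phi)|\leq\|f'\|_{L^\infty(\TT)}\,|\phi-\phi_0|$ together with the first moment estimate gives
\[
2\pi|f(\phi_0)|\leq \|f'\|_{L^\infty(\TT)}\int_\TT|\phi-\phi_0|\,K_{\phi_0}(\phi)\,d\phi\leq \frac{C}{N}\|f'\|_{L^\infty(\TT)},
\]
and taking the supremum over $\phi_0\in\TT$ yields \eqref{reverse_bernstein1}. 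The degenerate case $N=1$ (where $n=0$) reduces to the Poincar\'e inequality for mean-zero $C^1$ functions on $\TT$, which supplies the same bound with a universal constant.

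\textbf{Main obstacle.} The only subtle ingredient is producing a kernel whose first absolute moment is $O(1/N)$ rather than $O(\log N/N)$; this is exactly why I use the \emph{fourth}-power Jackson kernel rather than the Fej\'er (second-power) kernel. An alternative real-variable approach would reduce to real-valued $f$---legitimate because $\hat{\bar f}(n)=\overline{\hat f(-n)}$---observe that such an $f$ must have at least $2N+2$ sign changes on $\TT$ (else multiplying by a trigonometric polynomial of degree $\leq N$ matching those signs would contradict orthogonality), and then integrate $f'$ from the nearest zero. The kernel approach is, however, more transparent and directly produces an absolute constant independent of $N$ and $f$.
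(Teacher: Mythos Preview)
Your Jackson-kernel argument is correct. The paper does not actually prove this lemma; it simply invokes the classical result and cites Katznelson, Chapter~I, \S8. So you have in fact \emph{supplied} a proof where the paper provides none.

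A few technical points worth recording, all of which you handle correctly: with $n=\lfloor N/2\rfloor$ the kernel $c_n\bigl(\sin(n\theta/2)/\sin(\theta/2)\bigr)^4$ is indeed a nonnegative trigonometric polynomial of degree $2(n-1)\le N$, the normalization $c_n\sim n^{-3}$ is standard, and the first-moment computation $\int_{\TT}|\theta|K(\theta)\,d\theta\lesssim 1/n$ goes through exactly as you split it. The small-$N$ cases ($n\le 1$) are absorbed into the constant via the mean-zero Poincar\'e inequality, as you note.

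It is worth remarking that the ``alternative real-variable approach'' you sketch at the end---reducing to real $f$, counting at least $2N+2$ sign changes via orthogonality to degree-$\le N$ polynomials, and integrating $f'$ from the nearest zero---is in fact the classical argument one finds in Katznelson (and in most textbook treatments of this inequality). Your kernel approach trades that combinatorial step for a quantitative approximation-theory estimate; both yield the same $C/N$ bound, but your route is arguably more robust (it generalizes immediately to H\"older moduli of continuity, for instance), while the sign-change argument gives a slightly sharper explicit constant.
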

	The proof of this lemma is technical and involved. Luckily, this is a classical result that be found in many text books, for example \cite{Katznelson}, section 8 in Chapter 1. So we omit the proof of Lemma \ref{Reverse_bernstein}.
	
	\begin{rmk}
		{Let $g=g(\beta,\phi)\in C((0,+\infty)\times\TT)$ and $N\in\NN_+$ satisfy $$\hat g_n(\beta):=\frac{1}{2\pi}\int_\TT g(\beta,\phi)e^{-\ii n(\beta+\phi)}\,d\phi=0 \qquad\text{for all}\quad |n|\leq N$$  and $g_{\phi}(\beta,\phi)\in C((0,+\infty)\times\TT)$. Then we have}
		\begin{equation}\label{reverse_bernstein2}
			\|g\|_{L^\infty}\leq\frac CN\|g_\phi\|_{L^\infty},\qquad \|g\|_{C_\beta^\al}\leq\frac CN\|g_\phi\|_{C_\beta^\al},
		\end{equation}
		where $C>0$ is a positive constant independent of  $N$ and $g$, $\al\in(0,1)$. Indeed, for fixed $\beta>0$, if we let $f(\phi)=g(\beta,\phi)$, then by \eqref{reverse_bernstein1} we get
		\[\sup_{\phi\in\TT}|g(\beta,\phi)|=\|f\|_{L^\infty(\TT)}\leq \frac CN\|f_\phi\|_{L^\infty(\TT)}=\frac CN\sup_{\phi\in\TT}|g_\phi(\beta,\phi)|.\]
		Taking the supremum in the above identity with respect to $\beta>0$ gives $\|g\|_{L^\infty}\leq\frac CN\|g_\phi\|_{L^\infty}$. For fixed $\beta_1\neq\beta_2>0$, if we take $f(\phi)=g(\beta_1, \phi)-g(\beta_2, \phi)$, then by \eqref{reverse_bernstein1} we get
		\[\sup_{\phi\in\TT}|g(\beta_1,\phi)-g(\beta_2,\phi)|=\|f\|_{L^\infty(\TT)}\leq \frac CN\|f_\phi\|_{L^\infty(\TT)}=\frac CN\sup_{\phi\in\TT}|g_\phi(\beta_1,\phi)-g_\phi(\beta_2,\phi)|.\]
		Hence,
		\begin{align*}
			&\sup_{\substack{0<\beta_2<\beta_1<2\beta_2\\
					0<|\beta_1-\beta_2|<1}}\sup_{\phi\in\TT}|\beta_1+\beta_2|^\alpha\frac{|g(\beta_1,\phi)-g(\beta_2,\phi)|}{|\beta_1-\beta_2|^\alpha}\\
				&\qquad\qquad\qquad\qquad \leq\frac CN\sup_{\substack{0<\beta_2<\beta_1<2\beta_2\\
					0<|\beta_1-\beta_2|<1}}\sup_{\phi\in\TT}|\beta_1+\beta_2|^\alpha\frac{|g_\phi(\beta_1,\phi)-g_\phi(\beta_2,\phi)|}{|\beta_1-\beta_2|^\alpha},
		\end{align*}
		 which along with $\|\langle\beta\rangle^\alpha g\|_{L^\infty}\leq\frac CN\|\p_\phi\left(\langle\beta\rangle^\alpha g\right)\|_{L^\infty}=\frac CN\|\langle\beta\rangle^\alpha\p_\phi g\|_{L^\infty}$ gives the second inequality in \eqref{reverse_bernstein2}.
	\end{rmk}
	
\begin{lem}[Bernstein's inequality]\label{Bernstein}
		If $f(\phi)=\sum\limits_{|n|\leq N}a_n e^{\ii n\phi}, \phi\in\TT$, then
		\[\left\|f'\right\|_{L^\infty(\TT)}\leq N\left\|f\right\|_{L^\infty(\TT)}.\]
	\end{lem}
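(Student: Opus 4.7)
The plan is to invoke Riesz's interpolation formula for trigonometric polynomials of degree $\le N$: with shifts $t_k=\tfrac{(2k-1)\pi}{2N}$ and weights $\lambda_k=\tfrac{(-1)^{k+1}}{4N\sin^2(t_k/2)}$ for $k=1,\ldots,2N$, every such polynomial $f$ satisfies
\[
f'(\phi)=\sum_{k=1}^{2N}\lambda_k\,f(\phi+t_k).
\]
Granted this identity, the triangle inequality yields $\|f'\|_{L^\infty(\TT)}\le\bigl(\sum_{k=1}^{2N}|\lambda_k|\bigr)\|f\|_{L^\infty(\TT)}$, so the proof reduces to the arithmetic equality $\sum_{k=1}^{2N}|\lambda_k|=N$.

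To establish the interpolation formula, by linearity it suffices to verify it on the Fourier basis $\{e^{\ii n\phi}\}_{|n|\le N}$, which reduces the claim to the discrete identities $\ii n=\sum_{k=1}^{2N}\lambda_k e^{\ii nt_k}$ for $|n|\le N$. The endpoint cases $n=\pm N$ are most transparent: since $Nt_k=(2k-1)\pi/2$, one has $e^{\pm \ii Nt_k}=\pm \ii(-1)^{k+1}$, so the right-hand side collapses to $\pm \ii\sum_{k=1}^{2N}|\lambda_k|$ and the identities at $n=\pm N$ are equivalent to the target $\sum_k|\lambda_k|=N$. For the intermediate modes $|n|<N$, I would note that the points $\omega_k=e^{\ii t_k}$ are precisely the $2N$ roots of $z^{2N}=-1$, recast the sum as a discrete Fourier transform at frequency $n$ against the sequence $\{\lambda_k\}$, and exploit the antisymmetry $\lambda_{2N+1-k}=-\lambda_k$ (coming from $t_{2N+1-k}=2\pi-t_k$) together with orthogonality of roots of unity to obtain the required cancellation.

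To evaluate $\sum_{k=1}^{2N}|\lambda_k|$, I would reduce it to the trigonometric identity
\[
\sum_{k=1}^{2N}\csc^2\!\left(\tfrac{(2k-1)\pi}{4N}\right)=4N^2,
\]
which is a direct consequence of the Mittag--Leffler expansion $\csc^2 z=\sum_{j\in\ZZ}(z-j\pi)^{-2}$ evaluated at $z=\tfrac{(2k-1)\pi}{4N}$ and summed in $k$. Dividing by $4N$ yields $\sum_k|\lambda_k|=N$, completing the proof. The main technical obstacle is the careful verification of the Riesz formula at interior frequencies $|n|<N$; conceptually the whole argument amounts to exhibiting the signed atomic measure $\mu=\sum_{k=1}^{2N}\lambda_k\delta_{-t_k}$ as realizing the operator $f\mapsto f'$ via convolution on the subspace of degree-$N$ polynomials, with total variation exactly $N$. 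Sharpness of the constant is confirmed by the extremal example $f(\phi)=\sin(N\phi)$.
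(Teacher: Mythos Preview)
The paper does not supply its own proof of this lemma; it simply cites Katznelson, Chapter~1, Section~7, Exercise~15. Your approach via the M.~Riesz interpolation formula is precisely the classical argument developed in that reference, and your outline is correct: the antisymmetry $\lambda_{2N+1-k}=-\lambda_k$ you identify does drive the cancellation at interior frequencies, and your Mittag--Leffler computation of $\sum_k|\lambda_k|$ is valid (after summing over $k$ and $j$, the arguments $(2k-1)-4Nj$ run exactly once over all odd integers, giving $\tfrac{16N^2}{\pi^2}\cdot\tfrac{\pi^2}{4}=4N^2$). Since the paper offers only a citation, you are in fact supplying more detail than the paper itself.
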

	
	This  is a classical result and we refer to \cite{Katznelson}, section 7 in Chapter 1, Exercise 15.
	
{\begin{rmk}
		Similar to \eqref{reverse_bernstein2}, if $g(\beta,\phi)=\sum\limits_{|n|\leq N}\widehat{g}_n(\beta) e^{\ii n(\beta+\phi)}$, then
 we have
		\begin{equation}\label{bernstein2}
			\|\partial_{\phi}g\|_{L^\infty}\leq N\|g\|_{L^\infty},\quad \|\partial_{\phi}g\|_{C_{\beta}^{\alpha}}\leq N\|g\|_{C_{\beta}^{\alpha}}.\end{equation}
	\end{rmk}

	
	\begin{lem}\label{lem6.11}
		For each $N\in\NN_+$, we have $\p_\phi: Y^N\to Z^N$ with
		\begin{equation}\label{5.36}
			\|\p_\phi G\|_{Z}\leq\frac CN\|G\|_{Y_0} \qquad \text{for all}\quad  G\in Y^N,
		\end{equation}
		here $C$ is independent of $N$.
	\end{lem}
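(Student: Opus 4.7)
The plan is to construct an explicit decomposition of $\p_\phi G$ into the required form and to invoke the reverse Bernstein inequality (Lemma \ref{Reverse_bernstein}) in order to extract the factor $1/N$. The key identity is $\p_\phi = \p_\vp + \p_\beta$, which suggests the decomposition
\[
\p_\phi G = \p_\vp G + \p_\beta G,
\]
so it is natural to take $F_1 := G$ and to define $F_2$ as a primitive of $\p_\beta G$ in $\phi$. Since $G \in Y^N$ means $\hat G_n(\beta) = 0$ for all $|n|\leq N$, a direct Fourier computation (using that the expansion is in $e^{\ii n(\beta+\phi)}$) shows that $\p_\beta G$ also has vanishing Fourier modes for $|n|\leq N$; in particular its zero mode is zero, so we may set
\[
F_2(\beta,\phi) := \sum_{|n|>N}\frac{(\widehat{\p_\beta G})_n(\beta)}{\ii n}\, e^{\ii n(\beta+\phi)},
\]
which satisfies $\p_\phi F_2 = \p_\beta G$ and has only Fourier modes with $|n|>N$. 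By construction, $\p_\phi G = \p_\vp F_1 + \p_\phi F_2$, and both $\beta^{2\mu-1}F_1$ and $\beta^{2\mu}F_2$ lie in $\GG^{0,N}$ (since $\beta$-weights do not mix Fourier modes in $\theta=\beta+\phi$), so $\p_\phi G \in Z^N$.

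Next I would collect the relevant estimates from Lemma \ref{F<H} applied to $H := G + \tfrac{\beta}{2\mu}G_\beta \in X_0$, which give
\[
\|\beta^{2\mu-1}G\|_{C_\beta^\alpha} + \|\beta^{2\mu-1}G_\phi\|_{C_\beta^\alpha} + \|\beta^{2\mu}G_\beta\|_{C_\beta^\alpha} \lesssim \|H\|_{X_0} \lesssim \|G\|_{Y_0}.
\]
Then, since both $\beta^{2\mu-1}F_1 = \beta^{2\mu-1}G$ and $\beta^{2\mu}F_2$ have vanishing Fourier modes for $|n|\leq N$, the reverse Bernstein inequality \eqref{reverse_bernstein2} yields
\begin{align*}
\|\beta^{2\mu-1}F_1\|_{C_\beta^\alpha} &\leq \frac{C}{N}\,\|\p_\phi(\beta^{2\mu-1}G)\|_{C_\beta^\alpha} = \frac{C}{N}\,\|\beta^{2\mu-1}G_\phi\|_{C_\beta^\alpha} \lesssim \frac{1}{N}\|G\|_{Y_0},\\
\|\beta^{2\mu}F_2\|_{C_\beta^\alpha} &\leq \frac{C}{N}\,\|\beta^{2\mu}\p_\phi F_2\|_{C_\beta^\alpha} = \frac{C}{N}\,\|\beta^{2\mu}G_\beta\|_{C_\beta^\alpha} \lesssim \frac{1}{N}\|G\|_{Y_0}.
\end{align*}
Using $\|\beta^{2\mu-1}F_1\|_{\GG^-} \leq \|\beta^{2\mu-1}F_1\|_{\GG^0}$ and summing gives the desired bound $\|\p_\phi G\|_Z \leq \tfrac{C}{N}\|G\|_{Y_0}$.

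There is no real obstacle here, the only subtlety being the slightly unnatural choice $F_1 = G$ rather than $F_2 = G$: taking $F_2 = G$ fails because $\beta^{2\mu}G$ need not be bounded at infinity (we only control $\beta^{2\mu-1}G\in C_\beta^\alpha$), whereas the decomposition above places the growing weight on $F_2$, which is a $\phi$-primitive and hence gains a factor $1/N$ via reverse Bernstein, exactly compensating the extra power of $\beta$. The verification that $\p_\beta G$ still has no low Fourier modes (because the Fourier basis is $e^{\ii n(\beta+\phi)}$, which is annihilated by $\p_\vp$) is the one place where the geometry of the $\theta$-expansion is used crucially.
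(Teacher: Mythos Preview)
Your proof is correct and follows essentially the same approach as the paper: take $F_1=G$, take $F_2$ to be a $\phi$-primitive of $G_\beta$, and apply the reverse Bernstein inequality \eqref{reverse_bernstein2} together with the bounds from Lemma \ref{F<H}. The paper phrases the construction of $F_2$ slightly differently---it first takes $\psi$ with $\psi_\phi=G$ via Lemma \ref{m_antiderivative} and sets $F_2=\psi_\beta$---but since $\p_\phi(\psi_\beta)=(\psi_\phi)_\beta=G_\beta$, this is the same $F_2$ you wrote down.
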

	\begin{proof}
		For $G\in Y^N$, by Lemma \ref{m_antiderivative} there exists $\psi$ such that $G=\psi_\phi$ and $P_0\psi=0$. Then $\p_\phi G=\p_\vp G+G_\beta=\p_\vp F+\p_\phi\psi_{\beta}$ and thus
		\begin{equation*}
			\begin{aligned}
				\|\p_\phi G\|_{Z}\lesssim&\|\beta^{2\mu-1}G\|_{C_\beta^\alpha}+\|\beta^{2\mu}\psi_\beta\|_{C_\beta^\alpha}\overset{\text{\eqref{reverse_bernstein2}}}{\lesssim} \frac1N \left\|\beta^{2\mu-1}G_\phi\right\|_{C_\beta^\alpha}+\frac1N \left\|\beta^{2\mu}\psi_{\beta\phi}\right\|_{C_\beta^\alpha}\\ \lesssim&\frac1N \left\|\beta^{2\mu-1}G_\phi\right\|_{C_\beta^\alpha}+\frac1N \left\|\beta^{2\mu}G_{\beta}\right\|_{C_\beta^\alpha}\overset{\text{Lemma \ref{F<H}}}{\lesssim}\frac1N\|G\|_{Y_0}.
			\end{aligned}
		\end{equation*}
		This completes the proof of the lemma.
	\end{proof}

	Now we are ready to construct the solution to \eqref{exist_2_Q} in high frequencies. Recall the operators $T_0: X_0\to Y_0$  defined in Lemma \ref{F<H} and $\mathcal{H}: Z^1\to X^1$ defined in Proposition \ref{linear_schauder_prop}. By our construction of these two operators, their restriction
	\begin{equation}\label{Eq.T_0^N_def}
		T_0^N= T_0|_{X^N}: X^N\to Y^N,\qquad \mathcal{H}^N= \mathcal{H}|_{Z^N}: Z^N\to X^N
	\end{equation}
	are well-defined bounded linear operators with bounds independent of $N$.
	
	\begin{prop}\label{high_frequency}
		Assume that $\alpha\in\left(0,1\right)$ and $\mu>1/2$. There exists a positive integer $N$ such that for any $H\in X^N$, we can find $Q\in X^N$ and $\psi\in Y^N$ satisfying the linear system \eqref{exist_2_Q} with the estimate
		\beno
		\|Q\|_{X_0}+\|\psi\|_{Y_0}\lesssim\|H\|_{X_0}.
		\eeno
	\end{prop}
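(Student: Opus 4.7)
The plan is to convert \eqref{exist_2_Q} into a single fixed-point equation on the high-frequency subspace $X^N$ and invert via Neumann series, exploiting the $N^{-1}$ gain provided by Lemma \ref{lem6.11}. The second equation of \eqref{exist_2_Q} is solved by the prescription $\psi := T_0^N(H+Q)$ from Lemma \ref{F<H}; since $T_0$ acts diagonally on Fourier modes in $\theta=\beta+\phi$, it sends $X^N$ into $Y^N$ boundedly and uniformly in $N$. The first equation then reads
\[
\tfrac{1}{\mu}\p_\vp(\beta Q_\vp) + \tfrac{\mu Q_{\phi\phi}}{\beta} = -\tfrac{\gamma}{2\mu}\,\p_\phi T_0^N(H+Q),
\]
whose right-hand side lies in $Z^N$ (Fourier-vanishing is preserved by both $T_0$ and $\p_\phi$), so Proposition \ref{linear_schauder_prop} allows us to close the loop by declaring $Q := -\tfrac{\gamma}{2\mu}\,\mathcal H^N \p_\phi T_0^N(H+Q)$.

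Introducing $\chi := H+Q$ and
\[
T^N := \tfrac{\gamma}{2\mu}\,\mathcal H^N\p_\phi T_0^N \colon X^N \to X^N,
\]
the two equations combine into the single equation $(I+T^N)\chi = H$. To bound the operator norm, for $\xi \in X^N$ Lemma \ref{F<H} gives $T_0^N\xi \in Y^N$ with $\|T_0^N\xi\|_{Y_0}\lesssim\|\xi\|_{X_0}$; Lemma \ref{lem6.11} then furnishes the key gain $\|\p_\phi T_0^N\xi\|_Z \lesssim N^{-1}\|T_0^N\xi\|_{Y_0}$, and the decomposition $\p_\phi G = \p_\vp G + \p_\phi \psi_\beta$ used there also shows $\p_\phi T_0^N\xi \in Z^N$, so the bound transfers to the $Z^N$-norm; finally $\mathcal H^N \colon Z^N \to X^N$ is bounded uniformly in $N$. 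Composing the three bounds yields $\|T^N\|_{X^N\to X^N} \lesssim N^{-1}$.

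Choose $N$ large enough that $\|T^N\|\le 1/2$. A Neumann series then inverts $I+T^N$ on $X^N$; setting $\chi := (I+T^N)^{-1}H$, $Q := \chi - H$ and $\psi := T_0^N\chi$ produces a pair $(Q,\psi)\in X^N \times Y^N$ that satisfies \eqref{exist_2_Q}, together with the estimate
\[
\|Q\|_{X_0} = \|T^N\chi\|_{X_0} \lesssim N^{-1}\|\chi\|_{X_0} \lesssim \|H\|_{X_0}, \qquad \|\psi\|_{Y_0} \lesssim \|\chi\|_{X_0} \lesssim \|H\|_{X_0}.
\]
The only analytically nontrivial ingredient is the $N^{-1}$ gain of Lemma \ref{lem6.11} (the reverse Bernstein inequality for functions with vanishing low Fourier modes in $\phi$); the remaining work is bookkeeping on Fourier supports, which is automatic because $T_0^N$, $\p_\phi$ and $\mathcal H^N$ are all Fourier-diagonal in $\theta=\beta+\phi$ and thus preserve the high-frequency subspace.
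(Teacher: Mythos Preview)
Your proof is correct and follows essentially the same route as the paper's: define $T^N=\tfrac{\gamma}{2\mu}\mathcal H^N\p_\phi T_0^N$, rewrite the system as $(I+T^N)(H+Q)=H$, use Lemma~\ref{F<H}, Lemma~\ref{lem6.11} and the uniform boundedness of $\mathcal H^N$ to get $\|T^N\|_{X^N\to X^N}\lesssim N^{-1}$, and invert by Neumann series. Your remark that the explicit decomposition in the proof of Lemma~\ref{lem6.11} also yields membership in (and the same bound for) $Z^N$ is a point the paper uses implicitly.
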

	
	\begin{proof}
	By the definitions of the operators $\mathcal{H},\ \mathcal{H}^N,\ T_0,\ T_0^N $, it is enough to find $Q\in X^N$, $\psi\in Y^N$ such that
	\begin{equation}\label{Q}
			Q=-\frac{\gamma}{2\mu}\mathcal{H}^N\psi_\phi,\quad \psi=T_0^N(H+Q).
		\end{equation}
	We define the linear operator $T^N=\frac{\gamma}{2\mu}\mathcal{H}^N\partial_{\phi}T_0^N$, then $T^N: X^N\to X^N$ is well defined, since $T_0^N: X^N\to Y^N$,
$\p_\phi:Y^N\to Z^N$(by Lemma \ref{lem6.11}),
$\mathcal{H}^N: Z^N\to X^N $. And \eqref{Q} becomes $Q+T^N(H+Q)=0$.
It is sufficient to
show that $\mathrm{id} + T^N$ is invertible for $N$ large enough.
For any $Q_1\in X^N$, by \eqref{5.36} and Lemma \ref{F<H}, we have
		\begin{align*}
			&\ \ \ \left\|T^N(Q_1)\right\|_{X_0}\lesssim \left\|\mathcal{H}^N\p_{\phi}\left(T_0 Q_1\right)\right\|_{X_0}\lesssim\left\|\p_{\phi}\left(T_0 Q_1\right)\right\|_{Z}{\lesssim}\frac1N\|T_0Q_1\|_{Y_0}{\lesssim}\frac1N\|Q_1\|_{X_0}.
		\end{align*}Here $ \lesssim$ is with a constant independent of $N$ and $Q_1\in X^N$, and we used that the
norm of $ \mathcal{H}^N$ is independent of $N$.
		Hence, for $N$ large enough, we have $\left\|T^N\right\|_{X^N\to X^N}<\frac12$. Now we fix such an $N$, then $\mathrm{id} + T^N$ is invertible
with $\left\|(\mathrm{id} + T^N)^{-1}(Q_1)\right\|_{X_0}\leq2 \|Q_1\|_{X_0}$ for $Q_1\in X^N$, and the solution to \eqref{Q} is $Q=-(\mathrm{id} + T^N)^{-1}T^NH\in X^N$
and $\psi:=T_0^N(H+Q)\in Y^N$ (as $H\in X^N$ and $T_0^N: X^N\to Y^N$).
\if0		$
		\left\|T^N(Q_1)-T^N(Q_2)\right\|_{X_0}\leq \frac12 \|Q_1-Q_2\|_{X_0}
		$
		for all $Q_1, Q_2\in X^N$. Then by Banach's fixed point theorem, $T^N$ has a fixed point $Q\in X^N$. Therefore, $Q\in X^N$  satisfy the linear system \eqref{exist_2_Q}. Moreover, for such $Q$, we have
		\beno
		\|Q\|_{X_0}=\|T^N(Q)\|_{X_0}\leq \|T^N(Q)-T^N(0)\|_{X_0}+\|T^N(0)\|_{X_0}\leq \frac12\|Q\|_{X_0}+\|T^N(0)\|_{X_0},
		\eeno\fi
		Hence, $\|Q\|_{X_0}\leq 2\|T^NH\|_{X_0}\leq \|H\|_{X_0} $ and (using Lemma \ref{F<H} for $\psi=T_0^N(H+Q)=T_0(H+Q)$)
		\begin{align*}
			\|Q\|_{X_0}+\|\psi\|_{Y_0}\lesssim& \|H\|_{X_0}+\|Q\|_{X_0}\lesssim \|H\|_{X_0}.
		\end{align*}
		
		The proof of Proposition \ref{high_frequency} is completed.
	\end{proof}
	
	Now, we turn to the construction of low-frequency solution of \eqref{exist_2_Q}. Fix the integer $N$ given by Proposition \ref{high_frequency}.  To avoid ambiguity, we denote this fixed $N$ by $N_0$ thenceforth. In the rest of the subsection, we assume that $\alpha\in\left(0,\alpha_\mu\right)$ with $\alpha_\mu$ given by \eqref{alpha_mu}.  Also, the implicit constants in all $\lesssim$ in the rest of this subsection depend only on parameters $\alpha,\mu$, this fixed $N_0$ and the bump function $\rho$ introduced below.
	
	\begin{prop}\label{low_frequency}
		Assume that $\mu>\frac12$ and $\alpha\in\left(0,\alpha_\mu\right)$. For any $\displaystyle H=\sum_{2\leq |n|\leq N_0}\hat H_n(\beta)e^{\ii n(\beta+\phi)}\\ \in X_0$, there exist $\displaystyle Q=\sum_{2\leq |n|\leq N_0}\hat Q_n(\beta)e^{\ii n(\beta+\phi)}\in X_0$ and $\displaystyle \psi=\sum_{2\leq |n|\leq N_0}\hat \psi_n(\beta)e^{\ii n(\beta+\phi)}\in Y_0$ solving the linear system \eqref{exist_2_Q} with the estimate
		\beno
		\|Q\|_{X_0}+\|\psi\|_{Y_0}\lesssim\|H\|_{X_0}.
		\eeno
	\end{prop}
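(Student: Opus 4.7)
The plan is to reduce the linear system \eqref{exist_2_Q}, restricted to the finite band of Fourier modes $2 \le |n| \le N_0$, to a Fredholm-type equation. Following the outline in Section \ref{Sec.sketch}, I would first pass to the equivalent reformulation \eqref{Eq.Q-psi2}, obtained from \eqref{exist_2_Q} by adding $\frac{\gamma}{\beta}\rho(\beta)Q$ to both sides and using the compatibility $H+Q = \psi + \frac{\beta}{2\mu}\psi_\beta$ together with $\p_\vp = \p_\phi - \p_\beta$ to rewrite the right-hand side; here $\rho$ is the bump function from \eqref{bump_function}. The purpose of this modification is to turn the equation for $Q$ into a genuine \emph{second}-order ODE in $\beta$ at each Fourier mode; the added zeroth-order coefficient $\frac{\gamma}{\beta}\rho(\beta)$ is bounded and vanishes for $\beta \le 1$, so it leaves the Frobenius analysis at the regular singular point $\beta = 0$ undisturbed while yielding coercivity for large $\beta$.

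For each $n$ with $2 \le |n| \le N_0$, the first line of \eqref{Eq.Q-psi2} is a second-order linear ODE in $\beta$ for $\hat Q_n$. Using the existence theory for second-order ODEs (announced in Appendix \ref{AppendixB}), I would build bounded linear solution operators $T_1, T_2$ so that $Q_1 = T_1 \psi$ solves the ODE with source depending only on $\psi$, and $Q_2 = T_2 H$ solves it with source $-\frac{\gamma}{\beta}\rho(\beta)H$. Combining $Q = T_1\psi + T_2 H$ with $\psi = T_0(H+Q)$, where $T_0$ is the operator defined in \eqref{H_to_F} (which by Lemma \ref{F<H} inverts the map $\psi \mapsto \psi + \frac{\beta}{2\mu}\psi_\beta$), the full system collapses to the single fixed-point identity
\[
(\operatorname{id} - T_1 T_0)(H + Q) = (\operatorname{id} + T_2)H,
\]
with $\psi$ recovered afterwards by $\psi = T_0(H+Q)$.

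To invert $\operatorname{id} - T_1 T_0$ I would apply Fredholm's alternative on carefully enlarged function spaces on which $T_0$ is compact, obtained by weakening the H\"older exponent and/or the decay weight slightly relative to $X_0$ and $Y_0$. Compactness of $T_0$ will follow from the regularity gain built into the formula \eqref{H_to_F} combined with Arzel\`a--Ascoli, using moreover that only finitely many Fourier modes are in play. Since $T_1$ is bounded between suitable spaces, $T_1 T_0$ is compact on the enlarged space, so it is enough to establish injectivity of $\operatorname{id} - T_1 T_0$ there; a nontrivial kernel element would produce nonzero $(\hat H_n, \hat\psi_n)$ for some $2 \le |n| \le N_0$ satisfying the homogeneous system \eqref{homogeneous_lin} at a single mode, which is ruled out by Lemma \ref{uniqueness_ODE} (valid exactly because $n \ne 0, \pm 1$).

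The closing step is a regularity upgrade from the enlarged space back to $X_0 \times Y_0$: the second-order ODE for each $\hat Q_n$ has smooth coefficients away from $\beta = 0$ and a Frobenius-type singularity at $\beta = 0$, so once $H+Q$ lies in the enlarged space one can bootstrap to $Q \in X_0$, and then Lemma \ref{F<H} places $\psi = T_0(H+Q) \in Y_0$ with the quantitative bound $\|Q\|_{X_0} + \|\psi\|_{Y_0} \lesssim \|H\|_{X_0}$. I expect the main obstacle to be the calibration of the enlarged spaces used in the Fredholm step: they must be weak enough that Arzel\`a--Ascoli delivers genuine compactness of $T_0$, yet strong enough that the Frobenius asymptotics at $\beta \to 0^+$ and $\beta \to \infty$ exploited in Lemma \ref{uniqueness_ODE} still preclude the growing homogeneous solution $\sim \beta^{2\mu - 1 + \sqrt{n^2\mu^2 - 2\mu + 1}}$. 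The hypothesis $\alpha < \alpha_\mu$ from \eqref{alpha_mu} enters precisely at this juncture, as $\alpha_\mu$ was tuned so that this growth rate exceeds the weight allowed by the (enlarged) norm for every $2 \le |n| \le N_0$, preserving uniqueness throughout the enlargement.
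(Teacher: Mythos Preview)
Your proposal follows essentially the same route as the paper's proof: reformulate via \eqref{Eq.Q-psi2}, define $T_1,T_2$ as second-order ODE solution operators (Lemma \ref{compact_bounded}), reduce to $(\operatorname{id}-T_1\tilde T_0)(H+Q)=H+T_2H$ on enlarged spaces, apply Fredholm using compactness of $\tilde T_0$ (Lemma \ref{compact}) and injectivity via Lemma \ref{uniqueness_ODE}, then bootstrap regularity back to $X_0\times Y_0$.

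One small calibration: the paper's enlarged spaces $\tilde X,\tilde Y$ are weighted $L^\infty$ spaces (dropping the H\"older seminorm entirely), not H\"older spaces with a weakened exponent; this is what makes the Arzel\`a--Ascoli argument in Lemma \ref{compact} clean. Also, the hypothesis $\alpha<\alpha_\mu$ enters earlier than you suggest---it is needed already for the \emph{boundedness} of $T_1,T_2$ on the enlarged spaces, via the inequality $q_{n,2}-2\mu-\alpha+1>0$ checked in the proof of Lemma \ref{compact_bounded}, which in turn unlocks Lemmas \ref{prop_B2}--\ref{prop_B3}. The injectivity step itself only uses the membership conditions of $\tilde X,\tilde Y$ (in particular $\beta^{2\mu}\psi\to0$ as $\beta\to0^+$ and the $\langle\beta\rangle^{\alpha-1}\beta^{2\mu}\psi\in L^\infty$ bound at infinity) to feed into Lemma \ref{uniqueness_ODE}.
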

	
	Combining the above two propositions, we are able to prove Proposition \ref{exist_2_prop}.
	
	\begin{proof}[Proof of Proposition \ref{exist_2_prop}]
		Let $N_0=N$ be given by Proposition \ref{high_frequency}. For $H\in X^1$, we perform the decomposition
		\begin{equation*}
			H(\beta,\phi)=H^{\text{low}}(\beta,\phi)+H^{\text{high}}(\beta,\phi),
		\end{equation*}
		where
		\[H^{\text{low}}(\beta,\phi)=\sum_{2\leq |n|\leq N_0}\hat H_n(\beta)e^{\ii n(\beta+\phi)},\quad  H^{\text{high}}\in X^{N_0}.\]
		By Proposition \ref{high_frequency}, there exist $Q^{\text{high}}\in X^{N_0}$ and $\psi^{\text{high}}\in Y^{N_0}$ such that
		\begin{equation*}
			\begin{cases}
				\frac1\mu\p_\vp\left(\beta \p_\vp Q^{\text{high}}\right)+\frac{\mu}{\beta}\p_\phi^2Q^{\text{high}}+\frac\gamma{2\mu}\p_\phi \psi^{\text{high}}=0,\\
				H^{\text{high}}+Q^{\text{high}}=\psi^{\text{high}}+\frac\beta{2\mu}\p_\beta \psi^{\text{high}},
			\end{cases}
		\end{equation*}
		with the estimate
		\beno
		\left\|Q^{\text{high}}\right\|_{X_0}+\left\|\psi^{\text{high}}\right\|_{Y_0}\lesssim\left\|H^{\text{high}}\right\|_{X_0}\lesssim \|H\|_{X_0}.
		\eeno
		By Proposition \ref{low_frequency}, there exist $Q^{\text{low}}\in X_0$ and $\psi^{\text{low}}\in Y_0$ such that
		\begin{equation*}
			\begin{cases}
				\frac1\mu\p_\vp\left(\beta \p_\vp Q^{\text{low}}\right)+\frac{\mu}{\beta}\p_\phi^2Q^{\text{low}}+\frac\gamma{2\mu}\p_\phi \psi^{\text{low}}=0,\\
				H^{\text{low}}+Q^{\text{low}}=\psi^{\text{low}}+\frac\beta{2\mu}\p_\beta \psi^{\text{low}},
			\end{cases}
		\end{equation*}
		with the estimate
		\beno
		\left\|Q^{\text{low}}\right\|_{X_0}+\left\|\psi^{\text{low}}\right\|_{Y_0}\lesssim\left\|H^{\text{low}}\right\|_{X_0}\lesssim \|H\|_{X_0}.
		\eeno
		
		Finally, let $Q=Q^{\text{low}}+Q^{\text{high}}$ and $\psi=\psi^{\text{low}}+\psi^{\text{high}}$, then $(Q, \psi)\in X_0\times Y_0$ solves the linear system \eqref{exist_2_Q} and $\|Q\|_{X_0}+\|\psi\|_{Y_0}\lesssim \|H\|_{X_0}$.
	\end{proof}
	
	As a result, it suffices to prove Proposition \ref{low_frequency}. Recall that here we fix the integer $N_0=N$ given by Proposition \ref{high_frequency} and let $\alpha\in\left(0, \alpha_\mu\right)$ and $\mu>\frac12$. We define two auxiliary functional spaces as follows
	\[\tilde X:=\Big\{H=\sum_{2\leq |n|\leq N_0}\hat H_n(\beta)e^{\ii n(\beta+\phi)}\in C^1\ \Big|\ \left\|H\right\|_{\tilde X}<+\infty\Big\},\]
	with the norm
	\[\left\|H\right\|_{\tilde X}:=\left\|\langle\beta\rangle^\alpha\beta^{2\mu}\p_\vp H\right\|_{L^\infty}+\left\|\langle\beta\rangle^\alpha\beta^{2\mu-1}H\right\|_{L^\infty};\]
	and
	\[\tilde Y:=\Big\{\psi=\sum_{2\leq |n|\leq N_0}\hat \psi_n(\beta)e^{\ii n(\beta+\phi)}\in C^0\ \Big|\ \lim_{\beta\to0+}\beta^{2\mu}\psi(\beta,\phi)=0, \left\|\psi\right\|_{\tilde Y}<+\infty\Big\},\]
	with the norm
	\[\left\|\psi\right\|_{\tilde Y}:=\left\|\langle\beta\rangle^{\alpha-1}\beta^{2\mu}\psi\right\|_{L^\infty}.\]
	Note that if $\displaystyle H=\sum_{2\leq |n|\leq N_0}\hat H_n(\beta)e^{\ii n(\beta+\phi)}\in X_0$, then $H\in\tilde X$, and similarly for $\psi\in Y_0$ (since $\langle\beta\rangle^\alpha\beta^{2\mu-1}\psi\in L^\infty$ if $\psi\in Y_0$, by {Lemma \ref{F<H}}).
	
	To prove Proposition \ref{low_frequency}, our strategy  is to construct a solution firstly in the larger spaces $\tilde X$ and $\tilde Y$ by using the compactness method, and then we show that the solution constructed in the first step has the required $C^\alpha$ regularity to be lying in $X_0$ and $Y_0$.

	\begin{lem}\label{compact}
		Define the operator $\tilde T_0: \tilde X\to \tilde Y$ by
		\[F(\beta,\phi)=\tilde T_0(H)(\beta,\phi)=2\mu\beta^{-2\mu}\int_0^\beta s^{2\mu-1}H(s,\phi)\,ds,\qquad H\in \tilde X.\]
		Then $H=\tilde T_0(H)+\frac\beta{2\mu}\p_\beta \tilde T_0(H)$ for all $H\in \tilde X$ and $\tilde T_0: \tilde X\to \tilde Y$ is a compact operator.
	\end{lem}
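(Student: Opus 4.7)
The algebraic identity follows immediately from differentiating
\[\beta^{2\mu}F(\beta,\phi)=2\mu\int_0^\beta s^{2\mu-1}H(s,\phi)\,ds\]
in $\beta$, which yields $2\mu\beta^{2\mu-1}F+\beta^{2\mu}F_\beta=2\mu\beta^{2\mu-1}H$ and thus $H=F+\tfrac{\beta}{2\mu}F_\beta$. For boundedness of $\tilde T_0\colon \tilde X\to\tilde Y$, the $\tilde X$-norm supplies $|s^{2\mu-1}H(s,\phi)|\le\langle s\rangle^{-\alpha}\|H\|_{\tilde X}$, hence
\[|\beta^{2\mu}F(\beta,\phi)|\lesssim\|H\|_{\tilde X}\int_0^\beta\langle s\rangle^{-\alpha}\,ds\lesssim\|H\|_{\tilde X}\,\beta\langle\beta\rangle^{-\alpha},\]
so that $\|\tilde T_0(H)\|_{\tilde Y}=\|\langle\beta\rangle^{\alpha-1}\beta^{2\mu}F\|_{L^\infty}\lesssim\|H\|_{\tilde X}$ and $\beta^{2\mu}F\to 0$ as $\beta\to 0^+$, placing $\tilde T_0(H)\in\tilde Y$.

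For compactness I would exploit the fact that both $\tilde X$ and $\tilde Y$ involve only the finitely many Fourier modes $\{j\in\ZZ:2\le|j|\le N_0\}$, which reduces the problem to per-mode relative compactness in $L^\infty([0,\infty))$ of
\[u_{n,j}(\beta):=\langle\beta\rangle^{\alpha-1}\beta^{2\mu}\hat F_{n,j}(\beta)=2\mu\,\langle\beta\rangle^{\alpha-1}e^{-\ii j\beta}\int_0^\beta e^{\ii j s}b_{n,j}(s)\,ds,\]
where $b_{n,j}(s):=s^{2\mu-1}\hat H_{n,j}(s)$ and $(H_n)$ is a bounded sequence in $\tilde X$. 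The $\tilde X$-norm supplies, uniformly in $n$, the bounds $|b_{n,j}(s)|\lesssim\langle s\rangle^{-\alpha}$ and, via $\widehat{\p_\vp H_n}_{\,j}=-\hat H_{n,j}{}'$ applied to its first component, $|b_{n,j}'(s)|\lesssim s^{-1}\langle s\rangle^{-\alpha}$.

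From these I would assemble three ingredients, all uniform in $n$: (i) \emph{tightness at $\beta=0$}, namely $|u_{n,j}(\beta)|\lesssim\beta$, immediate from the boundedness estimate; (ii) \emph{equicontinuity on compacts}, a bound on $|\p_\beta u_{n,j}|$ uniform on each $[\delta,M]$ derived from direct differentiation together with the identity $\p_\beta(\beta^{2\mu}F)=2\mu\beta^{2\mu-1}H$; and (iii) \emph{tightness at $\beta=\infty$}. For (iii) I split the $s$-integral at $s=1$ and integrate by parts on $[1,\beta]$:
\[\int_1^\beta e^{\ii j s}b_{n,j}(s)\,ds=\frac{e^{\ii j\beta}b_{n,j}(\beta)-e^{\ii j}b_{n,j}(1)}{\ii j}-\frac{1}{\ii j}\int_1^\beta e^{\ii j s}b_{n,j}'(s)\,ds.\]
The boundary terms are $O(1)$ since $|b_{n,j}|\lesssim\langle s\rangle^{-\alpha}$, and the remaining integral is controlled by $\int_1^\infty\langle s\rangle^{-1-\alpha}\,ds<\infty$; hence $\bigl|\int_0^\beta e^{\ii j s}b_{n,j}(s)\,ds\bigr|\lesssim 1$ uniformly for $\beta\ge 1$ and $n$, so $|u_{n,j}(\beta)|\lesssim\langle\beta\rangle^{\alpha-1}\to 0$ as $\beta\to\infty$, uniformly in $n$. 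A standard diagonal Arzelà--Ascoli extraction over $[\delta,M]$ with $\delta\downarrow 0$ and $M\uparrow\infty$ then produces a subsequence convergent in $L^\infty([0,\infty))$ for each $j$, and summing over the finitely many modes yields convergence in $\tilde Y$.

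\textbf{The main obstacle} is step (iii). Since $\int_0^\beta\langle s\rangle^{-\alpha}\,ds$ grows like $\beta^{1-\alpha}$ as $\beta\to\infty$, no crude size estimate can deliver tightness at infinity, and one must exploit the oscillation $e^{\ii j s}$; this is precisely why the zero mode is excluded from $\tilde X$ and $\tilde Y$ (and ultimately why the assumption $2\le|j|\le N_0$ appears). Splitting at $s=1$ is equally essential, because $b_{n,j}'\sim s^{-1}$ near the origin is not integrable there, so integration by parts cannot be carried out on the whole interval $[0,\beta]$.
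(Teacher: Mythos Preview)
Your proof is correct and essentially equivalent to the paper's, but presented from the Fourier side rather than in physical variables. The paper also proves the key uniform bound $\sup_{\beta,\phi}|\beta^{2\mu}F|\lesssim 1$ (your step (iii)) by exploiting the vanishing of the zero mode, but it does so without decomposing into individual modes: it first bounds $\p_\vp(\beta^{2\mu}F)$ via the identity $\p_\beta\p_\vp(\beta^{2\mu}F)=2\mu\,\p_\vp(\beta^{2\mu-1}H)$ integrated from $1$ to $\beta$ (using $\|\langle s\rangle^\alpha s^{2\mu}\p_\vp H\|_{L^\infty}\le\|H\|_{\tilde X}$ so the integrand decays like $s^{-1-\alpha}$), and then invokes the Reverse Bernstein inequality $\|\beta^{2\mu}F\|_{L^\infty_\phi}\lesssim\|\beta^{2\mu}\p_\phi F\|_{L^\infty_\phi}$ together with $\beta^{2\mu}\p_\phi F=\p_\vp(\beta^{2\mu}F)+2\mu\beta^{2\mu-1}H$. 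Your integration by parts against $e^{\ii js}$, producing the factor $1/(\ii j)$, is precisely the per-mode incarnation of that Reverse Bernstein step, and your bound $|b_{n,j}'(s)|\lesssim s^{-1}\langle s\rangle^{-\alpha}$ (which, note, uses \emph{both} terms of the $\tilde X$-norm, not only the $\p_\vp H$ one) is the Fourier translation of the paper's $\p_\vp(\beta^{2\mu-1}H)$ bound. The paper then obtains global equicontinuity directly from $\p_\beta(\langle\beta\rangle^{\alpha-1}\beta^{2\mu}F)=(\alpha-1)\langle\beta\rangle^{\alpha-3}\beta^{2\mu+1}F+2\mu\langle\beta\rangle^{\alpha-1}\beta^{2\mu-1}H$, which is the same computation you do in (ii). Your mode-by-mode route is arguably more elementary (no separate Reverse Bernstein lemma needed), while the paper's physical-space argument avoids writing out Fourier coefficients and makes the role of $\p_\vp H$ more transparent.
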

	
	\begin{proof}
		It is direct to show that $\tilde T_0(H)\in \tilde Y$ for each $H\in \tilde X$ and $\left\|\tilde T_0(H)\right\|_{\tilde Y}\lesssim\left\|H\right\|_{\tilde X}$, see {Lemma \ref{F<H}}. Hence, $\tilde T_0: \tilde X\to \tilde Y$ is a well-defined bounded linear operator. It is also direct to show that
		$H=\tilde T_0(H)+\frac\beta{2\mu}\p_\beta \tilde T_0(H)$.
		We claim that for any $H\in \tilde X$ with $\left\|H\right\|_{\tilde X}\leq 1$, we have the following estimates for $F=\tilde T_0(H)$:
		\begin{equation}\label{5.37}
			\sup_{\beta,\phi}\left|\beta^{2\mu}F(\beta,\phi)\right|\lesssim 1,
		\end{equation}
		\begin{equation}\label{5.38}
			\sup_{\beta,\phi}\left|\p_\beta\Big(\langle
			\beta\rangle^{\alpha-1}\beta^{2\mu}F(\beta,\phi)\Big)\right|\lesssim1.
		\end{equation}
		
		\underline{Estimates \eqref{5.37} and \eqref{5.38} imply the compactness of $\tilde T_0$}. Let $\{H_n\}_{n\geq 1}$ be a sequence of functions in $\tilde X$ with $\|H_n\|_{\tilde X}\leq 1$, and let $F_n=\tilde T_0(H_n)\in\tilde Y$.  We are going to show that $\{F_n\}_{n\geq1}$ has a subsequence converging in $\tilde Y$. By $\alpha\in(0,1)$, \eqref{5.37}, \eqref{5.38} and  \eqref{bernstein2}, we have
		\[{\sup_{\beta,\phi,n}}\left(\left|\langle\beta\rangle^{\alpha-1}\beta^{2\mu}F_n(\beta,\phi)\right|+
		\left|\p_\beta\Big(\langle\beta\rangle^{\alpha-1}\beta^{2\mu}F_n(\beta,\phi)\Big)\right|+
		\left|\p_\phi\Big(\langle\beta\rangle^{\alpha-1}\beta^{2\mu}F_n(\beta,\phi)\Big)\right|\right)\lesssim1.\]
		Hence, using Arzel\`a-Ascoli lemma and Cantor's diagonal arguments, we know that there exist  $F_0\in\tilde Y$ and a subsequence of $\{F_n\}_{n\geq 1}$, still denoted by $\{F_n\}$, such that
		\begin{equation}\label{5.39}
			\langle\beta\rangle^{\alpha-1}\beta^{2\mu}F_n\longrightarrow\langle\beta\rangle^{\alpha-1}\beta^{2\mu}F_0\ \ \text{ in }\ \ L^\infty_{\text{loc}}([0,\infty)\times\TT),
		\end{equation}
		and
		\begin{equation}\label{5.40}
			{\sup_{\beta,\phi}}\left|\beta^{2\mu}F_0(\beta,\phi)\right|\lesssim 1.
		\end{equation}
		We show that the convergence in \eqref{5.39} is indeed in $L^\infty([0,\infty)\times\TT)$. For any $\varepsilon>0$, by $\alpha\in(0,1)$, \eqref{5.37} and \eqref{5.40}, there exists $R>0$ such that
		\begin{equation}\label{5.41}
			\left|\langle\beta\rangle^{\alpha-1}\beta^{2\mu}F_n(\beta,\phi)-\langle\beta\rangle^{\alpha-1}\beta^{2\mu}F_0(\beta,\phi)\right|\leq C\langle\beta\rangle^{\alpha-1}<\varepsilon,\qquad  \forall \ \beta\geq R, \phi\in\TT.
		\end{equation}
		On the other hand, \eqref{5.39} implies that $\langle\beta\rangle^{\alpha-1}\beta^{2\mu}F_n\to\langle\beta\rangle^{\alpha-1}\beta^{2\mu}F_0$ in $L^\infty([0,R]\times\TT)$. Combining this with \eqref{5.41}, there exists $K=K(\varepsilon)>0$ such that $\|F_n-F_0\|_{\tilde Y}\leq 2\varepsilon$ for all $n>K$. Hence, $F_n$ converges to $F_0$ in $\tilde Y$. This proves that $\tilde T_0: \tilde X\to \tilde Y$ is a compact operator.\smallskip
		
		\underline{Proof of \eqref{5.37}}. For simplicity, we omit the variable $\phi\in\TT$ in the rest of this proof. { It follows from $F=\tilde T_0H$ that $\left|\beta^{2\mu}F(\beta)\right|\lesssim\|F\|_{\tilde Y}=\left\|\tilde T_0(H)\right\|_{\tilde Y}\lesssim\left\|H\right\|_{\tilde X} \leq1$ for $\beta\in(0,2)$.
		Then by Lemma \ref{Bernstein} for $f(\phi)=F(1,\phi)$ we have $|\p_\phi F(1)|\lesssim\|F\|_{\tilde Y}\lesssim1 $.} Noting that $\p_\beta(\beta^{2\mu}F)=2\mu(\beta^{2\mu-1}H)$, $\p_\vp\p_\beta(\beta^{2\mu}F)=2\mu\p_\vp(\beta^{2\mu-1}H)$, we have
		\begin{align}\label{5.42}
			&\p_\vp(\beta^{2\mu}F)(\beta)=\p_\vp(\beta^{2\mu}F)(1)+2\mu\int_1^{\beta}\p_\vp(s^{2\mu-1}H)(s)\,ds\\ \notag
			&=\p_\phi F(1)-2\mu H(1)+2\mu\int_1^{\beta} [s^{2\mu-1}\p_\vp H(s)-(2\mu-1)s^{2\mu-2}H(s)]ds,\qquad\beta\geq1.
		\end{align}
		Then we obtain
		\[|\p_\vp(\beta^{2\mu}F)(\beta)|\lesssim\left\|H\right\|_{\tilde X}+\int_1^{\beta}\frac{\left\|H\right\|_{\tilde X}}{s^{\alpha+1}}\,ds\lesssim1.\]
		{Finally, note that $\p_\vp(\beta^{2\mu}F)=\beta^{2\mu}\p_\phi F-\p_\beta(\beta^{2\mu}F)=\beta^{2\mu}\p_\phi F-2\mu\beta^{2\mu-1}H$, using Lemma \ref{Reverse_bernstein} (for $N=1$, $f(\phi)=\beta^{2\mu}F(\beta,\phi)$), we get
		\[\|\beta^{2\mu}F(\beta)\|_{L_{\phi}^{\infty}}\lesssim\|\beta^{2\mu}\p_\phi F(\beta)\|_{L_{\phi}^{\infty}}\lesssim \|\p_\vp(\beta^{2\mu}F)(\beta)\|_{L_{\phi}^{\infty}}+\|H\|_{\tilde X}\lesssim 1,\qquad \beta\geq 1.\]
		This proves \eqref{5.37}.}
		
		\underline{Proof of \eqref{5.38}}. Direct computation gives
		\[\p_\beta\Big(\langle\beta\rangle^{\alpha-1}\beta^{2\mu}F\Big)=(\alpha-1)\langle\beta\rangle^{\alpha-3}\beta^{2\mu+1}F{+}
		2\mu\langle\beta\rangle^{\alpha-1}\beta^{2\mu-1}H,\]
		hence,
		\[\left|\p_\beta\Big(\langle
		\beta\rangle^{\alpha-1}\beta^{2\mu}F(\beta)\Big)\right|\lesssim\|F\|_{\tilde Y}+\|H\|_{\tilde X}\lesssim1.\]
		This completes the proof of \eqref{5.38}.
	\end{proof}
	
	Now we fix a smooth bump function \eqref{bump_function}:
	\begin{equation*}
		\rho\in C^\infty([0,\infty);[0,1])\qquad \text{ such that } \qquad \rho(\beta)=\begin{cases}
			0 & \beta\in[0,1]\\
			1 & \beta\geq 2.
		\end{cases}
	\end{equation*}

	\begin{lem}\label{compact_bounded}
		If $\displaystyle F_1=\sum_{2\leq |n|\leq N_0}\widehat{F_1}_{,n}(\beta)e^{\ii n(\beta+\phi)}$ and $\displaystyle F_2=\sum_{2\leq |n|\leq N_0}\widehat{F_2}_{,n}(\beta)e^{\ii n(\beta+\phi)}$ are such that $\langle\beta\rangle^\alpha\beta^{2\mu-1}F_1, \langle\beta\rangle^\alpha\beta^{2\mu}F_2\in L^\infty$, then there exists a solution $H\in\tilde X$ to the equation
		\begin{equation}\label{T_1T_2_eq}
			\frac1\mu\p_\vp(\beta H_\vp)+\frac\mu\beta H_{\phi\phi}+\frac\gamma\beta\rho(\beta)H=\p_\vp F_1+F_2
		\end{equation}
		with the estimate
		\[\|H\|_{\tilde X}\lesssim\left\|\langle\beta\rangle^\alpha\beta^{2\mu-1}F_1\right\|_{L^\infty}+\left\|\langle\beta\rangle^\alpha\beta^{2\mu}F_2\right\|_{L^\infty}.\]
	\end{lem}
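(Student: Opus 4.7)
The plan is to reduce the PDE \eqref{T_1T_2_eq} to a finite collection of second-order ODEs by Fourier-expanding in $\theta=\beta+\phi$. Writing $H(\beta,\phi)=\sum_{2\leq|n|\leq N_0}\hat H_n(\beta)e^{\ii n(\beta+\phi)}$ and using the identity $\p_\vp\bigl(g(\beta)e^{\ii n(\beta+\phi)}\bigr)=-g'(\beta)e^{\ii n(\beta+\phi)}$, equation \eqref{T_1T_2_eq} decouples into
\[\frac{1}{\mu}\bigl(\beta\hat H_n'\bigr)'-\frac{\mu n^2}{\beta}\hat H_n+\frac{\gamma\rho(\beta)}{\beta}\hat H_n=-\bigl(\widehat{F_1}_{,n}\bigr)'(\beta)+\widehat{F_2}_{,n}(\beta),\]
one ODE for each $n$ with $2\leq|n|\leq N_0$. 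Since there are only finitely many such modes, it suffices to solve each ODE with appropriate asymptotic behavior, obtain mode-wise weighted estimates, and sum.

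For each fixed $n$, the ODE is regular-singular at both endpoints. At $\beta=0$, where $\rho=0$, the indicial roots are $\pm\mu|n|$, and the requirement $\beta^{2\mu-1}\hat H_n\in L^\infty$ near $0$ forces the growing branch $\hat H_n\sim\beta^{\mu|n|}$ (the branch $\beta^{-\mu|n|}$ is ruled out since $\mu|n|>2\mu-1$ for $|n|\geq 2$). At $\beta=+\infty$, where $\rho=1$, the indicial exponents are $\pm\sqrt{\mu^2n^2-(2\mu-1)}$; to satisfy $\langle\beta\rangle^\alpha\beta^{2\mu-1}\hat H_n\in L^\infty$ one must pick the decaying branch, and this is compatible with the prescribed weight precisely when $\alpha+(2\mu-1)\leq\sqrt{\mu^2n^2-(2\mu-1)}$. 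Since $|n|\geq 2$ gives
\[\sqrt{\mu^2n^2-(2\mu-1)}\geq\sqrt{4\mu^2-2\mu+1}=\alpha_\mu+(2\mu-1),\]
the hypothesis $\alpha<\alpha_\mu$ is exactly what is needed. Invoking the general second-order ODE theory of Appendix \ref{AppendixB} with these branch selections, one produces for each $n$ a solution $\hat H_n$ together with a weighted bound of the form
\[\bigl\|\langle\beta\rangle^\alpha\beta^{2\mu-1}\hat H_n\bigr\|_{L^\infty}+\bigl\|\langle\beta\rangle^\alpha\beta^{2\mu}(\hat H_n)'\bigr\|_{L^\infty}\lesssim_n\bigl\|\langle\beta\rangle^\alpha\beta^{2\mu-1}\widehat{F_1}_{,n}\bigr\|_{L^\infty}+\bigl\|\langle\beta\rangle^\alpha\beta^{2\mu}\widehat{F_2}_{,n}\bigr\|_{L^\infty}.\]
Because $|n|$ ranges over a finite set, the constants $\lesssim_n$ absorb into a single one, and summing the finite Fourier expansion yields the stated $\tilde X$-estimate on $H$.

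The main technical obstacle I anticipate lies not in the Fourier reduction but in the ODE estimate itself: the source contains the derivative $(\widehat{F_1}_{,n})'$, which is not directly controlled by the weighted $L^\infty$ norm of $\widehat{F_1}_{,n}$. The correct treatment is to integrate by parts against the Green's function of the ODE, transferring the derivative onto the kernel, which is smooth and decaying at both singular ends; this requires careful matching of Frobenius branches at $\beta=0$ and $\beta=\infty$, and it is here that the sharp condition $\alpha<\alpha_\mu$ plays its decisive role by guaranteeing integrability of the kernel against the weight. This step, constituting the technical heart of the argument, is precisely what the second-order ODE existence machinery in Appendix \ref{AppendixB} is designed to supply.
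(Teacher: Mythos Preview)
Your proposal is correct and follows essentially the same route as the paper: Fourier-expand in $\theta=\beta+\phi$ to reduce \eqref{T_1T_2_eq} to the finite family of ODEs $(\beta\p_\beta)^2\hat H_n-\mu^2n^2\hat H_n+(2\mu-1)\rho(\beta)\hat H_n=-\mu\beta(\widehat{F_1}_{,n})'+\mu\beta\widehat{F_2}_{,n}$, verify the indicial conditions $q_{n,1}=\mu|n|>2\mu-1$ and $q_{n,2}=\sqrt{\mu^2n^2-(2\mu-1)}>2\mu-1+\alpha$ (the latter using $\alpha<\alpha_\mu$), and then invoke Lemmas \ref{prop_B2} and \ref{prop_B3} of Appendix \ref{AppendixB}---Lemma \ref{prop_B3} being precisely the integration-by-parts device you describe for handling the $(\widehat{F_1}_{,n})'$ term. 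The finiteness of the mode set absorbs the $n$-dependent constants, exactly as you say.
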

	
	This lemma is proved by expanding the solution in finitely many Fourier modes. Therefore, we need to establish some existence results for a class of inhomogeneous second-order ODEs with singular points:
	\[x^2y''(x)+xy'(x)-q(x)y(x)=f,\quad x>0.\]
	These results are collected in Appendix \ref{AppendixB}. We will use Lemma \ref{prop_B2}, where the force $f$ lies in a weighted $L^\infty$ space, and Lemma \ref{prop_B3},  where the force $f=x\wt f'$ for some $\wt f$ in a weighted $L^\infty$ space. For more details, readers may refer to Appendix \ref{AppendixB}.
		
	\begin{proof}
		We write $\displaystyle H=\sum_{2\leq |n|\leq N_0}\hat{H}_{n}(\beta)e^{\ii n(\beta+\phi)}$. Using $\gamma=2-\frac1\mu$, the linear equation \eqref{T_1T_2_eq} can be rewritten as the following ODE for $2\leq |n|\leq N_0$:
		\begin{equation}\label{5.45}
			(\beta\p_\beta)^2 \hat H_n-\mu^2n^2\hat H_n+(2\mu-1)\rho(\beta)\hat H_n=-\mu\beta\p_\beta \widehat{F_1}_{,n}+\mu\beta \widehat{F_2}_{,n}, \ \ \beta>0.
		\end{equation}
		To construct a solution $H$ to \eqref{T_1T_2_eq}, it suffices to prove the existence of each $\hat H_n$ to the ODE \eqref{5.45} for $2\leq |n|\leq N_0$. We borrow some results from the standard ODE theory, see Appendix \ref{AppendixB}. Let $q_n=\mu^2n^2-(2\mu-1)\rho$ for $2\leq |n|\leq N_0$, then $q_n\in C^\infty([0,\infty); (0,\infty))$, $q_n(\beta)=q_{n,1}^2$ for $\beta\in[0,1]$ and $q_n(\beta)=q_{n,2}^2$ for $\beta\geq 2$, where
		\[q_{n,1}=\mu|n|,\qquad q_{n,2}=\sqrt{\mu^2n^2-(2\mu-1)}.\]
		In order to apply Lemma \ref{prop_B2} and Lemma \ref{prop_B3}, we need to check the relation \eqref{alpha<1/2}. Since $|n|\geq 2$ and $\mu>1/2$, we have $q_{n,1}-2\mu+1>0$. Thanks to $\alpha\in\left(0,\alpha_\mu\right)$, we also have
		\[q_{n,2}-2\mu-\alpha+1\geq \sqrt{4\mu^2-2\mu+1}-(2\mu-1)-\alpha=\alpha_\mu-\alpha>0.\]
		Now, Lemma \ref{prop_B2} and Lemma \ref{prop_B3} are applicable. Using them, we know that there exists a solution $\hat H_n$ to \eqref{5.45} such that
		\begin{align*}
			&\ \ \ \left\|\langle\beta\rangle^\alpha\beta^{2\mu}\p_\beta\hat H_n\right\|_{L^\infty((0,\infty))}+\left\|\langle\beta\rangle^\alpha\beta^{2\mu-1}\hat H_n\right\|_{L^\infty((0,\infty))}\\
			&\lesssim_n \left\|\langle\beta\rangle^\alpha\beta^{2\mu-1}\widehat{F_1}_{,n}\right\|_{L^\infty((0,\infty))}+
			\left\|\langle\beta\rangle^\alpha\beta^{2\mu-1}\beta\widehat{F_2}_{,n}\right\|_{L^\infty((0,\infty))}\\
			&\lesssim_n \left\|\langle\beta\rangle^\alpha\beta^{2\mu-1}F_1\right\|_{L^\infty}+\left\|\langle\beta\rangle^\alpha\beta^{2\mu}F_2\right\|_{L^\infty}.
		\end{align*}
		Therefore, with $\hat H_n$ constructed above, the function $\displaystyle H=\sum_{2\leq |n|\leq N_0}\hat{H}_{n}(\beta)e^{\ii n(\beta+\phi)}$ is a solution to \eqref{T_1T_2_eq} satisfying the estimate (recall that $N_0$ is a fixed integer)
		\begin{align*}
			\left\|H\right\|_{\tilde X}&=\left\|\langle\beta\rangle^\alpha\beta^{2\mu}\p_\vp H\right\|_{L^\infty}+\left\|\langle\beta\rangle^\alpha\beta^{2\mu-1}H\right\|_{L^\infty}\\
			&\lesssim\sum_{2\leq |n|\leq N_0}\left(\left\|\langle\beta\rangle^\alpha\beta^{2\mu}\p_\beta\hat H_n\right\|_{L^\infty((0,\infty))}+\left\|\langle\beta\rangle^\alpha\beta^{2\mu-1}\hat H_n\right\|_{L^\infty((0,\infty))}\right)\\
			&\lesssim \left\|\langle\beta\rangle^\alpha\beta^{2\mu-1}F_1\right\|_{L^\infty}+\left\|\langle\beta\rangle^\alpha\beta^{2\mu}F_2\right\|_{L^\infty}.
		\end{align*}
		Here we remark that the implicit constants depend on $q_n$ by Appendix \ref{AppendixB}; nevertheless, as $N_0$ is fixed and finite, we can take a uniform implicit constant.
		This completes the proof of Lemma \ref{compact_bounded}.
	\end{proof}
	
	Now we are in a position to prove Proposition \ref{low_frequency}.
	
	\begin{proof}[Proof of Proposition \ref{low_frequency}]
		\textbf{Step 1.} In this step, we first construct a solution $(Q, \psi)$ to \eqref{exist_2_Q} in the larger space $\tilde X\times \tilde Y$. Using $\p_\vp=\p_\phi-\p_\beta$ and the second equation in \eqref{exist_2_Q}, we get
		\begin{align*}
			\frac\gamma{2\mu}\psi_\phi&=\frac\gamma{2\mu}\left(1-\rho(\beta)\right)\psi_\phi+\frac\gamma{2\mu}\rho(\beta)(\psi_\vp+\psi_\beta)\\
			&=\frac\gamma{2\mu}\left(1-\rho(\beta)\right)\psi_\phi+\frac\gamma{2\mu}\rho(\beta)\psi_\vp+\frac\gamma\beta\rho(\beta)(H+Q-\psi)\\
			&=\frac\gamma\beta\rho(\beta)Q+\frac\gamma\beta\rho(\beta)H+\frac\gamma{2\mu}\p_\vp(\rho(\beta)\psi)+
			\frac\gamma{2\mu}\left(\rho'(\beta)-\frac{2\mu}\beta\rho(\beta)\right)\psi+\frac\gamma{2\mu}\left(1-\rho(\beta)\right)\psi_\phi.
		\end{align*}
		This motivates us to consider the following linear system
		\begin{equation}\label{5.46}
			\begin{cases}
				\frac1\mu\p_\vp(\beta Q_\vp)+\frac\mu\beta Q_{\phi\phi}+\frac\gamma\beta\rho(\beta)Q\\
				\qquad=-\frac\gamma\beta\rho(\beta)H-\frac\gamma{2\mu}\p_\vp(\rho(\beta)\psi)-
				\frac\gamma{2\mu}\left(\rho'(\beta)-\frac{2\mu}\beta\rho(\beta)\right)\psi-\frac\gamma{2\mu}\left(1-\rho(\beta)\right)\psi_\phi\\
				H+Q=\psi+\frac{\beta}{2\mu}\psi_\beta.
			\end{cases}
		\end{equation}
		It follows from $\displaystyle H=\sum_{2\leq |n|\leq N_0}\hat H_n(\beta)e^{\ii n(\beta+\phi)}\in X_0$ that $H\in\tilde X$. By Lemma \ref{compact_bounded}, for any $\psi\in \tilde Y$, we can find $Q_1, Q_2\in \tilde X$ satisfying
		\begin{equation*}
			\begin{aligned}
				&\frac1\mu\p_\vp(\beta \p_\vp Q_1)+\frac\mu\beta \p_\phi^2Q_{1}+\frac\gamma\beta\rho(\beta)Q_1\\
				&\ \ =-\frac\gamma{2\mu}\p_\vp(\rho(\beta)\psi)-\frac\gamma{2\mu}\left(\rho'(\beta)-\frac{2\mu}\beta\rho(\beta)\right)\psi-
				\frac\gamma{2\mu}\left(1-\rho(\beta)\right)\psi_\phi
			\end{aligned}
		\end{equation*}
		and
		\begin{equation*}
			\frac1\mu\p_\vp(\beta \p_\vp Q_2)+\frac\mu\beta \p_\phi^2Q_{2}+\frac\gamma\beta\rho(\beta)Q_2=-\frac\gamma\beta\rho(\beta)H,
		\end{equation*}
		with $\|Q_2\|_{\tilde X}\lesssim \left\|\langle\beta\rangle^\alpha\beta^{2\mu}\frac{\rho(\beta)}\beta H\right\|_{L^\infty}\lesssim\|H\|_{\tilde X}$ and
		\begin{align*}
			\|Q_1\|_{\tilde X}&\lesssim \left\|\langle\beta\rangle^\alpha\beta^{2\mu-1}\rho(\beta)\psi\right\|_{L^\infty}+
			\left\|\langle\beta\rangle^\alpha\beta^{2\mu}\left(\rho'(\beta)-\frac{2\mu}\beta\rho(\beta)\right)\psi\right\|_{L^\infty}\\
			&\qquad\qquad+\left\|\langle\beta\rangle^\alpha\beta^{2\mu}\left(1-\rho(\beta)\right)\psi_\phi\right\|_{L^\infty}\\
			&
\lesssim\left\|\langle\beta\rangle^{\alpha-1}\beta^{2\mu}\psi\right\|_{L^\infty}\lesssim \|\psi\|_{\tilde Y}.
		\end{align*}
	Here we used \eqref{bernstein2} for $g=\langle\beta\rangle^\alpha\beta^{2\mu}\left(1-\rho(\beta)\right)\psi$ and
	\begin{align*}
				\beta^{2\mu-1}|\rho(\beta)|+\beta^{2\mu}\left|\rho'(\beta)-\frac{2\mu}\beta\rho(\beta)\right|+\beta^{2\mu}\left|1-\rho(\beta)\right|
				\lesssim\beta^{2\mu}\langle\beta\rangle^{-1}.
		\end{align*}
		
		We denote the solution map from $\psi$ to $Q_1$ by $T_1: \tilde Y\to \tilde X$, and denote the solution map from $H$ to $Q_2$ by $T_2: \tilde X\to \tilde X$. Both of them are bounded linear operators. To solve the system \eqref{5.46} in $\tilde X\times\tilde Y$, we only need to find $(Q,\psi)\in \tilde X\times\tilde Y$ such that
		\[Q=T_1\psi+T_2H,\qquad \psi=\tilde T_0(H+Q),\]
		i.e.,
		\begin{equation*}
			\left(I-T_1\tilde T_0\right)(H+Q)=T_2H+H.
		\end{equation*}
		It follows from Lemma \ref{compact} and the boundedness of $T_1: \tilde Y\to \tilde X$ that
		\[T_1\tilde T_0: \tilde X\to\tilde X\quad \text{is a compact operator}.\]
		We claim that $I-T_1\tilde T_0: \tilde X\to\tilde X$ is an injection. Indeed, if $Q\in \tilde X$ satisfies $T_1\tilde T_0Q=Q$, then letting $\psi=\tilde T_0Q\in\tilde Y$ we have
		\begin{equation}\label{5.50}
			\begin{cases}
				\frac1\mu\p_\vp(\beta Q_\vp)+\frac{\mu Q_{\phi\phi}}{\beta}+\frac\gamma{2\mu}\psi_\phi=0,\\
				Q=\psi+\frac\beta{2\mu}\psi_\beta.
			\end{cases}
		\end{equation}
		Now, using the same ideas as in the proof of uniqueness part of Proposition \ref{linear_thm}: expanding $Q$ and $\psi$ in Fourier series (here the series is a finite summation) and then by Lemma \ref{uniqueness_ODE}, we can show that the system \eqref{5.50} only has the trivial solution $(Q, \psi)=(0,0)$ in $\tilde X\times\tilde Y$. Hence, $I-T_1\tilde T_0: \tilde X\to\tilde X$ is injective.
		
		By Fredholm's theory, $I-T_1\tilde T_0: \tilde X\to\tilde X$ is a bijection and has a bounded inverse. As a result,
		\[Q:=\left(I-T_1\tilde T_0\right)^{-1}(T_2H+H)-H\in \tilde X, \qquad \psi:=\tilde T_0(H+Q)\in\tilde Y\]
		solve the system \eqref{exist_2_Q} and
		\[\|Q\|_{\tilde X}+\|\psi\|_{\tilde Y}\lesssim \|H\|_{\tilde X_0}\lesssim\|H\|_{X_0}.\hypertarget{step2}{}\]
		
		\noindent\textbf{Step 2.} In this step, we show that the solution $Q\in \tilde X$ to \eqref{exist_2_Q} given
		\beno
		\displaystyle H=\sum_{2\leq |n|\leq N_0}\hat H_n(\beta)e^{\ii n(\beta+\phi)}\in X_0
		\eeno
		(so $H\in\tilde X$)  constructed in the previous step exactly belongs to $X_0$, the smaller space requiring the $C^\alpha$ regularity.
		
		\if0 We write
		\[Q=\sum_{2\leq |n|\leq N_0}\hat Q_n(\beta)e^{\ii n(\beta+\phi)}\in \tilde X,\qquad \psi=\sum_{2\leq |n|\leq N_0}\hat \psi_n(\beta)e^{\ii n(\beta+\phi)}\in \tilde Y.\]
		Then the equation \eqref{exist_2_Q} is equivalent to the following ODE system for $m\leq |n|\leq N_0$:
		\begin{equation*}
			\begin{cases}
				\left(\beta\p_\beta\right)^2\hat Q_n-\mu^2n^2\hat Q_n+\frac\gamma2\ii n\beta\hat \psi_n=0,\\
				\hat H_n+\hat Q_n=\hat \psi_n+\frac\beta{2\mu}\left(\p_\beta\hat \psi_n+\ii n\hat \psi_n\right),
			\end{cases}
		\end{equation*}
		which can be further converted to
		\begin{equation*}
			\begin{cases}
				\Big(\beta\p_\beta-(2\mu-1-n\mu)\Big)\Big(\beta\p_\beta-(2\mu-1+n\mu)\Big)\Big(2\mu\beta^{2\mu-1}\hat Q_n\Big)=(1-2\mu)\ii n\beta^{2\mu}\hat \psi_n,\\
				2\mu\beta^{2\mu-1}\hat H_n+2\mu\beta^{2\mu-1}\hat Q_n=\p_\beta\left(\beta^{2\mu}\hat \psi_n\right)+\ii n\beta^{2\mu}\hat \psi_n.
			\end{cases}
		\end{equation*}
		Now, Lemma \ref{prop_B4} implies that $\beta^{\alpha+1}\p_\beta\left(\beta^{2\mu-1}\hat Q_n\right), \beta^\alpha\beta^{2\mu-1}\hat Q_n\in C^\alpha$ so \\$\beta^{\alpha+2\mu}\p_\beta\hat Q_n, \beta^{\alpha+2\mu-1}\hat Q_n\in C^\alpha$ and (we emphasize that $N_0$ is a fixed finite positive integer)
		\begin{align*}
			\left\|\beta^{\alpha+2\mu}\p_\beta\hat Q_n\right\|_{C^\alpha}&+\left\|\beta^{\alpha+2\mu-1}\hat Q_n\right\|_{C^\alpha}\lesssim \left\|\langle\beta\rangle^\alpha\beta^{2\mu-1}\hat Q_n\right\|_{L^\infty}+\left\|\langle\beta\rangle^\alpha \beta\left(\beta^{2\mu-1}\hat Q_n\right)\right\|_{L^\infty}\\
			&\ \ +\left\|\langle\beta\rangle^{\alpha-1}\beta^{2\mu}\hat \psi_n\right\|_{L^\infty}+\left\|\langle\beta\rangle^\alpha\beta^{2\mu-1}\hat H_n\right\|_{L^\infty}+\left\|\langle\beta\rangle^\alpha \beta\left(\beta^{2\mu-1}\hat H_n\right)\right\|_{L^\infty}\\
			&\lesssim \left\|\langle\beta\rangle^\alpha\beta^{2\mu-1}\hat Q_n\right\|_{L^\infty}+\left\|\langle\beta\rangle^\alpha \beta^{2\mu}\p_\beta\hat Q_n\right\|_{L^\infty}\\
			&\ \ \ +\left\|\langle\beta\rangle^{\alpha-1}\beta^{2\mu}\hat \psi_n\right\|_{L^\infty}+\left\|\langle\beta\rangle^\alpha\beta^{2\mu-1}\hat H_n\right\|_{L^\infty}+\left\|\langle\beta\rangle^\alpha \beta^{2\mu}\p_\beta\hat H_n\right\|_{L^\infty}\\
			&\lesssim\|Q\|_{\tilde X_m}+\|\psi\|_{\tilde Y_m}+\|H\|_{\tilde X_m}\lesssim \|H\|_{X_m}.
		\end{align*}
		Therefore, $Q\in X_m$ and
		\begin{align*}
			\|Q\|_{X_m}&=\left\|\beta^{\alpha+2\mu}\p_\vp Q\right\|_{C_\beta^\alpha}+\left\|\beta^{\alpha+2\mu-1}\p_\phi Q\right\|_{C_\beta^\alpha}+\left\|\beta^{\alpha+2\mu-1}Q\right\|_{C_\beta^\alpha}\\
			&\lesssim \sum_{m\leq |n|\leq N_0}\left(\left\|\beta^{\alpha+2\mu}\p_\beta\hat Q_n\right\|_{C^\alpha}+\left\|\beta^{\alpha+2\mu-1}\hat Q_n\right\|_{C^\alpha}\right)\\
			&\lesssim \|H\|_{X_m}.
		\end{align*}\fi
		
		By \eqref{f1} and $\partial_{\beta}Q=\partial_{\phi}Q-\partial_{\varphi}Q$, we have\begin{align}\label{f2}
			&\left\|\beta^{2\mu-1}Q\right\|_{C_\beta^\alpha}\lesssim \left\|\langle\beta\rangle^{\alpha}\beta^{2\mu-1}Q\right\|_{L^{\infty}}+
			\left\|\langle\beta\rangle^{\alpha-1}\beta\partial_{\beta}(\beta^{2\mu-1}Q)\right\|_{L^{\infty}}\\ \notag&\lesssim \left\|\langle\beta\rangle^{\alpha}\beta^{2\mu-1}Q\right\|_{L^{\infty}}+
			\left\|\langle\beta\rangle^{\alpha-1}\beta^{2\mu}\partial_{\beta}Q\right\|_{L^{\infty}}\\ \notag&\lesssim \left\|\langle\beta\rangle^{\alpha}\beta^{2\mu-1}Q\right\|_{L^{\infty}}+
			\left\|\langle\beta\rangle^{\alpha-1}\beta^{2\mu}\partial_{\phi}Q\right\|_{L^{\infty}}+
			\left\|\langle\beta\rangle^{\alpha-1}\beta^{2\mu}\partial_{\varphi}Q\right\|_{L^{\infty}}\\ \notag
			&\lesssim \left\|\langle\beta\rangle^{\alpha}\beta^{2\mu-1}Q\right\|_{L^{\infty}}+
			\left\|\langle\beta\rangle^{\alpha-1}\beta^{2\mu}\partial_{\varphi}Q\right\|_{L^{\infty}} \lesssim\|Q\|_{\tilde X}\lesssim\|H\|_{X_0}.
		\end{align}
		Here we have used \eqref{bernstein2} so that
		\beno
		\left\|\langle\beta\rangle^{\alpha-1}\beta^{2\mu}\partial_{\phi}Q\right\|_{L^{\infty}}\lesssim
		\left\|\langle\beta\rangle^{\alpha-1}\beta^{2\mu}Q\right\|_{L^{\infty}}\leq \left\|\langle\beta\rangle^{\alpha}\beta^{2\mu-1}Q\right\|_{L^{\infty}},
		\eeno
		and we also used $\langle\beta\rangle^{\alpha-1}\leq \langle\beta\rangle^{\alpha} $ and the definition of the $\tilde X$ norm.
		
		By \eqref{bernstein2}, we also have
		\beno
		\left\|\beta^{2\mu-1}\partial_{\phi}Q\right\|_{C_\beta^\alpha}\lesssim\left\|\beta^{2\mu-1}Q\right\|_{C_\beta^\alpha}\lesssim \|H\|_{X_0}.
		\eeno
		
		Similarly, by \eqref{f2} with $Q$ replaced by $\beta\p_\vp Q $, \eqref{exist_2_Q} and \eqref{bernstein2}, we have
		\begin{align*}
			&\left\|\beta^{2\mu}\p_\vp Q\right\|_{C_\beta^\alpha}\lesssim  \left\|\langle\beta\rangle^{\alpha}\beta^{2\mu}\p_\vp Q\right\|_{L^{\infty}}+
			\left\|\langle\beta\rangle^{\alpha-1}\beta^{2\mu}\partial_{\varphi}(\beta\p_\vp Q)\right\|_{L^{\infty}} \\
			&\lesssim \|Q\|_{\tilde X}+\left\|\langle\beta\rangle^{\alpha-1}\beta^{2\mu-1}\partial_{\phi}^2 Q\right\|_{L^{\infty}}+\left\|\langle\beta\rangle^{\alpha-1}\beta^{2\mu}\partial_{\phi}\psi\right\|_{L^{\infty}}\\
			&\lesssim \|Q\|_{\tilde X}+\left\|\langle\beta\rangle^{\alpha}\beta^{2\mu-1} Q\right\|_{L^{\infty}}+\left\|\langle\beta\rangle^{\alpha-1}\beta^{2\mu}\psi\right\|_{L^{\infty}}\lesssim\|Q\|_{\tilde X}+\|\psi\|_{\tilde Y}\lesssim\|H\|_{X_0}.
		\end{align*}
		Summing up, we have $Q\in X_0$ with
		\begin{align*}
			\|Q\|_{X_0}&=\left\|\beta^{\alpha+2\mu}\p_\vp Q\right\|_{C_\beta^\alpha}+\left\|\beta^{\alpha+2\mu-1}\p_\phi Q\right\|_{C_\beta^\alpha}+\left\|\beta^{\alpha+2\mu-1}Q\right\|_{C_\beta^\alpha}\lesssim\|H\|_{X_0}.
		\end{align*}
		
		Finally, Lemma \ref{F<H} implies that $\psi=\tilde T_0(H+Q)\in Y_0$ and $	\|\psi\|_{Y_0}\lesssim\|H+Q\|_{X_0}	\leq \|H\|_{X_0}+\|Q\|_{X_0}\lesssim\|H\|_{X_0}$. 	
		\end{proof}

	\section{Nonlinear problem in the original coordinates}\label{sec_original_coor}
	This section is devoted to recovering the solution in the physical variables $x\in \RR^2$. First of all, we study the invertibility of the change of variables $x\mapsto\bbb=(\beta,\phi)$, which is a nonlinear implicit change of variables. Later on, we check that the solution in the physical variables is a weak solution to Euler equations and finish the proof of our main theorem.  In this section and Section \ref{sec_main_cor}, since we are taking estimates on the solutions constructed before, the implicit constants in $\lesssim$, unless otherwise specified, all depend on the solutions and all parameters (including $m$), and they are independent of the variables $(t, y)$, $(r,\theta)$ and $(\beta, \phi)$.
		
	\subsection{Invertibility of the change of coordinates}\label{sec_invertibility}
	For $m\geq 2$, $\mu>1/2$ and $\alpha\in(0,\alpha_\mu)$ where $\alpha_\mu$ is given by \eqref{alpha_mu}, Theorem \ref{IFT} gives a $C^\infty$ map $\Xi: B_{\varepsilon_\Omega}^{(\WWW_m)}(\Omega_0)\to B_{\varepsilon_\psi}^{(Y_m)}(\psi_0)$, where $\varepsilon_\psi, \varepsilon_\Omega>0$ are independent of $m\geq2$, such that the unique solution of $\FF(\psi,\Omega)=0$ in $B_{\varepsilon_\psi}^{(Y_m)}(\psi_0)\times  B_{\varepsilon_\Omega}^{(\WWW_m)}(\Omega_0)$ is $\psi=\Xi(\Omega)$. Using the function $\psi$, we define the change of coordinates $\RR^2\ni x\mapsto\aaa=(r,\theta)\mapsto \bbb=(\beta,\phi)$ in the beginning of Section \ref{sec_formulation}. Now we check that this change of coordinates has all properties we expected and is invertible.
	
	By Lemma \ref{lem_nonlinear}, we have
	\[\beta^{2\mu-1}\psi\in \GG_m^-, \ \ \beta^{2\mu}\psi_\beta\in\GG_m^-,\ \  \beta^{2\mu-1}\psi_\phi\in\GG_m^0, \ \ \beta^{2\mu}\psi_{\beta\phi}\in\GG_m^0,\]
	\[\beta^{2\mu}\left(\psi_\vp+\frac\beta{2\mu}\psi_{\beta\vp}\right)\in\GG_m^0,\ \  \beta^{2\mu}\psi_\vp\in\GG_m, \ \ \beta^{2\mu+1}\psi_{\beta\vp}\in\GG_m.\]
	Due to $\p_\beta=\p_\phi-\p_\vp$, we obtain
	\[\psi_{\beta\beta}, \psi_{\beta\phi}, \psi_\phi\in C_{(\beta,\phi)}(\RR_+\times\TT)\ \ \Longrightarrow \ \ \psi_\beta, \psi\in C^1_{(\beta,\phi)}(\RR_+\times\TT).\]
	Here $C_{(\beta,\phi)}(\RR_+\times\TT)$ denotes the continuous function in terms of variable $(\beta,\phi)$.
	For the special solution $\psi_0=\frac{1}{2\mu-1}\beta^{1-2\mu}$, we have
	\[\beta^{2\mu}\p_\beta\psi_0=-1<0, \ \ \beta^{2\mu}\p_\vp\psi_0=1>0, \ \  \beta^{2\mu+1}\p_{\beta\vp}\psi_0=-2\mu<0.\]
	By the definitions of $\GG_m^0$, $\GG_m^-$ and $\GG_m$, the following embeddings are independent of $m\in\NN_+$:
	\[\GG_m^0, \GG_m^-, \GG_m\hookrightarrow C_b(\RR_+\times \TT), \qquad \langle\beta\rangle^\alpha \GG_m^0\hookrightarrow C_b(\RR_+\times \TT).\]
	Therefore, by adjusting $\varepsilon_\Om>0$ to a smaller number if necessary, for the solution $\psi=\Xi(\Omega)$ with $\Om\in B_{\varepsilon_\Omega}^{(\WWW_m)}(\Omega_0)$, we have
	\begin{equation}
		\beta^{2\mu}\psi_\beta<0,\ \ \beta^{2\mu}\psi_\vp>0, \ \ \beta^{2\mu+1}\psi_{\beta\vp}<0,
	\end{equation}
	and
	\begin{equation}\label{5.2}
		-\psi_\beta\sim \beta^{-2\mu},\ \ \psi_\vp\sim\beta^{-2\mu},\ \ -\psi_{\beta\vp}\sim \beta^{-2\mu-1}.
	\end{equation}
	It follows from \eqref{beta-x} that
	\begin{equation}\label{r_beta^-mu}
		r=\left(-\frac1\mu\psi_\beta\right)^{1/2}>0 \qquad\text{ and } \qquad r\sim \beta^{-\mu},
	\end{equation}
	thus the change of variable $\bbb=(\beta,\phi)\mapsto \aaa=(r,\theta)$ is $C^1_{(\beta,\phi)}(\RR_+\times\TT)$. By \eqref{beta_to_r}, the Jacobian of $\bbb\mapsto \aaa$ is
	\[\det\aaa_\bbb=\det\begin{pmatrix}
		r_\beta & r_\phi \\ \theta_\beta & \theta_\phi
	\end{pmatrix}=-r_\vp=\frac{\psi_{\beta\vp}}{2\mu r}<0,\]
	hence $\bbb\mapsto \aaa$  is a $C^1$ local diffeomorphism. As for the surjectivity of $\bbb\mapsto \aaa$, we refer  to section 7.4 in \cite{ELL2013}, since the proof is identical. Therefore, the map $(\beta,\phi)\mapsto (r,\theta)$ is a surjective, hence a $C^1$ diffeomorphism.  The transform $(r,\theta)\mapsto x\in\RR^2\setminus\{0\}$ is also a $C^1$ diffeomorphism modulo periodicity.

	\subsection{Properties of the solution in the physical variables}\label{sec_phycical_variable}
	
	In this subsection, we explore the properties of the solution constructed in Theorem \ref{IFT}  in the physical variables.\smallskip
	
	We start with the properties of $\psi=\psi(x)$ with the polar coordinates $x=(r\cos\theta, r\sin\theta)$. It follows from $\beta^{2\mu-1}\psi\in\GG_m^-$ and \eqref{r_beta^-mu} that $|\psi(r,\theta)|\lesssim r^{2-\frac1\mu}$. By \eqref{p_rtheta}, we have
	\begin{align*}
		\p_\theta \psi&=\psi_\phi-\frac{\psi_{\beta\phi}}{\psi_{\beta\vp}}\psi_\vp=\beta^{1-2\mu}\langle\beta\rangle^{-\alpha}
		\left(\langle\beta\rangle^{\alpha}\beta^{2\mu-1}\psi_\phi-
		\frac{\langle\beta\rangle^{\alpha}\beta^{2\mu}\psi_{\beta\phi}}{\beta^{2\mu+1}\psi_{\beta\vp}}\beta^{2\mu}\psi_\vp\right),\\
		r\p_r\psi&=\frac{2\psi_\beta}{\psi_{\beta\vp}}\psi_\vp=\frac{2\psi_\beta\left(\psi_\vp+\frac\beta{2\mu}\psi_{\beta\vp}\right)}{\psi_{\beta\vp}}-
		\frac\beta\mu\psi_\beta\\
		&=2\frac{\beta^{2\mu}\psi_\beta\cdot \langle\beta\rangle^{\alpha}\beta^{2\mu}\left(\psi_\vp+\frac\beta{2\mu}\psi_{\beta\vp}\right)}{\beta^{2\mu+1}\psi_{\beta\vp}}\beta^{1-2\mu}
		\langle\beta\rangle^{-\alpha}-
		\frac{\beta^{2\mu}\psi_\beta}\mu\beta^{1-2\mu}.
	\end{align*}
	Lemma \ref{lem_nonlinear}, \eqref{5.2} and \eqref{r_beta^-mu} imply that
	\begin{equation}\label{5.4}
		\left|\p_\theta\psi\right|\lesssim \min(r^{2+\frac{\alpha-1}{\mu}},r^{2-\frac{1}{\mu}}),\qquad \left|\p_r\psi\right|\lesssim r^{1-\frac1\mu}.
	\end{equation}
	Moreover, since $\beta^{2\mu}\psi_\beta\in\GG_m^-$, there exists a constant $c_0$ such that
	\[\left|r\p_r\psi(r,\theta)-c_0\beta(r,\theta)^{1-2\mu}\right| {{\lesssim\beta(r,\theta)^{1-\alpha-2\mu}}}\lesssim r^{2+\frac{\alpha-1}{\mu}}.\]
	We denote $A(r,\theta)=\beta(r,\theta)^{1-2\mu}$ and
	\begin{align*}
		A_0(r)&=P_0A(r):=\frac1{2\pi}\int_\TT A(r,\theta)\,d\theta, \\ A_{\neq}(r,\theta)&=A(r,\theta)-A_0(r)=\frac1{2\pi}\int_\TT [A(r,\theta)-A(r,\theta')]\,d\theta',
	\end{align*}
	then by \eqref{r_beta^-mu}, we have $|A_0(r)|\lesssim r^{2-\frac1\mu}$. Recalling \eqref{r_to_beta}, we compute
	\[\p_\theta A(r,\theta)=(1-2\mu)\beta^{-2\mu}\beta_\theta=(1-2\mu)\beta^{1-\alpha-2\mu}\frac{\beta^{\alpha+2\mu}\psi_{\beta\phi}}{\beta^{2\mu+1}\psi_{\beta\vp}},\]
	hence $\left|\p_\theta A(r,\theta)\right|\lesssim r^{2+\frac{\alpha-1}{\mu}}$ and then we have $|A_{\neq}(r,\theta)|\lesssim r^{2+\frac{\alpha-1}{\mu}}$. As a result, for $v=v^re_r+v^\theta e_\theta=\nabla^\perp \psi=-\frac1r\p_\theta\psi e_r+\p_r\psi e_\theta$, we have $v^\theta(r,\theta)=v^\theta_1(r,\theta)+v^\theta_2(r)$ with {($v^\theta_2(r)=c_0A_0(r)/r$)}
	\begin{equation}\label{v_bounds}
		|v(r,\theta)|\lesssim r^{1-\frac1\mu}, \qquad \left|v^r(r,\theta)\right|+\left|v^\theta_1(r,\theta)\right|\lesssim r^{1+\frac{\alpha-1}{\mu}}, \qquad \left|v^\theta_2(r)\right|\lesssim r^{1-\frac1\mu}.
	\end{equation}
	Since $\mu>\frac12$, we know that $v(x)\in L^2_{\text{loc}}(\RR^2)$.
	
	For the vorticity $\omega$, it follows from \eqref{omega_Omega} and \eqref{r_beta^-mu} that
	\[|\omega(r,\theta)|=\left(\beta^{2\mu}\psi_\vp(\beta,\phi)\right)^{-\frac1{2\mu}}\beta|\Omega(\phi)|\lesssim r^{-\frac1\mu}|\Omega(\phi)|.\]
	For any fixed $R>0$, using the change of variables $x\mapsto(r,\theta)\mapsto(\beta,\phi)$, we obtain
	\begin{equation}\label{5.6}
		\begin{aligned}
			&\int_{|x|\leq R}|\omega(x)|\,dx\lesssim\int_0^R\int_\TT r^{1-\frac1\mu}|\Omega(\phi(r,\theta))|\,dr\,d\theta\\
			&\lesssim\int_{(CR)^{-\frac1\mu}}^\infty\int_\TT \beta^{1-\mu}|\Omega(\phi)|\left|\det\aaa_\bbb\right|\,d\beta\,d\phi \lesssim \int_{(CR)^{-\frac1\mu}}^\infty\int_\TT \beta^{1-\mu}|\Omega(\phi)||\psi_{\beta\varphi}/r|\,d\beta\,d\phi
			\\
			&\lesssim\int_{(CR)^{-\frac1\mu}}^\infty(\beta^{1-\mu}\beta^{-2\mu-1}/\beta^{-\mu})\,d\beta\|\Omega\|_{L^1(\TT)}\lesssim R^{2-\frac1\mu}\|\Omega\|_{L^1(\TT)}.
		\end{aligned}
	\end{equation}
	Therefore, $\omega(x)\in L^1_{\text{loc}}(\RR^2)$.
	
	Recall the self-similar change of variables \eqref{self-similar}: with $x=t^{-\mu}y$
	\[\vvv(y,t)=t^{\mu-1}v(x),\qquad \www(y,t)=t^{-1}\omega(x).\]
	Hence, $\vvv\in C((0,\infty); L^2_{\text{loc}}(\RR^2; \RR^2))$ and $\www\in C((0,\infty); L^1_{\text{loc}}(\RR^2))$.
	
	\subsection{Weak solution of the Euler equations}
	We show that $\vvv(y,t)$ is actually a weak solution to the 2-D Euler equations \eqref{2DEuler}.\smallskip

	The same arguments as in section 4 in \cite{ELL2016} based on the change of variables show that $v$ solves the equation  weakly outside the origin:
	\begin{equation}\label{5.7}
		\nabla\times\Big((\mu-1)v+v\cdot\nabla v-\mu x\cdot\nabla v\Big)=0 \qquad \text{ in } \mathcal{D}'\left(\RR^2\setminus\{0\}\right),
	\end{equation}
	and $\vvv$ solves \eqref{2DEuler} weakly outside the space-time origin. It remains to show that $\vvv$ is a weak solution on $\RR^2\times[0,\infty)$ (including the origin).\smallskip
	
	We first show that the equation for $v$ holds weakly in the whole plane $\RR^2$ (including the origin). Recalling from \eqref{alpha_mu}, we have
	\[\alpha_\mu=\sqrt{4\mu^2-2\mu+1}-(2\mu-1)>1-\mu \text{ and } \alpha_\mu>\frac12>0,\qquad \text{ for  } \mu>\frac12.\]
	
	\begin{prop}\label{prop_v_weak}
		Assume that $\mu>\frac12$ and $\alpha\in(\max\{0, 1-\mu\}, \alpha_\mu)$. For any vector field $w\in C_c^\infty(\RR^2; \RR^2)$ with $\text{div }w=0$, there holds
		\begin{equation}\label{v_weaksol}
			\int_{\RR^2}(3\mu-1)v\cdot w-\left(v\otimes v\right): \nabla w+\mu v\cdot (x\cdot \nabla w)\,dx=0.
		\end{equation}
	\end{prop}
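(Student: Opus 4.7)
The strategy is to deduce the weak momentum identity \eqref{v_weaksol} from the weak curl identity \eqref{5.7} (which holds only on $\RR^2\setminus\{0\}$) by a careful radial cut-off argument at the origin. Since $w\in C_c^\infty(\RR^2;\RR^2)$ is divergence-free, write $w = \nabla^\perp h$ with $h \in C_c^\infty(\RR^2)$. Let $\chi_\epsilon\in C^\infty([0,\infty);[0,1])$ be a radial bump with $\chi_\epsilon(r)=0$ for $r\le \epsilon$ and $\chi_\epsilon(r)=1$ for $r\ge 2\epsilon$, and set $\tilde w_\epsilon := \nabla^\perp(\chi_\epsilon(|x|)h(x))$, a divergence-free field supported in $\{|x|\ge\epsilon\}$. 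Since $v\in L^2_{\mathrm{loc}}$, the products $v\otimes v$ and $x\otimes v$ are in $L^1_{\mathrm{loc}}$, so $v\cdot\nabla v = \nabla\cdot(v\otimes v)$ and $x\cdot\nabla v$ are well-defined distributions; then \eqref{5.7} combined with standard integration by parts (as in the derivation of \eqref{v_weaksol} from the momentum equation) yields \eqref{v_weaksol} with $w$ replaced by $\tilde w_\epsilon$. The task is to pass to the limit $\epsilon\to 0$.

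For the linear terms $(3\mu-1)\int v\cdot(\tilde w_\epsilon-w)$ and $\mu\int v\cdot(x\cdot\nabla(\tilde w_\epsilon-w))$, the integrand is supported in $\overline{B_{2\epsilon}}$, and direct estimates using $|v|\lesssim r^{1-1/\mu}$ from \eqref{v_bounds} together with $|\nabla^k\chi_\epsilon|\lesssim \epsilon^{-k}$ yield an error of order $\epsilon^{2-1/\mu}\to 0$ under $\mu>\tfrac12$.

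The remaining term $\int(v\otimes v):\nabla\tilde w_\epsilon$ is delicate. Decompose $v = v_1+v_2$ with $v_2 = v_2^\theta(r)\hat e_\theta$ the purely radial part from \eqref{v_bounds} (so $|v_2|\lesssim r^{1-1/\mu}$ and $v_2^r=0$), and $v_1 = v_1^r\hat e_r + v_1^\theta\hat e_\theta$ satisfying $|v_1|\lesssim r^{1+(\alpha-1)/\mu}$. Expanding $\nabla\nabla^\perp((1-\chi_\epsilon)h)$ produces terms in which $v\otimes v$ is contracted against $w\otimes\nabla\chi_\epsilon$, $\nabla^\perp\chi_\epsilon\otimes\nabla h$, or $h\,\nabla\nabla^\perp\chi_\epsilon$. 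The $v_2\otimes v_2$ contributions vanish identically: in the first two by $v_2\cdot\nabla\chi_\epsilon=0$ (tangential vs.\ radial) together with the periodicity identity $\int_0^{2\pi}\partial_\theta h\, d\theta=0$, and in the third because the reduction (see below) is proportional to $v^r v^\theta$ and $v_2^r=0$. The $v_1\otimes v_1$ contributions are estimated by $|v_1|^2\lesssim r^{2+2(\alpha-1)/\mu}$, giving $O(\epsilon^{2+2(\alpha-1)/\mu})\to 0$ since $\alpha>1-\mu$. Cross terms $v_1\otimes v_2$ (and symmetric) contracted with at most one derivative of $\chi_\epsilon$ are bounded by $O(\epsilon^{3+(\alpha-2)/\mu})\to 0$, using the fact that $\alpha>1-\mu>2-3\mu$ for $\mu>\tfrac12$.

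The principal obstacle is the cross term $v_1\otimes v_2$ contracted with $h\,\nabla\nabla^\perp\chi_\epsilon$. Using the polar expansion $\partial_j\partial_k\chi_\epsilon = \chi_\epsilon''(r)\hat e_r^j\hat e_r^k + (\chi_\epsilon'(r)/r)\hat e_\theta^j\hat e_\theta^k$, a direct calculation reduces it to $\int h\, v_1^r v_2^\theta[\chi_\epsilon'(r)/r - \chi_\epsilon''(r)]\,dx$. The naive pointwise bound gives only $O(\epsilon^{2+(\alpha-2)/\mu})$, which fails to vanish when $\alpha\le 2-2\mu$. The key observation is that by $2\pi$-periodicity of $\psi$,
\[\int_0^{2\pi}v_1^r(r,\theta)\,d\theta = -\frac{1}{r}\int_0^{2\pi}\partial_\theta\psi(r,\theta)\,d\theta = 0,\]
so the angular integral $\int_0^{2\pi} h\, v_1^r\,d\theta$ equals $\int_0^{2\pi}(h-\bar h(r))v_1^r\,d\theta$, where $\bar h(r)$ is the angular mean of $h(r,\cdot)$. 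Smoothness of $h$ yields $|h(r,\theta)-\bar h(r)|\lesssim r\|Dh\|_{L^\infty}$ uniformly, supplying the missing factor of $r$ and upgrading the bound to $O(\epsilon^{3+(\alpha-2)/\mu})\to 0$ under $\alpha>2-3\mu$. Combining all estimates and letting $\epsilon\to 0$ gives \eqref{v_weaksol}.
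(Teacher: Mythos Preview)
Your proof is correct and rests on the same two ingredients as the paper's: a radial cut-off at the origin together with the cancellation $\int_0^{2\pi} v^r(r,\theta)\,d\theta=0$. The organization, however, differs. The paper Taylor-expands the stream function $h$ at the origin as $h(x)=h(0)+\nabla h(0)\cdot x+h_2(x)$, so that the nonlinear error splits into the three concrete integrals $\int(v\otimes v):\nabla\nabla^\perp\rho_\delta$, $\int(v\otimes v):\nabla\nabla^\perp(x_i\rho_\delta)$, and a remainder with the quadratically vanishing $h_2$; each is then computed explicitly in polar form and the cancellation is invoked directly (e.g.\ killing $v^rv_2^\theta$ in the first, and $(v_2^\theta)^2\sin\theta$ in the second). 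You instead keep $h$ general, split $v=v_1+v_2$, and recover the missing factor of $r$ in the worst cross term by subtracting the angular mean of $h$ and using $|h-\bar h(r)|\lesssim r$. This is a clean alternative: it avoids the case analysis over Taylor pieces at the cost of slightly less explicit formulas. One small omission in your expansion of $\nabla\nabla^\perp((1-\chi_\epsilon)h)$ is the term $(1-\chi_\epsilon)\nabla w$, but that contributes $O(\epsilon^{4-2/\mu})\to 0$ trivially from $|v|^2\lesssim r^{2-2/\mu}$ and $\mu>\tfrac12$.
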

	\begin{proof}
		Since $\text{div }w=0$, there exists a scalar function $\eta\in C_c^\infty(\RR^2)$ such that $w=\nabla^\perp \eta$. Let $\rho\in C_c^\infty(\RR)$ be a smooth bump function satisfying $\rho|_{[-1,1]}\equiv 1$ and $\text{supp }\rho{\ \subset}(-2,2)$. For any $\delta>0$, we define $\rho_\delta\in C_c^\infty(\RR^2)$ by $\rho_\delta(x)=\rho\left(\frac{|x|}{\delta}\right)$ for $x\in\RR^2$. It follows from \eqref{5.7} that
		\begin{align*}
			\int_{\RR^2}(3\mu-1)v\cdot\nabla^\perp\left(\eta(1-\rho_\delta)\right)&-\left(v\otimes v\right): \nabla \nabla^\perp\left(\eta(1-\rho_\delta)\right)\\
			&+\mu v\cdot (x\cdot \nabla \nabla^\perp\left(\eta(1-\rho_\delta)\right))\,dx=0.
		\end{align*}
		As a consequence, to show \eqref{v_weaksol}, it suffices to show that
		\begin{equation}
			\begin{aligned}
				\lim_{\delta\to0+}\int_{\RR^2}(3\mu-1)v\cdot\nabla^\perp\left(\eta\rho_\delta\right)-\left(v\otimes v\right): \nabla \nabla^\perp\left(\eta\rho_\delta\right)+\mu v\cdot (x\cdot \nabla \nabla^\perp\left(\eta\rho_\delta\right))\,dx=0.
			\end{aligned}
		\end{equation}
		By \eqref{v_bounds}, as $\delta\to0+$, we have
		\begin{align*}
			\left|\int_{\RR^2}v\cdot\nabla^\perp\left(\eta\rho_\delta\right)\,dx\right|&\lesssim\int_0^{2\delta}r\cdot r^{1-\frac1\mu}\left(1+\frac1\delta\right)\,dr\lesssim\delta^{2-\frac1\mu}\to0,\\
			\left|\int_{\RR^2}v\cdot (x\cdot \nabla \nabla^\perp\left(\eta\rho_\delta\right))\,dx\right|&\lesssim\int_0^{2\delta} r\cdot r^{1-\frac1\mu}\cdot r\cdot\frac1{\delta^2}\,dr\lesssim \delta^{2-\frac1\mu}\to0.
		\end{align*}
		Here the implicit constants in all $\lesssim$  are independent of $\delta>0$. The remaining term is more delicate and we need to explore some cancellations. We decompose $\eta$ as $\eta(x)=\eta(0)+\p_1\eta(0)x_1+\p_2\eta(0)x_2+\eta_2(x)$. Then $\eta_2$ is smooth and
		\begin{equation*}
			\frac{|\eta_2(x)|}{|x|^2}+\frac{|\nabla\eta_2(x)|}{|x|}+\left|\nabla^2\eta_2(x)\right|\lesssim1,\qquad |x|\leq 1.
		\end{equation*}
		Hence,
		\begin{align*}
			\left|\int_{\RR^2}\left(v\otimes v\right): \nabla \nabla^\perp\left(\eta_2\rho_\delta\right)\,dx\right|\lesssim \int_0^{2\delta}r\cdot r^{2-\frac2\mu}\left(1+\frac r\delta+\frac{r^2}{\delta^2}\right)\,dr\lesssim\delta^{4-\frac2\mu}\to0.
		\end{align*}
		Now it remains to show that
		\begin{align}
			\lim_{\delta\to0+}\int_{\RR^2}\left(v\otimes v\right): \nabla \nabla^\perp\rho_\delta\,dx&=0, \label{5.10}\\
			\lim_{\delta\to0+}\int_{\RR^2}\left(v\otimes v\right): \nabla \nabla^\perp\left(x_i \rho_\delta\right)\,dx&=0 \ \ (i=1,2). \label{5.11}
		\end{align}
		
		\underline{Proof of \eqref{5.10}.} Direct computation gives
		$$\left(v\otimes v\right): \nabla \nabla^\perp\rho_\delta=\frac1{\delta^2}v^rv^\theta\left(\rho''\left(\frac r\delta\right)-\frac\delta r\rho'\left(\frac r\delta\right)\right).$$
		Note that $v^r=-\frac1r\p_\theta\psi$, so $\int_\TT v^r(r,\theta)\,d\theta=0$. By \eqref{v_bounds}, we obtain
		\begin{align*}
			\left|\int_{\RR^2}\left(v\otimes v\right): \nabla \nabla^\perp\rho_\delta\,dx\right|&=\frac1{\delta^2}\Big|\int_\delta^{2\delta}r\int_\TT v^r(r,\theta)\left(v^\theta_1(r,\theta)+v^\theta_2(r)\right)\,d\theta\\
			&\qquad\times \left(\rho''\left(\frac r\delta\right)-\frac\delta r\rho'\left(\frac r\delta\right)\right)\,dr\Big|\\
			&\lesssim \frac1{\delta^2}\int_\delta^{2\delta}r\int_\TT\left|v^r(r,\theta)v^\theta_1(r,\theta)\right|\,d\theta\,dr\\
			&\lesssim \frac1{\delta^2}\int_\delta^{2\delta}r\cdot r^{1+\frac{\alpha-1}{\mu}}\cdot r^{1+\frac{\alpha-1}{\mu}}\,dr\lesssim \delta^{2+\frac{2\alpha-2}{\mu}}\to0,
		\end{align*}
		due to $\alpha>1-\mu$.
		
		\underline{Proof of \eqref{5.11}.} We only prove the limit for $i=1$, since the proof of $i=2$ is the same. Direct computation gives
		\begin{align*}
			\left(v\otimes v\right): \nabla \nabla^\perp\left(x_1 \rho_\delta\right)&=\frac1\delta\left(|v^r|^2-|v^\theta|^2\right)\rho'\left(\frac r\delta\right)\sin\theta+\frac r{\delta^2}v^rv^\theta\left(\rho''\left(\frac r\delta\right)+\frac\delta r\rho'\left(\frac r\delta\right)\right)\cos\theta.
		\end{align*}
		We write $|v^\theta(r,\theta)|^2=v^\theta_1(r,\theta)\left(v^\theta_1(r,\theta)+2v^\theta_2(r)\right)+\left(v^\theta_2(r)\right)^2$. Since $\int_\TT\sin \theta\,d\theta=0$, the term $\left(v^\theta_2(r)\right)^2$ contributes nothing into the integral. As a consequence, we have
		\begin{align*}
			\left|\int_{\RR^2}\left(v\otimes v\right): \nabla \nabla^\perp\left(x_1 \rho_\delta\right)\,dx\right|&\lesssim \int_\delta^{2\delta}\left(\frac1\delta r^{3+\frac{2\alpha-2}{\mu}}+\frac1{\delta^2}r^{4+\frac{\alpha-2}{\mu}}+\frac1\delta r^{3+\frac{\alpha-2}{\mu}}\right)\,dr\\
			&\lesssim \delta^{3+\frac{2\alpha-2}{\mu}}+\delta^{3+\frac{\alpha-2}{\mu}}\to 0,\qquad \delta\to0+,
		\end{align*}
		since $\alpha>1-\mu>2-3\mu$.
		
		This concludes the proof of the proposition.
	\end{proof}

	We now discuss the initial data for $\vvv(t)$ and $\www(t)$.
	
	\begin{prop}\label{prop_data}
		Let $\mathring{\omega}(\theta)=\mu^{-\frac1{2\mu}}\Omega(\theta)$, then
		\begin{equation}
			\www(y,t)\xrightarrow{t\to0+}|y|^{-\frac1\mu}\ \mathring{\omega}(\theta)=:\www_0(y) \qquad \text{ in } L^1_{\text{loc}}(\RR^2).
		\end{equation}
	\end{prop}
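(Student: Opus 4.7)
The plan is to reduce the $L^1_{\mathrm{loc}}$ convergence, via the self-similar scaling, to an angular asymptotic for the profile $\omega(x)$ as $|x|\to\infty$. Changing variables $y=t^\mu x$ in the target integral gives
\[\int_{|y|\le R}|\www(y,t)-\www_0(y)|\,dy = t^{2\mu-1}J(R/t^\mu),\qquad J(T):=\int_{|x|\le T}|\omega(x)-|x|^{-1/\mu}\mathring\omega(\theta)|\,dx.\]
Writing $J$ in polar coordinates yields $J(T)=\int_0^T r^{1-1/\mu}E(r)\,dr$ with $E(r):=\int_\TT|r^{1/\mu}\omega(r,\theta)-\mathring\omega(\theta)|\,d\theta$. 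Since $\mu>1/2$ gives $1/\mu-2<0$, a standard Ces\`aro-type splitting ($\int_0^T=\int_0^{S_0}+\int_{S_0}^T$ for any large $S_0$) shows that $t^{2\mu-1}J(R/t^\mu)\to 0$ as $t\to 0+$ provided $E(r)\to 0$ as $r\to\infty$, so the whole task reduces to this angular asymptotic.

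Next, I would decompose the angular integrand as
\[r^{1/\mu}\omega(r,\theta)-\mathring\omega(\theta)=\bigl[(r^2/\psi_\vp)^{1/(2\mu)}-\mu^{-1/(2\mu)}\bigr]\Omega(\phi)+\mu^{-1/(2\mu)}\bigl[\Omega(\phi)-\Omega(\theta)\bigr]=:I_1+I_2,\]
using $\omega=\psi_\vp^{-1/(2\mu)}\Omega(\phi)$, $r^2=-\psi_\beta/\mu$, and $\mathring\omega=\mu^{-1/(2\mu)}\Omega$. For $I_1$, I rewrite $r^2/\psi_\vp=(1/\mu)(1-\psi_\phi/\psi_\beta)^{-1}$ and use the bounds from Lemma \ref{lem_nonlinear}: $\beta^{2\mu-1}\psi_\phi\in\GG_m^0$ stays uniformly bounded, while $\psi_\beta$ is comparable to $-\beta^{-2\mu}$ (since $\beta^{2\mu}\psi_\beta\in\GG_m^-$ is a small perturbation of $-1$), hence $|\psi_\phi/\psi_\beta|\lesssim\beta\to 0$ uniformly in $\phi$ as $r\to\infty$ (noting $\beta\sim r^{-1/\mu}$). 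This yields $\sup_\phi|(r^2/\psi_\vp)^{1/(2\mu)}-\mu^{-1/(2\mu)}|\to 0$. Combined with $\int_\TT|\Omega(\phi(r,\theta))|\,d\theta\lesssim\|\Omega\|_{L^1(\TT)}$, obtained by changing variables $\theta\mapsto\phi$ with Jacobian $\phi_\theta=1-\psi_{\beta\phi}/\psi_{\beta\vp}$ close to $1$ (again by Lemma \ref{lem_nonlinear}), this gives $\int_\TT|I_1|\,d\theta\to 0$.

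The main obstacle is the second term $I_2$, for which I must show $\int_\TT|\Omega(\phi(r,\theta))-\Omega(\theta)|\,d\theta\to 0$ as $r\to\infty$. The difficulty is that $\Omega$ has only $L^1(\TT)$ regularity while the shift $\theta-\phi=\beta$ depends nontrivially on $\theta$, so one cannot directly invoke the standard $L^1$-translation continuity. The plan is to approximate $\Omega$ by continuous $\Omega_\delta\in C(\TT)$ with $\|\Omega-\Omega_\delta\|_{L^1}<\delta$, split
\[|\Omega(\phi)-\Omega(\theta)|\le|\Omega_\delta(\phi)-\Omega_\delta(\theta)|+|(\Omega-\Omega_\delta)(\phi)|+|(\Omega-\Omega_\delta)(\theta)|,\]
use uniform continuity of $\Omega_\delta$ together with $|\beta|\lesssim r^{-1/\mu}\to 0$ for the first piece, and control the two remainder pieces by $\delta$ using that $\theta\mapsto\phi(r,\theta)$ is a $C^1$-diffeomorphism of $\TT$ with Jacobian uniformly close to $1$ for large $r$. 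Combining these bounds gives $E(r)\to 0$, and the proposition follows.
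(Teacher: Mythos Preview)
Your proof is correct and the overall scheme matches the paper's: both identify the amplitude factor $\bar w=(-\psi_\beta/(\mu\psi_\vp))^{1/(2\mu)}=(r^2/\psi_\vp)^{1/(2\mu)}$ and show it converges uniformly to $\mu^{-1/(2\mu)}$ as $\beta\to0+$ (your $I_1$ step), and both then reduce the remainder to a change-of-variables computation of the type in \eqref{5.6}. The paper's argument is terser and does not spell out the phase piece at all; you supply those details via the Ces\`aro reduction $t^{2\mu-1}J(R/t^\mu)\to0\Leftrightarrow E(r)\to0$ and the density/Jacobian argument for $I_2$. One small difference worth noting: if, as the paper hints, one carries the whole integral into the $(\beta,\phi)$ chart (rather than staying in $(r,\theta)$), then $\Omega(\phi)-\Omega(\theta)$ becomes $\Omega(\phi)-\Omega(\phi+\beta)$, a \emph{pure} translation in the $\phi$-variable, and the $I_2$ bound follows directly from $L^1$-continuity of translations without invoking density of $C(\TT)$. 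Your route avoids the global change of variables at the cost of the approximation step; both are short and equivalent in strength.
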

	\begin{proof}
		By \eqref{self-similar} and \eqref{omega_Omega}, we have
		\begin{align*}
			\www(y,t)&=\frac1t\omega\left(\frac y{t^\mu}\right)=\frac{1}{t}\psi_\vp^{-\frac1{2\mu}}\left(\beta\left(\frac{|y|}{t^\mu},\theta\right), \phi\left(\frac{|y|}{t^\mu},\theta\right)\right)\Omega\left(\phi\left(\frac{|y|}{t^\mu},\theta\right)\right)\\
			&\xlongequal{r=t^{-\mu}|y|}|y|^{-\frac1\mu}r^{\frac1\mu}
			\left(\psi_\vp(\beta(r,\theta),\phi(r,\theta))\right)^{-\frac1{2\mu}}\Omega(\phi(r,\theta))\\
			&\xlongequal{\eqref{r_beta^-mu}} |y|^{-\frac1\mu}\left(\frac{-\psi_\beta}{\mu\psi_\vp}\right)^{\frac1{2\mu}}(\beta(r,\theta),\phi(r, \theta))\Omega(\phi(r,\theta)).
		\end{align*}
		Let $\bar w(\beta,\phi):=\left(\frac{-\psi_\beta}{\mu\psi_\vp}\right)^{\frac1{2\mu}}(\beta,\phi)$. Recalling $\p_\vp=\p_\phi-\p_\beta$, we have
		\[\frac{-\psi_\beta}{\mu\psi_\vp}=\frac1\mu-\frac{\psi_\phi}{\mu\psi_\vp}=\frac1\mu-\frac\beta\mu\frac{\beta^{2\mu-1}\psi_\phi}{\beta^{2\mu}\psi_\vp}.\]
		It follows from the boundedness of $\beta^{2\mu-1}\psi_\phi$ and \eqref{5.2} that $\lim\limits_{\beta\to0+} \bar w(\beta,\phi)=\mu^{-\frac1{2\mu}}$ uniformly. Now, a similar argument involving the change of variables as in \eqref{5.6} gives the $L^1_{\text{loc}}(\RR^2)$ convergence of $\www(t)$ as $t\to0+$.
	\end{proof}

	Now let us consider the initial data for $\vvv$. Let $\Psi_0\in L^\infty_{\text{loc}}(\RR^2)$ solve $\Delta_y\Psi_0=\www_0$ and we require that $\Psi_0$ is $m$-fold symmetric and it has the bound $|\Psi_0(y)|\lesssim |y|^{2-\frac1\mu}$. Indeed, there exists only one $\Psi_0$ satisfying our requirements: $\Psi_0(y)=|y|^{2-\frac1\mu}B(\theta)$, where $B(\theta)$ is the only $m$-fold function solving the ODE $\gamma^2 B(\theta)+B''(\theta)=\mathring{\omega}$, hence $\Psi_0\in C^1(\mathbb{R}^2\setminus\{0\})$. The uniqueness can be proved by using several methods: considering the Fourier coefficients as in Subsection \ref{Uniqueness}; or by Liouville's theorem, we know from $\Delta_y\Psi_0=0$, $|\Psi_0(y)|\lesssim |y|^{2-\frac1\mu}$ and $\mu>\frac12$ that $\Psi_0$ is an affine function, i.e., $\Psi_0(y)=(C_1, C_2)\cdot y$, now since $\Psi_0$ is $m$-fold symmetric and $m\geq2$, we obtain $\Psi_0=0$; the third method is applying the Poisson's representation formula for the Laplace operator and then using the $m$-fold symmetry of $\Psi_0$ to gain more decay in the formula, see Lemma 2.9 in \cite{Elgindi}.
	
	Recall that $\Psi_0\in C^1(\mathbb{R}^2\setminus\{0\})$, we can define $\vvv_0=\nabla_y^\perp\Psi_0$, then $|\vvv_0(y)|\lesssim |y|^{1-\frac1\mu}$, so $\vvv_0\in L^2_{\text{loc}}(\RR^2; \RR^2)$.
	
	\begin{prop}\label{v_initial}
		As $t\to0+$, we have
		\[\Psi(\cdot, t)\to \Psi_0\ \ \text{ in }\  L^\infty_{\text{loc}}(\RR^2),\qquad \vvv(\cdot, t)\to \vvv_0\ \ \text{ in } \ L^2_{\text{loc}}(\RR^2; \RR^2).\]
		In particular, $\vvv\in C([0,\infty); L_{\text{loc}}^2(\RR^2; \RR^2))$.
	\end{prop}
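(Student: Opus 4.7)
My approach is to extract the leading-order asymptotic of $\psi(r,\theta)$ as $r\to\infty$ directly from its regularity in the new coordinates, identify the leading profile with the $B(\theta)$ defining $\Psi_0$, and then conclude by self-similar rescaling and dominated convergence.

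\emph{Step 1: asymptotic expansion of $\psi$.} Write $\psi=\psi_0+\tilde\psi$ with $\tilde\psi\in Y_{0,m}$ and $\tilde H=\tilde\psi+(\beta/2\mu)\tilde\psi_\beta\in X_{0,m}$. The function $g:=\beta^{2\mu-1}\tilde H$ lies in $\GG_m^0=C_\beta^\alpha$, so it is H\"older in $\beta$ (and hence extends continuously to $\beta=0$); since $g_\phi=\beta^{2\mu-1}\tilde H_\phi\in\GG^0$ is bounded, $g$ is Lipschitz in $\phi$. From $\partial_\beta(\beta^{2\mu}\tilde\psi)=2\mu g$ combined with $\beta^{2\mu}\tilde\psi|_{\beta=0}=0$ (which follows from $\beta^{2\mu-1}\tilde\psi\in L^\infty$), integration from $0$ yields
\begin{equation*}
\tilde\psi(\beta,\phi)=2\mu\,g(0,\phi)\,\beta^{1-2\mu}+O(\beta^{1-2\mu+\alpha}),\qquad \beta\to 0^+,
\end{equation*}
so $\psi(\beta,\phi)=C(\phi)\,\beta^{1-2\mu}+O(\beta^{1-2\mu+\alpha})$ with $C(\phi)=\tfrac{1}{2\mu-1}+2\mu g(0,\phi)$ Lipschitz in $\phi$. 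Analogous integrations of $\partial_\beta(\beta^{2\mu}\tilde\psi_\phi)=2\mu g_\phi$ and $\partial_\beta(\beta^{2\mu}\tilde\psi)=2\mu g$ supply matching expansions for $\partial_\phi\psi$ and $\partial_\beta\psi$. Inverting the nonlinear coordinate change $r^2=-\psi_\beta/\mu$ asymptotically produces $\beta=D(\phi)^{1/(2\mu)}\,r^{-1/\mu}(1+O(r^{-\alpha/\mu}))$ with $D=(2\mu-1)C/\mu$; substituting $\phi=\theta-\beta$ and using Lipschitz continuity of $C,D$ yields an $m$-fold symmetric $B\in C^{0,1}(\TT)$ such that
\begin{equation*}
\psi(r,\theta)=B(\theta)\,r^{2-1/\mu}+O\bigl(r^{2-1/\mu-\alpha/\mu}\bigr),\qquad r\to\infty,
\end{equation*}
with corresponding expansions for $\partial_r\psi$ and $\partial_\theta\psi$ differing by one power of $r$. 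Matching the leading $r^{-1/\mu}$ coefficient on both sides of the Poisson equation $\Delta\psi=\omega$ (using Proposition \ref{prop_data}) forces $\gamma^2 B+B''=\mathring\omega$, which by $m$-fold symmetry with $m\ge 2$ uniquely identifies $B$ with the function entering $\Psi_0(y)=|y|^{2-1/\mu}B(\theta)$.

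\emph{Step 2: passing to the limit.} By self-similarity $\Psi(y,t)=t^{2\mu-1}\psi(y/t^\mu)$, the above asymptotic translates into
\begin{equation*}
|\Psi(y,t)-\Psi_0(y)|\lesssim t^{\alpha}\,|y|^{2-1/\mu-\alpha/\mu},\qquad y\neq 0,\ t>0,
\end{equation*}
while the uniform a priori bound $|\Psi(y,t)|,|\Psi_0(y)|\lesssim|y|^{2-1/\mu}$ (from $|\psi(x)|\lesssim|x|^{2-1/\mu}$ in Subsection~\ref{sec_phycical_variable} together with the self-similar scaling) gives $|\Psi(y,t)-\Psi_0(y)|\lesssim|y|^{2-1/\mu}$ on all of $B_R$. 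Splitting $B_R=\{|y|\le\delta\}\cup\{\delta<|y|\le R\}$ and optimizing $\delta=\delta(t)$ (for instance $\delta=t^\mu$ when $2-1/\mu-\alpha/\mu<0$) drives $\|\Psi(\cdot,t)-\Psi_0\|_{L^\infty(B_R)}\to 0$ as $t\to 0^+$. The analogous expansion of $\nabla_x^\perp\psi$ produces pointwise a.e.\ convergence $\vvv(\cdot,t)\to\vvv_0$, and the uniform bound $|\vvv(y,t)|\lesssim|y|^{1-1/\mu}$ from \eqref{v_bounds} (which is in $L^2_{\mathrm{loc}}$ because $\mu>\tfrac12$) serves as an integrable dominating function, so dominated convergence yields $\vvv(\cdot,t)\to\vvv_0$ in $L^2_{\mathrm{loc}}$. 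Continuity of $\vvv$ at any $t_0>0$ follows from the self-similar form $\vvv(y,t)=t^{\mu-1}v(y/t^\mu)$, continuity of $v$ on $\RR^2\setminus\{0\}$, and dominated convergence with the same dominant.

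\emph{Main obstacle.} The principal technical difficulty is extracting the quantitative $\beta\to 0^+$ expansion with explicit rate $O(\beta^\alpha)$ using only the weighted H\"older regularity of $\tilde\psi\in Y_{0,m}$, together with the parallel control on the nonlinear implicit coordinate change $r^2=-\psi_\beta/\mu$ whose leading behaviour must be expanded to the same order. A secondary but delicate issue is that the pointwise error $t^\alpha|y|^{2-1/\mu-\alpha/\mu}$ may blow up near $y=0$ (whenever $\alpha>2\mu-1$); reconciling it with the a priori bound $|y|^{2-1/\mu}$ requires the $t$-dependent radius splitting described above.
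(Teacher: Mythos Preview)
Your Step~1 contains a genuine gap. You claim that $g=\beta^{2\mu-1}\tilde H\in C_\beta^\alpha$ ``extends continuously to $\beta=0$'' with a H\"older rate, but the weighted H\"older space $C_\beta^\alpha$ does \emph{not} control increments near $\beta=0$ in that way: the seminorm is $(\beta_1+\beta_2)^\alpha|f(\beta_1)-f(\beta_2)|/|\beta_1-\beta_2|^\alpha$, which only controls dyadic oscillations. A concrete counterexample is $f(\beta)=\sin(\ln\beta)$ (cut off for large $\beta$): for $\beta_2<\beta_1<2\beta_2$ one has $|f(\beta_1)-f(\beta_2)|\le|\ln(\beta_1/\beta_2)|\lesssim(\beta_1-\beta_2)/\beta_2$, hence $(\beta_1+\beta_2)^\alpha|f(\beta_1)-f(\beta_2)|/|\beta_1-\beta_2|^\alpha\lesssim 1$, yet $f$ has no limit at $0$. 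Without $g(0,\phi)$, the expansion $\tilde\psi=2\mu g(0,\phi)\beta^{1-2\mu}+O(\beta^{1-2\mu+\alpha})$ is not available, the identification of $B(\theta)$ collapses, and both the $\Psi$ and $\vvv$ limits in Step~2 are left unproved.

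The paper bypasses this issue entirely. For $\Psi$: from the uniform bounds $|\Psi(y,t)|\lesssim|y|^{2-1/\mu}$ and $|\nabla_y\Psi(y,t)|\lesssim|y|^{1-1/\mu}$, Arzel\`a--Ascoli gives subsequential limits $\widetilde{\Psi_0}$ in $L^\infty_{\mathrm{loc}}$; passing to the limit in $\Delta_y\Psi=\www$ using Proposition~\ref{prop_data} yields $\Delta_y\widetilde{\Psi_0}=\www_0$, and the uniqueness of $m$-fold solutions with growth $|y|^{2-1/\mu}$ forces $\widetilde{\Psi_0}=\Psi_0$ independently of the subsequence. For $\vvv$: rather than pointwise convergence plus dominated convergence, the paper integrates by parts,
\[
\int\rho|\widetilde\vvv|^2=\tfrac12\int|\widetilde\Psi|^2\Delta\rho-\int\rho\,\widetilde\Psi\,\widetilde\www
\lesssim\|\widetilde\Psi\|_{L^\infty(B_{R+1})}^2+\|\widetilde\Psi\|_{L^\infty(B_{R+1})}\|\widetilde\www\|_{L^1(B_{R+1})},
\]
and both factors tend to zero by the $\Psi$ convergence just established and Proposition~\ref{prop_data}. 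This route needs only the crude bounds already obtained in Subsection~\ref{sec_phycical_variable}, not any refined $\beta\to0$ asymptotic.
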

	\begin{proof}
		It follows from \eqref{self-similar}, $|\psi|\lesssim r^{2-\frac1\mu}$ and \eqref{5.4} that
		\begin{align}\label{psi1}\left|\Psi(y,t)\right|\lesssim |y|^{2-\frac1\mu},\qquad \left|\nabla_y\Psi(y,t)\right|\lesssim |y|^{1-\frac1\mu}.\end{align}
		By Arzel\`a-Ascoli lemma and Cantor's diagonal arguments, there exist a sequence $\{t_n\}$ with $\lim\limits_{n\to\infty}t_n=0$ and an $m$-fold function $\widetilde{\Psi_0}\in L^\infty_{\text{loc}}(\RR^2\setminus\{0\})$ such that $\Psi(\cdot, t_n)\to \widetilde{\Psi_0}$ in $L^\infty_{\text{loc}}(\RR^2\setminus\{0\})$, and moreover $\Psi$ and $\widetilde{\Psi_0}$ have the bound $\left|\Psi(y,t)\right|+\left|\widetilde{\Psi_0}(y)\right|\lesssim |y|^{2-\frac1\mu}\to0$ as $y\to0$, which implies that $\Psi(\cdot, t_n)\to \widetilde{\Psi_0}$ in $L^\infty_{\text{loc}}(\RR^2)$. By $\Delta_y \Psi=\www$ and Proposition \ref{prop_data}, we have $\Delta_y\widetilde{\Psi_0}=\www_0$ weakly. Now, the uniqueness of solution stated above implies that $\widetilde{\Psi_0}=\Psi_0$, which is independent of the sequence $\{t_n\}$. As a consequence, we can easily show that $\Psi(\cdot, t)\to \Psi_0$ in $L^\infty_{\text{loc}}(\RR^2)$ as $t\to0+$.
		
		It remains to show that $\vvv(\cdot, t)\to \vvv_0\text{ in }L^2_{\text{loc}}(\RR^2; \RR^2)$. For simplicity, we denote $\widetilde{\vvv}(y,t):=\vvv(y,t)-\vvv_0(y)$, $\widetilde{\Psi}(y,t):=\Psi(y,t)-\Psi_0(y)$ and $\widetilde{\www}(y,t):=\www(y,t)-\www_0(y)$. Fix an arbitrary $R>0$, we introduce a smooth bump function $\rho\in C_c^\infty(\RR^2; [0,1])$ such that $\rho|_{B_R}\equiv 1$ and $\text{supp }\rho\subset B_{R+1}$, where $B_R=\{y\in\RR^2: |y|\leq R\}$. Integration by parts gives
		\begin{align*}
			\left\|\vvv(\cdot, t)-\vvv_0\right\|_{L^2(B_R)}^2&\leq \int_{\RR^2}\rho(y)\left|\widetilde{\vvv}(y,t)\right|^2\,dy=\int_{\RR^2}\rho(y)\nabla_y\widetilde{\Psi}(y,t)\cdot \nabla_y\widetilde{\Psi}(y,t)\,dy\\
			&=-\int_{\RR^2}\widetilde{\Psi}(y,t)\text{ div}_y\left(\rho\nabla_y\widetilde{\Psi}\right)(y,t)\,dy\\
			 &=-\int_{\RR^2}\widetilde{\Psi}(y,t)\nabla_y\widetilde{\Psi}(y,t)\cdot\nabla_y\rho(y)\,dy-\int_{\RR^2}\rho(y)\widetilde{\Psi}(y,t)\Delta_y\widetilde{\Psi}(y,t)\,dy\\
			&=\frac12\int_{\RR^2}\left|\widetilde{\Psi}(y,t)\right|^2\Delta_y\rho(y)\,dy-\int_{\RR^2}\rho(y)\widetilde{\Psi}(y,t)\widetilde{\www}(y,t)\,dy\\
			&\lesssim\left\|\widetilde{\Psi}(\cdot, t)\right\|_{L^\infty(B_{R+1})}^2+\left\|\widetilde{\Psi}(\cdot, t)\right\|_{L^\infty(B_{R+1})}\left\|\widetilde{\www}(\cdot, t)\right\|_{L^1(B_{R+1})}\to0
		\end{align*}
		as $t\to0+$. This concludes the proof.
	\end{proof}

	
	\begin{prop}\label{prop_vv_weak}
		Assume that $\mu>\frac12$ and $\alpha\in(\max\{0, 1-\mu\}, \alpha_\mu)$, where $\alpha_\mu$ is given by \eqref{alpha_mu}. Then $\vvv=\vvv(y,t)$ is a weak solution of the 2-D Euler equation \eqref{2DEuler} on $\RR^2\times[0,\infty)$.
	\end{prop}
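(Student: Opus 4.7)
The plan is to verify each of the three conditions in Definition \ref{weak_sol_def}. Condition (i) was established in Proposition \ref{v_initial} (note $\vvv$ is continuous on $[0,\infty)$, not just $(0,\infty)$). Condition (ii) is immediate, since $\vvv=\nabla_y^\perp\Psi$ with $\Psi(\cdot,t)\in L^\infty_{\mathrm{loc}}(\mathbb R^2)$ and $\Psi(\cdot,t)\to\Psi_0\in L^\infty_{\mathrm{loc}}$, so $\mathrm{div}_y\vvv=0$ holds in $\mathcal D'(\mathbb R^2\times[0,\infty))$. The real content is condition (iii), which we reduce to the stationary weak equation \eqref{v_weaksol} of Proposition \ref{prop_v_weak}.

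Fix a divergence-free test $\mathbf w\in C_c^\infty(\mathbb R^2\times[0,\infty);\mathbb R^2)$, and set
\[
J(t):=\int_{\mathbb R^2}\vvv(y,t)\cdot\mathbf w(y,t)\,dy,\qquad t\ge 0.
\]
Proposition \ref{v_initial} together with $\vvv\in C([0,\infty);L^2_{\mathrm{loc}})$ shows that $J\in C([0,\infty))$ with $J(0)=\int\vvv_0\cdot\mathbf w(y,0)\,dy$, and $J$ vanishes for large $t$ because $\mathbf w$ has compact time-support. Thus condition (iii) is equivalent to
\begin{equation}\label{Eq.prop_key}
J'(t)=\int_{\mathbb R^2}\bigl[\vvv\cdot\p_t\mathbf w+(\vvv\otimes\vvv):\nabla_y\mathbf w\bigr]\,dy,\qquad t>0,
\end{equation}
in the classical sense on $(0,\infty)$, because then $\int_0^\infty J'\,dt=-J(0)$ by the fundamental theorem of calculus, which is exactly the identity in Definition \ref{weak_sol_def}(iii).

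To prove \eqref{Eq.prop_key} I will use the self-similar change of variables $y=t^\mu x$. Define the rescaled test field
\[
W_t(x):=\mathbf w(t^\mu x,t),\qquad t>0,
\]
which for each $t>0$ lies in $C_c^\infty(\mathbb R^2;\mathbb R^2)$ with $\mathrm{div}_x W_t=t^\mu(\mathrm{div}_y\mathbf w)(t^\mu x,t)=0$. Writing $v(x)=t^{1-\mu}\vvv(y,t)$ and $dx=t^{-2\mu}\,dy$ we get
\[
J(t)=t^{3\mu-1}\!\int v(x)\cdot W_t(x)\,dx.
\]
Since the $x$-supports of $W_t$ stay inside a bounded set when $t$ ranges over any compact subinterval of $(0,\infty)$, and $v\in L^2_{\mathrm{loc}}$, we may differentiate under the integral. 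Using $\p_tW_t(x)=\mu t^{\mu-1}x\cdot\nabla_y\mathbf w(t^\mu x,t)+(\p_t\mathbf w)(t^\mu x,t)$ and reverting to $y$-variables gives, after a bookkeeping computation of the scaling factors,
\[
J'(t)=(3\mu-1)t^{-1}J(t)+\mu t^{-1}\!\int\vvv\cdot(y\cdot\nabla_y\mathbf w)\,dy+\int\vvv\cdot\p_t\mathbf w\,dy.
\]
On the other hand, applying \eqref{v_weaksol} (from Proposition \ref{prop_v_weak}) to $W_t$ and performing the same change of variables transforms the three terms there into multiples of $J(t)$, $\int\vvv\cdot(y\cdot\nabla_y\mathbf w)\,dy$ and $\int(\vvv\otimes\vvv):\nabla_y\mathbf w\,dy$ respectively; one finds
\[
(3\mu-1)J(t)+\mu\!\int\vvv\cdot(y\cdot\nabla_y\mathbf w)\,dy=t\!\int(\vvv\otimes\vvv):\nabla_y\mathbf w\,dy.
\]
Dividing by $t$ and substituting into the formula for $J'(t)$ yields exactly \eqref{Eq.prop_key}.

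The one delicate spot, and the only real obstacle, is justifying the differentiation under the integral and the continuity $J(0+)=\int\vvv_0\cdot\mathbf w(\cdot,0)$; but both follow at once from the pointwise estimates $|v(x)|\lesssim r^{1-1/\mu}$ obtained in Subsection \ref{sec_phycical_variable}, the compactness of the $(y,t)$-support of $\mathbf w$, and the $L^2_{\mathrm{loc}}$-continuity of $\vvv$ proved in Proposition \ref{v_initial}. Once \eqref{Eq.prop_key} is established, the proof is complete.
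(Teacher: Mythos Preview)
Your proof is correct and follows essentially the same route as the paper: both reduce the space--time weak formulation to the stationary identity \eqref{v_weaksol} via the self-similar change of variables $y=t^\mu x$, observe that this makes the quantity $\int \vvv\cdot\mathbf w\,dy=\int v\cdot w\,dx$ a total time derivative, and then pass to the limit $\tau\to0+$ using the $L^2_{\mathrm{loc}}$--continuity of $\vvv$ at $t=0$ from Proposition \ref{v_initial}. The only cosmetic difference is that you phrase the computation as an ODE for $J(t)$ while the paper writes out the integrand directly, but the scaling bookkeeping and the use of Proposition \ref{prop_v_weak} are identical.
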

	\begin{proof}
		Let $\mathbf{w}\in C_c^\infty(\RR^2_y\times [0,\infty); \RR^2)$ be a divergence-free test function. We need to show that
		\begin{equation}\label{5.13}
			\int_{\RR^2}\vvv\cdot\mathbf w \,dy\Big|_{t=0}+\int_0^\infty\int_{\RR^2}\vvv\cdot \p_t\mathbf w +\left(\vvv\otimes\vvv\right): \nabla_y\mathbf w \,dy\,dt=0.
		\end{equation}
		Let $w(x,t)=t^{3\mu-1}\mathbf w(t^\mu x, t)$ for $x\in\RR^2$ and $t>0$, then $\mathbf w(y,t)=t^{1-3\mu}w\left(\frac{y}{t^\mu},t\right)$, thus
		\begin{align*}
			\p_t\mathbf w(y,t)&=t^{-3\mu}\left((1-3\mu)w\left(\frac{y}{t^\mu},t\right)-\mu\frac{y}{t^\mu}\cdot\nabla_{x}w
			\left(\frac{y}{t^\mu},t\right)+t\p_tw\left(\frac{y}{t^\mu},t\right)\right),\\
			\nabla_y\mathbf w (y,t)&=t^{1-4\mu}\nabla_{x}w\left(\frac{y}{t^\mu},t\right).
		\end{align*}
		For each $t>0$, we have $w(\cdot, t)\in C_c^\infty(\RR^2;\RR^2)$ and $\text{ div}_x w(\cdot, t)=0$, by $\vvv(y,t)=t^{\mu-1}v\left(\frac{y}{t^\mu}\right)$ and Proposition \ref{prop_v_weak},
		\begin{align*}
			&\ \ \ \int_{\RR^2}\vvv\cdot \p_t\mathbf w +\left(\vvv\otimes\vvv\right): \nabla_y\mathbf w \,dy\\
			&=t^{-1}\int_{\RR^2}v(x)\cdot t\p_tw(x,t)+(1-3\mu)v(x)\cdot w(x,t)-\mu v(x)\cdot \left(x\cdot\nabla w(x,t)\right)\\
			&\qquad\qquad+\left(v(x)\otimes v(x)\right):\nabla w(x,t)\,dx\\
			&=\int_{\RR^2}v(x)\cdot\p_tw(x,t)\,dx=\frac{d}{dt}\int_{\RR^2}v(x)\cdot w(x,t)\,dx;\\
			&\ \ \ \int_{\RR^2}\vvv(y,t)\cdot \mathbf w (y,t)\,dy=\int_{\RR^2}t^{-2\mu}v\left(\frac{y}{t^\mu}\right)\cdot w\left(\frac{y}{t^\mu},t\right)\,dy=\int_{\RR^2}v(x)\cdot w(x,t)\,dx.
		\end{align*}
		Therefore, for any small $\tau>0$ we have
		\[\int_{\RR^2}\vvv\cdot\mathbf w \,dy\Big|_{t=\tau}+\int_\tau^\infty\int_{\RR^2}\vvv\cdot \p_t\mathbf w +\left(\vvv\otimes\vvv\right): \nabla_y\mathbf w \,dy\,dt=0.\]
		Letting $\tau\to0+$ and using $\vvv\in C([0,\infty); L_{\text{loc}}^2(\RR^2; \RR^2))$ gives \eqref{5.13}.
	\end{proof}
	
	Finally, let us prove the main theorem.
	
	\begin{proof}[Proof of Theorem \ref{mainthm}]
		Assume that $\mu>\frac12$ and $m\ge2$. By Theorem \ref{IFT}, Proposition \ref{prop_data} and Proposition \ref{prop_vv_weak}, there exists a small positive number $\varepsilon_\Omega>0$ such that for all $\Omega\in L^1(\TT)$ with $\|\Omega-\gamma\|_{\WWW_m}<\varepsilon_\Omega$, we can find a weak solution $\vvv$ to the 2-D Euler equation \eqref{2DEuler} with the initial data \eqref{initial_data}, where $\mathring{\omega}=\mu^{-\frac1{2\mu}}\Omega$. Note that the condition $\|\Omega-\gamma\|_{\WWW_m}<\varepsilon_\Omega$ is equivalent to $\|\mathring{\omega}-\gamma\mu^{-\frac1{2\mu}}\|_{\WWW_m}<\mu^{-\frac1{2\mu}}\varepsilon_\Omega $. Choose $\varepsilon\in\left(0,\frac1\gamma\varepsilon_\Omega\right)$ and now we verify Theorem \ref{mainthm}. Let $\mathring{\omega}\in L^1(\TT)$ with \eqref{1.5}, i.e., $\|P_{\neq}\mathring{\omega}\|_{L^1(\TT)}\leq \varepsilon m^{\frac12}|P_0\mathring{\omega}|$. We may assume that $P_0\mathring{\omega}\neq0$, otherwise $\mathring{\omega}=0$ and things are trivial. Denote $c_0=\gamma\mu^{-\frac1{2\mu}}>0$ and $A=P_0\mathring{\omega}\neq0$. Let $\mathring{\omega}_1:=\frac{c_0}{A}\mathring{\omega}$, then
		\begin{align*}
			\|\mathring{\omega}_1-c_0\|_{\WWW_m}=\frac{c_0}{|A|}\|\mathring{\omega}-A\|_{\WWW_m}=
			\frac{c_0}{|A|}m^{-\frac12}\left\|P_{\neq}\mathring{\omega}\right\|_{L^1(\TT)}\leq c_0\varepsilon< \mu^{-\frac1{2\mu}}\varepsilon_\Omega.
		\end{align*}
		Hence, we can find a weak solution $\vvv_1$ to the 2-D Euler equation with the initial data $\www_1|_{t=0}=|y|^{-\frac1\mu}\mathring{\omega}_1(\theta)$. Finally, using the well-known Euler scaling property, $\vvv(y,t):=\frac{A}{c_0}\vvv_1\left(y,\frac A{c_0}t\right)$ is a weak solution of \eqref{2DEuler} with the initial data \eqref{initial_data}.
	\end{proof}

	\section{Existence of weak solution with Radon measure}\label{sec_main_cor}
	
	In this section, we prove  Corollary \ref{maincor}. As we mentioned in the introduction, we regularize $\mathring{\omega}\in\mathcal{M}(\TT)$ by using $F_N=N\chi_{(0,1/N)}$ to get $\mathring{\omega}_N=F_N*\mathring{\omega}\in L_m^1(\TT)$ which satisfies \eqref{1.5} for each $\mathring{\omega}_N (N\in\NN_+)$. In this section, the implicit constants in all $\lesssim$ are independent of $N$.

	As in the proof of Theorem \ref{mainthm},  we only need to consider $\mathring{\omega}\in\mathcal{M}(\TT)$ such that $\Omega=\mu^{\frac1{2\mu}}\mathring{\omega}$ satisfies $\|\Omega-\gamma\|_{\widetilde{\WWW}_m}<\varepsilon_\Omega\ll 1$, where $\widetilde{\WWW}_m$ is the trivial extension of $\WWW_m$ into the corresponding subspace of $\mathcal{M}(\TT)$. In this case, we have $\Omega_N\in L^1(\TT)$ and $\|\Omega_N-\gamma\|_{\WWW_m}<\varepsilon_\Omega$.
	By Theorem \ref{IFT} and the arguments in Subsection \ref{sec_phycical_variable}, for each $N\in\NN_+$, there exists $\psi_N\in C^1(\RR^2)$ such that $v_N=\nabla_x^\perp \psi_N\in L^2_{\text{loc}}(\RR^2)$ satisfies \eqref{v_weaksol}: for $w\in C_c(\RR^2; \RR^2)$ with $\text{ div } w=0$, there holds
	\begin{equation}\label{5.8'}
		\int_{\RR^2}(3\mu-1)v_N\cdot w-(v_N\otimes v_N):\nabla w+\mu v_N\cdot(x\cdot\nabla w)\,dx=0,
	\end{equation}
	and $\omega_N\in L^1_{\text{loc}}(\RR^2)$ with the uniform bounds
	\begin{equation}
		|\psi_N(x)|\lesssim |x|^{2-\frac1\mu},\qquad |\nabla_x\psi_N(x)|\lesssim |x|^{1-\frac1\mu}, \qquad \int_{|x|\leq R}|\omega_N(x)|\,dx\lesssim_R1.
	\end{equation}
	
	Similar to the proof of Proposition \ref{v_initial}, by Arzel\`a-Ascoli lemma and Cantor's diagonal arguments, we can find a subsequence of $\{\psi_N\}$, which is still denoted by $\{\psi_N\}$ and $\psi\in C(\RR^2)$ such that $\psi_N\rightarrow\psi$ in $L^\infty_{\text{loc}}(\RR^2)$ as $N\to\infty$, moreover $\psi$ has the bounds $|\psi(x)|\lesssim  |x|^{2-\frac1\mu}$ and $|\nabla_x\psi(x)|\lesssim |x|^{1-\frac1\mu}$. Since $v_N$ is uniformly bounded in $L^2_{\text{loc}}(\RR^2)$, using again Cantor's diagonal arguments, up to subsequence we have $v_N\rightharpoonup v$ in $L^2(B_R)$ for all $R>0$ and some $v\in L^2_{\text{loc}}(\RR^2)$. A standard argument gives that $v=\nabla_x^\perp \psi$ weakly.
	
	Now we claim that $v_N\rightarrow v$ in $L^2_{\text{loc}}(\RR^2)$. The proof is similar to the corresponding step in the proof of Proposition \ref{v_initial}. Fix an arbitrary $R>0$. Let $\rho\in C_c^\infty(\RR^2; [0,1])$ be a smooth bump function such that $\rho|_{B_R}\equiv1$ and $\text{supp }\rho\subset B_{R+1}$. Then we have
	\begin{align*}
		\int_{B_R}|v_N-v|^2\,dx&\leq \int_{\RR^2}\rho(x)|v_N(x)-v(x)|^2\,dx=\int_{\RR^2}\rho(x)\left|\nabla_x(\psi_N-\psi)\right|^2\,dx\\
		&=\int_{\RR^2}\rho(x)\nabla_x\psi_N\cdot \nabla_x(\psi_N-\psi)\,dx-\int_{\RR^2}\rho(x)v(x)\cdot(v_N-v)\,dx.
	\end{align*}
	It follows from $v_N\rightharpoonup v$ in $L^2(B_{R+1})$ that $\lim\limits_{N\to\infty}\int_{\RR^2}\rho(x)v(x)\cdot(v_N-v)\,dx=0$. For the other integral, we use integration by parts to obtain
	\begin{align*}
		&\left|\int_{\RR^2}\rho(x)\nabla_x\psi_N\cdot \nabla_x(\psi_N-\psi)\,dx\right|=\left|\int_{\RR^2}(\psi_N-\psi)\text{ div}(\rho\nabla_x\psi_N)\,dx\right|\\
		&\leq \|\psi_N-\psi\|_{L^\infty(B_{R+1})}\int_{\RR^2}\left|\text{div}(\rho\nabla\psi_N)\right|\,dx\lesssim_R\|\psi_N-\psi\|_{L^\infty(B_{R+1})}\to0, \qquad N\to\infty,
	\end{align*}
	where we have used the uniform estimate
	\begin{align*}
		\int_{\RR^2}\left|\text{div}(\rho\nabla\psi_N)\right|\,dx&\lesssim\int_{\RR^2}|\nabla \rho||\nabla\psi_N|\,dx+\int_{\RR^2}\rho|\Delta\psi_N|\,dx\\
		&\lesssim_R\|v_N\|_{L^2(B_{R+1})}+\|\omega_N\|_{L^1(B_{R+1})}\lesssim_R1.
	\end{align*}
	This proves $v_N\rightarrow v$ in $L^2_{\text{loc}}(\RR^2)$. Letting $N\to\infty$ in \eqref{5.8'}, we obtain \eqref{v_weaksol} for $v\in L^2_{\text{loc}}(\RR^2)$. Hence, $v$ solves the equation \eqref{5.7} weakly.
	
	Next we recover the velocity field $\vvv(y,t)=t^{\mu-1}v\left(\frac y{t^\mu}\right)$, $\vvv_N(y,t)=t^{\mu-1}v_N\left(\frac y{t^\mu}\right)$. Then $\vvv_N,\ \vvv\in C(\RR_+; L^2_{\text{loc}}(\RR^2))$ and $\vvv_N\to\vvv $ in $L^2_{\text{loc}}(\RR^2\times\RR_+)$. We also have $| v_N(x)|\lesssim |x|^{1-\frac1\mu}$,
	$| v(x)|\lesssim |x|^{1-\frac1\mu}$, $| \vvv_N(y,t)|\lesssim |y|^{1-\frac1\mu}$,
	$| \vvv(y,t)|\lesssim |y|^{1-\frac1\mu}$, then $\vvv_N,\ \vvv$ are uniformly bounded in $ L^{\infty}(\RR_+; L^2(B_R))$, and by the dominated convergence theorem, we have $\vvv_N\to\vvv$ in $ L^2(B_R\times(0,T))$ for any $R,T\in\RR_+.$ By the proof of Proposition \ref{prop_vv_weak}, we have
	\[\int_{\RR^2}\vvv_N\cdot\mathbf w \,dy\Big|_{t=\tau}+\int_\tau^\infty\int_{\RR^2}\vvv_N\cdot \p_t\mathbf w +\left(\vvv_N\otimes\vvv_N\right): \nabla_y\mathbf w \,dy\,dt=0,\]
	for every $\tau\geq0 $ and every $\mathbf{w}\in C_c^\infty(\RR^2_y\times [0,\infty); \RR^2)$ with $\text{div}\,\mathbf{w}=0$.
	Letting $N\to+\infty$, we have (for $ \tau>0$)
	\begin{align}\label{vvv}\int_{\RR^2}\vvv\cdot\mathbf w \,dy\Big|_{t=\tau}+\int_\tau^\infty\int_{\RR^2}\vvv\cdot \p_t\mathbf w +\left(\vvv\otimes\vvv\right): \nabla_y\mathbf w \,dy\,dt=0.
	\end{align}
	
By Proposition \ref{v_initial}, we know that $\lim\limits_{t\to0+}\left\|\vvv_N(\cdot, t)-\vvv_N|_{t=0}\right\|_{L^2(B_R)}=0$, if we define $\vvv_N|_{t=0}=\nabla_y^\perp \Psi_{0,N}$ such that $\Psi_{0,N}(y)=|y|^{2-\frac{1}{\mu}}B_N(\theta)$, $ \gamma^2B_N+B_N''=\mathring{\omega}_N $.
	Let $\vvv_0=\nabla_y^\perp \Psi_{0}$, $\Psi_{0}(y)=|y|^{2-\frac{1}{\mu}}B(\theta)$,	$ \gamma^2B+B''=\mathring{\omega}$. Then we have $B_N=F_N*B$ and  $\lim\limits_{N\to\infty}\left\|\vvv_N|_{t=0}-\vvv_0\right\|_{L^2(B_R)}=0$. Thus, \eqref{vvv} is still true for $ \tau=0$ if we define $\vvv|_{t=0}=\vvv_0 $.
	
	Finally, we prove $\vvv\in C([0,\infty); L^2_{\text{loc}}(\RR^2))$.  Now by \eqref{vvv}, $\int_{\RR^2}\vvv\cdot\mathbf w \,dy\Big|_{t=\tau}$ is continuous at $ \tau=0$, thus $\int_{\RR^2}\vvv(y,t)\cdot w(y) \,dy$ is continuous at $ t=0$ for all $w\in C_c^\infty(\RR^2; \RR^2)$, div\,$w=0$.
	
	We define $\Psi(y, t)=t^{2\mu-1}\psi\left(\frac y{t^\mu}\right) $, then $\vvv(y,t)=\nabla_y^\perp \Psi(y, t)$, and \eqref{psi1} is true. Similar to the proof of Proposition \ref{v_initial}, there exists $t_n\to0+$ so that $\Psi(\cdot, t_n)\to \widetilde{\Psi_0}$ in $L^\infty_{\text{loc}}(\RR^2)$, and $|\widetilde{{\Psi_{0}}}(y)|\lesssim |y|^{2-\frac1\mu}$. Let $\widetilde{\vvv}_0=\nabla_y^\perp \widetilde{{\Psi_{0}}}$. Then $ \int_{\RR^2}\vvv_0\cdot w \,dy=\lim\limits_{n\to\infty}\int_{\RR^2}\vvv(y,t_n)\cdot w(y) \,dy=-\lim\limits_{n\to\infty}\int_{\RR^2}\Psi(y,t_n)(\nabla\times w)(y) \,dy=-\int_{\RR^2}\widetilde{{\Psi_{0}}}(y)(\nabla\times w)(y) \,dy=\int_{\RR^2}\widetilde{\vvv}_0\cdot w \,dy$ for all $w\in C_c^\infty(\RR^2; \RR^2)$, div\,$w=0$. Then $\Delta_y(\widetilde{{\Psi_{0}}}-{\Psi_{0}})=0$, which gives $\widetilde{{\Psi_{0}}}={\Psi_{0}}$ as $|\widetilde{{\Psi_{0}}}|+| \Psi_0(y)|\lesssim |y|^{2-\frac1\mu}$. This limit is independent of the sequence $\{t_n\}$. As a consequence, $\Psi(\cdot, t)\to \Psi_0$ in $L^\infty_{\text{loc}}(\RR^2)$ and  $\vvv(\cdot, t)\rightharpoonup \vvv_0$ in $L^2_{\text{loc}}(\RR^2)$ as $t\to0+$. Next we prove that $\vvv(\cdot, t)\to \vvv_0$ in $L^2_{\text{loc}}(\RR^2)$ as $t\to0+$. Similar to the proof of $v_N\rightarrow v$ in $L^2_{\text{loc}}(\RR^2)$, we need to prove the uniform boundedness of $\text{div}(\rho\nabla\Psi(\cdot,t))$ in $\mathcal{M}(B_{R+1}) $ norm. This follows from the uniform boundedness of $\|\text{div}(\rho\nabla\Psi_N(\cdot,t))\|_{L^1(B_{R+1})}$ (here $\Psi_N(y, t)=t^{2\mu-1}\psi_N\left(\frac y{t^\mu}\right) $).
	
	This proves $\vvv\in C([0,\infty); L^2_{\text{loc}}(\RR^2))$, and by $\www=\nabla_y\times\vvv$, we have $\www\in C([0,\infty); \mathcal{D}'(\RR^2))$.

	\if0
	As for the continuity of $\vvv$ at $t=0$, we define $\vvv_0$ according to $\mathring{\omega}$ as in Subsection \ref{sec_phycical_variable} and we are going to show that $\vvv(\cdot, t)\rightarrow \vvv_0$ in $L^2_{\text{loc}}(\RR^2)$ as $t\to0+$. For any $R>0$ we have
	\begin{align*}
		\left\|\vvv(\cdot, t)-\vvv_0\right\|_{L^2(B_R)}&\leq \left\|\vvv(\cdot, t)-\vvv_N(\cdot, t)\right\|_{L^2(B_R)}+\left\|\vvv_N(\cdot, t)-\vvv_N|_{t=0}\right\|_{L^2(B_R)}\\
		&\qquad+\left\|\vvv_N|_{t=0}-\vvv_0\right\|_{L^2(B_R)}.
	\end{align*}
	We first take $\limsup_{t\to0+}$, then let $N\to\infty$ in the above inequality: ; as for the remaining term, we have
	\begin{align*}
		\left\|\vvv(\cdot, t)-\vvv_N(\cdot, t)\right\|_{L^2(B_R)}^2&=t^{2\mu-2}\int_{|y|\leq R}\left|v\left(\frac y{t^\mu}\right)-v_N\left(\frac y{t^\mu}\right)\right|^2\,dy\\
		&=t^{4\mu-2}\int_{|x|\leq \frac R{t^\mu}}\left|v(x)-v_N(x)\right|^2\,dx,
	\end{align*}
	thus it suffices to show that
	\begin{equation}
		\lim_{N\to\infty}\limsup_{R\to+\infty}\frac{1}{R^{4-\frac2\mu}}\int_{|x|\leq R}\left|v(x)-v_N(x)\right|^2\,dx=0.
	\end{equation}\fi
	
	\appendix
	
	\if0
	\begin{lem}\label{AppendixA_1}
		Let $\alpha\in(0,1)$ and $\beta_1, \beta_2>0$. We have
		\begin{enumerate}[(1)]
			\item $|\beta_1^\alpha-\beta_2^\alpha|\lesssim|\beta_1-\beta_2|^\alpha;$
			\item $\displaystyle \left|\frac{\beta_1^\alpha}{\langle\beta_1\rangle^\alpha}-\frac{\beta_2^\alpha}{\langle\beta_2\rangle^\alpha}\right|\lesssim|\beta_1-\beta_2|^\alpha.$
		\end{enumerate}
	\end{lem}
	\begin{proof}Without loss of generality, we assume that $\beta_1>\beta_2$.
		\begin{enumerate}[(1)]
			\item Letting $t=\beta_1/\beta_2$, it suffices to show that $t^\alpha-1\lesssim (t-1)^\alpha$ for all $t>1$, which follows from
			 \[\lim_{t\to1}\frac{t^\alpha-1}{(t-1)^\alpha}=\lim_{t\to1}\frac{t^{\alpha-1}}{(t-1)^{\alpha-1}}=\lim_{t\to1}\frac{(t-1)^{1-\alpha}}{t^{1-\alpha}}=0,\qquad \lim_{t\to+\infty}\frac{t^\alpha-1}{(t-1)^\alpha}=1.\]
			\item Denote $f(\beta)=\frac{\beta^\alpha}{\langle\beta\rangle^\alpha}$ for $\beta>0$, then $f'(\beta)=\alpha\beta^{\alpha-1}\langle\beta\rangle^{-\alpha}-\alpha\beta^{\alpha+1}\langle\beta\rangle^{-\alpha-2}$ and thus $|f'(\beta)|\lesssim \beta^{\alpha-1}$. Therefore,
			\[\left|\frac{\beta_1^\alpha}{\langle\beta_1\rangle^\alpha}-
			 \frac{\beta_2^\alpha}{\langle\beta_2\rangle^\alpha}\right|\leq\int_{\beta_2}^{\beta_1}|f'(s)|\,ds\lesssim\int_{\beta_2}^{\beta_1}s^{\alpha-1}\,ds\lesssim |\beta_1^\alpha-\beta_2^\alpha|\lesssim|\beta_1-\beta_2|^\alpha.\]
		\end{enumerate}
	\end{proof}
	
	\begin{prop}\label{AppendixA_L_infty}
		Let $\alpha\in(0,1)$ and $f=f(\beta,\phi): \RR_+\times\TT\to\RR$.
		\begin{enumerate}[(1)]
			\item If $\beta^\alpha f\in C_{\beta,0}^\alpha$, then $f\in L^\infty$ and $\|f\|_{L^\infty}\lesssim \|\beta^\alpha f\|_{C_\beta^\alpha}$.
			\item If $\frac{\beta^\alpha}{\langle\beta\rangle^\alpha}f\in C_{\beta,0}^\alpha$, then $f\in L^\infty$ and $\|f\|_{L^\infty}\lesssim \left\|\frac{\beta^\alpha}{\langle\beta\rangle^\alpha} f\right\|_{C_\beta^\alpha}$.
		\end{enumerate}
	\end{prop}
	\begin{proof}
		\begin{enumerate}[(1)]
			\item We have
			\[|f(\beta,\phi)|=\frac{|\beta^\alpha f(\beta,\phi)-0|}{\beta^\alpha}=\frac{\left|\beta^\alpha f(\beta,\phi)-\lim\limits_{\beta\to0+}\beta^\alpha f(\beta,\phi)\right|}{(\beta-0)^\alpha}\lesssim \|\beta^\alpha f\|_{C_\beta^\alpha}.\]
			\item If $\beta\in(0,1)$, we have
			\[|f(\beta,\phi)|=\frac{\left|\left(\frac\beta{\langle\beta\rangle}\right)^\alpha f(\beta,\phi)-0\right|}{\beta^\alpha}\cdot\langle\beta\rangle^\alpha\lesssim\left\|\left(\frac\beta{\langle\beta\rangle}\right)^\alpha f\right\|_{C_\beta^\alpha}.\]
			If $\beta\geq 1$, then
			\[|f(\beta,\phi)|=\left|\left(\frac\beta{\langle\beta\rangle}\right)^\alpha f(\beta,\phi)\right|\left(\frac{\langle\beta\rangle}\beta\right)^\alpha\lesssim \left\|\left(\frac\beta{\langle\beta\rangle}\right)^\alpha f\right\|_{L^\infty}.\]
		\end{enumerate}
	\end{proof}\fi

	\section{Some ODE lemmas}\label{AppendixB}
	\begin{lem}\label{prop_B1}
		Let $q\in C^\infty([0,\infty); (0,\infty))$ be  such that $q(x)=q_1^2$ for $x\in[0,1]$ and $q(x)=q_2^2$ for $x\geq 2$, where $q_1$ and $q_2$ are positive real numbers. Consider the second order linear differential operator $L$ defined by
		\begin{equation}\label{B.1}
			(Ly)(x)=x^2y''(x)+xy'(x)-q(x)y(x),\qquad x>0.
		\end{equation}
		Then $Ly=0$ has a fundamental system of solutions $\{y_1, y_2\}$ given by
		\begin{equation}\label{B.2}
			y_1(x)\begin{cases}
				=x^{q_1} & x\in(0,1)\\
				\sim x^{q_1} & x\in[1,2]\\
				=C_1x^{q_2}+C_2x^{-q_2} & x>2,
			\end{cases}\quad\text{and}\quad y_2(x)\begin{cases}
				=C_3x^{-q_1}+C_4x^{q_1} & x\in(0,1)\\
				\sim x^{-q_2} & x\in [1,2]\\
				=x^{-q_2} & x>2,
			\end{cases}
		\end{equation}
		where $C_1, C_2, C_3, C_4$ are real constants and $C_1\neq 0, C_3\neq 0$.
	\end{lem}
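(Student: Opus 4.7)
\textbf{Proof proposal for Lemma \ref{prop_B1}.} The plan is to construct $y_1$ and $y_2$ piece by piece: define each explicitly on the region where $q$ is constant (where the equation is Euler and solvable in closed form), then extend through $(0,\infty)$ by the standard existence/uniqueness theory for the linear ODE $y''+y'/x-q(x)y/x^2=0$, whose coefficients are smooth on $(0,\infty)$. Concretely, I would set $y_1(x)=x^{q_1}$ on $(0,1]$, so that $y_1(1)=1$, $y_1'(1)=q_1$, and extend $y_1$ to $(0,\infty)$ as the unique solution to this IVP; uniqueness at $x=1$ ensures the extension agrees with $x^{q_1}$ on $(0,1]$. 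Symmetrically, set $y_2(x)=x^{-q_2}$ on $[2,\infty)$ with matching data at $x=2$ and extend to all of $(0,\infty)$. Since $q(x)=q_2^2$ is constant on $[2,\infty)$, uniqueness in the Euler regime forces $y_1\big|_{[2,\infty)}=C_1x^{q_2}+C_2x^{-q_2}$ for some constants, and similarly $y_2\big|_{(0,1]}=C_3x^{-q_1}+C_4x^{q_1}$.

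The main obstacle is showing the non-degeneracy $C_1\neq 0$ and $C_3\neq 0$, together with the positivity needed for the asymptotics $y_1\sim x^{q_1}$ and $y_2\sim x^{-q_2}$ on the transition interval $[1,2]$. My key tool is the following monotonicity identity. For any solution $y$ of $Ly=0$, setting $u:=xy'$, a direct computation using $x^2y''=qy-xy'$ gives $u'=qy/x$, hence
\begin{equation*}
(yu)'=y'u+yu'=\frac{u^2}{x}+\frac{qy^2}{x}\geq 0,
\end{equation*}
since $q>0$ on $[0,\infty)$. Thus $x\mapsto y(x)\cdot xy'(x)$ is non-decreasing on $(0,\infty)$ for every solution of $Ly=0$.

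Applying this to $y_1$: at $x=1$ we have $y_1\cdot xy_1'\big|_{x=1}=q_1>0$, so $y_1\cdot xy_1'\geq q_1$ on $[1,\infty)$. This instantly rules out any zero of $y_1$ in $[1,\infty)$ (which would force the product to vanish), hence $y_1>0$ on $[1,2]$ and the asymptotic $y_1\sim x^{q_1}$ on $[1,2]$ follows from continuity on this compact interval. If $C_1=0$, then $y_1=C_2x^{-q_2}$ on $[2,\infty)$ with $C_2>0$ by positivity, and $y_1\cdot xy_1'=-q_2C_2^2 x^{-2q_2}<0$, contradicting the monotonicity bound; hence $C_1\neq 0$. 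The argument for $y_2$ is analogous but reversed: at $x=2$ we have $y_2\cdot xy_2'\big|_{x=2}=-q_2 4^{-q_2}<0$, so $y_2\cdot xy_2'\leq -q_24^{-q_2}<0$ on $(0,2]$; this forces $y_2>0$ on $[1,2]$ (ruling out zeros) and gives $y_2\sim x^{-q_2}$ there. If $C_3=0$, then on $(0,1]$ the quantity $y_2\cdot xy_2'=q_1C_4^2x^{2q_1}\to 0$ as $x\to 0^+$, contradicting the strictly negative upper bound; hence $C_3\neq 0$.

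Finally, to confirm that $\{y_1,y_2\}$ is a fundamental system I would compute the Wronskian on $(0,1]$, where both functions are known explicitly:
\begin{equation*}
W(x)=y_1y_2'-y_1'y_2=-2q_1C_3 x^{-1}\neq 0,
\end{equation*}
since $C_3\neq 0$. (Abel's formula $W(x)=W(1)/x$ then confirms $W$ is non-vanishing throughout $(0,\infty)$.) This yields the fundamental system with the claimed form \eqref{B.2}, completing the proof outline. I do not anticipate technical difficulties beyond the positivity/sign-chasing argument based on the $(yu)'\geq 0$ identity, which is the main conceptual content of the lemma.
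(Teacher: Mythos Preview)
Your proposal is correct and uses essentially the same approach as the paper: both rely on the monotonicity identity $(xyy')'=x(y')^2+q(x)y^2/x\geq 0$ as the key tool, together with the Wronskian computation $W(x)=-2q_1C_3/x$. The only minor difference is that you apply the monotonicity identity symmetrically to both $y_1$ and $y_2$ to obtain $C_1\neq 0$ and $C_3\neq 0$ directly, whereas the paper uses monotonicity only on $y_2$ to get $C_3>0$ and then deduces $C_1\neq 0$ from the nonvanishing Wronskian (since $W(x)=-2q_2C_1/x$ on $(2,\infty)$).
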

	In this appendix, all implicit constants in $\lesssim$ and $\sim$ depend only on $q$ and parameters $\alpha,\mu$ (see Lemma \ref{prop_B2} below).
	\begin{proof}
		We denote
		\[L_1y=x^2y''+xy'-q_1^2y,\qquad L_2y=x^2y''+xy'-q_2^2y,\]
		then $\{x^{q_1}, x^{-q_1}\}$ is a fundamental system of solutions of $L_1y=0$, and $\{x^{q_2}, x^{-q_2}\}$ is a fundamental system of solutions of $L_2y=0$. By standard ODE theory, we know that $Ly=0$ has two smooth solutions $y_1, y_2$ of the form \eqref{B.2}. We need to check that $C_1\neq 0$ and $C_3\neq 0$, so that $y_1$ and $y_2$ are linearly independent, and thus they form a fundamental system of solutions to $Ly=0$.
		
		We look at $y_2$ first. Note that from \eqref{B.1}, we have $(xy_2y_2')'=x(y_2')^2+y_2(xy_2''+y_2')=x(y_2')^2+y_2^2q(x)/x\geq0$, and then $xy_2y_2'$ is increasing on
		$(0,\infty)$. We also have $y_2>0$, $y_2'<0$, $xy_2y_2'<0$ on $(2,\infty)$. Then $xy_2y_2'<0$ on $(0,\infty)$, $y_2'$ does not change sign on $(0,\infty)$,
		$y_2'<0$ on $(0,\infty)$.
		So, $y_2$ is decreasing on the whole interval $(0,\infty)$, which implies that $C_3>0$.
		
		Finally, we prove that $C_1\neq 0$. We consider the Wronskian
		\begin{equation}\label{Wronskian}
			W(x)=y_1(x)y_2'(x)-y_2(x)y_1'(x), \qquad x>0.
		\end{equation}
		Direct computation gives the equation for $W$: $xW'(x)+W(x)=0$, hence $W(x)=\frac Cx$ for some constant $C$. It follows from the definition that $W(x)=-2q_1C_3x^{-1}$ for $x\in(0,1)$, hence $C=-2q_1C_3<0$. As a consequence, $C_1\neq 0$, because $W(x)=-2q_2C_1x^{-1}\neq0$ for $x>2$.
	\end{proof}
	
	\begin{lem}\label{prop_B2}
		Let $q\in C^\infty([0,\infty); (0,\infty))$ be  as in Lemma \ref{prop_B1}. Assume that $f: (0,\infty)\to\CC$ satisfies $\langle x\rangle^\alpha x^{2\mu-1}f\in L^\infty$ for $\alpha\in(0,1)$, $\mu>1/2$ and
		\begin{equation}\label{alpha<1/2}
			q_1-2\mu+1>0,\qquad q_2-2\mu-\alpha+1>0,
		\end{equation}
		then the function
		\begin{equation}\label{B.4}
			y(x)=u_1(x)y_1(x)+u_2(x)y_2(x),\qquad x>0
		\end{equation}
		is a solution to  $Ly=f$, where $L$ is given by  \eqref{B.1}, $y_1, y_2$ are given by \eqref{B.2} and $u_1, u_2$ are given by
		\begin{equation}\label{B.5}
			u_1(x)=\int_x^\infty\frac{y_2(t)f(t)}{t^2W(t)}\,dt,\qquad u_2(x)=\int_0^x\frac{y_1(t)f(t)}{t^2W(t)}\,dt,
		\end{equation}
		here $W$ is the Wronskian defined in \eqref{Wronskian}. Moreover, we have
		\begin{equation}\label{B.6}
			\left\|\langle x\rangle^\alpha x^{2\mu}y'\right\|_{L^\infty}+\left\|\langle x\rangle^\alpha x^{2\mu-1}y\right\|_{L^\infty}\lesssim\left\|\langle x\rangle^\alpha x^{2\mu-1}f\right\|_{L^\infty}.
		\end{equation}
	\end{lem}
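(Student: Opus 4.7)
\medskip

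\noindent\textbf{Proof plan for Lemma \ref{prop_B2}.} The plan is a textbook variation-of-parameters argument combined with a careful regime-by-regime estimate exploiting the asymptotic description of $y_1,y_2$ provided by Lemma \ref{prop_B1}. First I would verify that the integrals defining $u_1,u_2$ converge. Since, by the proof of Lemma \ref{prop_B1}, the Wronskian satisfies $W(x)=C/x$ for some nonzero $C$, the weight $\frac{1}{t^2 W(t)}$ is a constant multiple of $1/t$. Denote $M:=\|\langle x\rangle^{\alpha} x^{2\mu-1}f\|_{L^\infty}$, so that $|f(t)|\leq M\langle t\rangle^{-\alpha} t^{1-2\mu}$. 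For $u_2(x)=\int_0^x y_1(t)f(t)/(t^2 W(t))\,dt$, the integrand near $t=0$ is controlled by $t^{q_1}\cdot t^{1-2\mu}\cdot t^{-1}=t^{q_1-2\mu}$, which is integrable at $0$ exactly because $q_1-2\mu+1>0$ by \eqref{alpha<1/2}. For $u_1(x)=\int_x^\infty y_2(t)f(t)/(t^2 W(t))\,dt$ the integrand at infinity is bounded by $t^{-q_2}\cdot t^{-\alpha}t^{1-2\mu}\cdot t^{-1}=t^{-q_2-\alpha-2\mu}$, which is integrable at $\infty$ thanks to $q_2-2\mu-\alpha+1>0$. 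Having established convergence, the standard variation-of-parameters identities $u_1' y_1+u_2' y_2=0$ and $u_1' y_1'+u_2' y_2'=f/x^2$ (immediate from $u_1'=-y_2 f/(x^2W)$, $u_2'=y_1 f/(x^2W)$) imply that $y$ solves $Ly=f$.

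\smallskip

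Next I would establish the weighted pointwise bound on $y$ by splitting into the three regimes $x\in(0,1]$, $x\in[1,2]$, and $x\geq 2$, in each of which the asymptotics \eqref{B.2} are explicit.
\begin{itemize}
\item For $x\in(0,1]$, split $u_1(x)$ as $\int_x^2+\int_2^\infty$ and use $|y_2(t)|\lesssim t^{-q_1}$ on $[x,2]$, $|y_2(t)|\lesssim t^{-q_2}$ on $[2,\infty)$; the first piece yields $|u_1(x)|\lesssim M x^{-(q_1+2\mu-1)}$ (using $q_1+2\mu-1>0$) and the second is $\lesssim M$. Multiplying by $y_1(x)\sim x^{q_1}$ produces $|u_1 y_1|(x)\lesssim M x^{1-2\mu}$. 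For $u_2(x)$, on $(0,x]$ the integrand is $\lesssim M t^{q_1-2\mu}$, giving $|u_2(x)|\lesssim M x^{q_1-2\mu+1}$; times $|y_2(x)|\lesssim x^{-q_1}$ this yields $|u_2 y_2|(x)\lesssim M x^{1-2\mu}$.
\item For $x\geq 2$, $|u_1(x)|\lesssim M\int_x^\infty t^{-q_2-\alpha-2\mu}\,dt\lesssim M x^{1-q_2-\alpha-2\mu}$, and $|y_1(x)|\lesssim x^{q_2}$, so $|u_1 y_1|(x)\lesssim M x^{1-\alpha-2\mu}$. Splitting $u_2=\int_0^2+\int_2^x$ and using $|y_1(t)|\lesssim t^{q_2}$ on $[2,x]$ gives $|u_2(x)|\lesssim M+M x^{q_2-\alpha-2\mu+1}\lesssim M x^{q_2-\alpha-2\mu+1}$, hence $|u_2 y_2|(x)\lesssim M x^{1-\alpha-2\mu}$.
\item The intermediate regime $x\in[1,2]$ is treated by a continuity/boundedness argument.
\end{itemize}
Combining, $|y(x)|\lesssim M\langle x\rangle^{-\alpha} x^{1-2\mu}$, which is exactly $\|\langle x\rangle^{\alpha}x^{2\mu-1}y\|_{L^\infty}\lesssim M$.

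\smallskip

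For the derivative estimate, the relation $u_1'y_1+u_2'y_2=0$ reduces $y'$ to $y'=u_1 y_1'+u_2 y_2'$. Differentiating the explicit asymptotics \eqref{B.2} term by term gives $|y_i'(x)|\lesssim |y_i(x)|/x$ in each of the three regimes (power laws in $x$). Therefore $|y'(x)|\lesssim (|u_1 y_1|(x)+|u_2 y_2|(x))/x$, and combined with the bounds just derived for $y$ this yields $\|\langle x\rangle^{\alpha} x^{2\mu}y'\|_{L^\infty}\lesssim M$.

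\smallskip

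The only genuinely delicate point, and the one where I would spend the most care, is the regime $x\geq 2$: one must use the \emph{full} decay hypothesis on $f$ (the $\langle t\rangle^{-\alpha}$ factor) to obtain the sharp $x^{-\alpha}$ decay of $y$, which is why the strengthened condition $q_2-2\mu-\alpha+1>0$ (stronger than $q_2-2\mu+1>0$) enters. The remaining cases are routine integrations of power laws whose convergence is guaranteed by the two inequalities in \eqref{alpha<1/2}.
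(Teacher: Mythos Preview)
Your proposal is correct and follows essentially the same route as the paper: both use $W(x)=C/x$, the standard variation-of-parameters identities (in particular $y'=u_1y_1'+u_2y_2'$), and a regime split governed by the explicit asymptotics in \eqref{B.2}, with the two inequalities in \eqref{alpha<1/2} supplying exactly the integrability at $0$ and the sharp $\langle x\rangle^{-\alpha}$ decay at infinity. The only cosmetic difference is that the paper first records pointwise bounds on $u_1,u_2$ separately (its inequalities \eqref{u1}, \eqref{u2}) and then multiplies by $y_i, y_i'$, whereas you bound the products $u_i y_i$ directly; also the paper splits at $x=1$ rather than at $x=1$ and $x=2$, which is immaterial since everything is bounded on $[1,2]$.
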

	\begin{rmk}
		The solution \eqref{B.4} to  $Ly=f$ is found by using the fundamental system of solutions $\{y_1, y_2\}$ and the method of variation of constants. Here we check that the solution \eqref{B.4} is really a solution with the bound \eqref{B.6}.
	\end{rmk}
	\begin{proof}[Proof of Lemma \ref{prop_B2}]
		We start with the estimates on $u_1$ and $u_2$, which not only show that the integrals in \eqref{B.5} are absolutely convergent, but also are crucial in the proof of the estimate \eqref{B.6}. From the proof of Lemma \ref{prop_B1}, we know that $W(x)=\frac Cx$ for some non-zero constant $C\in\RR$. For simplicity, we assume that $\left\|\langle x\rangle^\alpha x^{2\mu-1}f\right\|_{L^\infty}=1$. If $x\geq1$, then
		\[|u_1(x)|\lesssim \int_x^\infty t^{-q_2-1}|f(t)|\,dt\lesssim\int_x^\infty t^{-q_2-1}t^{1-2\mu-\alpha}\,dt\lesssim x^{-q_2+1-2\mu-\alpha};\]
		If $x\in(0,1)$, then
		\begin{align*}
			|u_1(x)|\lesssim \int_x^1 t^{-q_1-1}t^{1-2\mu}\,dt+\int_1^\infty t^{-q_2-1}t^{1-2\mu}\,dt\lesssim x^{-q_1+1-2\mu}+1\lesssim x^{-q_1+1-2\mu},
		\end{align*}
		hence we can conclude that
		\begin{equation}\label{u1}
			|u_1(x)|\lesssim \begin{cases}
				\langle x\rangle^{-\alpha}x^{-q_1+1-2\mu} & x\in(0,1)\\
				\langle x\rangle^{-\alpha}x^{-q_2+1-2\mu} & x\geq 1.
			\end{cases}
		\end{equation}
		Similarly, we estimate $u_2$ by using \eqref{alpha<1/2}. If $x\in(0,1)$, then
		\[|u_2(x)|\lesssim\int_0^x t^{q_1-1}t^{1-2\mu}\,dt\lesssim x^{q_1+1-2\mu};\]
		If $x\geq 1$, then
		\[|u_2(x)|\lesssim\int_0^1 t^{q_1-1}t^{1-2\mu}\,dt+\int_1^x t^{q_2-1}t^{1-2\mu-\alpha}\,dt\lesssim x^{q_2+1-2\mu-\alpha},\]
		hence we conclude that
		\begin{equation}\label{u2}
			|u_2(x)|\lesssim \begin{cases}
				\langle x\rangle^{-\alpha}x^{q_1+1-2\mu} & x\in(0,1)\\
				\langle x\rangle^{-\alpha}x^{q_2+1-2\mu} & x\geq 1.
			\end{cases}
		\end{equation}
		Now \eqref{u1} and \eqref{u2} imply that $u_1$ and $u_2$ are well-defined.
		
		Next we check that the function $y$ given by \eqref{B.4} satisfies $Ly=f$. Direct computation gives that $y'=u_1'y_1+u_2'y_2+u_1y_1'+u_2y_2'=u_1y_1'+u_2y_2'$ and $y''=u_1'y_1'+u_2'y_2'+u_1y_1''+u_2y_2''$, hence by the definition of Wronskian,
		\begin{align*}
			Ly=x^2\left(u_1'y_1'+u_2'y_2'\right)+u_1Ly_1+u_2Ly_2=x^2\left(-\frac{y_2f}{x^2W}y_1'+\frac{y_1f}{x^2W}y_2'\right)=f.
		\end{align*}
		
		Finally, the estimate \eqref{B.6} follows directly from \eqref{B.2}, \eqref{u1} and \eqref{u2}, noting that $y'=u_1y_1'+u_2y_2'$. \end{proof}
	
	\begin{lem}\label{prop_B3}
		Let $q\in C^\infty([0,\infty); (0,\infty))$ be as in Lemma \ref{prop_B1}. Assume that $f: (0,\infty)\to\CC$ satisfies $\langle x\rangle^\alpha x^{2\mu-1}f\in L^\infty$ for $\alpha\in(0,1)$, $\mu>1/2$ and \eqref{alpha<1/2} holds.
		Then the function
		\begin{equation*}
			y(x)=u_1(x)y_1(x)+u_2(x)y_2(x),\qquad x>0
		\end{equation*}
		is a weak solution to $Ly(x)=xf'(x)$, where $L$ is given by \eqref{B.1}, $y_1, y_2$ are given by \eqref{B.2} and $u_1, u_2$ are given by
		\begin{equation*}
			u_1(x)=-\frac1C\int_x^\infty y_2'(t)f(t)\,dt,\qquad u_2(x)=-\frac1C\int_0^xy_1'(t)f(t)\,dt,
		\end{equation*}
		here $C=-2q_1C_3<0$ is a constant. Moreover, we have
		\begin{equation*}
			\left\|\langle x\rangle^\alpha x^{2\mu}y'\right\|_{L^\infty}+\left\|\langle x\rangle^\alpha x^{2\mu-1}y\right\|_{L^\infty}\lesssim\left\|\langle x\rangle^\alpha x^{2\mu-1}f\right\|_{L^\infty}.
		\end{equation*}
	\end{lem}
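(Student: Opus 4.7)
The plan is to follow the same variation-of-constants framework as in Lemma \ref{prop_B2}, but exploit an integration by parts that shifts the derivative off $f$, so that the formulas make sense for $f$ merely in a weighted $L^\infty$ class. Heuristically, if we substituted $xf'$ in place of $f$ in the formula of Lemma \ref{prop_B2} we would get $\frac{1}{C}\int_x^\infty y_2(t) f'(t)\,dt$; integrating by parts (discarding the boundary contribution, which becomes a modification absorbed into the other constant of integration) yields precisely the $u_1$ of the present lemma. The new $u_1,u_2$ therefore should inherit essentially the same size bounds as in Lemma \ref{prop_B2}.

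First, I would establish pointwise estimates on $u_1(x)$ and $u_2(x)$ of the same form as \eqref{u1}--\eqref{u2}. These rely on the asymptotics of $y_1', y_2'$ derived by differentiating \eqref{B.2}: near $0$, $y_1'(x)\sim q_1 x^{q_1-1}$ and $y_2'(x)\sim -q_1 C_3\, x^{-q_1-1}$; for $x\geq 2$, $y_1'(x) = C_1 q_2 x^{q_2-1} - C_2 q_2 x^{-q_2-1}$ and $y_2'(x) = -q_2 x^{-q_2-1}$. Combined with $|f(t)|\lesssim \langle t\rangle^{-\alpha} t^{1-2\mu}$ and the integrability conditions \eqref{alpha<1/2} at both endpoints, the same case analysis as in Lemma \ref{prop_B2} delivers
\[
|u_1(x)|\lesssim \langle x\rangle^{-\alpha} x^{-q_1+1-2\mu}\ (x<1),\ \ \langle x\rangle^{-\alpha} x^{-q_2+1-2\mu}\ (x\geq 1),
\]
and similarly for $u_2$. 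The bound \eqref{B.6} on $y$ then drops out by combining these $u_i$-estimates with \eqref{B.2}. For the derivative bound, I would use that $y'=f/x+u_1 y_1'+u_2 y_2'$ a.e. (see below) and estimate each piece with the $u_i$-bounds and the asymptotics of $y_i'$.

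Second, to identify $y$ as a weak solution of $Ly=xf'$, I would use a density argument. Choose smooth approximants $\{f_n\}\subset C^\infty((0,\infty))$ with $f_n\to f$ pointwise a.e.\ and $\|\langle x\rangle^\alpha x^{2\mu-1} f_n\|_{L^\infty}$ uniformly bounded; for each $n$ define $u_i^{(n)}$ and $y_n=u_1^{(n)}y_1+u_2^{(n)}y_2$ by the same formulas. Then $(u_1^{(n)})'=\frac{1}{C}y_2' f_n$, $(u_2^{(n)})'=-\frac{1}{C}y_1' f_n$, and the Wronskian identity $y_1 y_2'-y_2 y_1'=C/x$ gives
\begin{equation*}
(u_1^{(n)})' y_1+(u_2^{(n)})' y_2=\frac{f_n}{x},\qquad (u_1^{(n)})' y_1'+(u_2^{(n)})' y_2'=0.
\end{equation*}
Consequently $y_n'=f_n/x+u_1^{(n)}y_1'+u_2^{(n)}y_2'$, and a second differentiation together with $Ly_1=Ly_2=0$ produces $Ly_n=xf_n'$ classically. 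Testing against $\varphi\in C_c^\infty((0,\infty))$ and integrating by parts yields $\int y_n\, L^*\varphi\,dx=-\int f_n(\varphi+x\varphi')\,dx$, where $L^*\varphi=x^2\varphi''+3x\varphi'+(1-q)\varphi$. Passing to the limit with dominated convergence (justified by the step-one estimates, now uniform in $n$) transfers this identity to $y$ and $f$, giving the weak formulation.

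The main obstacle is modest: one must check that the approximation is compatible with both sides simultaneously, i.e.\ that $y_n\to y$ in $L^1_{\mathrm{loc}}$ (which follows from the uniform bound on $u_i^{(n)}$ from step one plus pointwise convergence) and that $\int f_n(\varphi+x\varphi')\to \int f(\varphi+x\varphi')$ (which follows from the uniform $L^\infty$-bound on $\langle x\rangle^\alpha x^{2\mu-1} f_n$ and the compact support of $\varphi$). Once this bookkeeping is done, the estimate \eqref{B.6} is already secured from step one, completing the proof.
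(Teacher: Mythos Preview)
Your proposal is correct and is precisely the approach the paper has in mind: the paper omits its own proof, stating only that it is ``very similar to the proof of Lemma \ref{prop_B2}'', and your argument is exactly that---the same $u_1,u_2$ bounds via the asymptotics of $y_1',y_2'$ and \eqref{alpha<1/2}, combined with the Wronskian relation $u_1'y_1+u_2'y_2=f/x$, $u_1'y_1'+u_2'y_2'=0$. The density step you include for the weak formulation is sound, though one could also bypass it by observing directly that $xy'=f+x(u_1y_1'+u_2y_2')$ with the second term locally Lipschitz, so $Ly=x(xy')'-qy=xf'$ holds distributionally without approximation.
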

	
	We omit the proof of Lemma \ref{prop_B3}, because it is very similar to the proof of Lemma \ref{prop_B2}.
	
	\section{Proof of Proposition \ref{linear_basic_estimate1}}\label{AppendixB1}
	
The main idea of the proof is to write down explicitly the formula for the solution of the equation \eqref{linear_basic_eq} and then make the estimates directly. Although the integral representation \eqref{Q_expression} of the solution $Q$ is not singular, the singularity appears when we take the derivative of $Q$.
Because of the special shape of our new coordinates $(\beta, \phi)$, we will introduce a partition of unity to take full advantage of the properties of the coordinates. Thus,  we write $Q=\sum_{k=0}^\infty Q_k$ with $Q_k$ given by \eqref{Q_k_expression}. The major contribution to $Q_k(\beta,\phi)$ in the integral \eqref{Q_k_expression} comes from two parts:

 \begin{itemize}
 \item when $s$ is  small or large, we need to use the cancellation given by the condition $\hat G_{\pm1}=0,\ \widehat{G}_{0}=0$ to extract the leading order of the integrand;

 \item when $s$ is near $k$, we need to make full advantage of the $C_\beta^\al$ regularity of $G$ in order to compensate the smallness of the denominator.
  \end{itemize}

The most difficult part is the estimate of $Q_0$, especially for $\beta$ small, where two main contributions ($s$ small and $s$ near $k=0$) are mixed up.
In this case, we must perform a more refined decomposition; see Lemma \ref{lem5}. 

In this appendix, all implicit constants in $\lesssim$ and $\sim$
depend only on the parameters $\alpha, \mu$ and the bump functions $\eta,\rho$ we introduced in the proof, unless otherwise specified. And we assume that $\alpha\in(0, 1)$, $\mu>1/2$ throughout this appendix. Without loss of generality, we assume that $\|\beta^{2\mu-1}G\|_{C_\beta^\alpha}=1$.

		\subsection{Representation formula of the solution.}
				
		 Let
		\begin{equation}\label{QnGn}
			{\hat{Q}_n(\beta):=\frac{1}{2\pi}}\int_\TT Q(\beta,\phi)e^{-\ii n(\beta+\phi)}\,d\phi,\qquad \hat{G}_n(\beta):=\frac{1}{2\pi}\int_\TT G(\beta,\phi)e^{-\ii n(\beta+\phi)}\,d\phi.
		\end{equation}
		If $Q$ solves \eqref{linear_basic_eq}, then  \eqref{linear_basic_eq} is converted into a system of equations for modes $\hat Q_n$: $(\beta\p_\beta+\mu n)\hat Q_n=\hat G_n$, which is equivalent to
$\p_\beta\left(\beta^{\mu n}\hat Q_n\right)=\beta^{\mu n-1}\hat G_n$. The solution is given by
\begin{align}\label{Qn1}
&\hat Q_n(\beta)=\beta^{-\mu n}\int_0^\beta s^{\mu n-1}\hat G_n(s)\,ds=\int_0^\beta \left(\frac s\beta\right)^{\mu n}\frac{\hat G_n(s)}{s}\,ds\quad\text{for}\ n\geq 1,\\
\label{Qn0}&\hat Q_n(\beta)=-\beta^{-\mu n}\int_\beta^\infty s^{\mu n-1}\hat G_n(s)\,ds=-\int_\beta^\infty \left(\frac s\beta\right)^{\mu n}\frac{\hat G_n(s)}{s}\,ds\quad\text{for}\ n\leq 0{.}
\end{align}		
		Thanks to $\mu>1/2$, $\beta^{2\mu-1}G\in C_{\beta}^\alpha$ and {$\hat G_1=0$}, the above two integral formulas are absolutely convergent,\footnote{Note that $\hat G_{-1}=\hat G_0=0$ is not needed here for $\hat Q_n$ to be convergent:  for $n\leq0$, by $\mu>1/2$ and
			$\|\beta^{2\mu-1}\hat G_n\|_{L^{\infty}}\leq \|\beta^{2\mu-1}G\|_{C_\beta^\alpha}$, we have $\int_\beta^\infty\left|(s/\beta)^{n\mu}\frac{\hat G_n(s)}{s}\right|\,ds\leq
			\|\beta^{2\mu-1}G\|_{C_\beta^\alpha}\int_\beta^\infty(s/\beta)^{n\mu}\frac{s^{1-2\mu}}{s}\,ds<+\infty$.}  and if $\hat G_n=0$ then $\hat Q_n=0$.
To sum them up, we use two elementary identities: for $\theta\in\TT$,
\begin{equation}\label{rnsn}
\sum_{n=1}^{+\infty} r^n e^{\ii n\theta}=\frac{re^{\ii\theta}}{1-re^{\ii\theta}},\ \forall\ r\in(0,1),\qquad \sum_{n=-\infty}^0s^ne^{\ii n\theta}=
\frac{se^{\ii\theta}}{se^{\ii\theta}-1},\ \forall\ s\in (1,+\infty).\end{equation}
Then we can obtain
		\begin{align}\label{Q_expression}
				Q(\beta,\phi)&=\sum_{n\in\ZZ}\hat Q_n(\beta)e^{\ii n(\beta+\phi)}=\frac1{2\pi}\int_0^\infty\tilde Q(\beta, \phi, s)\,ds,\quad \beta>0, \phi\in\TT,
		\end{align}
		where for $\beta>0, s>0, \beta\neq s, \phi\in\TT$ we define \footnote{It is not necessary to give a precise definition of $\tilde Q(\beta,\phi,s)$ for $\beta=s$.  For simplicity we can define $\tilde Q(\beta,\phi,\beta)=0$. Note that the integral with respect to $s$ in \eqref{Q_expression} does not changed if we only change the  value of $\tilde Q(\beta,\phi,\beta)$.}
		\begin{align}
\label{Qt}\tilde Q(\beta, \phi, s)&:=\int_\TT \frac{\left( s/\beta\right)^\mu e^{\ii(\beta+\phi-s-\Phi)}}{1-\left(s/\beta\right)^\mu e^{\ii(\beta+\phi-s-\Phi)}}\frac{G(s, \Phi)}{s}\,d\Phi\\
				\notag&=\int_\TT\frac{s^\mu e^{-\ii(s+\Phi)}}{\beta^\mu e^{-\ii(\beta+\phi)}-s^\mu e^{-\ii(s+\Phi)}}\frac{G(s, \Phi)}{s}\,d\Phi.
		\end{align}
In details, we need to prove that
\begin{align}\label{a1}
			&\int_0^\beta \sum_{n=1}^{+\infty}\left(\frac s\beta\right)^{\mu n}\frac{|\hat G_n(s)|}{s}\,ds+\int_\beta^\infty\sum_{n=-\infty}^0\left(\frac s\beta\right)^{\mu n}\frac{|\hat G_n(s)|}{s}\,ds<+\infty,
\quad \forall\ \beta>0,
\\ \label{a2}&\sum_{n=1}^{+\infty}\left(\frac s\beta\right)^{\mu n}\frac{\hat G_n(s)}{s}e^{\ii n(\beta+\phi)}=\frac1{2\pi}\tilde Q(\beta, \phi, s),\quad \forall\ 0<s<\beta, \forall\ \phi\in\TT,\\
\label{a3}&-\sum_{n=-\infty}^0\left(\frac s\beta\right)^{\mu n}\frac{\hat G_n(s)}{s}e^{\ii n(\beta+\phi)}=\frac1{2\pi}\tilde Q(\beta, \phi, s),\quad \forall\ s>\beta>0, \forall\ \phi\in\TT.
		\end{align}
		
In fact, \eqref{a2} follows from the following facts.
\begin{itemize}
	\item By \eqref{QnGn}, we have
	$$\left(\frac s\beta\right)^{\mu n}\frac{\hat G_n(s)}{s}e^{\ii n(\beta+\phi)}=\frac{1}{2\pi}\int_\TT \left(\frac s\beta\right)^{\mu n}\frac{G(s,\Phi)}{s}e^{\ii n(\beta+\phi-s-\Phi)}\,d\Phi; $$
	\item By \eqref{rnsn}, we have (as $0<s<\beta$, $\left(s/\beta\right)^\mu\in(0,1) $)
	$$\sum_{n=1}^{+\infty} \left(\frac s\beta\right)^{\mu n}\frac{G(s,\Phi)}{s}e^{\ii n(\beta+\phi-s-\Phi)}=
	\frac{\left( s/\beta\right)^\mu e^{\ii(\beta+\phi-s-\Phi)}}{1-\left(s/\beta\right)^\mu e^{\ii(\beta+\phi-s-\Phi)}}\frac{G(s, \Phi)}{s};$$
	\item $ \int_\TT \sum_{n=1}^{+\infty}|\left(s/\beta\right)^{\mu n}\frac{G(s,\Phi)}{s}e^{\ii n(\beta+\phi-s-\Phi)}|\,d\Phi=
	\int_\TT \frac{\left( s/\beta\right)^\mu}{1-\left(s/\beta\right)^\mu }\frac{|G(s, \Phi)|}{s}\,d\Phi<+\infty$; \footnote{Here we only need to prove \eqref{a2} and \eqref{a3} for fixed $s\neq \beta$. It is not necessary to consider the limit of \eqref{a2} and \eqref{a3} as $s\to \beta$.}
	\item The definition of $ \tilde Q(\beta, \phi, s)$ in \eqref{Qt} and Fubini's theorem.
\end{itemize}
The proof of \eqref{a3} is similar.

\begin{proof}[Proof of \eqref{a1}]
Thanks to $\hat G_{\pm1}=0$, $\widehat{G}_{0}=0$, it is enough to prove that (for $N\in\ZZ$, $N\geq 2$, $ \beta>0$; the constant of $\lesssim$ in \eqref{a4}
depends only on $\mu$)
\begin{align}\label{a4}
			&\int_0^\beta \sum_{n=N}^{2N-1}\left(\frac s\beta\right)^{\mu n}\frac{|\hat G_n(s)|}{s}\,ds+\int_\beta^\infty\sum_{n=1-2N}^{-N}\left(\frac s\beta\right)^{\mu n}\frac{|\hat G_n(s)|}{s}\,ds\lesssim
N^{-1/2}\beta^{1-2\mu}.
		\end{align}
Then \eqref{a1} follows by taking $N=2^k$ in \eqref{a4} and summing over $k\in \ZZ_+$ ($\ZZ_+:=\ZZ\cap(0,+\infty)$).

As $\|\beta^{2\mu-1}G\|_{C_{\beta}^{\alpha}}=1$ we have $\beta^{2\mu-1}|G(\beta,\phi)|\leq 1 $. Then by \eqref{QnGn} and Parseval's identity, we have
$ \sum_{n\in\ZZ}|\hat G_n(s)|^2=\frac{1}{2\pi}\int_\TT|G(s,\Phi)|^2\,d\Phi\leq s^{2(1-2\mu)}$. Thus $\sum_{n=N}^{2N-1}|\hat G_n(s)|\leq N^{1/2}s^{1-2\mu} $, {$\sum_{n=1-2N}^{-N}|\hat G_n(s)|\leq N^{1/2}s^{1-2\mu} $. For} $0<s<\beta$ we have
$$\sum_{n=N}^{2N-1}\left(\frac s\beta\right)^{\mu n}\frac{|\hat G_n(s)|}{s}\leq \sum_{n=N}^{2N-1}\left(\frac s\beta\right)^{\mu N}\frac{|\hat G_n(s)|}{s}\leq \left(\frac s\beta\right)^{\mu N}N^{1/2}s^{-2\mu}. $$
{For} $s>\beta>0$ we have $$\sum_{n=1-2N}^{-N}\left(\frac s\beta\right)^{\mu n}\frac{|\hat G_n(s)|}{s}\leq \sum_{n=1-2N}^{-N}\left(\frac s\beta\right)^{-\mu N}\frac{|\hat G_n(s)|}{s}\leq \left(\frac s\beta\right)^{-\mu N}N^{1/2}s^{-2\mu}. $$
Then (using $N\geq2$, $\mu>1/2$)\begin{align*}
	&\int_0^\beta \sum_{n=N}^{2N-1}\left(\frac s\beta\right)^{\mu n}\frac{|\hat G_n(s)|}{s}\,ds+\int_\beta^\infty\sum_{n=1-2N}^{-N}\left(\frac s\beta\right)^{\mu n}\frac{|\hat G_n(s)|}{s}\,ds\\
	\leq&\int_0^\beta \left(\frac s\beta\right)^{\mu N}N^{1/2}s^{-2\mu}\,ds+\int_\beta^\infty\left(\frac s\beta\right)^{-\mu N}N^{1/2}s^{-2\mu}\,ds\\
	=&\frac{\beta^{1-2\mu}N^{1/2}}{\mu N-2\mu+1}+\frac{\beta^{1-2\mu}N^{1/2}}{\mu N+2\mu-1}\lesssim
	N^{-1/2}\beta^{1-2\mu}.
\end{align*}
This completes the proof of \eqref{a4} and hence \eqref{a1}.
\end{proof}	
By \eqref{a1}--\eqref{a3}, we have $\frac1{2\pi}\int_0^\infty|\tilde Q(\beta, \phi, s)|\,ds<+\infty$, $\forall \beta>0,\phi\in\TT$, even if	
\if0We emphasize here that the last integral in \eqref{Q_expression} is convergent in the sense that (as $\widehat{G}_{0}=0$)
 \begin{align*}
   Q(\beta,\phi)=\lim_{\varepsilon\to0+}\int_{\varepsilon}^\infty \int_\TT\frac{s^\mu e^{-\ii(s+\Phi)}}{\beta^\mu e^{-\ii(\beta+\phi)}-
   s^\mu e^{-\ii(s+\Phi)}}\frac{G(s, \Phi)}{s}\,d\Phi\,ds, \qquad \forall \beta>0,\phi\in\TT,
  \end{align*}
 where\fi
  \begin{equation*}
   \int_{0}^\infty \int_\TT\left|\frac{s^\mu e^{-\ii(s+\Phi)}}{\beta^\mu e^{-\ii(\beta+\phi)}-s^\mu e^{-\ii(s+\Phi)}}
   \frac{G(s, \Phi)}{s}\right|\,d\Phi\,ds=+\infty.
  \end{equation*}
  By \eqref{a1}, \eqref{Qn1} and \eqref{Qn0}, we have $\sum_{n\in\ZZ}|\hat Q_n(\beta)|<+\infty$ and
  $Q(\beta,\phi)=\sum_{n\in\ZZ}\hat Q_n(\beta)e^{\ii n(\beta+\phi)} $ is absolutely convergent.
  Moreover, by \eqref{a4}, \eqref{Qn1} and \eqref{Qn0}, we have
  $$\sum_{N\leq |n|<2N}|\hat Q_n(\beta)|\lesssim
N^{-1/2}\beta^{1-2\mu},\quad \sum_{|n|\geq N}|\hat Q_n(\beta)|\lesssim
N^{-1/2}\beta^{1-2\mu},\quad \forall\ N\in\ZZ\cap[2,+\infty).$$
Thus, $Q(\beta,\phi)=\sum_{n\in\ZZ}\hat Q_n(\beta)e^{\ii n(\beta+\phi)} $ is locally uniformly convergent in $ (0,+\infty)\times\TT$ (which implies that $Q(\beta,\phi)$ is continuous as $\hat Q_n(\beta)$ is).
By \eqref{QnGn}, we have  $G(\beta,\phi)=\sum_{n\in\ZZ}\hat G_n(\beta)e^{\ii n(\beta+\phi)}  $ in $L_{\text{loc}}^2((0,+\infty)\times\TT)$.
We also have (using \eqref{Qn1}, \eqref{Qn0} and $\p_\vp=\p_\phi-\p_\beta$) $$ (\beta\p_\vp+\ii\mu\p_\phi)\left[\hat Q_n(\beta)e^{\ii n(\beta+\phi)}\right]=\left[-(\beta\p_\beta+\mu n)\hat Q_n(\beta)\right]e^{\ii n(\beta+\phi)}=-\hat G_n(\beta)e^{\ii n(\beta+\phi)},\quad  \forall\ n\in\ZZ.$$
   Thus, the function $Q$ defined in \eqref{Q_expression}
 satisfies the equation \eqref{linear_basic_eq} in the sense
   of distribution.\smallskip		

\subsection{Proof of \eqref{linear_basic_est}}
Next we prove \eqref{linear_basic_est}. Recall that we assume  $\|\beta^{2\mu-1}G\|_{C_\beta^\alpha}=1$.

We  first make several reductions. Recalling the equation \eqref{linear_basic_eq}, it suffices to show that
		\[\|\beta^{2\mu-1}\p_\phi Q\|_{C_\beta^\alpha}+\|\beta^{2\mu-1}Q\|_{C_\beta^\alpha}\lesssim1.\]
		{As $ \hat{G}_0=\hat G_{\pm1}=0,$ $ \hat{Q}_0=\hat Q_{\pm1}=0,$ \eqref{reverse_bernstein2} implies $\|\beta^{2\mu-1}Q\|_{C_\beta^\alpha} \lesssim
\|\beta^{2\mu-1}\p_\phi Q\|_{C_\beta^\alpha}$,} then it suffices to show that $\|\beta^{2\mu-1}\p_\phi Q\|_{C_\beta^\alpha}\lesssim1$.
		Equation \eqref{linear_basic_eq} and $\p_\vp=\p_\phi-\p_\beta$ also imply that
		\[\beta^{2\mu-1}\p_\phi Q=\beta^{2\mu-1}\frac{\beta\p_\beta Q}{\beta+\ii\mu}-\frac1{\beta+\ii\mu}\beta^{2\mu-1}G.\]
		As a consequence, it suffices to show that (also using $\frac1{\beta+\ii\mu}\in C_\beta^\alpha$ and Lemma \ref{AppendixA_Algebra})
		\begin{equation}\label{Q3}\left\|\beta^{2\mu-1}\frac{\beta\p_\beta Q}{\beta+\ii\mu}\right\|_{C_\beta^\alpha}\lesssim1.	\end{equation}
		
		Let $\eta\in C_c^\infty(\RR; [0,1])$ be such that
		$$\text{supp }\eta\subset\left[-\frac34,\frac34\right],\qquad \eta\Big|_{\left[-\frac14,\frac14\right]}\equiv 1,\ \ \text{ and }\ \ \sum_{k=0}^\infty \eta(\beta-k)=1,\ \ {\forall\ }\beta>0.$$
		For each $k\in\ZZ_{\geq0}$, we define $G_k(\beta,\phi):=G(\beta,\phi)\eta(\beta-k)$, then $G=\sum\limits_{k=0}^\infty G_k$.
We define
		\begin{equation}\label{Q_k_expression}
			Q_k(\beta,\phi):=\frac1{2\pi}\int_0^\infty\left(\int_\TT\frac{s^\mu e^{-\ii(s+\Phi)}}{\beta^\mu e^{-\ii(\beta+\phi)}-s^\mu e^{-\ii(s+\Phi)}}
\frac{G_k(s, \Phi)}{s}\,d\Phi\right)\,ds.
		\end{equation}
		By \eqref{Q_expression} and \eqref{Qt}, we have\footnote{We emphasize here that \eqref{Q_k_expression} and \eqref{Q_expression_1} should be viewed as iterated integrals, although we will see in \eqref{Q_k>0} below that for $k\in\ZZ_+$ we can drop the bracket in \eqref{Q_k_expression} and view it as a Lebesgue integral (or a double integral) with the integral domain $(0,+\infty)\times\TT$.}
		\begin{equation}\label{Q_expression_1}
			Q(\beta,\phi)=\frac1{2\pi}\int_0^\infty\left(\int_\TT\frac{s^\mu e^{-\ii(s+\Phi)}}{\beta^\mu e^{-\ii(\beta+\phi)}-s^\mu e^{-\ii(s+\Phi)}}
\frac{G(s, \Phi)}{s}\,d\Phi\right)\,ds.
		\end{equation}We also have (using $\beta^{2\mu-1}G\in C_\beta^\alpha$ and $1-\eta(s)=0 $ for $s\leq 1/4$)
		\begin{align*}
			&\int_0^\infty\int_\TT\sum\limits_{k=1}^\infty\left|\frac{s^\mu e^{-\ii(s+\Phi)}}{\beta^\mu e^{-\ii(\beta+\phi)}-s^\mu e^{-\ii(s+\Phi)}}
\frac{G_k(s, \Phi)}{s}\right|\,d\Phi\,ds\\
&=\int_0^\infty\int_\TT\left|\frac{s^\mu e^{-\ii(s+\Phi)}}{\beta^\mu e^{-\ii(\beta+\phi)}-s^\mu e^{-\ii(s+\Phi)}}
\frac{G(s, \Phi)}{s}\right|(1-\eta(s))\,d\Phi\,ds<+\infty
		\end{align*}
		for each $\beta>0$ and $\phi\in\TT$.
		Then {by Fubini's theorem}, we have $Q=\sum\limits_{k=0}^\infty Q_k$ and (by \eqref{Q_k_expression})\begin{equation}\label{Q_k>0}
			Q_k(\beta,\phi)=\frac1{2\pi}\int_0^\infty\int_\TT\frac{s^\mu e^{-\ii(s+\Phi)}}{\beta^\mu e^{-\ii(\beta+\phi)}-s^\mu e^{-\ii(s+\Phi)}}
\frac{G_k(s, \Phi)}{s}\,d\Phi\,ds,\quad \forall\ k\in\ZZ_+.
		\end{equation}
		Moreover,
$\text{supp }G_k(\cdot, \phi)\subset \left[k-\frac34,k+\frac34\right]$ for $k\geq 1$ and $\text{supp }G_0(\cdot, \phi)\subset \left[0,\frac34\right]$.
Thanks to $\|\beta^{2\mu-1}G\|_{C_\beta^\alpha}=1$, we have $\|G_k\|_{L^\infty}\lesssim k^{1-\alpha-2\mu}$ for $k\geq 1$
and $	\|\beta^{2\mu-1}G_0\|_{L^\infty}\lesssim1$.

For each $k\geq0$, we claim that $\pa_\beta Q_k\in C((0,+\infty)\times\TT)$ and we define $Q_k^{(1)}(\beta, \phi):=\frac\beta{\beta+\ii\mu}\p_\beta Q_k(\beta,\phi) $. Our desired inequality \eqref{Q3} will follow from the following lemma.

\begin{lem}\label{lem1} The following key estimates hold.
	\begin{enumerate}[(i)]
		\item If $\beta>0$, then $\left|Q_0^{(1)}(\beta,\phi)\right|\lesssim \beta^{1-2\mu}/(1+\beta)$.
		\item If $\beta>0$, $k\in\ZZ_+$ then  $\left|Q_k^{(1)}(\beta,\phi)\right|\lesssim(|\beta-k|+1)^{\alpha-1}(\beta+k)^{1-\alpha-2\mu}k^{-\alpha}$.
	\end{enumerate}	
	
	For any fixed $\beta_1,\beta_2>0,\phi\in\TT$ and $k\in\ZZ_{\geq0}$, let
	\begin{equation}\label{Ik}
		I_k=I_k(\beta_1,\beta_2,\phi):=\left|\beta_1^{2\mu-1} Q_k^{(1)}(\beta_1,\phi)-\beta_2^{2\mu-1} Q_k^{(1)}(\beta_2,\phi)\right|.
	\end{equation}
	\begin{enumerate}
		\item[(iii)]  If $0<\beta_1/2<\beta_2<\beta_1<2$, then
		$I_k\lesssim\left|\frac{\beta_1-\beta_2}{\beta_1+\beta_2}\right|^\alpha(1+k)^{-2\mu-\alpha}$.
		
		\item[(iv)] If $k_0\in\ZZ$, $k_0\geq 2$, $\beta_1,\beta_2\in(k_0-1,k_0+1)$, $0<\beta_1-\beta_2<1 $, then
		$I_k\lesssim\left|\frac{\beta_1-\beta_2}{\beta_1+\beta_2}\right|^\alpha$ for $k\in\{0,k_0-1,k_0,k_0+1\}$, and
		$$I_k\lesssim\begin{cases}
			|k-k_0|^{\alpha-1}k_0^{-2\alpha}, & \text{if }2\leq |k-k_0|\leq|\beta_1-\beta_2|k_0/2,\\
			|\beta_1-\beta_2|k_0^{2\mu-1}|k_0-k|^{\alpha-2}(k_0+k)^{2-\alpha-2\mu}k^{-\alpha}, &\text{if }|k-k_0|\geq\max\{2,|\beta_1-\beta_2|k_0/2\}
		\end{cases}$$ for $k\in\ZZ_+\setminus[k_0-1,k_0+1]$.
	\end{enumerate}
	In (i) and (ii) the implicit constants in $\lesssim$ are independent of $\beta,\phi,k$, and  in (iii) and (iv) the implicit constants in $\lesssim$ are independent of $\beta_1, \beta_2,\phi$ and $k, k_0$, and as usual, they only depend on $\alpha,\mu$ and the bump function $\eta$ and $\rho$ (introduced in the proof of Lemma \ref{lem5}).
	\end{lem}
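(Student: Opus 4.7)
The plan is to begin by differentiating the integral representation \eqref{Q_k_expression} under the integral sign. Writing $A = A(\beta,\phi) := \beta^\mu e^{-\ii(\beta+\phi)}$ and $B = B(s,\Phi) := s^\mu e^{-\ii(s+\Phi)}$, a direct computation gives $\p_\beta A = -\ii\beta^{-1}(\beta+\ii\mu)A$, so that
\[
Q_k^{(1)}(\beta,\phi) \;=\; \frac{\ii}{2\pi}\int_0^\infty\!\!\int_\TT \frac{AB}{(A-B)^2}\,\frac{G_k(s,\Phi)}{s}\,d\Phi\,ds.
\]
The convergence of this integral and the continuity of $\p_\beta Q_k$ on $(0,\infty)\times\TT$ will be justified by the absolute integrability established below; note that the kernel has a pole exactly on the locus $\{A = B\}$, i.e.\ $s=\beta$ together with $\Phi + s \equiv \phi + \beta \pmod{2\pi}$, and that $|A-B|^2 = \beta^{2\mu} + s^{2\mu} - 2\beta^\mu s^\mu\cos(\Phi + s - \phi - \beta)$. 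This makes the analysis essentially a weighted kernel estimate with a one-dimensional singularity in $(s,\Phi)$-space.

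For the pointwise bounds (i) and (ii), I will partition the $s$-support of $G_k$ into a ``far'' region $\{|s-\beta|\gtrsim \beta+k\}$, where the kernel is harmless and a direct size estimate using $\|G_k\|_{L^\infty}\lesssim k^{1-\alpha-2\mu}$ (for $k\ge 1$) suffices, and a ``near'' region where the singularity must be tamed. In the near region I reorganise the $\Phi$-integral via its Fourier series in $\theta := \Phi + s - \phi - \beta$: the geometric expansion of $AB/(A-B)^2$ in powers of $\min(\beta,s)/\max(\beta,s)$ yields an absolutely summable series, and subtracting the $\theta$-average of $G_k$ (equivalently, an appropriate low Fourier mode) absorbs the mild logarithmic divergence thanks to the $C^\alpha$ regularity of $G$. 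For (ii) this directly produces the factor $(|\beta-k|+1)^{\alpha-1}(\beta+k)^{1-\alpha-2\mu}k^{-\alpha}$, while for (i) with $\beta\gtrsim 1$ the distance from $\mathrm{supp}(G_0)\subset[0,3/4]$ to $\beta$ gives the claimed $(1+\beta)^{-1}$ decay.

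The H\"older-type difference estimates (iii) and (iv) follow a bifurcation strategy. If $|\beta_1-\beta_2|$ is small compared with the distance between $[\beta_1,\beta_2]$ and the effective singular locus as seen by $G_k$, I will apply the fundamental theorem of calculus and estimate $\p_\beta Q_k^{(1)}$ by the same kernel method (one extra $\beta$-derivative produces an additional factor of $(A-B)^{-1}$ in the integrand, whose singularity is again absorbed by the $C^\alpha$ regularity of $G$); otherwise, the triangle inequality reduces matters to the pointwise bounds (i) and (ii), and the $|\beta_1-\beta_2|^\alpha$ factor is recovered by interpolating the trivial $|\beta_1-\beta_2|\le(\text{distance})$ against the H\"older scaling $|\beta_1+\beta_2|^\alpha$. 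The somewhat intricate four-fold split in (iv) — according to whether $k\in\{0,k_0{-}1,k_0,k_0{+}1\}$, $2\le|k-k_0|\le|\beta_1-\beta_2|k_0/2$, or $|k-k_0|\ge\max\{2,|\beta_1-\beta_2|k_0/2\}$ — precisely matches the regimes in which each of these two mechanisms is the sharper one.

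The main obstacle is (i) for small $\beta$. In this regime the support of $G_0\subset[0,3/4]$ contains both the origin — where $G_0$ is only assumed to satisfy $\beta^{2\mu-1}G_0\in L^\infty$ and may be large — and the kernel singularity at $s=\beta$. The arguments sketched above, which exploit the finite distance $\beta+k$ between $\mathrm{supp}(G_k)$ and the origin, collapse here because that distance degenerates as $\beta\to 0$. To recover control, the proof will invoke Lemma~\ref{lem5}: one decomposes $G_0$ using an auxiliary bump function $\rho$ concentrated near $s\sim\beta$, exploits the vanishing conditions $\hat G_0=\hat G_{\pm 1}=0$ to subtract the three lowest Fourier modes of the kernel on the near piece (this is exactly the cancellation flagged before Proposition~\ref{linear_basic_estimate1}), and treats the remaining near- and far-parts by separate kernel estimates. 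This refined decomposition is the genuine technical heart of the appendix.
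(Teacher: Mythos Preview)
Your overall scheme---far/near split for the pointwise bounds, fundamental theorem of calculus versus triangle inequality for the H\"older increments, and the appeal to Lemma~\ref{lem5} for (i) at small $\beta$---matches the paper's architecture. But the mechanism you describe for the near region is not the right one, and that is the central step.

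You propose to Fourier-expand the kernel in $\theta=\Phi+s-\phi-\beta$ and ``subtract the $\theta$-average of $G_k$'' to absorb a logarithmic divergence via the $C^\alpha$ regularity of $G$. The $\theta$-average of $G_k(s,\cdot)$ is its zeroth Fourier mode, which already vanishes (since $\hat G_0=0$), so that subtraction is vacuous. More importantly, $C^\alpha_\beta$ regularity is regularity \emph{in the $s$-variable only}; no H\"older smoothness in $\Phi$ is assumed, so manipulating Fourier modes in $\Phi$ cannot produce a factor $|s-\beta|^\alpha$. If you carry your Fourier route through in the near zone $s\sim\beta\sim k$, each mode contributes $n\,(s/\beta)^{n\mu}\hat G_n(s)$ with only $|\hat G_n(s)|\lesssim s^{1-2\mu}$ and no decay in $n$, so the series over $|n|\ge 2$ does not converge.

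The paper's cure (Lemma~\ref{lem3}) is different and genuinely uses the $\beta$-direction regularity: fix $\Phi$, write $Q_k^{(1)}=\frac{1}{2\pi}\int_\TT K_k\,d\Phi$ with $K_k(\beta,\phi,\Phi)=\ii z\int \frac{\p_s\gamma}{(z-\gamma)^2}\,\tilde G_k(s,\Phi)\,ds$ (here $\tilde G_k=G_k/(\mu-\ii s)$), pick $s_0$ so that $z$ and $\gamma(s_0,\Phi)$ share the same argument, and subtract $\tilde G_k(s_0,\Phi)$ inside the $s$-integral. The difference picks up $|s-s_0|^\alpha k^{-\alpha-2\mu}$ from $\|\tilde G_k\|_{C^\alpha_\beta}\lesssim k^{-2\mu}$, while the remaining integral $\int \p_s\gamma/(z-\gamma)^2\,ds$ is an exact derivative and reduces to harmless endpoint values. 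This yields $|K_k|\lesssim |(\beta-k)/k|^{\alpha-1}k^{-2\mu-\alpha}$ for $|\beta-k|\ge 1$, and $|K_k|\lesssim\|\phi-\Phi\|_\TT^{\alpha-1}k^{1-2\alpha-2\mu}$ for $|\beta-k|<2$; (ii) follows after integrating in $\Phi$. This freeze-and-subtract-at-$s_0$ device is what actually converts $C^\alpha_\beta$ regularity into the exponent $\alpha-1$.

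Two secondary gaps. First, your far-region claim (``a direct size estimate suffices'') is enough for the pointwise bound (ii) but not for the derivative $\p_\beta Q_k^{(1)}$ needed in (iv) when $\beta\gg k$: without the $\hat G_{\pm1}=0$ subtraction (Lemma~\ref{lem2}), the crude estimate gives only $\beta^{-\mu}k^{-\mu-\alpha}$, which is strictly weaker than the required $\beta^{-2\mu}k^{-\alpha}$. Second, for the closest cases $k\in\{k_0-1,k_0,k_0+1\}$ in (iv) (and $k=0$ with small $\beta$ in (iii)), the paper does not apply FTC or the triangle inequality directly to $Q_k^{(1)}$: it splits the $\Phi$-integral at $\|\phi-\Phi\|_\TT\sim k_0|\beta_1-\beta_2|$ and applies FTC to $K_k$ only on the outer piece, where $|\p_\beta K_k|\lesssim\|\phi-\Phi\|_\TT^{\alpha-2}$ is still integrable. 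Your phrase ``distance to the effective singular locus'' may gesture at this, but the $\Phi$-split has to be made explicit for those cases.
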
	
	
Assuming Lemma \ref{lem1} for the moment,  we prove \eqref{Q3}.

\begin{proof}[Proof of \eqref{Q3}]

	\begin{enumerate}[(1)]
		\item \underline{$\left\|\langle\beta\rangle^\alpha\beta^{2\mu-1}\frac{\beta\p_\beta Q}{\beta+\ii\mu}\right\|_{L^\infty}\lesssim1$.} By  Lemma \ref{lem1} (i) (ii),
		{$\mu>1/2$ and $\alpha\in(0,1)$}, we know that $\sum_{k=0}^\infty|Q_k^{(1)}(\beta,\phi)|$ is convergent locally uniformly in $(0,+\infty)\times\TT$, hence
		$\p_\beta Q\in C((0,+\infty)\times\TT)$, and {(recall that $Q=\sum\limits_{k=0}^\infty Q_k$, $Q_k^{(1)}(\beta, \phi)=\frac\beta{\beta+\ii\mu}\p_\beta Q_k(\beta,\phi) $)}
		\begin{align}
			\label{Qk1}\frac{\beta\p_\beta Q(\beta,\phi)}{\beta+\ii\mu}&=\sum_{k=0}^\infty Q_k^{(1)}(\beta,\phi),\\
			\label{Qk2}\left|\langle\beta\rangle^\alpha\beta^{2\mu-1}\frac{\beta\p_\beta Q(\beta,\phi)}{\beta+\ii\mu}\right|&{\leq}
			\langle\beta\rangle^\alpha\beta^{2\mu-1}\sum_{k=0}^\infty \left|Q_k^{(1)}(\beta,\phi)\right|.\end{align}
		
		{\bf Case I. $\beta\in(0,2)$.} By Lemma \ref{lem1} (i), we have $|Q_0^{(1)}(\beta,\phi)|\lesssim \beta^{1-2\mu}$;
		by Lemma \ref{lem1} (ii), we have $\left|Q_k^{(1)}(\beta,\phi)\right|\lesssim k^{-2\mu-\alpha}$ for $k\geq 1$.
		Recalling that $\mu>1/2$ and $\alpha\in(0,1)$, we obtain (using \eqref{Qk2})
		\[\left|\langle\beta\rangle^\alpha\beta^{2\mu-1}\frac{\beta\p_\beta Q(\beta,\phi)}{\beta+\ii\mu}\right|\lesssim\langle\beta\rangle^\alpha\beta^{2\mu-1}
		\left(\beta^{1-2\mu}+1\right)\lesssim \langle\beta\rangle^\alpha\lesssim 1.\]
		
		{\bf Case II. $\beta\geq2$.} By Lemma \ref{lem1} (ii), we have
		\begin{align*}
			\sum_{k=1}^\infty \left|Q_k^{(1)}(\beta,\phi)\right|&\lesssim\sum_{k=1}^\infty (|\beta-k|+1)^{\alpha-1}(\beta+k)^{1-\alpha-2\mu}k^{-\alpha}\\
			&\lesssim  \beta^{1-\alpha-2\mu}\sum_{1\leq k\leq\frac\beta2}\beta^{\alpha-1}k^{-\alpha}+\beta^{1-\alpha-2\mu}\sum_{\frac\beta2\leq k\leq2\beta}
			(|\beta-k|+1)^{\alpha-1}\beta^{-\alpha}\\
			&\qquad\qquad\qquad+\sum_{k\geq 2\beta}k^{\alpha-1}k^{1-\alpha-2\mu}k^{-\alpha}\\
			&\lesssim  \beta^{1-\alpha-2\mu}.
		\end{align*}
		Combining this with Lemma \ref{lem1} (i) {($|Q_0^{(1)}(\beta,\phi)|\lesssim \beta^{-2\mu}$ as $\beta\geq2$) and \eqref{Qk2}}, we get
		\[\left|\langle\beta\rangle^\alpha\beta^{2\mu-1}\frac{\beta\p_\beta Q(\beta,\phi)}{\beta+\ii\mu}\right|\lesssim\langle\beta\rangle^\alpha\beta^{2\mu-1}
		\left(\beta^{-2\mu}+\beta^{1-\alpha-2\mu}\right)\lesssim 1.\]
		
		\item \underline{$\left\|\beta^{2\mu-1}\frac{\beta\p_\beta Q}{\beta+\ii\mu}\right\|_{C_\beta^\alpha}\lesssim1$.} Now we estimate the H\"older norm. Fix
		$0<\beta_2<\beta_1<2\beta_2$ with $\beta_1-\beta_2<1$. We start with {(using \eqref{Qk1} and \eqref{Ik})}
		\begin{align*}
			&\left|\beta_1^{2\mu-1}\frac{\beta_1\p_\beta Q(\beta_1,\phi)}{\beta_1+\ii\mu}-\beta_2^{2\mu-1}\frac{\beta_2\p_\beta Q(\beta_2,\phi)}{\beta_2+\ii\mu}\right|\\
			&{\leq}\sum_{k=0}^\infty \left|\beta_1^{2\mu-1} Q_k^{(1)}(\beta_1,\phi)-\beta_2^{2\mu-1} Q_k^{(1)}(\beta_2,\phi)\right|=\sum_{k=0}^\infty I_k.
		\end{align*}
		
		{\bf Case I. $\beta_2<\beta_1<2$.} By Lemma \ref{lem1} (iii) and $\mu>1/2,\alpha>0$, we have
		\[\sum_{k=0}^\infty I_k\lesssim\sum_{k=0}^\infty \left|\frac{\beta_1-\beta_2}{\beta_1+\beta_2}\right|^\alpha (1+k)^{-2\mu-\alpha}\lesssim \left|\frac{\beta_1-\beta_2}{\beta_1+\beta_2}\right|^\alpha.\]
		
		{\bf Case II. $1<\beta_2<\beta_1$.}  Fix an integer $k_0\geq 2$ such that $\beta_1,\beta_2\in(k_0-1, k_0+1)$. By Lemma \ref{lem1} (iv), we have
		\begin{align*}
			\sum_{|k-k_0|\geq 2 \atop k\in \ZZ_+}I_k&=\sum_{2\leq |k-k_0|\leq|\beta_1-\beta_2|k_0/2 \atop k\in \ZZ_+}I_k+\sum_{|k-k_0|\geq \max\{2,|\beta_1-\beta_2|k_0/2\}
				\atop k\in \ZZ_+}I_k\\
			&\lesssim k_0^{-2\alpha}\sum_{2\leq |k-k_0|\leq|\beta_1-\beta_2|k_0/2 \atop k\in \ZZ_+}|k-k_0|^{\alpha-1}\\
			&\qquad\qquad+|\beta_1-\beta_2|k_0^{2\mu-1}\sum_{|k-k_0|\geq|\beta_1-\beta_2|k_0/2\atop k\in\ZZ_+}|k-k_0|^{\alpha-2}(k_0+k)^{2-\alpha-2\mu}k^{-\alpha}.
		\end{align*}
		Since
		\begin{align*}&\sum_{|k-k_0|\geq|\beta_1-\beta_2|k_0/2\atop k\in\ZZ_+}|k_0-k|^{\alpha-2}(k_0+k)^{2-\alpha-2\mu}k^{-\alpha}\\
			&\lesssim\sum_{|k-k_0|\geq|\beta_1-\beta_2|k_0/2\atop k_0/2\leq k\leq 2k_0}|k_0-k|^{\alpha-2}k_0^{2-\alpha-2\mu}k_0^{-\alpha}+
			\sum_{1\leq k<k_0/2}k_0^{\alpha-2}k_0^{2-\alpha-2\mu}k^{-\alpha}+\sum_{k\geq 2k_0} k^{-2\mu-\alpha}\\
			&\lesssim (k_0|\beta_1-\beta_2|)^{\alpha-1}k_0^{2-2\alpha-2\mu}+k_0^{\alpha-2}k_0^{2-\alpha-2\mu}k_0^{1-\alpha}+k_0^{1-\alpha-2\mu}\\
			&=|\beta_1-\beta_2|^{\alpha-1}k_0^{1-\alpha-2\mu}+k_0^{1-\alpha-2\mu}\lesssim |\beta_1-\beta_2|^{\alpha-1}k_0^{1-\alpha-2\mu},\end{align*}
		we have
		\[\sum_{|k-k_0|\geq 2 \atop k\in \ZZ_+}I_k{\lesssim k_0^{-2\alpha}(k_0|\beta_1-\beta_2|)^{\alpha}+|\beta_1-\beta_2|k_0^{2\mu-1}|\beta_1-\beta_2|^{\alpha-1}k_0^{1-\alpha-2\mu}}\lesssim|\beta_1-\beta_2|^\alpha k_0^{-\alpha}.\]
		Using  Lemma \ref{lem1} (iv){(for $k\in\{0,k_0-1,k_0,k_0+1\}$)} again, we obtain
		\[\sum_{k=0}^\infty I_k{\lesssim\left|\frac{\beta_1-\beta_2}{\beta_1+\beta_2}\right|^\alpha+|\beta_1-\beta_2|^\alpha k_0^{-\alpha}}\lesssim\left|\frac{\beta_1-\beta_2}{\beta_1+\beta_2}\right|^\alpha.\]
	\end{enumerate}
	Putting everything all together, we have shown the validity of \eqref{Q3}.
\end{proof}

\subsection{Proof of Lemma \ref{lem1}}
We first make some preparations. Let \begin{align}\label{z1}z:=\beta^\mu e^{-\ii(\beta+\phi)},\quad \gamma(s,\Phi):=s^\mu e^{-\ii(s+\Phi)}.\end{align}

\begin{lem}\label{lem2}
\begin{enumerate}[(i)]
	\item If $0<\beta\leq \frac15k$, $k\in\ZZ_+$, then $|Q_k^{(1)}(\beta,\phi)|+|\partial_{\beta}Q_k^{(1)}(\beta,\phi)|\lesssim k^{-2\mu-\alpha}$.
	\item If $\beta\geq2k$, $k\in\ZZ_+$, then $|Q_k^{(1)}(\beta,\phi)|+|\partial_{\beta}Q_k^{(1)}(\beta,\phi)|\lesssim\beta^{-2\mu} k^{-\alpha}$.
\end{enumerate}
\end{lem}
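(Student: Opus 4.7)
The plan is to convert the integral representation \eqref{Q_k_expression} of $Q_k$ into an explicit power series in $\beta$, valid in each of the two regimes, and estimate each coefficient using a weighted pointwise bound on $G$. Differentiating \eqref{Q_k_expression} in $\beta$ and using the identity $(\mu-\ii\beta)/(\beta+\ii\mu)=-\ii$ yields the cleaner formula
\[
Q_k^{(1)}(\beta,\phi)=\frac{\ii}{2\pi}\int_0^\infty\!\!\int_\TT\frac{z\gamma}{(z-\gamma)^2}\,\frac{G_k(s,\Phi)}{s}\,d\Phi\,ds.
\]
In Case (i), $\beta\leq k/5$ together with $s\in\mathrm{supp}\,\eta(\cdot-k)\subset[k-3/4,k+3/4]$ forces $|z/\gamma|=(\beta/s)^\mu\leq(4/5)^\mu<1$ uniformly, so one has the geometric expansion $\frac{z\gamma}{(z-\gamma)^2}=\sum_{n\geq1}n(z/\gamma)^n$. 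Carrying out the $\Phi$-integral and invoking $\hat G_0=\hat G_{\pm1}=0$ (which kills the $n=0,1$ terms) produces
\[
Q_k^{(1)}(\beta,\phi)=\ii\sum_{n\geq2}n\,z^n a_{k,n},\qquad a_{k,n}:=\int_0^\infty s^{-\mu n-1}\eta(s-k)\hat G_{-n}(s)\,ds.
\]
Case (ii) is dual: $|\gamma/z|\leq(7/8)^\mu<1$ for $\beta\geq2k$, and the analogous expansion gives $Q_k^{(1)}=\ii\sum_{n\geq2}n\beta^{-\mu n}e^{\ii n(\beta+\phi)}b_{k,n}$ with $b_{k,n}=\int s^{\mu n-1}\eta(s-k)\hat G_n(s)\,ds$.

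The decisive input supplying the crucial factor $k^{-\alpha}$ is the weighted pointwise bound
\[
|\hat G_n(s)|\leq\|G(s,\cdot)\|_{L^\infty}\leq\langle s\rangle^{-\alpha}s^{1-2\mu}\|\beta^{2\mu-1}G\|_{C_\beta^\alpha}\leq\langle s\rangle^{-\alpha}s^{1-2\mu},
\]
which on $\mathrm{supp}\,\eta(\cdot-k)$ with $k\in\ZZ_+$ simplifies to $|\hat G_n(s)|\lesssim k^{1-2\mu-\alpha}$, since $s\sim k$ and $\langle s\rangle\sim\max(1,k)\sim k$. Substituting this in gives the uniform coefficient bounds $|a_{k,n}|\lesssim k^{-\mu n-2\mu-\alpha}$ and $|b_{k,n}|\lesssim k^{\mu n-2\mu-\alpha}$, after which the summation is purely geometric: in Case (i), $\beta/k\leq 1/5$ implies $\sum_{n\geq2}n(\beta/k)^{\mu n}\lesssim(\beta/k)^{2\mu}$, so $|Q_k^{(1)}|\lesssim(\beta/k)^{2\mu}k^{-2\mu-\alpha}\lesssim k^{-2\mu-\alpha}$; in Case (ii), $\sum_{n\geq2}n(k/\beta)^{\mu n}\lesssim(k/\beta)^{2\mu}$ gives $|Q_k^{(1)}|\lesssim(k/\beta)^{2\mu}k^{-2\mu-\alpha}=\beta^{-2\mu}k^{-\alpha}$.

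For the derivative I would differentiate the series termwise: in Case (i), $\partial_\beta Q_k^{(1)}=\ii(\mu-\ii\beta)\sum_{n\geq2}n^2\beta^{\mu n-1}e^{-\ii n(\beta+\phi)}a_{k,n}$, which yields $|\partial_\beta Q_k^{(1)}|\lesssim\sqrt{\mu^2+\beta^2}\beta^{2\mu-1}k^{-4\mu-\alpha}$. The elementary uniform bound $\sqrt{\mu^2+\beta^2}\beta^{2\mu-1}\lesssim k^{2\mu}$ on $\beta\in(0,k/5]$, $k\in\ZZ_+$ (split $\beta\leq 1$ from $\beta>1$ and use $2\mu>1$), closes the estimate; Case (ii) is parallel, using instead $\sqrt{\mu^2+\beta^2}/\beta\lesssim 1$ for $\beta\geq 2k\geq 2$. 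The main obstacle I foresee is spotting the weighted version of the pointwise bound: with the naive estimate $|\hat G_n(s)|\leq s^{1-2\mu}$ one arrives only at $|Q_k^{(1)}|\lesssim k^{-2\mu}$, missing precisely the $k^{-\alpha}$ improvement that the $\langle s\rangle^{-\alpha}$ weight built into the $C_\beta^\alpha$ norm provides; once this is in place, all remaining work reduces to summing a geometric series and controlling $\sqrt{\mu^2+\beta^2}\beta^{2\mu-1}$ in its two asymptotic regimes.
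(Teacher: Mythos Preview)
Your proof is correct and rests on the same two ingredients as the paper's: the uniform separation $|z/\gamma|\leq(4/5)^\mu$ (resp.\ $|\gamma/z|\leq(7/8)^\mu$) on $\operatorname{supp}G_k$, and the weighted bound $\|G_k\|_{L^\infty}\lesssim k^{1-2\mu-\alpha}$ together with the vanishing of $\hat G_{\pm1}$. The difference is in presentation. Where you expand $\frac{z\gamma}{(z-\gamma)^2}=\sum_{n\geq1}n(z/\gamma)^n$ in a full geometric series and observe that the $n=1$ term is killed by $\hat G_{-1}=0$, the paper stays in the integral representation and subtracts exactly that one term from the kernel, rewriting
\[
\frac{\gamma}{(z-\gamma)^2}-\frac1\gamma=\frac{z(2\gamma-z)}{\gamma(z-\gamma)^2}
\]
to extract an explicit extra factor of $z$, then bounds the modified integrand directly; the dual subtraction $-\gamma/z^2$ handles case~(ii). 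Your series argument makes the gain of the factor $(\beta/k)^{2\mu}$ especially transparent and reduces the derivative estimate to the same geometric sum with an extra $n$; the paper's single-subtraction trick avoids termwise summation and is closer in spirit to the kernel manipulations required in the harder nearby-support regimes (Lemmas~\ref{lem3} and~\ref{lem5}), where no convergent expansion is available. One small correction: the series $\sum_{n\geq1}n\,w^n$ has no $n=0$ term, so only one cancellation is actually used in each case ($\hat G_{-1}=0$ in~(i), $\hat G_1=0$ in~(ii)) to start the sum at $n=2$.
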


\begin{proof}
	Recalling that $\text{supp }G_k(\cdot, \phi)\subset \left[k-\frac34,k+\frac34\right]$, {by \eqref{z1}}, \eqref{Q_k>0} can be rewritten as
	\begin{align}\label{Qk3}Q_k(\beta,\phi)=\frac1{2\pi}\int_{k-\frac34}^{k+\frac34}\int_\TT\frac{\gamma(s,\Phi)}{z-\gamma(s,\Phi)}\frac{G_k(s,\Phi)}{s}\,d\Phi\,ds.\end{align}
	
	\begin{enumerate}[(i)]
		\item $\beta\leq \frac15k$.
	For $s\in[k-3/4,k+3/4]$, it follows from $\beta\leq\frac15k\leq\frac45(k-3/4)$ that $\beta\leq\frac45s$, and thus $|z-\gamma(s,\Phi)|\geq |\gamma(s,\Phi)|-|z|=s^\mu-\beta^\mu\gtrsim s^\mu$, {and we can take the derivative under the integral \eqref{Qk3}}. Recall that $\hat G_{-1}=0$, hence $\int_\TT\frac1{\gamma(s,\Phi)}G_k(s,\Phi)\,d\Phi=0$ and then by $\frac{\beta}{\beta+\ii\mu}\p_\beta z=-\ii z$,
	\begin{equation}\label{5.14}
		\begin{aligned}			Q_k^{(1)}(\beta,\phi)&=\frac{\beta\p_\beta Q_k(\beta,\phi)}{\beta+\ii\mu}=\frac{\ii z}{2\pi}\int_{k-\frac34}^{k+\frac34}\int_\TT\frac{\gamma(s,\Phi)}{\left(z-\gamma(s,\Phi)\right)^2}\frac{G_k(s,\Phi)}{s}\,d\Phi\,ds\\
			&=\frac{\ii z}{2\pi}\int_{k-\frac34}^{k+\frac34}\int_\TT\left(\frac{\gamma(s,\Phi)}{\left(z-\gamma(s,\Phi)\right)^2}-\frac1{\gamma(s,\Phi)}\right)\frac{G_k(s,\Phi)}{s}\,d\Phi\,ds\\
			&=\frac{\ii z^2}{2\pi} \int_{k-\frac34}^{k+\frac34}\int_\TT\frac{2\gamma(s,\Phi)-z}{\gamma(s,\Phi)\left(z-\gamma(s,\Phi)\right)^2}\frac{G_k(s,\Phi)}{s}\,d\Phi\,ds.
		\end{aligned}
	\end{equation}
This along with $\|G_k\|_{L^\infty}\lesssim k^{1-\alpha-2\mu}$ { and $|z|=\beta^\mu$} gives
	\[\left|Q_k^{(1)}(\beta,\phi)\right|\lesssim\beta^{2\mu}\frac{k^\mu}{k^{3\mu}}k^{-\alpha-2\mu}\lesssim \beta^{2\mu}k^{-\alpha-4\mu}\lesssim k^{-\alpha-2\mu}.\]
	Using $\p_\beta z=\frac{\mu-\ii\beta}{\beta}z$, we infer from \eqref{5.14} that
	\begin{align*}
		\p_\beta Q_k^{(1)}(\beta,\phi)=\frac{\mu-\ii\beta}{\beta}Q_k^{(1)}(\beta,\phi)-2\ii \frac{\mu-\ii\beta}{2\pi\beta}z^2\int_{k-\frac34}^{k+\frac34}\int_\TT\frac{\gamma(s,\Phi)}{\left(z-\gamma(s,\Phi)\right)^3}\frac{G_k(s,\Phi)}{s}\,d\Phi\,ds,
	\end{align*}
	and then (recalling $\mu>1/2$)
	\[\left|\p_\beta Q_k^{(1)}(\beta,\phi)\right|\lesssim\frac k\beta \beta^{2\mu}k^{-\alpha-4\mu}+\frac k\beta\beta^{2\mu}\frac{k^\mu}{k^{3\mu}}k^{-\alpha-2\mu}\lesssim \beta^{2\mu-1}k^{1-\alpha-4\mu}\lesssim k^{-\alpha-2\mu}.\]\smallskip
	
	\item $\beta\geq2k$. In this case, for $s\in[k-3/4, k+3/4]$, one has $\beta\geq 2k\geq \frac87(k+3/4)\geq \frac87s$, hence
	$|z-\gamma(s,\Phi)|\geq \beta^\mu-s^\mu\gtrsim \beta^\mu$, {thus we can take the derivative under the integral \eqref{Qk3}}. Recall that $\hat G_1=0$, hence $\int_\TT\gamma(s,\Phi)G_k(s,\Phi)\,d\Phi=0$ and then
	using $\frac{\beta}{\beta+\ii\mu}\p_\beta z=-\ii z$, we have
	\begin{align*}
		Q_k^{(1)}(\beta,\phi)&=\frac{\beta\p_\beta Q_k(\beta,\phi)}{\beta+\ii\mu}=\frac{\ii z}{2\pi}\int_{k-\frac34}^{k+\frac34}\int_\TT\frac{\gamma(s,\Phi)}{\left(z-\gamma(s,\Phi)\right)^2}\frac{G_k(s,\Phi)}{s}\,d\Phi\,ds\\&=\frac{\ii z}{2\pi}\int_{k-\frac34}^{k+\frac34}\int_\TT\gamma(s,\Phi)\left(\frac{1}{\left(z-\gamma(s,\Phi)\right)^2}-\frac1{z^2}\right)\frac{G_k(s,\Phi)}{s}\,d\Phi\,ds\\
		&=\frac{\ii z}{2\pi}\int_{k-\frac34}^{k+\frac34}\int_\TT\frac{\left(\gamma(s,\Phi)\right)^2\left(2z-\gamma(s,\Phi)\right)}{z^2\left(z-\gamma(s,\Phi)\right)^2}
		\frac{G_k(s,\Phi)}{s}\,d\Phi\,ds
	\end{align*}
	with the bound
	\[\left|Q_k^{(1)}(\beta,\phi)\right|\lesssim\beta^\mu \frac{k^{2\mu}\ \beta^\mu}{\beta^{2\mu}\ \beta^{2\mu}}k^{-\alpha-2\mu}\lesssim \beta^{-2\mu} k^{-\alpha}.\]
	And by $\p_\beta z=\frac{\mu-\ii\beta}{\beta}z$,
	\begin{align*}
		&\p_\beta Q_k^{(1)}(\beta,\phi)-\frac{\mu-\ii\beta}{\beta}Q_k^{(1)}(\beta,\phi)\\
		=&-2\ii \frac{\mu-\ii\beta}{2\pi\beta}z^2\int_{k-\frac34}^{k+\frac34}\int_\TT\gamma(s,\Phi)\left(\frac{1}{\left(z-\gamma(s,\Phi)\right)^3}-\frac1{z^3}\right)
		\frac{G_k(s,\Phi)}{s}\,d\Phi\,ds\\
		=&-2\ii \frac{\mu-\ii\beta}{2\pi\beta}z^2\int_{k-\frac34}^{k+\frac34}\frac{\left(\gamma(s,\Phi)\right)^2\left(3z^2-3z\gamma(s,\Phi)+
			\left(\gamma(s,\Phi)\right)^2\right)}{z^3\left(z-\gamma(s,\Phi)\right)^3}\frac{G_k(s,\Phi)}{s}\,ds,
	\end{align*}
	hence,
	\[\left|\p_\beta Q_k^{(1)}(\beta,\phi)\right|\lesssim\beta^{-2\mu} k^{-\alpha}+\beta^{2\mu} \frac{k^{2\mu}\ \beta^{2\mu}}{\beta^{3\mu}\ \beta^{3\mu}}k^{-\alpha-2\mu}\lesssim\beta^{-2\mu} k^{-\alpha}.\]
\end{enumerate}

This completes the proof.
\end{proof}

\begin{lem}\label{lem3}
Assume that $k\in\ZZ_+$, $\frac1{10}k<\beta<3k$.  We define {(see \eqref{z1})}
	\begin{equation}\label{Kk1}
			\widetilde{K}_k(\beta,\phi,\Phi)=\int_{k-\frac34}^{k+\frac34}\frac{\gamma(s,\Phi)}{z-\gamma(s,\Phi)}\frac{G_k(s,\Phi)}{s}\,ds,\quad \forall\ \phi\neq\Phi,
		\end{equation}and $K_k(\beta,\phi,\Phi)=\frac{\beta}{\beta+\ii\mu}\p_\beta\tilde K_k(\beta,\phi,\Phi)$ {for $\phi\neq\Phi\in\TT$}.
\begin{enumerate}[(i)]
	\item If $|\beta-k|\geq 1$, then $|\p_\beta^j K_k(\beta, \phi, \Phi)|\lesssim \left|\frac{\beta-k}{k}\right|^{\alpha-1-j}k^{-2\mu-\alpha}$ for $j=0,1$.
	\item \footnote{We define $\|a\|_{\mathbb{T}}=\inf_{k\in\mathbb{Z}}|a-2\pi k|$ for $a\in\mathbb{R}$.
		For $\phi\in\mathbb{T}$ we define $\|\phi\|_{\mathbb{T}}=\|a\|_{\mathbb{T}}$ for $a\in\mathbb{R}$ such that $p_0(a)=\phi$, where
		$p_0:\mathbb{R}\to\mathbb{T}=\mathbb{R}/(2\pi\mathbb{Z})$ is the quotient map. It is well defined as $a\mapsto\|a\|_{\mathbb{T}}$ is a $2\pi$-periodic function for
		$a\in\mathbb{R}$. We also have $\|a+b\|_\TT\leq \|a\|_\TT+\|b\|_\TT$ for all $a,b\in\RR$.} $|\p_\beta^j K_k(\beta, \phi, \Phi)|\lesssim\|\phi-\Phi\|_{\TT}^{\alpha-1-j}k^{1+j-2\alpha-2\mu}$ for $j=0,1$, $\phi\neq\Phi$.
	\item We have\footnote{In view of (i) and (ii), we know that $\int_\TT K_k(\beta,\phi,\Phi)\,d\Phi$ is absolutely convergent.} $Q_k(\beta, \phi)=\frac1{2\pi}\int_\TT \widetilde{K}_k(\beta, \phi, \Phi)\,d\Phi $, $Q_k^{(1)}(\beta, \phi)=\frac1{2\pi}\int_\TT K_k(\beta, \phi, \Phi)\,d\Phi $.
\end{enumerate}
	\end{lem}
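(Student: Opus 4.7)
\smallskip
\noindent\textbf{Proof proposal for Lemma \ref{lem3}.}
The plan is to first dispose of (iii) by Fubini and differentiation under the integral, then derive an explicit formula for $K_k$ that makes the analysis in (i)–(ii) possible.  For (iii), the key point is that in the regime $\beta/10<k<3\beta$ with $s\in[k-3/4,k+3/4]$ the integrand $\frac{\gamma(s,\Phi)}{z-\gamma(s,\Phi)}\cdot\frac{G_k(s,\Phi)}{s}$ and its $\beta$-derivative are dominated (uniformly in a neighborhood of $(\beta,\phi)$ avoiding the diagonal $\phi=\Phi,\ s=\beta$) by an integrable function of $(s,\Phi)$; this follows from the combined radial/angular lower bound on $|z-\gamma|$ derived below, together with $|G_k|\lesssim k^{1-2\mu-\alpha}$.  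So both claims in (iii) follow from Fubini's theorem and dominated convergence applied to the integrals defining $Q_k$ and $Q_k^{(1)}$.

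For the pointwise estimates in (i) and (ii), the first step is to compute $\partial_\beta \widetilde K_k$.  Using $\partial_\beta z=(\mu/\beta-i)z$, one checks the cancellation $\tfrac{\beta}{\beta+i\mu}\partial_\beta z=-iz$, which was already exploited in the proofs of Lemma \ref{lem2}; this yields
\[
K_k(\beta,\phi,\Phi)\;=\;iz\int_{k-3/4}^{k+3/4}\frac{\gamma(s,\Phi)}{\bigl(z-\gamma(s,\Phi)\bigr)^2}\,\frac{G_k(s,\Phi)}{s}\,ds.
\]
The decisive lower bound is
\[
|z-\gamma(s,\Phi)|^2=(\beta^\mu-s^\mu)^2+4\beta^\mu s^\mu\sin^2\frac{\beta-s+\phi-\Phi}{2}\;\gtrsim\;k^{2\mu-2}(\beta-s)^2+k^{2\mu}\|\beta-s+\phi-\Phi\|_\TT^{\,2},
\]
valid for $s,\beta\sim k$, which splits the denominator into a radial and an angular contribution.

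For (i) with $|\beta-k|\geq 1$ one has $\eta(\beta-k)=0$, so that $G_k(\beta,\Phi)=0$.  Writing $G_k(s,\Phi)=G_k(s,\Phi)-G_k(\beta,\Phi)$ and applying the Hölder estimate $\|\beta^{2\mu-1}G\|_{C_\beta^\alpha}\lesssim 1$ via Lemma \ref{equvi_Holder_norm} (equivalent norm), together with a routine commutator estimate for the cutoff $\eta(\cdot-k)$, gives $|G_k(s,\Phi)|\lesssim s^{1-2\mu}k^{-\alpha}|s-\beta|^\alpha$ on the support.  Plugging into the formula above and using the combined lower bound for $|z-\gamma|$ reduces the problem to a scalar integral in $s$; one then splits into two subregions, one where the radial term $k^{\mu-1}|\beta-s|$ dominates and one where the angular term $k^\mu\|\beta-s+\phi-\Phi\|_\TT$ dominates, and bounds each by an elementary one-variable integral.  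For (ii), since $\|\phi-\Phi\|_\TT>0$ the angular lower bound $|z-\gamma|\gtrsim k^\mu\|\phi-\Phi\|_\TT$ is available away from an interval of length $O(\|\phi-\Phi\|_\TT)$ of resonant $s$'s; on that small interval one reverts to the radial bound and again uses the Hölder smallness of $G_k$.  The $j=1$ derivative bounds in both (i) and (ii) follow by differentiating once more under the integral: the formula becomes a sum of two terms, one proportional to $K_k$ itself, and one with an extra factor $(z-\gamma)^{-1}$; the first is controlled by the $j=0$ bound, while the second loses one extra power of the separation scale $(|\beta-k|/k)$ or $\|\phi-\Phi\|_\TT$, exactly matching the claim.

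\smallskip
The main obstacle I anticipate is the case in (i) when $|\beta-k|$ is of order $1$ (so the "$\tau/k$" factor in the target is as small as $1/k$): here neither the radial nor the angular bound on $|z-\gamma|$ by itself is enough, and one must carefully track where the putative angular resonance point $\sigma_\ast=-(\phi-\Phi)\bmod 2\pi$ lies in the interval $[\beta-k-3/4,\beta-k+3/4]$, splitting the $s$-integral into a small angular neighborhood of $\sigma_\ast$ (where the radial bound is used together with the Hölder smallness $|s-\beta|^\alpha$) and its complement (where the angular bound gives the saving).  Balancing the two regimes produces the claimed $|\tau/k|^{\alpha-1}k^{-2\mu-\alpha}$.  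This same delicate splitting will drive the derivative estimate and, in (ii), the sharp $\|\phi-\Phi\|_\TT^{\alpha-1-j}$ dependence.
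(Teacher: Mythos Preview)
There is a genuine gap in your approach to (i). Your H\"older subtraction $G_k(s,\Phi)=G_k(s,\Phi)-G_k(\beta,\Phi)$ does not buy you anything: on the support $s\in[k-\tfrac34,k+\tfrac34]$ with $|\beta-k|\ge 1$ one has $|s-\beta|\sim |\beta-k|=:\tau$ uniformly, so the bound $|G_k(s,\Phi)|\lesssim s^{1-2\mu}k^{-\alpha}|s-\beta|^\alpha\sim k^{1-2\mu-\alpha}\tau^\alpha$ is actually \emph{weaker} than the direct $L^\infty$ bound $|G_k|\lesssim k^{1-2\mu-\alpha}$. Running either bound through your combined radial/angular lower bound on $|z-\gamma|$ yields at best $|K_k|\lesssim k^{1-2\mu-\alpha}\tau^{-1}$, which misses the target $(\tau/k)^{\alpha-1}k^{-2\mu-\alpha}=\tau^{\alpha-1}k^{1-2\mu-2\alpha}$ by a factor $(k/\tau)^\alpha$; this diverges as $\tau\to 1$ with $k$ large. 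Your ``main obstacle'' paragraph proposes to use the H\"older smallness $|s-\beta|^\alpha$ near the angular resonance $\sigma_\ast$, but since $|s-\beta|$ is essentially constant on the support this furnishes no additional smallness there.

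Two ideas are missing. First, rewrite the integrand as $\frac{\partial_s\gamma}{(z-\gamma)^2}\tilde G_k$ with $\tilde G_k(s,\Phi):=\frac{G_k(s,\Phi)}{\mu-\ii s}$, so that $\frac{1}{z-\gamma}$ is an exact antiderivative of the kernel. Second, subtract $\tilde G_k$ not at $\beta$ but at the resonance point $s_0$ chosen so that $z$ and $\gamma(s_0,\Phi)$ share the same argument; then $|s-s_0|$ is genuinely small somewhere on the integration range and one has $|z-\gamma|\sim k^\mu\bigl(\tfrac{\tau}{k}+|s-s_0|\bigr)$. Extending the domain slightly so the endpoints sit at distance $\sim 1$ from $s_0$, one splits
\[
K_k=\ii z\int\frac{\partial_s\gamma}{(z-\gamma)^2}\bigl(\tilde G_k(s,\Phi)-\tilde G_k(s_0,\Phi)\bigr)\,ds+\ii z\,\tilde G_k(s_0,\Phi)\Bigl[\frac{1}{z-\gamma}\Bigr]_{\text{endpoints}}.
\]
The boundary term is $O(k^{-2\mu-\alpha})$ since $|z-\gamma|\sim k^\mu$ at the endpoints, and the first term uses $|\tilde G_k(s,\Phi)-\tilde G_k(s_0,\Phi)|\lesssim |s-s_0|^\alpha k^{-2\mu-\alpha}$ together with $\int_\RR\frac{|x|^\alpha}{A^2+x^2}\,dx\lesssim A^{\alpha-1}$ to produce the factor $(\tau/k)^{\alpha-1}$. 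The same device, now with $|z-\gamma|\sim k^\mu\bigl(\tfrac{\|\phi-\Phi\|_\TT}{k}+|s-s_0|\bigr)$, gives (ii) in the range $|\beta-k|\le 2$; for $|\beta-k|\ge 1$ the bound in (ii) follows from (i) since $\|\phi-\Phi\|_\TT\lesssim 1$.
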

	
\begin{proof}
	It follows from {\eqref{Qk3} and \eqref{Kk1}} 
that
	\begin{equation}\label{Qk4}Q_k(\beta,\phi)=\frac1{2\pi}\int_\TT\wt K_k(\beta,\phi,\Phi)\,d\Phi,\qquad k\in\ZZ_+, \beta>0, \phi\in\TT.\end{equation}
	Define 
	 $\tilde G_k(s,\Phi):=\frac{\gamma(s,\Phi)}{\p_s\gamma(s,\Phi)}\frac{G_k(s,\Phi)}s=\frac{G_k(s,\Phi)}{\mu-\ii s}$ {(see \eqref{z1})},
	then
	\begin{equation}\label{tilde_K_k}
		\tilde K_k(\beta,\phi,\Phi)=\int_{k-\frac34}^{k+\frac34}\frac{\p_s\gamma(s,\Phi)}{z-\gamma(s,\Phi)}\tilde G_k(s,\Phi)\,ds.
	\end{equation}
	By the definition, $\tilde G_k(s,\Phi)=\frac{\eta(s-k)}{s^{2\mu-1}(\mu-\ii s)}\cdot s^{2\mu-1}G(s,\Phi)$. Using \eqref{f1} and $\text{supp }\eta\subset[-3/4, 3/4]$, one can easily show that $\left\|\langle\beta\rangle^{-\alpha}\frac{\eta(\beta-k)}{\beta^{2\mu-1}(\mu-\ii \beta)}\right\|_{C_\beta^\alpha}\lesssim k^{-2\mu}$ for $k\in\ZZ_+$. It follows from the algebra property (Lemma \ref{AppendixA_Algebra}) and $\|\beta^{2\mu-1}G\|_{C_\beta^\alpha}=1$ that
	\begin{equation}\label{tilde_G_k_est}
		\left\|\tilde G_k\right\|_{C_\beta^\alpha}\lesssim k^{-2\mu},\qquad k\geq1.
	\end{equation}
	It also follows from $\text{supp }G_k(\cdot, \phi)\subset \left[k-\frac34,k+\frac34\right]$ that we can change the integral domain in \eqref{tilde_K_k} into any interval containing $\left[k-\frac34,k+\frac34\right]$. Due to technical reasons, the proof for this case involves careful choices of the larger integral intervals, depending on the range of the parameters $\beta,\phi$ and $\Phi$. \smallskip
	
	\begin{enumerate}[(i)]
		\item $|\beta-k|\geq 1$. Pick $s_0\in\left[k-\frac67, k-\frac67+2\pi\right]$ such that $\beta+\phi-(s_0+\Phi)\in2\pi\ZZ$, so $z$ and $\gamma(s_0,\Phi)$ have the same arguments. For $s\in\left[k-\frac56, k+\frac56\right]$, we have {(using $\frac1{10}k<\beta<3k$)}
		\begin{equation}\label{z-gamma_1}
			\begin{aligned}
				\left|z-\gamma(s,\Phi)\right|^2&=\left|\beta^\mu e^{-\ii(s_0+\Phi)}-s^\mu e^{-\ii(s+\Phi)}\right|^2=\beta^{2\mu}\left|1-\left(\frac s\beta\right)^\mu e^{-\ii(s-s_0)}\right|^2\\
				&=\beta^{2\mu}\left(\left|1-\left(\frac s\beta\right)^\mu\right|^2+\left(\frac s\beta\right)^\mu\left(1-\cos(s-s_0)\right)\right)\\
				&\sim k^{2\mu}\left(\left|\frac{\beta-s}{k}\right|^2+\|s-s_0\|_\TT^2\right)\sim k^{2\mu}\left(\left|\frac{\beta-k}{k}\right|^2+|s-s_0|^2\right).
			\end{aligned}
		\end{equation}
		Since $|z-\gamma(s,\Phi)|^{2}\gtrsim k^{2\mu}|(\beta-k)/k|^2\gtrsim k^{2\mu-2}$, we obtain (using $\frac{\beta}{\beta+\ii\mu}\p_\beta z=-\ii z$)
		\begin{align}
			K_k(\beta,\phi, \Phi)&=\frac{\beta}{\beta+\ii\mu}\p_\beta\tilde K_k(\beta,\phi,\Phi)=\ii z\int_{k-\frac34}^{k+\frac34}\frac{\p_s\gamma(s,\Phi)}{\left(z-\gamma(s,\Phi)\right)^2}\tilde G_k(s,\Phi)\,ds,\label{Eq.K_k}\\
			\p_\beta K_k(\beta,\phi, \Phi)&=-2\ii \frac{\mu-\ii\beta}{\beta} z^2 \int_{k-\frac34}^{k+\frac34}\frac{\p_s\gamma(s,\Phi)}{\left(z-\gamma(s,\Phi)\right)^3}\tilde G_k(s,\Phi)\,ds+\frac{\mu-\ii\beta}{\beta}K_k(\beta,\phi,\Phi).\label{Eq.p_beta_K_k}
		\end{align}
		
		{\bf Case I. $s_0\in[k-4/5, k+4/5]$.} Due to $\operatorname{supp}\tilde G_k(\cdot,\phi)\subset\left[k-\frac34,k+\frac34\right]$, we have
		\begin{align*}
			K_k(\beta,\phi, \Phi)&=\ii z\int_{k-\frac56}^{k+\frac56}\frac{\p_s\gamma(s,\Phi)}{\left(z-\gamma(s,\Phi)\right)^2}\tilde G_k(s,\Phi)\,ds.
		\end{align*}
		To explore the effects of the $C_\beta^\alpha$ regularity of $\tilde G_k$, we rewrite
		\begin{align*}
			K_k(\beta,\phi, \Phi)&=\ii z\int_{k-\frac56}^{k+\frac56}\frac{\p_s\gamma(s,\Phi)}{\left(z-\gamma(s,\Phi)\right)^2}\left(\tilde G_k(s,\Phi)-\tilde G_k(s_0,\Phi)\right)\,ds\\
			&\qquad+\ii z\ \tilde G_k(s_0,\Phi)\int_{k-\frac56}^{k+\frac56}\frac{\p_s\gamma(s,\Phi)}{\left(z-\gamma(s,\Phi)\right)^2}\,ds.
		\end{align*}
		Now we are handling the case where $s\in\left[k-\frac56,k+\frac56\right], s_0\in\left[k-\frac45, k+\frac45\right], |\beta-k|\geq1, \frac1{10}k<\beta<3k$. By \eqref{z-gamma_1}, we have
		\begin{align*}
			\left|z-\gamma(s,\Phi)\right|&\sim k^\mu\left(\left|\frac{\beta-k}{k}\right|+|s-s_0|\right),\\
			\left|z-\gamma\left(k\pm\frac56,\Phi\right)\right|&\sim k^\mu\left(\left|\frac{\beta-k}{k}\right|+\left|k\pm\frac56-s_0\right|\right)\sim k^\mu.
		\end{align*}
		It also follows from Lemma \ref{equvi_Holder_norm} and \eqref{tilde_G_k_est} that
		\[\left|\tilde G_k(s,\Phi)-\tilde G_k(s_0,\Phi)\right|\lesssim k^{-2\mu}\frac{|s-s_0|^\alpha}{|s+s_0|^\alpha}\lesssim |s-s_0|^\alpha k^{-\alpha-2\mu}.\]
		Hence,
		\begin{align*}
			\left|K_k(\beta,\phi,\Phi)\right|&\lesssim k^\mu \int_{k-\frac56}^{k+\frac56}\frac{k^\mu|s-s_0|^\alpha k^{-\alpha-2\mu}}{k^{2\mu}\left(\left|\frac{\beta-k}{k}\right|^2+|s-s_0|^2\right)}\,ds\\
			&\qquad+k^\mu\cdot k^{-\alpha-2\mu}\left(\frac1{\left|z-\gamma\left(k+\frac56, \Phi\right)\right|}+\frac1{\left|z-\gamma\left(k-\frac56, \Phi\right)\right|}\right)\\
			&\lesssim \left|\frac{\beta-k}{k}\right|^{\alpha-1}k^{-\alpha-2\mu}+k^{-\alpha-2\mu}\lesssim\left|\frac{\beta-k}{k}\right|^{\alpha-1}k^{-\alpha-2\mu},
		\end{align*}
		where we used $\int_\RR \frac{|x|^\alpha}{x^2+A^2}\,dx\lesssim |A|^{\alpha-1}$ for $0<\alpha<1$. Similarly, we rewrite
		\begin{align*}
			&\p_\beta K_k(\beta,\phi,\Phi)=-2\ii \frac{\mu-\ii\beta}{\beta} z^2 \int_{k-\frac56}^{k+\frac56}\frac{\p_s\gamma(s,\Phi)}{\left(z-\gamma(s,\Phi)\right)^3}\left(\tilde G_k(s,\Phi)-\tilde G_k(s_0,\Phi)\right)\,ds\\
			&\qquad-2\ii\frac{\mu-\ii\beta}{\beta} z^2\ \tilde G_k(s_0,\Phi)\int_{k-\frac56}^{k+\frac56}\frac{\p_s\gamma(s,\Phi)}{\left(z-\gamma(s,\Phi)\right)^3}\,ds+\frac{\mu-\ii\beta}{\beta}K_k(\beta,\phi,\Phi)
		\end{align*}
		with the bound
		\begin{align*}
			\left|\p_\beta K_k(\beta,\phi,\Phi)\right|&\lesssim\left|\frac{\beta-k}{k}\right|^{\alpha-1}k^{-\alpha-2\mu}+k^{2\mu}\int_{k-\frac56}^{k+\frac56}\frac{k^\mu|s-s_0|^\alpha k^{-\alpha-2\mu}}{k^{3\mu}\left(\left|\frac{\beta-k}{k}\right|^3+|s-s_0|^3\right)}\,ds\\
			&\qquad+k^{2\mu}\cdot k^{-\alpha-2\mu}\left(\frac1{\left|z-\gamma\left(k+\frac56, \Phi\right)\right|^2}+\frac1{\left|z-\gamma\left(k-\frac56, \Phi\right)\right|^2}\right)\\
			&\lesssim\left|\frac{\beta-k}{k}\right|^{\alpha-1}k^{-\alpha-2\mu}+\left|\frac{\beta-k}{k}\right|^{\alpha-2}k^{-\alpha-2\mu}\lesssim
			\left|\frac{\beta-k}{k}\right|^{\alpha-2}k^{-\alpha-2\mu}.
		\end{align*}
		
		{\bf Case II. $s_0\in[k-6/7,k-6/7+2\pi]\setminus[k-4/5, k+4/5]$.} In this case, for any $s\in\left[k-\frac34,k+\frac34\right]$, we have $\left|\frac{\beta-k}k\right|\lesssim1$ and $|s-s_0|\sim 1$, hence by \eqref{z-gamma_1}, $$|z-\gamma(s,\Phi)|\sim k^\mu\left(\left|\frac{\beta-k}{k}\right|+|s-s_0|\right)\sim k^\mu.$$
		As a consequence, we get by \eqref{Eq.K_k}  that
		\[\left|K_k(\beta,\phi,\Phi)\right|\lesssim \beta^\mu\frac{k^\mu}{k^{2\mu}}k^{-\alpha-2\mu}\lesssim k^{-\alpha-2\mu}.\]
		Similarly, we have $\left|\p_\beta K_k(\beta,\phi,\Phi)\right|\lesssim k^{-\alpha-2\mu}.$
		
		{In summary}, for $|\beta-k|\geq 1$ and $\frac1{10}k<\beta<3k$, we arrive at
		\[\left|K_k(\beta, \phi,\Phi)\right|\lesssim \left|\frac{\beta-k}{k}\right|^{\alpha-1}k^{-\alpha-2\mu},\qquad \left|\p_\beta K_k(\beta, \phi,\Phi)\right|\lesssim\left|\frac{\beta-k}{k}\right|^{\alpha-2}k^{-\alpha-2\mu}.\]
		
		\item {If $|\beta-k|\geq 1$, by (i) we have $|\p_\beta^j K_k(\beta, \phi, \Phi)|\lesssim |\frac{\beta-k}{k}|^{\alpha-1-j}k^{-2\mu-\alpha}\leq |\frac{1}{k}|^{\alpha-1-j}k^{-2\mu-\alpha}=k^{1+j-2\alpha-2\mu}\lesssim \|\phi-\Phi\|_{\TT}^{\alpha-1-j}k^{1+j-2\alpha-2\mu}$ for $j=0,1$.
Now we assume $|\beta-k|\leq 2$.} The proof for $|\beta-k|\leq 2$ is similar to (i).
		Pick $s_0\in\left[k-\frac{13}5, k-\frac{13}5+2\pi\right]$ for $k\geq3$, $s_0\in\left[\frac{1}{10}, \frac{1}{10}+2\pi\right]$ for $k=1,2$ such that
		$\beta+\phi-(s_0+\Phi)\in2\pi\ZZ$, so $z$ and $\gamma(s_0,\Phi)$ have the same arguments. For
		$s\in\left[\max(k-\frac52,\frac{1}{10}), k+\frac52\right]=:[k_1, k_2]$, as in \eqref{z-gamma_1}
		we have
		\begin{equation}\label{z-gamma_2}
			\begin{aligned}
				\left|z-\gamma(s,\Phi)\right|&\sim k^\mu\left(\frac{|\beta-s|}{k}+|s-s_0|\right)\sim k^\mu\left(\frac{|\beta-s_0|}{k}+|s-s_0|\right)\\
				&\sim k^\mu\left(\frac{\|\phi-\Phi\|_\TT}{k}+|s-s_0|\right).
			\end{aligned}
		\end{equation}
		Since $\phi\neq\Phi$, we have $|z-\gamma(s,\Phi)|\gtrsim k^{\mu-1}\|\phi-\Phi\|_\TT$, hence we can take the derivative under the integral, then \eqref{Eq.K_k} and \eqref{Eq.p_beta_K_k} hold as well in this case. \smallskip
		
		{\bf Case I. $s_0\in[\max(k-2,1/5), k+2]$.} Similar to {\bf Case I} in (i), to make full use of the $C^\alpha_\beta$ regularity of $\tilde G_k$, we rewrite \eqref{Eq.K_k} in the following form
		\begin{align*}
			K_k(\beta,\phi,\Phi)&=\ii z\int_{\max(k-\frac52,\frac{1}{10})}^{k+\frac52}\frac{\p_s\gamma(s,\Phi)}{\left(z-\gamma(s,\Phi)\right)^2}\tilde G_k(s,\Phi)\,ds\\
			&=\ii z\int_{\max(k-\frac52,\frac{1}{10})}^{k+\frac52}\frac{\p_s\gamma(s,\Phi)}{\left(z-\gamma(s,\Phi)\right)^2}\left(\tilde G_k(s,\Phi)-\tilde G_k(s_0,\Phi)\right)\,ds\\
			&\qquad\qquad\qquad+\ii z\ \tilde G_k(s_0,\Phi)\int_{\max(k-\frac52,\frac{1}{10})}^{k+\frac52}\frac{\p_s\gamma(s,\Phi)}{\left(z-\gamma(s,\Phi)\right)^2}\,ds.
		\end{align*}
		By \eqref{z-gamma_2}, for $\left[\max(k-\frac52,\frac{1}{10}), k+\frac52\right]=[k_1,k_2]$, $s_0\in[\max(k-2,\frac{1}{5}), k+2]$, we have
		\[\left|z-\gamma\left(k_j,\Phi\right)\right|\sim k^\mu\left(\frac{\|\phi-\Phi\|_\TT}{k}+\left|k_j-s_0\right|\right)\sim k^\mu,\quad j=1,2.\]
		Following exactly the same proof in {\bf Case I} of (i), we obtain
		\[\left|K_k(\beta,\phi,\Phi)\right|\lesssim \left(\frac{\|\phi-\Phi\|_\TT}{k}\right)^{\alpha-1}k^{-\alpha-2\mu}\lesssim \|\phi-\Phi\|_\TT^{\alpha-1}k^{1-2\alpha-2\mu},\]
		and
		\[\left|\p_\beta K_k(\beta,\phi,\Phi)\right|\lesssim \left(\frac{\|\phi-\Phi\|_\TT}{k}\right)^{\alpha-2}k^{-\alpha-2\mu}\lesssim \|\phi-\Phi\|_\TT^{\alpha-2}k^{2-2\alpha-2\mu}.\]
		
		{\bf Case II}. $s_0\in[k-13/5,k-13/5+2\pi]\setminus[k-2, k+2]$ for $k\geq 3$ or $s_0\in[1/10,1/10+2\pi]\setminus[1/5, k+2]$ for $k=1,2$.  In this case, for any $s\in\left[k-\frac34,k+\frac34\right]$, we have $\frac{\|\phi-\Phi\|_\TT}k\lesssim1$ and $|s-s_0|\sim 1$, hence by \eqref{z-gamma_2}, $$|z-\gamma(s,\Phi)|\sim k^\mu\left(\frac{\|\phi-\Phi\|_\TT}{k}+|s-s_0|\right)\sim k^\mu.$$
		Along the same way as in {\bf Case II} of (i), one shows
		\[\left|K_k(\beta,\phi,\Phi)\right|+\left|\p_\beta K_k(\beta,\phi,\Phi)\right|\lesssim k^{-\alpha-2\mu}.\]
		
		{In summary}, for $k\geq 1$ and $\frac1{10}k<\beta<3k$, we obtain
		\[\left|\p_\beta^jK_k(\beta,\phi,\Phi)\right|\lesssim \|\phi-\Phi\|_\TT^{\alpha-1-j}k^{1+j-2\alpha-2\mu},\qquad j\in\{0,1\}, { \phi\neq\Phi}.\]
		
		\item Since $Q_k(\beta,\phi)=\frac1{2\pi}\int_\TT{\widetilde K_k}(\beta,\phi,\Phi)\,d\Phi$ (i.e. \eqref{Qk4}), it remains to prove $Q_k^{(1)}(\beta,\phi)=\frac1{2\pi}\int_\TT K_k(\beta,\phi,\Phi)\,d\Phi$ for $k\in\ZZ_+$.
By (ii) 
we have
$$\int_\TT \sup_{\frac1{10}k<\beta<3k}\left|K_k(\beta,\phi,\Phi)\right|\,d\Phi\lesssim \int_\TT\|\phi-\Phi\|_\TT^{\alpha-1}k^{1-2\alpha-2\mu}\,d\Phi\lesssim
			k^{1-2\alpha-2\mu}<+\infty.$$
		Then by $K_k=\frac{\beta}{\beta+\ii\mu}\p_\beta\wt K_k$, $Q_k^{(1)}=\frac{\beta}{\beta+\ii\mu}\p_\beta Q_k$, \eqref{Qk4} and the dominated convergence theorem, we have $Q_k^{(1)}(\beta,\phi)=\frac1{2\pi}\int_\TT K_k(\beta,\phi,\Phi)\,d\Phi$.
	\end{enumerate}
	
	This completes the proof.
\end{proof}	
	
\begin{lem}\label{lem4}
 If $\beta\geq1$, then $|Q_0^{(1)}(\beta,\phi)|+|\partial_{\beta}Q_0^{(1)}(\beta,\phi)|\lesssim\beta^{-2\mu}$.
	\end{lem}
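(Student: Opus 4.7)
\textbf{Proof proposal for Lemma \ref{lem4}.} The plan is to treat this as the $k=0$ analogue of Lemma \ref{lem2}(ii), exploiting that $G_0(\cdot,\phi)$ is supported in $[0,3/4]$, so when $\beta\ge 1$ every $s$ in the support satisfies $s<\beta$ by a definite factor. Writing $z=\beta^\mu e^{-\ii(\beta+\phi)}$ and $\gamma(s,\Phi)=s^\mu e^{-\ii(s+\Phi)}$ as in \eqref{z1}, we have $|z-\gamma(s,\Phi)|\ge \beta^\mu-s^\mu\gtrsim\beta^\mu$ uniformly, so the kernel is smooth in $\beta$ and we may differentiate under the integral sign to get
\[
Q_0^{(1)}(\beta,\phi)=\frac{\ii z}{2\pi}\int_0^{3/4}\!\int_\TT \frac{\gamma(s,\Phi)}{(z-\gamma(s,\Phi))^2}\,\frac{G_0(s,\Phi)}{s}\,d\Phi\,ds.
\]

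The key cancellation comes from $\hat G_1=0$, which, since $G_0(s,\Phi)=\eta(s)G(s,\Phi)$ with $\eta$ independent of $\Phi$, gives $\widehat{G_0}_{,1}(s)=\eta(s)\hat G_1(s)=0$, and hence $\int_\TT \gamma(s,\Phi)G_0(s,\Phi)\,d\Phi=0$ for all $s>0$. Exactly as in \eqref{5.14} (but with the subtraction that matches the far-field case of Lemma \ref{lem2}(ii)), I can replace $\gamma/(z-\gamma)^2$ by $\gamma/(z-\gamma)^2-\gamma/z^2=\gamma^2(2z-\gamma)/[z^2(z-\gamma)^2]$, yielding
\[
Q_0^{(1)}(\beta,\phi)=\frac{\ii z}{2\pi}\int_0^{3/4}\!\int_\TT \frac{\gamma(s,\Phi)^2\bigl(2z-\gamma(s,\Phi)\bigr)}{z^2(z-\gamma(s,\Phi))^2}\,\frac{G_0(s,\Phi)}{s}\,d\Phi\,ds.
\]
Using $|z|=\beta^\mu$, $|\gamma(s,\Phi)|=s^\mu$, $|z-\gamma(s,\Phi)|\sim\beta^\mu$, and $|G_0(s,\Phi)/s|\lesssim s^{-2\mu}$ (from $\|\beta^{2\mu-1}G\|_{C_\beta^\alpha}=1$), the integrand is bounded by $\beta^\mu\cdot s^{2\mu}\beta^\mu\cdot \beta^{-4\mu}\cdot s^{-2\mu}= \beta^{-2\mu}$, and integrating over $s\in[0,3/4]$ and $\Phi\in\TT$ gives $|Q_0^{(1)}(\beta,\phi)|\lesssim\beta^{-2\mu}$.

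For the derivative, I differentiate once more using $\p_\beta z=\frac{\mu-\ii\beta}{\beta}z$, obtaining
\[
\p_\beta Q_0^{(1)}(\beta,\phi)=\frac{\mu-\ii\beta}{\beta}Q_0^{(1)}(\beta,\phi)-\frac{2\ii(\mu-\ii\beta)}{2\pi\beta}z^2\int_0^{3/4}\!\int_\TT\frac{\gamma(s,\Phi)}{(z-\gamma(s,\Phi))^3}\,\frac{G_0(s,\Phi)}{s}\,d\Phi\,ds.
\]
I use the same cancellation to replace $\gamma/(z-\gamma)^3$ by $\gamma/(z-\gamma)^3-\gamma/z^3=\gamma^2(3z^2-3z\gamma+\gamma^2)/[z^3(z-\gamma)^3]$, so the integrand of the remaining term is bounded by $\beta^{-1}\beta^{2\mu}\cdot s^{2\mu}\beta^{2\mu}\cdot\beta^{-6\mu}\cdot s^{-2\mu}\lesssim \beta^{-2\mu-1}$, giving a contribution $\lesssim \beta^{-2\mu-1}$; combined with $|\frac{\mu-\ii\beta}{\beta}Q_0^{(1)}|\lesssim \beta^{-2\mu}$ for $\beta\ge 1$, this yields $|\p_\beta Q_0^{(1)}(\beta,\phi)|\lesssim\beta^{-2\mu}$. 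There is no real obstacle here: since $\beta\ge 1$ keeps us well away from the diagonal $\beta=s$ throughout the support of $G_0$, the proof is essentially a direct transcription of the ``$\beta\ge 2k$'' case of Lemma \ref{lem2}(ii) with $k=0$, requiring only the cancellation from $\hat G_1=0$ and pointwise bounds on the kernel.
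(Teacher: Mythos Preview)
Your proposal is correct and follows essentially the same approach as the paper: both use the support of $G_0$ in $[0,3/4]$ to get $|z-\gamma|\gtrsim\beta^\mu$ for $\beta\ge1$, apply the cancellation from $\hat G_1=0$ to subtract $\gamma/z^2$ (and $\gamma/z^3$ for the derivative), and estimate the resulting kernels pointwise. The only place you are slightly more informal than the paper is in writing $Q_0^{(1)}$ directly as the double integral $\frac{\ii z}{2\pi}\int_0^{3/4}\int_\TT\frac{\gamma}{(z-\gamma)^2}\frac{G_0}{s}\,d\Phi\,ds$ before subtracting: since $\mu>1/2$ this integrand behaves like $s^{-\mu}$ near $s=0$ and is not absolutely integrable, so the paper first performs the subtraction at the level of $Q_0$ (which changes nothing on the inner $\Phi$-integral), checks absolute convergence, and only then differentiates under the integral via dominated convergence; your argument implicitly relies on the same justification.
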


\begin{proof}
We rewrite \eqref{Q_k_expression} for $k=0$ as {(see \eqref{z1})}
	\[Q_0(\beta,\phi)=\frac1{2\pi}\int_0^\infty\left(\int_\TT\frac{\gamma(s,\Phi)}{z-\gamma(s,\Phi)}\frac{G_0(s,\Phi)}{s}\,d\Phi\right)\,ds.\]
	
	Recall from $\text{supp }G_0(\cdot, \phi)\subset \left[0,\frac34\right]$ and $\int_\TT\gamma(s,\Phi)G_0(s,\Phi)\,d\Phi=0$ {(as $\hat{G}_{1}=0$)} that
	\begin{equation}\label{Q_0_expression_1}
		Q_0(\beta,\phi)=\frac1{2\pi}\int_0^{\frac34}\left(\int_\TT\gamma(s,\Phi)\left(\frac1{z-\gamma(s,\Phi)}-\frac1z\right)\frac{G_0(s,\Phi)}{s}\,d\Phi\right)\,ds.
	\end{equation}
	For $\beta\geq {\frac78}, s\in\left[0,\frac34\right]$, we have $|z-\gamma(s,\Phi)|\geq |z|-|\gamma(s,\Phi)|\geq \beta^\mu-s^\mu \gtrsim \beta^\mu$. Thus, for $\beta\geq 1$ and $\phi\in\TT$ we have (recalling $\|\beta^{2\mu-1}G_0\|_{L^\infty}\lesssim1$)
	\begin{align*}
		&\int_0^{\frac34}\int_\TT\left|\gamma(s,\Phi)\left(\frac1{z-\gamma(s,\Phi)}-\frac1z\right)\frac{G_0(s,\Phi)}{s}\right|\,d\Phi\,ds\\
		=&\int_0^{\frac34}\int_\TT\left|\frac{\gamma(s,\Phi)^2}{z(z-\gamma(s,\Phi))}\frac{G_0(s,\Phi)}{s}\right|\,d\Phi\,ds\lesssim \int_0^{\frac34}\frac{s^{2\mu}}{\beta^{2\mu}}\frac{s^{1-2\mu}}{s}\,ds\lesssim\beta^{-2\mu}<+\infty,
	\end{align*}
	hence we can drop the bracket outside $\int_\TT$ in \eqref{Q_0_expression_1} to get
	\begin{equation}\label{Q_0_expression_2}
		Q_0(\beta,\phi)=\frac1{2\pi}\int_0^{\frac34}\int_\TT\gamma(s,\Phi)\left(\frac1{z-\gamma(s,\Phi)}-\frac1z\right)\frac{G_0(s,\Phi)}{s}\,d\Phi\,ds.
	\end{equation}
	
	Using $\frac{\beta}{\beta+\ii\mu}\p_\beta z=-\ii z$, we can take the derivative under the integral to obtain
	\begin{align*}
		Q_0^{(1)}(\beta,\phi)&=\frac{\beta\p_\beta Q_0(\beta,\phi)}{\beta+\ii\mu}=\frac{\ii z}{2\pi}\int_{0}^{\frac34}\int_\TT\gamma(s,\Phi)\left(\frac{1}{\left(z-\gamma(s,\Phi)\right)^2}-\frac1{z^2}\right)\frac{G_0(s,\Phi)}{s}\,d\Phi\,ds\\
		&=\frac{\ii z}{2\pi}\int_{0}^{\frac34}\int_\TT\frac{\left(\gamma(s,\Phi)\right)^2\left(2z-\gamma(s,\Phi)\right)}{z^2\left(z-\gamma(s,\Phi)\right)^2}\frac{G_0(s,\Phi)}{s}\,d\Phi\,ds.
	\end{align*}
	Indeed, this follows from
	\begin{align*}
		&\int_{0}^{\frac34}\int_\TT\sup_{\beta>\frac78}\left|\gamma(s,\Phi)\left(\frac{1}{\left(z-\gamma(s,\Phi)\right)^2}-\frac1{z^2}\right)\frac{G_0(s,\Phi)}{s}\right|\,d\Phi\,ds\\
		=&\int_{0}^{\frac34}\int_\TT\sup_{\beta>\frac78}\left|\frac{\left(\gamma(s,\Phi)\right)^2\left(2z-\gamma(s,\Phi)\right)}{z^2\left(z-\gamma(s,\Phi)\right)^2}\frac{G_0(s,\Phi)}{s}\right|\,d\Phi\,ds\lesssim \int_{0}^{\frac34}\sup_{\beta>\frac78}\frac{s^{2\mu}\beta^\mu}{\beta^{2\mu}\beta^{2\mu}}\frac{s^{1-2\mu}}{s}\,ds<+\infty,
	\end{align*}
{and the dominated convergence theorem.}	Hence,
	\[\left|Q_0^{(1)}(\beta, \phi)\right|\lesssim\beta^\mu \int_0^{\frac34}\frac{s^{2\mu}\ \beta^\mu}{\beta^{2\mu}\ \beta^{2\mu}}\frac{s^{1-2\mu}}{s}\,ds\lesssim \beta^{-2\mu}.\]
	\if Furthermore, by $\frac{\beta}{\beta+\ii\mu}\p_\beta z=-\ii z$, we have
	\begin{align*}
		K_0(\beta,\phi,\Phi)&=\frac{\beta}{\beta+\ii\mu}\p_\beta\tilde K_0(\beta,\phi,\Phi)=\ii z\int_{0}^{\frac34}\gamma(s,\Phi)\left(\frac{1}{\left(z-\gamma(s,\Phi)\right)^2}-\frac1{z^2}\right)\frac{G_0(s,\Phi)}{s}\,ds\\
		&=\ii z\int_{0}^{\frac34}\frac{\left(\gamma(s,\Phi)\right)^2\left(2z-\gamma(s,\Phi)\right)}{z^2\left(z-\gamma(s,\Phi)\right)^2}\frac{G_0(s,\Phi)}{s}\,ds
	\end{align*}
	with the bound
	\[\left|K_0(\beta,\phi,\Phi)\right|\lesssim\beta^\mu \int_0^{\frac34}\frac{s^{2\mu}\ \beta^\mu}{\beta^{2\mu}\ \beta^{2\mu}}\frac{s^{1-2\mu}}{s}\,ds\lesssim \beta^{-2\mu}.\]\fi
	Recalling $\p_\beta z=\frac{\mu-\ii\beta}{\beta}z$, we get
	\begin{align*}
		&\p_\beta Q_0^{(1)}(\beta,\phi)-\frac{\mu-\ii\beta}{\beta}Q_0^{(1)}(\beta,\phi)\\
		&\qquad=-2\ii \frac{\mu-\ii\beta}{2\pi\beta}z^2\int_{0}^{\frac34}\int_\TT\gamma(s,\Phi)\left(\frac{1}{\left(z-\gamma(s,\Phi)\right)^3}-
		\frac1{z^3}\right)\frac{G_0(s,\Phi)}{s}\,d\Phi\,ds\\
		&\qquad=-2\ii \frac{\mu-\ii\beta}{2\pi\beta}z^2
		\int_{0}^{\frac34}\int_\TT\frac{\left(\gamma(s,\Phi)\right)^2\left(3z^2-3z\gamma(s,\Phi)+
			\left(\gamma(s,\Phi)\right)^2\right)}{z^3\left(z-\gamma(s,\Phi)\right)^3}\frac{G_0(s,\Phi)}{s}\,d\Phi\,ds.
	\end{align*}
	Here we can take the derivative under the integral based on the similar reason as above, and we have
	\[\left|\p_\beta Q_0^{(1)}(\beta,\phi)\right|\lesssim\beta^{-2\mu} +\beta^{2\mu}\int_0^{\frac34} \frac{s^{2\mu}\ \beta^{2\mu}}{\beta^{3\mu}\ \beta^{3\mu}}\frac{s^{1-2\mu}}{s}\,ds\lesssim\beta^{-2\mu}.\]
	
	This completes the proof.
\end{proof}

\begin{lem}\label{lem5}
Assume that  $\beta\in(0,2)$, $a\in(\beta/3,2\beta/3)$, $b\in (3\beta/2,5\beta/2)$.\footnote{It is equivalent to $0<a<b<5$ 
and $  \beta\in(3a/2,3a)\cap(2b/5,2b/3)\cap(0,2)=:I_{a,b}$. Then $a<\beta<b$.\label{I_a,b}}
Define
	\begin{align*}
			&\widetilde{K}_{0,a,b}(\beta,\phi,\Phi):=
\int_{0}^{a}\left(\frac{\gamma(s,\Phi)}{z-\gamma(s,\Phi)}-\frac{\gamma(s,\Phi)}{z}\right)\frac{G_0(s,\Phi)}{s}\,ds\\
&+\int_{a}^{b}\frac{\gamma(s,\Phi)}{z-\gamma(s,\Phi)}\frac{G_0(s,\Phi)}{s}\,ds+\int_{b}^{5}\left(\frac{\gamma(s,\Phi)}{z-\gamma(s,\Phi)}+\frac{z}{\gamma(s,\Phi)}\right)\frac{G_0(s,\Phi)}{s}\,ds,\quad\forall\ \phi\neq\Phi,\\
&K_{0,a,b}(\beta,\phi,\Phi):=\frac{\beta}{\beta+\ii\mu}\p_\beta\tilde K_{0,a,b}(\beta,\phi,\Phi),
\quad K_{0}(\beta,\phi,\Phi):=K_{0,\beta/2,2\beta}(\beta,\phi,\Phi),\quad \forall\ \phi\neq\Phi.
		\end{align*}
{Here $z:=\beta^\mu e^{-\ii(\beta+\phi)},$ $\gamma(s,\Phi):=s^\mu e^{-\ii(s+\Phi)}$ (i.e. \eqref{z1})}. Then it holds that
\begin{enumerate}[(i)]
	\item  $Q_0(\beta, \phi)=\frac1{2\pi}\int_\TT \widetilde{K}_{0,a,b}(\beta, \phi, \Phi)\,d\Phi $.
	\item $|K_{0,a,b}(\beta,\phi,\Phi)|\lesssim\|\phi-\Phi\|_{\TT}^{\alpha-1}\beta^{1-2\mu} $ for  $\phi\neq\Phi$.
	\item  $Q_0^{(1)}(\beta, \phi)=\frac1{2\pi}\int_\TT K_0(\beta, \phi, \Phi)\,d\Phi $, $|K_{0}(\beta,\phi,\Phi)|\lesssim\|\phi-\Phi\|_{\TT}^{\alpha-1}\beta^{1-2\mu} $ for $\phi\neq\Phi$.
	\item $|\p_\beta K_0(\beta, \phi, \Phi)|\lesssim\|\phi-\Phi\|_{\TT}^{\alpha-2}\beta^{-2\mu} $ for $\phi\neq\Phi$.
\end{enumerate}
	\end{lem}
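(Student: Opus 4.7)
The plan is to extend the strategy of Lemma \ref{lem3}(ii) to the $k=0$ case, where $G_0$ is supported in $[0,3/4]$ and $\beta$ may range throughout $(0,2)$, so the singular locus $s\approx\beta$ can coincide with the support of $G_0$. For part (i), I would start from $Q_0(\beta,\phi)=\frac{1}{2\pi}\int_0^5\int_\TT\frac{\gamma(s,\Phi)}{z-\gamma(s,\Phi)}\frac{G_0(s,\Phi)}{s}\,d\Phi\,ds$ (using $\operatorname{supp}G_0\subset[0,5]$), split $(0,5)=(0,a)\cup(a,b)\cup(b,5)$, and insert the subtractions in the first and third subintervals. These subtractions are harmless because
\[\int_\TT\frac{\gamma(s,\Phi)}{z}\frac{G_0(s,\Phi)}{s}\,d\Phi=\frac{2\pi s^{\mu-1}e^{-\ii s}\eta(s)}{z}\widehat{G}_1(s)=0,\qquad\int_\TT\frac{z}{\gamma(s,\Phi)}\frac{G_0(s,\Phi)}{s}\,d\Phi=0,\]
by $\widehat{G}_{\pm1}=0$, and they render each piece absolutely integrable in $(s,\Phi)$ jointly so that Fubini applies.

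For part (ii), I would use $\frac{\beta}{\beta+\ii\mu}\partial_\beta z=-\ii z$ to express $K_{0,a,b}$ as $-\ii z$ times a $z$-derivative of each integrand. On $(0,a)$, the identity $\gamma/(z-\gamma)-\gamma/z=\gamma^2/(z(z-\gamma))$ together with $|z-\gamma|\gtrsim\beta^\mu$ yields a kernel $\lesssim(s/\beta)^{2\mu}/s$ integrating to $O(\beta^{1-2\mu})$; on $(b,5)$, a symmetric manipulation gives a kernel $\lesssim(\beta/s)^\mu/s$ integrating to $O(\beta^{1-\mu})\lesssim\beta^{1-2\mu}$ for $\beta\in(0,2)$. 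Both outer contributions are dominated by $\|\phi-\Phi\|_\TT^{\alpha-1}\beta^{1-2\mu}$ since $\|\phi-\Phi\|_\TT\leq\pi$. The hard part will be the middle piece $\ii z\int_a^b\partial_s\gamma\cdot(z-\gamma)^{-2}\,\widetilde{G}_0\,ds$, where $\widetilde{G}_0:=G_0/(\mu-\ii s)$ satisfies $\|s^{2\mu-1}\widetilde{G}_0\|_{C^\alpha_\beta}\lesssim1$ by Lemma \ref{AppendixA_Algebra}. Following the $s_0$-technique of Lemma \ref{lem3}(ii), I would choose $s_0\in\RR$ with $\beta+\phi-s_0-\Phi\in2\pi\ZZ$, giving $|z-\gamma(s,\Phi)|\sim\beta^{\mu-1}(|\beta-s|+\beta\|s-s_0\|_\TT)$, and decompose $\widetilde{G}_0(s,\Phi)=[\widetilde{G}_0(s,\Phi)-\widetilde{G}_0(s_0,\Phi)]+\widetilde{G}_0(s_0,\Phi)$. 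The H\"older difference is bounded by $\beta^{1-2\mu-\alpha}|s-s_0|^\alpha$, and the elementary estimate
\[\int_a^b\frac{|s-s_0|^\alpha}{(|s-\beta|+\beta\|s-s_0\|_\TT)^2}\,ds\lesssim(\beta\|\phi-\Phi\|_\TT)^{\alpha-1}\]
yields the contribution $\lesssim\|\phi-\Phi\|_\TT^{\alpha-1}\beta^{1-2\mu}$; the constant-$\widetilde{G}_0(s_0,\Phi)$ part integrates explicitly to the boundary values $[1/(z-\gamma(s,\Phi))]_{s=a}^{s=b}$, each $\lesssim\beta^{-\mu}$, contributing an additional $O(\beta^{1-2\mu})$.

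Part (iii) will then follow: for $\beta$ in a neighborhood of any fixed $\beta_0$ the choice $a=\beta_0/2,\,b=2\beta_0$ remains admissible, so the $\Phi$-integrable majorant from (ii) justifies differentiating $Q_0(\beta,\phi)=\frac{1}{2\pi}\int_\TT\widetilde{K}_{0,a,b}(\beta,\phi,\Phi)\,d\Phi$ under the integral, giving $Q_0^{(1)}=\frac{1}{2\pi}\int_\TT K_0\,d\Phi$. For (iv), I would differentiate $K_0(\beta,\phi,\Phi)=K_{0,\beta/2,2\beta}(\beta,\phi,\Phi)$ in $\beta$ using the chain rule: the boundary terms produced at $s=a=\beta/2$ and $s=b=2\beta$ lie in the non-singular regime $|z-\gamma|\gtrsim\beta^\mu$ and cost only $\beta^{-2\mu}$, while the interior derivative repeats the previous analysis with one additional factor of $(z-\gamma)^{-1}\sim\beta^{1-\mu}(|s-\beta|+\beta\|s-s_0\|_\TT)^{-1}$, producing the predicted extra $\|\phi-\Phi\|_\TT^{-1}$. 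The main obstacle throughout will be the central $(a,b)$ estimate, where the smallness of $|z-\gamma|$ must be balanced against the weighted H\"older modulus of $\widetilde{G}_0$; the scaling $a\sim b\sim\beta$ is precisely what keeps the outer pieces subdominant and makes the $s_0$-technique uniformly applicable on $\beta\in(0,2)$.
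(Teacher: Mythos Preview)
There is a genuine gap in your treatment of the middle piece $K_{0,a,b}^{(2)}$ in part (ii). The $s_0$-technique of Lemma~\ref{lem3}(ii) does not transfer to $k=0$: the phase condition $\beta+\phi-s_0-\Phi\in 2\pi\ZZ$ forces $|s_0-\beta|=\|\phi-\Phi\|_\TT$, so for $\|\phi-\Phi\|_\TT\gtrsim\beta$ the point $s_0$ can be near $0$ or negative. Since $\tilde G_0(s,\Phi)=G_0(s,\Phi)/(\mu-\ii s)$ blows up like $s^{1-2\mu}$ as $s\to 0$, the claimed bound $|\tilde G_0(s,\Phi)-\tilde G_0(s_0,\Phi)|\lesssim\beta^{1-2\mu-\alpha}|s-s_0|^\alpha$ is false in this regime (you only know $\|s^{2\mu-1}\tilde G_0\|_{C^\alpha_\beta}\lesssim 1$, and extracting a pointwise H\"older bound on $\tilde G_0$ itself requires both arguments to be $\sim\beta$). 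Your integral estimate is also incorrect: with the numerator $|s-s_0|^\alpha$ but the singular direction of the denominator along $|s-\beta|$, the two do not match up, and one checks directly that $\int_a^b|s-s_0|^\alpha(|s-\beta|+\beta\|s-s_0\|_\TT)^{-2}\,ds$ is not $\lesssim(\beta\|\phi-\Phi\|_\TT)^{\alpha-1}$ for small $\beta$.

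The paper's remedy is to subtract $\tilde G_0(\beta,\Phi)$ instead of $\tilde G_0(s_0,\Phi)$. Since $\beta\in(a,b)$ always, one introduces a cutoff $\tilde G_{0,r}:=\rho(\cdot/r)\tilde G_0$ with $r=\beta/3$, shows $\|\tilde G_{0,r}\|_{C^\alpha_\beta}\lesssim r^{1-2\mu}$, and obtains $|\tilde G_0(s,\Phi)-\tilde G_0(\beta,\Phi)|\lesssim|s-\beta|^\alpha\beta^{1-2\mu-\alpha}$ for $s\in[\beta/3,5\beta/2]$. This is paired with the direct estimate $|z-\gamma(s,\Phi)|\sim\beta^\mu(|s-\beta|/\beta+\|\phi-\Phi\|_\TT)$ (using $\|s-\beta\|_\TT=|s-\beta|\leq 2|s-\beta|/\beta$ for $\beta<2$), so that numerator and denominator are both in $|s-\beta|$; then $\int_\RR|x|^\alpha(x^2+A^2)^{-1}\,dx\lesssim A^{\alpha-1}$ with $A=\|\phi-\Phi\|_\TT$ yields the bound. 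A minor side remark: your kernel on $(b,5)$ omits the factor $|G_0(s,\Phi)|\lesssim s^{1-2\mu}$, and the correct contribution there is already $O(\beta^{1-2\mu})$, not $O(\beta^{1-\mu})$.
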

\begin{proof}
	\begin{enumerate}[(i)]
		\item 
Recall that (using \eqref{Q_k_expression} for $k=0$, \eqref{z1} and $\text{supp }G_0(\cdot, \phi)\subset \left[0,\frac34\right]$)
		\[Q_0(\beta,\phi)=\frac1{2\pi}\int_0^5\left(\int_\TT\frac{\gamma(s,\Phi)}{z-\gamma(s,\Phi)}\frac{G_0(s,\Phi)}{s}\,d\Phi\right)\,ds.\]
	Since $\int_\TT \gamma(s,\Phi)G_0(s,\Phi)\,d\Phi=0=\int_\TT \gamma(s,\Phi)^{-1}G_0(s,\Phi)\,d\Phi$ (as $\hat{G}_{1}=\hat{G}_{-1}=0$), we have
	\begin{align*}
		Q_0(\beta,\phi)&=\frac1{2\pi}\int_0^a\left(\int_\TT\left(\frac{\gamma(s,\Phi)}{z-\gamma(s,\Phi)}-\frac{\gamma(s,\Phi)}{z}\right)\frac{G_0(s,\Phi)}{s}\,d\Phi\right)\,ds\\
		&\qquad+\frac1{2\pi}\int_a^b\left(\int_\TT\frac{\gamma(s,\Phi)}{z-\gamma(s,\Phi)}\frac{G_0(s,\Phi)}{s}\,d\Phi\right)\,ds\\
		&\qquad+\frac1{2\pi}\int_b^5\left(\int_\TT\left(\frac{\gamma(s,\Phi)}{z-\gamma(s,\Phi)}+\frac z{\gamma(s,\Phi)}\right)\frac{G_0(s,\Phi)}{s}\,d\Phi\right)\,ds.
	\end{align*}
	For $s\in [0,a]$, we have $|z-\gamma(s,\Phi)|\geq \beta^\mu-s^\mu\geq \beta^\mu-a^\mu>0$, hence by $\|\beta^{2\mu-1}G_0\|_{L^\infty}\lesssim1$,
	\begin{align*}
		&\int_0^a\int_\TT\left|\left(\frac{\gamma(s,\Phi)}{z-\gamma(s,\Phi)}-\frac{\gamma(s,\Phi)}{z}\right)\frac{G_0(s,\Phi)}{s}\right|\,d\Phi\,ds\\
		&=\int_0^a\int_\TT\left|\frac{\gamma(s,\Phi)^2}{z(z-\gamma(s,\Phi))}\frac{G_0(s,\Phi)}{s}\right|\,d\Phi\,ds\lesssim \int_0^a\int_\TT \frac{s^{2\mu}}{\beta^\mu(\beta^\mu-a^\mu)}\frac{s^{1-2\mu}}{s}\,d\Phi\,ds<+\infty,
	\end{align*}
	thus by Fubini's theorem,
	\begin{align*}
		&\int_0^a\left(\int_\TT\left(\frac{\gamma(s,\Phi)}{z-\gamma(s,\Phi)}-\frac{\gamma(s,\Phi)}{z}\right)\frac{G_0(s,\Phi)}{s}\,d\Phi\right)\,ds\\
		&=\int_\TT\left(\int_0^a\left(\frac{\gamma(s,\Phi)}{z-\gamma(s,\Phi)}-\frac{\gamma(s,\Phi)}{z}\right)\frac{G_0(s,\Phi)}{s}\,ds\right)\,d\Phi.
	\end{align*}
	For $s\in[a,b]\subset[\beta/3,5\beta/2]$, we have
	\begin{align*}
		\left|z-\gamma(s,\Phi)\right|^2&=\left|\beta^\mu e^{-\ii(\beta+\phi)}-s^\mu e^{-\ii(s+\Phi)}\right|^2=\beta^{2\mu}\left|1-\left(\frac{s}{\beta}\right)^\mu e^{-\ii(s+\Phi-\beta-\phi)}\right|^2\\
		&=\beta^{2\mu}\left(\left|1-\left(\frac s\beta\right)^\mu\right|^2+\left(\frac s\beta\right)^\mu\left(1-\cos(s-\beta+\Phi-\phi)\right)\right)\\
		&\sim \beta^{2\mu}\left(\left|\frac{\beta-s}{\beta}\right|^2+\|s-\beta+\Phi-\phi\|_\TT^2\right);
	\end{align*}then using $|\beta-s|\leq 3\beta/2<3<\pi$, $|\beta-s|=\|\beta-s\|_\TT$ and
	\[\frac{2\|\beta-s\|_\TT}{\beta}+\|s-\beta+\Phi-\phi\|_\TT\geq\|\beta-s\|_\TT+\|s-\beta+\Phi-\phi\|_\TT\geq \|\phi-\Phi\|_\TT,\]
	we obtain
	\begin{equation}\label{B.29}
			\left|z-\gamma(s,\Phi)\right|\sim \beta^{\mu}\left(\frac{|\beta-s|}{\beta}+\|s-\beta+\Phi-\phi\|_\TT\right)
			\sim \beta^{\mu}\left(\frac{|\beta-s|}{\beta}+\|\phi-\Phi\|_\TT\right),
	\end{equation}
	As a consequence, we obtain
	\begin{align*}
		&\int_a^b\int_\TT\left|\frac{\gamma(s,\Phi)}{z-\gamma(s,\Phi)}\frac{G_0(s,\Phi)}{s}\right|\,d\Phi\,ds\lesssim\int_a^b\int_\TT \frac{(s/\beta)^\mu}{|s-\beta|/\beta+\|\phi-\Phi\|_\TT}\frac{s^{1-2\mu}}{s}\,d\Phi\,ds\\
		\lesssim& \int_a^b\int_\TT \frac{(s\beta)^{-\mu}}{|s-\beta|/\beta+\|\phi-\Phi\|_\TT}\,d\Phi\,ds\lesssim\int_a^b(s\beta)^{-\mu}\left(\ln\frac{\beta}{|s-\beta|}+2\right)\,ds<+\infty,
	\end{align*}
	thus by Fubini's theorem,
	\begin{align*}
		\int_a^b\left(\int_\TT\frac{\gamma(s,\Phi)}{z-\gamma(s,\Phi)}\frac{G_0(s,\Phi)}{s}\,d\Phi\right)\,ds
		&=\int_\TT\left(\int_a^b\frac{\gamma(s,\Phi)}{z-\gamma(s,\Phi)}\frac{G_0(s,\Phi)}{s}\,ds\right)\,d\Phi.
	\end{align*}
	Finally, for $s\in[b,5]$ we have $|z-\gamma(s,\Phi)|\geq s^\mu-\beta^\mu\geq b^\mu-\beta^\mu>0$, then
	\begin{align*}
		&\int_b^5\int_\TT\left|\left(\frac{\gamma(s,\Phi)}{z-\gamma(s,\Phi)}+\frac z{\gamma(s,\Phi)}\right)\frac{G_0(s,\Phi)}{s}\right|\,d\Phi\,ds\\
		\lesssim&\int_b^5\int_\TT\left(\frac{s^\mu}{b^\mu-\beta^\mu}+\frac{\beta^\mu}{s^\mu}\right)\frac{s^{1-2\mu}}{s}\,d\Phi\,ds<+\infty,
	\end{align*}
	hence by Fubini's theorem,
	\begin{align*}
		&\int_b^5\left(\int_\TT\left(\frac{\gamma(s,\Phi)}{z-\gamma(s,\Phi)}+\frac z{\gamma(s,\Phi)}\right)\frac{G_0(s,\Phi)}{s}\,d\Phi\right)\,ds\\
		=&\int_\TT\left(\int_b^5\left(\frac{\gamma(s,\Phi)}{z-\gamma(s,\Phi)}+\frac z{\gamma(s,\Phi)}\right)\frac{G_0(s,\Phi)}{s}\,ds\right)\,d\Phi.
	\end{align*}
	Therefore, by the definition of $\widetilde{K}_{0,a,b}$, we have $Q_0(\beta, \phi)=\frac1{2\pi}\int_\TT \widetilde{K}_{0,a,b}(\beta, \phi, \Phi)\,d\Phi $.\smallskip
	
	\item We first claim that for $\beta\in(0,2)$, $\phi\neq\Phi\in\TT$, $a\in(\beta/3,2\beta/3)$ and $b\in(3\beta/2,5\beta/2)$, we have (recalling $\frac{\beta}{\beta+\ii\mu}\p_\beta z=-\ii z$) (i.e., we can take the derivative under the integral)
	\begin{align}
		&K_{0,a,b}(\beta,\phi,\Phi)=\frac{\beta\p_\beta\wt K_{0,a,b}(\beta,\phi,\Phi)}{\beta+\ii\mu}=\ii z\int_{0}^{a}\left(\frac{\gamma(s,\Phi)}{(z-\gamma(s,\Phi))^2}-\frac{\gamma(s,\Phi)}{z^2}\right)\frac{G_0(s,\Phi)}{s}\,ds\label{K_0,a,b}\\
		&+\ii z\int_{a}^{b}\frac{\gamma(s,\Phi)}{(z-\gamma(s,\Phi))^2}\frac{G_0(s,\Phi)}{s}\,ds+\ii z\int_{b}^{5}\left(\frac{\gamma(s,\Phi)}{(z-\gamma(s,\Phi))^2}-\frac{1}{\gamma(s,\Phi)}\right)\frac{G_0(s,\Phi)}{s}\,ds\nonumber\\
		&=:K_{0,a,b}^{(1)}(\beta,\phi,\Phi)+K_{0,a,b}^{(2)}(\beta,\phi,\Phi)+K_{0,a,b}^{(3)}(\beta,\phi,\Phi).\nonumber
	\end{align}
	Indeed, we only need to show that (``A" stands for ``absolute value") (see footnote \ref{I_a,b})
	\begin{align}
		A_{0,1}:&=\int_{0}^{a}{\sup_{\beta\in I_{a,b}}}\left|\left(\frac{\gamma(s,\Phi)}{(z-\gamma(s,\Phi))^2}-\frac{\gamma(s,\Phi)}{z^2}\right)\frac{G_0(s,\Phi)}{s}\right|\,ds<{+\infty},\label{A_0,1}\\
		A_{0,2}:&=\int_{a}^{b}\sup_{\beta\in I_{a,b}}\left|\frac{\gamma(s,\Phi)}{(z-\gamma(s,\Phi))^2}\frac{G_0(s,\Phi)}{s}\right|\,ds<+\infty,\label{A_0,2}\\
		A_{0,3}:&=\int_{b}^{5}\sup_{\beta\in I_{a,b}}\left|\left(\frac{\gamma(s,\Phi)}{(z-\gamma(s,\Phi))^2}-\frac{1}{\gamma(s,\Phi)}\right)\frac{G_0(s,\Phi)}{s}\right|\,ds<+\infty.\label{A_0,3}
	\end{align}
	for any fixed $0<a<b<5$ {such  that $I_{a,b}\neq \emptyset$} and $\phi\neq\Phi\in\TT$. For $A_{0,1}$, we have $|z-\gamma(s,\Phi)|\geq \beta^\mu-s^\mu\geq \beta^\mu-(2\beta/3)^\mu\gtrsim\beta^\mu$ if $s\in[0,a]$, then
	\begin{align*}
		A_{0,1}&=\int_{0}^{a}\sup_{\beta\in I_{a,b}}\left|\frac{\left(\gamma(s,\Phi)\right)^2\left(2z-\gamma(s,\Phi)\right)}{z^2\left(z-\gamma(s,\Phi)\right)^2}\frac{G_0(s,\Phi)}{s}\right|\,ds
		\lesssim \int_0^{a}\sup_{\beta\in I_{a,b}}\frac{s^{2\mu}\ \beta^\mu}{\beta^{2\mu}\ \beta^{2\mu}}\frac{s^{1-2\mu}}{s}\,ds<+\infty.
	\end{align*}
	For $A_{0,2}$,
	if $\beta\in I_{a,b}$ and $s\in[a,b]\subset [\beta/3,5\beta/2]$, by \eqref{B.29}
	we have
	\begin{align*}
		A_{0,2}&\lesssim \int_a^b \sup_{\beta\in I_{a,b}}\frac{s^{\mu}}{\beta^{2\mu}(|s-\beta|^2/\beta^2+\|\phi-\Phi\|_\TT^2)}\frac{s^{1-2\mu}}{s}\,ds
		\lesssim \int_a^b \sup_{\beta\in I_{a,b}}\frac{s^{-\mu}}{\beta^{2\mu}\|\phi-\Phi\|_\TT^2}\,ds<+\infty.
	\end{align*}
	For $A_{0,3}$, we have $|z-\gamma(s,\Phi)|\geq s^\mu-\beta^\mu\geq s^\mu-(2s/3)^\mu\gtrsim s^\mu$ if $s\in[b,5]\subset[3\beta/2,5]$, and then (recalling $\mu>1/2$)
	\begin{align*}
		A_{0,3}&=\int_{b}^{5}\sup_{\beta\in I_{a,b}}\left|\frac{z\left(2\gamma(s,\Phi)-z\right)}{\left(z-\gamma(s,\Phi)\right)^2\gamma(s,\Phi)}\frac{G_0(s,\Phi)}{s}\right|\,ds\lesssim \int_b^5\sup_{\beta\in I_{a,b}}\frac{\beta^\mu\ s^\mu}{s^{2\mu}\ s^\mu}\frac{s^{1-2\mu}}{s}\,ds<+\infty.
	\end{align*}
	Therefore, we have checked \eqref{A_0,1}, \eqref{A_0,2} and \eqref{A_0,3}, hence \eqref{K_0,a,b} holds.
	
	Now we prove the bound $\left|K_{0,a,b}(\beta,\phi,\Phi)\right|\lesssim \|\phi-\Phi\|_\TT^{\alpha-1}\beta^{1-2\mu}$. By the proof of \eqref{A_0,1}, \eqref{A_0,3} and $|z|=\beta^{\mu}$ in \eqref{z1}, we have
	\begin{align*}
		\left|K_{0,a,b}^{(1)}(\beta,\phi,\Phi)\right|&\lesssim \beta^{\mu}\int_0^{\frac{2\beta}3}\frac{s^{2\mu}\ \beta^\mu}{\beta^{2\mu}\ \beta^{2\mu}}\frac{s^{1-2\mu}}{s}\,ds\lesssim \beta^{1-2\mu},\\
		\left|K_{0,a,b}^{(3)}(\beta,\phi,\Phi)\right|&\lesssim \beta^{\mu}\int_{\frac{3\beta}{2}}^\infty \frac{\beta^\mu\ s^\mu}{s^{2\mu}\ s^\mu}\frac{s^{1-2\mu}}{s}\,ds\lesssim\beta^{1-2\mu}.
	\end{align*}
	As for $K_{0,a,b}^{(2)}$, we need to invoke the $C^\alpha$ regularity. Define
	$$\tilde G_0(s,\Phi)=\frac{\gamma(s,\Phi)}{\p_s\gamma(s,\Phi)}\frac{G_0(s,\Phi)}s=\frac{G_0(s,\Phi)}{\mu-\ii s}=\frac{\eta(s)}{s^{2\mu-1}(\mu-\ii s)}\cdot s^{2\mu-1}G(s, \Phi),$$
	then
	\begin{align}\label{K_0^2}
		&K_{0,a,b}^{(2)}(\beta,\phi,\Phi)=\ii z\int_{a}^{b}\frac{\p_s\gamma(s,\Phi)}{\left(z-\gamma(s,\Phi)\right)^2}\tilde G_0(s, \Phi)\,ds\\
		\notag			=&\ii z\int_{a}^{b}\frac{\p_s\gamma(s,\Phi)}{\left(z-\gamma(s,\Phi)\right)^2}\left(\tilde G_0(s,\Phi)-\tilde G_0(\beta,\Phi)\right)\,ds+\ii z\ \tilde G_0(\beta, \Phi)\int_{a}^{b}\frac{\p_s\gamma(s,\Phi)}{\left(z-\gamma(s,\Phi)\right)^2}ds.
	\end{align}
	By \eqref{B.29}, for $a\in(\beta/3,2\beta/3)$ and $b\in(3\beta/2,5\beta/2)$ we have
	\begin{equation}\label{5.23}
		\begin{aligned}
			\left|z-\gamma\left(a,\Phi\right)\right|\sim \beta^\mu\left(1+\|\phi-\Phi\|_\TT\right)\sim \beta^\mu,\quad \left|z-\gamma\left(b,\Phi\right)\right|\sim \beta^\mu\left(1+\|\phi-\Phi\|_\TT\right)\sim \beta^\mu.
		\end{aligned}
	\end{equation}
	Also, for $s\in[a,b]\subset \left[\beta/3, {5\beta/2}\right]$ and $0<\beta<2$ we have
	\begin{equation}\label{5.21}
		|\p_s\gamma(s,\Phi)|\lesssim s^{\mu-1}\lesssim \beta^{\mu-1}.
	\end{equation}
	It remains to estimate $\left|\tilde G_0(s,\Phi)-\tilde G_0(\beta,\Phi)\right|$ and $\left|\tilde G_0(\beta,\Phi)\right|$ for $s\in\left[\beta/3, {5\beta/2}\right]$ and $0<\beta<2$. Let $\rho\in C^\infty(\RR; [0,1])$ be a smooth bump function such that $\rho|_{(-\infty,1/2)}\equiv 0$, $\rho|_{(1,\infty)}\equiv1$. For each $r>0$, we define
	\[\tilde G_{0,r}(s, \Phi)=\rho\left(\frac sr\right)\tilde G_0(s, \Phi)=\rho\left(\frac sr\right)\frac{\eta(s)}{s^{2\mu-1}(\mu-\ii s)}\cdot s^{2\mu-1}G(s, \Phi).\]
	Using \eqref{f1} and $\text{supp }\eta\subset[-3/4,3/4]$, one can easily show that $\left\|\rho\left(\frac \beta r\right)\frac{\eta(\beta)}{\beta^{2\mu-1}(\mu-\ii \beta)}\right\|_{C_\beta^\alpha}\lesssim r^{1-2\mu}$ for $r\leq 1$. It follows from the algebra property (Lemma \ref{AppendixA_Algebra}) and $\|\beta^{2\mu-1}G\|_{C_\beta^\alpha}=1$ that
	\begin{equation*}
		\left\|\tilde G_{0,r}\right\|_{C_\beta^\alpha}\lesssim r^{1-2\mu},\qquad r\leq1.
	\end{equation*}
	Hence, for $s\in\left[\beta/3, {5\beta/2}\right]$ and $0<\beta<2$, taking $r=\beta/3$ we have (recall Lemma \ref{equvi_Holder_norm})
	\begin{equation}\label{tilde_G_0_est}
		\left|\tilde G_0(s,\Phi)-\tilde G_0(\beta,\Phi)\right|=\left|\tilde G_{0,r}(s,\Phi)-\tilde G_{0,r}(\beta,\Phi)\right|\lesssim r^{1-2\mu}\frac{|s-\beta|^\alpha}{|s+\beta|^\alpha}\lesssim \frac{|s-\beta|^\alpha}{\beta^{\alpha+2\mu-1}},
	\end{equation}
	and
	\begin{equation}\label{tilde_G_0_est1}
		\left|\tilde G_0(\beta,\Phi)\right|=\left|\tilde G_{0,r}(\beta,\Phi)\right|\lesssim r^{1-2\mu}\lesssim\beta^{1-2\mu}.
	\end{equation}
	Substituting \eqref{B.29}, \eqref{5.23}$\sim$\eqref{tilde_G_0_est1} into \eqref{K_0^2} yields
	\begin{align*}
		\left|K_{0,a,b}^{(2)}(\beta,\phi,\Phi)\right|&\lesssim \beta^\mu\int_{a}^{b}\frac{\beta^{\mu-1}}{\beta^{2\mu}\left(|s-\beta|^2/\beta^2+\|\phi-\Phi\|_\TT^2\right)}
		\frac{|s-\beta|^\alpha}{\beta^{\alpha+2\mu-1}}\,ds\\
		&\qquad\qquad+\beta^\mu\beta^{1-2\mu}\left(\frac1{\left|z-\gamma\left(a,\Phi\right)\right|}+\frac1{\left|z-\gamma\left(b,\Phi\right)\right|}\right)\\
		&\lesssim \|\phi-\Phi\|_\TT^{\alpha-1}\beta^{1-2\mu},
	\end{align*}
	where we have used the fact that $\int_{\RR} \frac{|x|^\alpha}{x^2+A^2}\,dx\lesssim_\alpha A^{\alpha-1}$ for $\alpha\in(0,1)$ and $A>0$ {(and take $A=\beta\|\phi-\Phi\|_\TT$)}.
	Therefore, we arrive at
	$\left|K_{0,a,b}(\beta,\phi,\Phi)\right|\lesssim \|\phi-\Phi\|_\TT^{\alpha-1}\beta^{1-2\mu}$, where the implicit constant in $\lesssim$ depends only on $\alpha,\mu$ and it is independent of $\beta\in(0,2), \phi\neq\Phi\in\TT$, $a\in(\beta/3,2\beta/3)$ and $b\in(3\beta/2,5\beta/2)$.
	
	\item By (i), we have $Q_0(\beta, \phi)=\frac1{2\pi}\int_\TT \widetilde{K}_{0,a,b}(\beta, \phi, \Phi)\,d\Phi $; by the definition, we have
	\[Q_0^{(1)}=\frac{\beta}{\beta+\ii\mu}\p_\beta Q_0,\qquad K_{0,a,b}=\frac{\beta}{\beta+\ii\mu}\p_\beta\wt K_{0,a,b};\]
	by (ii), we have (note that $\alpha\in(0,1)$, and $ \beta\sim a\sim b$ for ${\beta}\in I_{a,b} $, see footnote \ref{I_a,b})
	 $$\int_\TT\sup_{{\beta}\in I_{a,b}}|K_{0,a,b}({\beta},\phi,\Phi)|\,d\Phi\lesssim\int_\TT\sup_{{\beta}\in I_{a,b}}\|\phi-\Phi\|_{\TT}^{\alpha-1}\beta^{1-2\mu}\,d\Phi\lesssim a^{1-2\mu}$$
for $0<a<b<5$ such  that $I_{a,b}\neq \emptyset$.
	Hence, by the dominated convergence theorem,
	$$Q_0^{(1)}(\beta, \phi)=\frac1{2\pi}\int_\TT K_{0,a,b}(\beta, \phi, \Phi)\,d\Phi $$ for  $0<a<b<5$, ${\beta}\in I_{a,b}$. Taking $a=\beta/2, b=2\beta$ gives
	\[Q_0^{(1)}(\beta, \phi)=\frac1{2\pi}\int_\TT K_{0}(\beta, \phi, \Phi)\,d\Phi, \qquad \left|K_{0}(\beta,\phi,\Phi)\right| \lesssim\|\phi-\Phi\|_{\TT}^{\alpha-1}\beta^{1-2\mu}.\]
	
	\item Finally, we prove the bound for $\p_\beta K_0$. Using the fact that $\p_\beta z=\frac{\mu-\ii \beta}{\beta}z$, a direct computation yields {(using \eqref{K_0^2} for $a=\beta/2, b=2\beta$)}
	\begin{align*}
		\p_\beta K_0(\beta, \phi,\Phi)&=\frac{\mu-\ii \beta}{\beta}K_0(\beta,\phi,\Phi)-K_0^{(4)}(\beta,\phi,\Phi)-K_0^{(5)}(\beta,\phi,\Phi)-K_0^{(6)}(\beta,\phi,\Phi)\\
		&\qquad+\frac\ii\beta\left(\frac{zG_0(2\beta,\Phi)}{\gamma(2\beta,\Phi)}-\frac{\gamma\left(\beta/2,\Phi\right)
			G_0\left({\beta/2},\Phi\right)}{z}\right),
	\end{align*}
	where
	\begin{align*}
		K_0^{(4)}(\beta,\phi,\Phi)&=2\ii \frac{\mu-\ii \beta}{\beta}z^2\int_0^{\frac\beta2}\gamma(s,\Phi)\left(\frac{1}{\left(z-\gamma(s,\Phi)\right)^3}-\frac1{z^3}\right)\frac{G_0(s,\Phi)}{s}\,ds\\
		&=2\ii \frac{\mu-\ii \beta}{\beta}z^2\int_0^{\frac\beta2}\frac{\left(\gamma(s,\Phi)\right)^2\left(3z^2-3z\gamma(s,\Phi)+
			\left(\gamma(s,\Phi)\right)^2\right)}{z^3\left(z-\gamma(s,\Phi)\right)^3}\frac{G_0(s,\Phi)}{s}\,ds,
	\end{align*}
	\begin{align*}
		K_0^{(5)}(\beta,\phi,\Phi)&=2\ii \frac{\mu-\ii \beta}{\beta}z^2\int_{\frac\beta2}^{2\beta}\frac{\gamma(s,\Phi)}{\left(z-\gamma(s,\Phi)\right)^3}\frac{G_0(s,\Phi)}{s}\,ds\\
		&=2\ii \frac{\mu-\ii \beta}{\beta}z^2\int_{\frac\beta2}^{2\beta}\frac{\p_s\gamma(s,\Phi)}{\left(z-\gamma(s,\Phi)\right)^3}\left(\tilde G_0(s,\Phi)-\tilde G_0(\beta,\Phi)\right)\,ds\\
		&\qquad\qquad\qquad+2\ii \frac{\mu-\ii \beta}{\beta}z^2\ \tilde G_0(\beta, \Phi)\int_{\frac\beta2}^{2\beta}\frac{\p_s\gamma(s,\Phi)}{\left(z-\gamma(s,\Phi)\right)^3}ds,\\
		K_0^{(6)}(\beta,\phi,\Phi)&=2\ii\frac{\mu-\ii \beta}{\beta}z^2\int_{2\beta}^{5}\frac{\gamma(s,\Phi)}{\left(z-\gamma(s,\Phi)\right)^3}\frac{G_0(s,\Phi)}{s}\,ds.
	\end{align*}
	Here we take the derivative under the integral, which can be checked using an argument similar with (ii).
	Since $\left|\frac{\mu-\ii \beta}{\beta}\right|\lesssim\beta^{-1}$, we have
	\begin{align*}
		&\left|K_0^{(4)}(\beta,\phi,\Phi)\right|\lesssim \beta^{-1}\beta^{2\mu} \int_0^{\frac\beta2}\frac{s^{2\mu}\ \beta^{2\mu}}{\beta^{3\mu}\ \beta^{3\mu}}\frac{s^{1-2\mu}}{s}\,ds\lesssim \beta^{-2\mu},\\
		\left|K_0^{(5)}(\beta,\phi,\Phi)\right|&\lesssim \beta^{-1}\beta^{2\mu}\int_{\frac\beta2}^{2\beta}\frac{\beta^{\mu-1}}{\beta^{3\mu}\left(|s-\beta|^3/\beta^3+\|\phi-\Phi\|_\TT^3\right)}
		\frac{|s-\beta|^\alpha}{\beta^{\alpha+2\mu-1}}\,ds\\
		&+\beta^{-1}\beta^{2\mu}\beta^{1-2\mu}\left(\frac1{\left|z-\gamma\left({\beta/2},\Phi\right)\right|^2}+
		\frac1{\left|z-\gamma\left(2\beta,\Phi\right)\right|^2}\right)
		\lesssim \|\phi-\Phi\|_\TT^{\alpha-2}\beta^{-2\mu},\\
		&\left|K_0^{(6)}(\beta,\phi,\Phi)\right|\lesssim \beta^{-1}\beta^{2\mu} \int_{2\beta}^{5}\frac{s^\mu}{s^{3\mu}}\frac{s^{1-2\mu}}{s}\,ds\lesssim \beta^{-2\mu}.
	\end{align*}
	Therefore, we arrive at
	\begin{align*}
		\left|\p_\beta K_0(\beta, \phi,\Phi)\right|&\lesssim \|\phi-\Phi\|_\TT^{\alpha-2}\beta^{-2\mu}+\beta^{-1}\left(\frac{\beta^\mu\ (2\beta)^{1-2\mu}}{(2\beta)^\mu}+\frac{\left({\beta/2}\right)^\mu\ \left({\beta/2}\right)^{1-2\mu}}{\beta^\mu}\right)\\
		&\lesssim\|\phi-\Phi\|_\TT^{\alpha-2}\beta^{-2\mu}+\beta^{-2\mu}\lesssim\|\phi-\Phi\|_\TT^{\alpha-2}\beta^{-2\mu}.
	\end{align*}
\end{enumerate}

This completes the proof.
\end{proof}

Finally, we prove Lemma \ref{lem1}.
\begin{proof}[Proof of Lemma \ref{lem1}]
	\begin{enumerate}[(i)]
		\item If $\beta\geq 1$, then by Lemma \ref{lem4}, we have $|Q_0^{(1)}(\beta,\phi)|\lesssim\beta^{-2\mu}\lesssim\beta^{1-2\mu}/(1+\beta)$. If $\beta\in(0,2)$ , then we get by Lemma \ref{lem5} (iii) that {(as $\alpha\in(0,1)$)}
		\[\left|Q_0^{(1)}(\beta,\phi)\right|\lesssim\int_\TT{\left|K_0(\beta,\phi,\Phi)\right|\,d\Phi}\lesssim \int_\TT \|\phi-\Phi\|_\TT^{\alpha-1}\beta^{1-2\mu}\,d\Phi\lesssim\beta^{1-2\mu}\lesssim\beta^{1-2\mu}/(1+\beta).\]
		
		\item Assume that  $k\in\ZZ_+$. If $0<\beta\leq k/5$, we get by Lemma \ref{lem2} (i) that $$\left|Q_k^{(1)}(\beta,\phi)\right|\lesssim k^{-2\mu-\alpha}\lesssim k^{\alpha-1}k^{1-\alpha-2\mu}k^{-\alpha}\lesssim (|\beta-k|+1)^{\alpha-1}(\beta+k)^{1-\alpha-2\mu}k^{-\alpha},$$ since $\alpha\in(0,1), \mu>1/2$ and $|\beta-k|+1\leq k+1\leq2k$, $\beta+k\leq\frac{6}5k$. If $\beta\geq2k$, we have
		\[\left|Q_k^{(1)}(\beta,\phi)\right|\lesssim \beta^{-2\mu}k^{-\alpha}\lesssim \beta^{\alpha-1}\beta^{1-\alpha-2\mu}k^{-\alpha}\lesssim (|\beta-k|+1)^{\alpha-1}(\beta+k)^{1-\alpha-2\mu}k^{-\alpha},\]
		{by using Lemma \ref{lem2} (ii) and} $\alpha\in(0,1), \mu>1/2$, $|\beta-k|+1\leq \beta$, $\beta+k\leq \frac32\beta$. Now we assume that $k/10<\beta<3k$. If $|\beta-k|\geq 1$, then we get by Lemma \ref{lem3} (i) (iii) that
		\[\left|Q_k^{(1)}(\beta,\phi)\right|\lesssim\int_\TT\left|K_k(\beta,\phi,\Phi)\right|\,d\Phi\lesssim \left|\frac{\beta-k}{k}\right|^{\alpha-1}k^{-2\mu-\alpha}\lesssim(|\beta-k|+1)^{\alpha-1}(\beta+k)^{1-\alpha-2\mu}k^{-\alpha},\]
		since $\alpha\in(0,1), \mu>1/2>0$ and $|\beta-k|+1\leq 2|\beta-k|, \beta+k<4k$. If $|\beta-k|<2$, then we get by Lemma \ref{lem3}  (ii) (iii) that
		\begin{align*}
			\left|Q_k^{(1)}(\beta,\phi)\right|&\lesssim\int_\TT\left|K_k(\beta,\phi,\Phi)\right|\,d\Phi\lesssim\int_\TT \|\phi-\Phi\|_\TT^{\alpha-1}k^{1-2\alpha-2\mu}\,d\Phi\\
			&\lesssim k^{1-2\alpha-2\mu}\lesssim 1\cdot k^{1-\alpha-2\mu}k^{-\alpha}\lesssim(|\beta-k|+1)^{\alpha-1}(\beta+k)^{1-\alpha-2\mu}k^{-\alpha},
		\end{align*}
		since $\alpha\in(0,1), \mu>1/2$ and $|\beta-k|+1<3$, $\beta+k<4k$.
		
		\item Assume that $0<\beta_1/2<\beta_2<\beta_1<2$. For $k=0$, by Lemma \ref{lem5} (iii) (iv), {\eqref{Ik}} and $0<\beta_2<\beta_1<2\beta_2$, we have
		\begin{align*}
			I_0&\leq \int_{\|\phi-\Phi\|_\TT\leq \frac{\beta_1-\beta_2}{\beta_1}}\left[\left|\beta_1^{2\mu-1} K_{0}(\beta_1,\phi,\Phi)\right|+
			\left|\beta_2^{2\mu-1} K_{0}(\beta_2,\phi,\Phi)\right|\right]d\Phi\\
			&\qquad\qquad+\int_{\|\phi-\Phi\|_\TT> \frac{\beta_1-\beta_2}{\beta_1}}\int_{\beta_2}^{\beta_1}\left|\p_\beta\left(\beta^{2\mu-1
			}K_0\right)(\beta,\phi,\Phi)\right|\,d\beta\,d\Phi\\
			&\lesssim\int_{\|\phi-\Phi\|_\TT\leq \frac{\beta_1-\beta_2}{\beta_1}}\|\phi-\Phi\|_{\TT}^{\alpha-1}\,d\Phi\\
			&\qquad\qquad +\int_{\beta_2}^{\beta_1}\int_{\|\phi-\Phi\|_\TT> \frac{\beta_1-\beta_2}{\beta_1}}[\beta^{-1}\|\phi-\Phi\|_\TT^{\alpha-1}+
			\beta^{-1}\|\phi-\Phi\|_\TT^{\alpha-2}]\,d\Phi\,d\beta\\
			&\lesssim  \left|\frac{\beta_1-\beta_2}{\beta_1}\right|^\alpha+\left|\frac{\beta_1-\beta_2}{\beta_2}\right|+
			\left|\frac{\beta_1-\beta_2}{\beta_2}\right|\left|\frac{\beta_1-\beta_2}{\beta_1}\right|^{\alpha-1}\lesssim\left|\frac{\beta_1-\beta_2}{\beta_1}\right|^\alpha
			\lesssim\left|\frac{\beta_1-\beta_2}{\beta_1+\beta_2}\right|^\alpha.
		\end{align*}
		{For $k\in\ZZ_+$,  we consider two cases $\beta_1\leq\frac k5$ and $\beta_1>\frac k5$ respectively.}
If $\beta_1\leq\frac k5$, then $0<\beta_1/2<\beta_2<\beta_1\leq\frac k5$, and  by Lemma \ref{lem2} (i), {\eqref{Ik} and $\mu>1/2$}, we have\begin{align*}
			I_k&\leq\int_{\beta_2}^{\beta_1}\left|\p_\beta\left(\beta^{2\mu-1
			}Q_k^{(1)}\right)(\beta,\phi)\right|\,d\beta\lesssim \int_{\beta_2}^{\beta_1}\left(\beta^{2\mu-2}k^{-2\mu-\alpha}+
			\beta^{2\mu-1}k^{-2\mu-\alpha}\right)\,d\beta\\
			&\lesssim
			\int_{\beta_2}^{\beta_1}\beta^{-1}k^{-2\mu-\alpha}\,d\beta\lesssim\left|\frac{\beta_1-\beta_2}{\beta_1+\beta_2}\right|^\alpha(1+k)^{-2\mu-\alpha}.
		\end{align*}
Here we used $k\sim k+1$ and $\int_{\beta_2}^{\beta_1}\beta^{-1}\,d\beta\leq|\frac{\beta_1-\beta_2}{\beta_2}|
			\leq|\frac{\beta_1-\beta_2}{\beta_2}|^\alpha
			\lesssim|\frac{\beta_1-\beta_2}{\beta_1+\beta_2}|^\alpha $.

If $\beta_1>\frac k5$, then $\beta_2>\beta_1/2>\frac k{10}$,
 $\frac k{10}<\beta_2<\beta_1<2<3k$, $ \beta_2\sim\beta_1\sim 1\sim k\sim k+1$, and by Lemma \ref{lem3} (ii) (iii), {\eqref{Ik} and $\mu>1/2$}, we get
		\begin{align*}
			I_k&\leq \int_{\|\phi-\Phi\|_\TT\leq |\beta_1-\beta_2|}\left[\left|\beta_1^{2\mu-1} K_{k}(\beta_1,\phi,\Phi)\right|+
			\left|\beta_2^{2\mu-1} K_{k}(\beta_2,\phi,\Phi)\right|\right]d\Phi\\
			&\qquad\qquad+\int_{\|\phi-\Phi\|_\TT> |\beta_1-\beta_2|}\int_{\beta_2}^{\beta_1}\left|\p_\beta\left(\beta^{2\mu-1
			}K_k\right)(\beta,\phi,\Phi)\right|\,d\beta\,d\Phi\\
			&\lesssim \int_{\|\phi-\Phi\|_\TT\leq |\beta_1-\beta_2|}\|\phi-\Phi\|_{\TT}^{\alpha-1}\,d\Phi\\
			&\qquad\qquad+\int_{\beta_2}^{\beta_1}\int_{\|\phi-\Phi\|_\TT> |\beta_1-\beta_2|}\left(\beta^{2\mu-2}\|\phi-\Phi\|_\TT^{\alpha-1}+
			\beta^{2\mu-1}\|\phi-\Phi\|_\TT^{\alpha-2}\right)\,d\Phi\,d\beta\\
			&\lesssim|\beta_1-\beta_2|^\alpha+\int_{\beta_2}^{\beta_1}\left(1+|\beta_1-\beta_2|^{\alpha-1}\right)\,d\beta\lesssim|\beta_1-\beta_2|^\alpha
			\lesssim\left|\frac{\beta_1-\beta_2}{\beta_1+\beta_2}\right|^\alpha.
		\end{align*}
		Hence, $I_k{\lesssim\left|\frac{\beta_1-\beta_2}{\beta_1+\beta_2}\right|^\alpha}\lesssim\left|\frac{\beta_1-\beta_2}{\beta_1+\beta_2}\right|^\alpha(1+k)^{-2\mu-\alpha}$.\if0 and $0<\beta_2<\beta_1<2\beta_2$, we have
		\begin{align*}
			I_0&\lesssim \int_{\|\phi-\Phi\|_\TT\leq \frac{\beta_1-\beta_2}{\beta_1}}\left[\left|\beta_1^{2\mu-1} K_{0}(\beta_1,\phi,\Phi)\right|+
			\left|\beta_2^{2\mu-1} K_{0}(\beta_2,\phi,\Phi)\right|\right]d\Phi\\
			&\qquad\qquad+\int_{\|\phi-\Phi\|_\TT> \frac{\beta_1-\beta_2}{\beta_1}}\int_{\beta_2}^{\beta_1}\left|\p_\beta\left(\beta^{2\mu-1
			}K_0\right)(\beta,\phi,\Phi)\right|\,d\beta\,d\Phi\\
			&\lesssim\int_{\|\phi-\Phi\|_\TT\leq \frac{\beta_1-\beta_2}{\beta_1}}\|\phi-\Phi\|_{\TT}^{\alpha-1}\,d\Phi\\
			&\qquad\qquad +\int_{\beta_2}^{\beta_1}\int_{\|\phi-\Phi\|_\TT> \frac{\beta_1-\beta_2}{\beta_1}}[\beta^{-1}\|\phi-\Phi\|_\TT^{\alpha-1}+
			\beta^{-1}\|\phi-\Phi\|_\TT^{\alpha-2}]\,d\Phi\,d\beta\\
			&\lesssim  \left|\frac{\beta_1-\beta_2}{\beta_1}\right|^\alpha+\left|\frac{\beta_1-\beta_2}{\beta_2}\right|+
			\left|\frac{\beta_1-\beta_2}{\beta_2}\right|\left|\frac{\beta_1-\beta_2}{\beta_1}\right|^{\alpha-1}\lesssim\left|\frac{\beta_1-\beta_2}{\beta_1}\right|^\alpha
			\lesssim\left|\frac{\beta_1-\beta_2}{\beta_1+\beta_2}\right|^\alpha.
		\end{align*}
		For $k\in\{1,2,\cdots, 9\}$, we have $k\sim1$, we consider two cases $\beta_2<\beta_1\leq\frac k5$ and $\frac k{10}<\beta_2<\beta_1$ respectively. If $\beta_2<\beta_1\leq\frac k5$,
then by (i) in Lemma \ref{lem2}, we have
		\begin{align*}
			I_k&\lesssim \int_{\beta_2}^{\beta_1}\left|\p_\beta\left(\beta^{2\mu-1
			}Q_k^{(1)}\right)(\beta,\phi)\right|\,d\beta\\
			& \lesssim \int_{\beta_2}^{\beta_1}\left(\beta^{2\mu-2}k^{-2\mu-\alpha}+
			\beta^{2\mu-1}k^{-2\mu-\alpha}\right)\,d\beta\\
			& \lesssim\int_{\beta_2}^{\beta_1}\beta^{-1}\,d\beta\leq\left|\frac{\beta_1-\beta_2}{\beta_2}\right|
			\leq\left|\frac{\beta_1-\beta_2}{\beta_2}\right|^\alpha
			\lesssim\left|\frac{\beta_1-\beta_2}{\beta_1+\beta_2}\right|^\alpha;
		\end{align*}
		If $\frac k{10}<\beta_2<\beta_1<2$, then by (ii)(iii) in Lemma \ref{lem3} and $\mu>1/2$, we have
		\begin{align*}
			I_k&\lesssim \int_{\|\phi-\Phi\|_\TT\leq |\beta_1-\beta_2|}\left[\left|\beta_1^{2\mu-1} K_{k}(\beta_1,\phi,\Phi)\right|+
			\left|\beta_2^{2\mu-1} K_{k}(\beta_2,\phi,\Phi)\right|\right]d\Phi\\
			&\qquad\qquad+\int_{\|\phi-\Phi\|_\TT> |\beta_1-\beta_2|}\int_{\beta_2}^{\beta_1}\left|\p_\beta\left(\beta^{2\mu-1
			}K_k\right)(\beta,\phi,\Phi)\right|\,d\beta\,d\Phi\\
			&\lesssim \int_{\|\phi-\Phi\|_\TT\leq |\beta_1-\beta_2|}\|\phi-\Phi\|_{\TT}^{\alpha-1}\,d\Phi\\
			&\qquad\qquad+\int_{\beta_2}^{\beta_1}\int_{\|\phi-\Phi\|_\TT> |\beta_1-\beta_2|}\left(\beta^{2\mu-2}\|\phi-\Phi\|_\TT^{\alpha-1}+
			\beta^{2\mu-1}\|\phi-\Phi\|_\TT^{\alpha-2}\right)\,d\Phi\,d\beta\\
			&\lesssim|\beta_1-\beta_2|^\alpha+\int_{\beta_2}^{\beta_1}\left(1+|\beta_1-\beta_2|^{\alpha-1}\right)\,d\beta\lesssim|\beta_1-\beta_2|^\alpha
			\lesssim\left|\frac{\beta_1-\beta_2}{\beta_1+\beta_2}\right|^\alpha.
		\end{align*}
		Hence, $I_k{\lesssim\left|\frac{\beta_1-\beta_2}{\beta_1+\beta_2}\right|^\alpha}\lesssim\left|\frac{\beta_1-\beta_2}{\beta_1+\beta_2}\right|^\alpha(1+k)^{-2\mu-\alpha}$ for $k\in\{1,2,\cdots,9\}$. For $k\geq 10$, we have $\beta_2<\beta_1<2\leq k/5$, then by (i) in Lemma \ref{lem2} {and $\mu>1/2$},
		\begin{align*}
			I_k&\lesssim\int_{\beta_2}^{\beta_1}\left|\p_\beta\left(\beta^{2\mu-1
			}Q_k^{(1)}\right)(\beta,\phi)\right|\,d\beta\lesssim \int_{\beta_2}^{\beta_1}\left(\beta^{2\mu-2}k^{-2\mu-\alpha}+
			\beta^{2\mu-1}k^{-2\mu-\alpha}\right)\,d\beta\\
			&\lesssim
			\int_{\beta_2}^{\beta_1}\beta^{-1}k^{-2\mu-\alpha}\,d\beta\lesssim\left|\frac{\beta_1-\beta_2}{\beta_1+\beta_2}\right|^\alpha(1+k)^{-2\mu-\alpha}.
		\end{align*}\fi
		
		\item Assume that $k_0\in \ZZ\cap[2,+\infty)$, $\beta_1,\beta_2\in(k_0-1,k_0+1)\subset(1,+\infty)$ and $0<\beta_1-\beta_2<1$. For $k=0$, we get by Lemma \ref{lem4} { and \eqref{Ik}} that
		\begin{align*}
			I_0&{\leq} \int_{\beta_2}^{\beta_1}\left|\p_\beta\left(\beta^{2\mu-1
			}Q_0^{(1)}\right)(\beta,\phi)\right|\,d\beta\lesssim\int_{\beta_2}^{\beta_1}\left(\beta^{2\mu-2}\beta^{-2\mu}+\beta^{2\mu-1}\beta^{-2\mu}\right)d\beta\\
			&\lesssim \int_{\beta_2}^{\beta_1}\left(\beta^{-2}+\beta^{-1}\right)\,d\beta\lesssim{\left|\frac{\beta_1-\beta_2}{\beta_2}\right|}
			\lesssim\left|\frac{\beta_1-\beta_2}{\beta_1+\beta_2}\right|^\alpha.
		\end{align*}
		For $|k-k_0|\leq 1$, it follows from $\beta_1,\beta_2\in(k_0-1, k_0+1)$ that $\beta\sim k_0, k\sim k_0, |\beta-k|<2,$ $\frac{k}{10}\leq \frac{k_0+1}{10}<k_0-1<\beta<k_0+1\leq k+2\leq 3k$ for $\beta\in[\beta_2,\beta_1]$, then by Lemma \ref{lem3} (ii) (iii)  { and \eqref{Ik}}, we have
		\begin{align*}
			I_k&{\leq}\int_{\|\phi-\Phi\|_\TT\leq k_0|\beta_1-\beta_2|}\left(\left|\beta_1^{2\mu-1} K_{k}(\beta_1,\phi,\Phi)\right|+
			\left|\beta_2^{2\mu-1} K_{k}(\beta_2,\phi,\Phi)\right|\right)d\Phi\\
			&\qquad+\int_{\|\phi-\Phi\|_\TT> k_0|\beta_1-\beta_2|}\int_{\beta_2}^{\beta_1}\left|\p_\beta\left(\beta^{2\mu-1
			}K_{k}\right)(\beta,\phi,\Phi)\right|\,d\beta\,d\Phi\\
			&\lesssim \int_{\|\phi-\Phi\|_\TT\leq k_0|\beta_1-\beta_2|}k_0^{2\mu-1}\|\phi-\Phi\|_\TT^{\alpha-1}k_0^{1-2\alpha-2\mu}\,d\Phi+
			\int_{\beta_2}^{\beta_1}\int_{\|\phi-\Phi\|_\TT> k_0|\beta_1-\beta_2|}\\
			&\qquad\Big(k_0^{2\mu-2}\|\phi-\Phi\|_\TT^{\alpha-1}k_0^{1-2\alpha-2\mu}+k_0^{2\mu-1}\|\phi-\Phi\|_\TT^{\alpha-2}k_0^{2-2\alpha-2\mu}\Big)\,d\Phi\,d\beta\\
			&\lesssim k_0^{-2\alpha}\left(k_0|\beta_1-\beta_2|\right)^\alpha
			+\int_{\beta_2}^{\beta_1}\Big(k_0^{-1-2\alpha}+k_0^{1-2\alpha}\left(k_0|\beta_1-\beta_2|\right)^{\alpha-1}\Big)d\beta\\
			&\lesssim k_0^{-\alpha}|\beta_1-\beta_2|^\alpha\lesssim\left|\frac{\beta_1-\beta_2}{\beta_1+\beta_2}\right|^\alpha.
		\end{align*}
		For $k\geq 1$ and $|k-k_0|\geq 2$, we have $\beta\sim k_0$, $|\beta\pm k|\sim |k_0\pm k|$, $|\beta-k|>1$ for $\beta\in[\beta_2,\beta_1]$ and we consider
		two cases $|k-k_0|\leq |\beta_1-\beta_2|k_0/2$ and $|k-k_0|{\geq}|\beta_1-\beta_2|k_0/2$ respectively. If $2\leq |k-k_0|\leq|\beta_1-\beta_2|k_0/2\leq k_0/2$, then by Lemma \ref{lem1} (ii){ and \eqref{Ik}}, we have
		\begin{align*}
			I_k&{\leq}\left|\beta_1^{2\mu-1} Q_{k}^{(1)}(\beta_1,\phi)\right|+
			\left|\beta_2^{2\mu-1} Q_{k}^{(1)}(\beta_2,\phi)\right|\\
			&\lesssim k_0^{2\mu-1}|k_0-k|^{\alpha-1}(k_0+k)^{1-\alpha-2\mu}k^{-\alpha}
			\lesssim k_0^{-2\alpha}|k-k_0|^{\alpha-1}.
		\end{align*}
		Now we claim that
		\begin{equation}\label{p_beta_phi_K_k_estimate}
			\left|\p_\beta Q_k^{(1)}(\beta, \phi)\right|\lesssim
			\left|\frac{\beta-k}{\beta+k}\right|^{\alpha-2}(\beta+k)^{-2\mu}k^{-\alpha},\quad \forall\ |\beta-k|> 1,\ \beta>0,\ k\in\ZZ_+.
		\end{equation}
		This can be proved by using the following facts.
		\begin{itemize}
	\item If $0<\beta\leq k/5$, then $ k\sim \beta+k\sim k-\beta$, and by Lemma \ref{lem2} (i), 
	$$\left|\p_\beta Q_k^{(1)}(\beta,\phi)\right|\lesssim k^{-2\mu-\alpha}\lesssim (\beta+k)^{-2\mu}k^{-\alpha}\lesssim \left|\frac{\beta-k}{\beta+k}\right|^{\alpha-2}(\beta+k)^{-2\mu}k^{-\alpha}.$$
	\item If $\beta\geq2k$, then $ \beta\sim \beta+k\sim \beta-k$, and by Lemma \ref{lem2} (ii),
		\[\left|\p_\beta Q_k^{(1)}(\beta,\phi)\right|\lesssim \beta^{-2\mu}k^{-\alpha}\lesssim  (\beta+k)^{-2\mu}k^{-\alpha}\lesssim \left|\frac{\beta-k}{\beta+k}\right|^{\alpha-2}(\beta+k)^{-2\mu}k^{-\alpha}.\]
	\item If  $k/10<\beta<3k$, $|\beta-k|> 1$, then $k\sim \beta+k$, and  by Lemma \ref{lem3} (i) (iii),
		\[\left|\p_\beta Q_k^{(1)}(\beta,\phi)\right|\lesssim\int_\TT\left|\p_\beta K_k(\beta,\phi,\Phi)\right|\,d\Phi\lesssim \left|\frac{\beta-k}{k}\right|^{\alpha-2}k^{-2\mu-\alpha}\lesssim\left|\frac{\beta-k}{\beta+k}\right|^{\alpha-2}(\beta+k)^{-2\mu}k^{-\alpha}.\]
\end{itemize} 
If $|k-k_0|{\geq}\max\{2,|\beta_1-\beta_2|k_0/2\}$, then {by \eqref{Ik} and \eqref{p_beta_phi_K_k_estimate}, we have}
		\begin{align*}
			I_k&{\leq} \int_{\beta_2}^{\beta_1}\left|\p_\beta\left(\beta^{2\mu-1
			}Q_{k}^{(1)}\right)(\beta,\phi)\right|\,d\beta\\
			&\lesssim \int_{\beta_2}^{\beta_1}\Big(k_0^{2\mu-2}|k_0-k|^{\alpha-1}(k_0+k)^{1-\alpha-2\mu}k^{-\alpha}
			+k_0^{2\mu-1}|k_0-k|^{\alpha-2}(k_0+k)^{2-\alpha-2\mu}k^{-\alpha}\Big)d\beta\\
			&\lesssim|\beta_1-\beta_2|k_0^{2\mu-1}|k_0-k|^{\alpha-2}(k_0+k)^{2-\alpha-2\mu}k^{-\alpha}.
		\end{align*}
		Here we have used $|\beta\pm k|\sim |k_0\pm k|$ for $\beta\in[\beta_2,\beta_1]$ and
		\begin{align*}
			\frac{k_0^{2\mu-2}|k_0-k|^{\alpha-1}(k_0+k)^{1-\alpha-2\mu}k^{-\alpha}}{
				k_0^{2\mu-1}|k_0-k|^{\alpha-2}(k_0+k)^{2-\alpha-2\mu}k^{-\alpha}}=\frac{|k_0-k|}{k_0(k_0+k)}\leq1.
		\end{align*}
	\end{enumerate}
	
	This completes the proof.
\end{proof}

\if0
In Lemma \ref{lem2} and Lemma \ref{lem4}, as $(\beta, \phi) $ is away from the support of $G_k$, there is  no singular integral after taking derivatives in \eqref{Q_k>0} or \eqref{Q_k_expression}.

		To prove the desired estimate \eqref{linear_basic_est}, let us first claim the following key estimates: \smallskip

	It holds that for $k\geq 1$, $j=0,1$,
			
		and for $k=0$, $j=0,1$,
			\begin{equation}\label{p_beta_phi_K_0_est}
			\left|\p_\beta^j Q_k^{(1)}(\beta, \phi)\right|\lesssim
			\beta^{-2\mu},\quad \beta\geq 1,
		\end{equation}
		 Moreover, {we can show the existence of $K_k$ such that}
		\[Q_k^{(1)}(\beta, \phi)=\frac1{2\pi}\int_\TT K_k(\beta, \phi, \Phi)\,d\Phi,\qquad k\geq 0,\]
		with the following estimates: (here $k\geq1$, $j=0,1$)
		\begin{equation}\label{p_beta_phi_K_k_estimate1}
			\left|\p_\beta^j K_k(\beta, \phi, \Phi)\right|\lesssim
			\|\phi-\Phi\|_{\TT}^{\alpha-1-j}k^{1+j-2\alpha-2\mu},\quad |\beta-k|<2 \text{ and } \frac1{10}k<\beta<3k;
		\end{equation}
		\begin{equation}\label{p_beta_phi_K_0_est1}
			\left|\p_\beta^jK_0(\beta, \phi, \Phi)\right|\lesssim
			\|\phi-\Phi\|_{\TT}^{\alpha-1-j}\beta^{1-j-2\mu},\quad \beta<2.
		\end{equation}
		{Here $K_k$ is defined in the proof of \eqref{p_beta_phi_K_k_estimate1}--\eqref{p_beta_phi_K_0_est1}: $K_k$ ($k\geq1$) is defined in \eqref{K_k}, $K_0$ is defined in
\eqref{K_0}--\eqref{K_03}.}
\smallskip
		
		\noindent\textbf{Step 3.} Proof of \eqref{linear_basic_est} by admitting the key estimates \eqref{p_beta_phi_K_k_estimate}$-$\eqref{p_beta_phi_K_0_est1}.\smallskip

		\begin{enumerate}[(1)]
			
			\item \underline{$\left\|\beta^{2\mu-1}\frac{\beta\p_\beta Q}{\beta+\ii\mu}\right\|_{C_\beta^\alpha}\lesssim1$.} Now we estimate the H\"older norm. Fix
$0<\beta_2<\beta_1<2\beta_2$ with $\beta_1-\beta_2<1$. We start with
			\begin{align*}
				&\ \ \ \left|\beta_1^{2\mu-1}\frac{\beta_1\p_\beta Q(\beta_1,\phi)}{\beta_1+\ii\mu}-\beta_2^{2\mu-1}\frac{\beta_2\p_\beta Q(\beta_2,\phi)}{\beta_2+\ii\mu}\right|\\
				&\lesssim\sum_{k=0}^\infty \left|\beta_1^{2\mu-1} Q_k^{(1)}(\beta_1,\phi)-\beta_2^{2\mu-1} Q_k^{(1)}(\beta_2,\phi)\right|=:\sum_{k=0}^\infty I_k.
			\end{align*}
			
			{\bf Case I. $\beta_2<\beta_1<2$.}
			For $k=0$, \if0if $\beta_2>2\beta_1$, then $\frac{\beta_2}{2}<\beta_2-\beta_1<\beta_2$, by \eqref{p_phi_K_0_estimate},
			\begin{align*}
				I_0&\lesssim \int_{\TT}\left|\beta_1^{\alpha+2\mu-1} K_{0}(\beta_1,\phi,\Phi)\right|+\left|\beta_2^{\alpha+2\mu-1} K_{0}(\beta_2,\phi,\Phi)\right|\,d\Phi\\
				&\lesssim \int_{\TT}(\beta_1^\alpha+\beta_2^\alpha)\|\phi-\Phi\|_{\TT}^{\alpha-1}\,d\Phi\lesssim \beta_1^\alpha+\beta_2^\alpha\\
				&\lesssim \beta_2^\alpha\lesssim|\beta_1-\beta_2|^\alpha;
			\end{align*}
			if $\beta_2<\beta_2<2\beta_1$, then\fi
			In summary, we complete the estimates of the H\"older norm in the case when $\beta_2<\beta_1<2$.
			
			{\bf Case II. $1<\beta_2<\beta_1$.} Pick and then fix an integer $k_0\geq 2$ such that $\beta_1,\beta_2\in(k_0-1, k_0+1)$. For $k=0$, by \eqref{p_beta_phi_K_0_est},

			Putting everything all together, we have shown that $\left\|\beta^{2\mu-1}\frac{\beta\p_\beta Q}{\beta+\ii\mu}\right\|_{C_\beta^\alpha}\lesssim1$.
		\end{enumerate}
		
		\noindent\textbf{Step 4.} Proof of \eqref{p_beta_phi_K_k_estimate} and \eqref{p_beta_phi_K_k_estimate1}. \smallskip

		\noindent\textbf{Step 5.} Proof of \eqref{p_beta_phi_K_0_est} and \eqref{p_beta_phi_K_0_est1}. \smallskip

		This step shares the same spirit as the previous step. However, since we are working in a weighted space, we need to be more careful in this final step.
		
		{\bf Case II. $0<\beta<2$.} In this case, we enlarge the integral domain to $[0,4]$, thanks to the property $\text{supp }G_0(\cdot, \phi)\subset \left[0,\frac34\right]$, then using $\frac{\beta}{\beta+\ii\mu}\p_\beta z=-\ii z$, we have
		\[Q_0^{(1)}(\beta,\phi)=\frac{\beta\p_\beta Q_0(\beta, \phi)}{\beta+\ii\mu} =\frac{\ii z}{2\pi}\int_\TT\int_0^4\frac{\gamma(s,\Phi)}{\left(z-\gamma(s,\Phi)\right)^2}\frac{G_0(s,\Phi)}{s}\,ds\,d\Phi=\frac1{2\pi}\int_\TT K_0(\beta,\phi,\Phi)\,d\Phi,\]
{where a more precise expression of the integral $ \int_\TT\int_0^4\cdots$ is (for every $a\in (0,\beta)$)
\begin{align*}\int_0^a\left(\int_\TT\frac{\gamma(s,\Phi)}{\left(z-\gamma(s,\Phi)\right)^2}\frac{G_0(s,\Phi)}{s}\,d\Phi\right)\,ds+
\int_\TT\left(\int_a^4\frac{\gamma(s,\Phi)}{\left(z-\gamma(s,\Phi)\right)^2}\frac{G_0(s,\Phi)}{s}\,ds\right)\,d\Phi,\end{align*}}
		\if0 \begin{equation}\label{Q_0_expression}
			Q_0(\beta,\phi)=\frac1{2\pi}\int_\TT\int_0^4\frac{\gamma(s,\Phi)}{z-\gamma(s,\Phi)}\frac{G_0(s,\Phi)}{s}\,ds\,d\Phi=\frac1{2\pi}\int_\TT\tilde K_0(\beta,\phi,\Phi)\,d\Phi,
		\end{equation}
		 $\tilde K_0$ is divided into three parts, using $\hat G_{\pm1}=0$, namely
		\[\tilde K_0=\tilde K_0^{(1)}+\tilde K_0^{(2)}+\tilde K_0^{(3)},\]
		with
		\begin{align*}
			\tilde K_0^{(1)}(\beta,\phi,\Phi)&=\int_0^{\frac\beta2}\gamma(s,\Phi)\left(\frac{1}{z-\gamma(s,\Phi)}-\frac1{z}\right)\frac{G_0(s,\Phi)}{s}\,ds\\
			&=\int_0^{\frac\beta2}\frac{\left(\gamma(s,\Phi)\right)^2}{z\left(z-\gamma(s,\Phi)\right)}\frac{G_0(s,\Phi)}{s}\,ds,\\
			\tilde K_0^{(2)}(\beta,\phi,\Phi)&=\int_{\frac\beta2}^{2\beta}\frac{\gamma(s,\Phi)}{z-\gamma(s,\Phi)}\frac{G_0(s,\Phi)}{s}\,ds,\\
			\tilde K_0^{(3)}(\beta,\phi,\Phi)&=\int_{2\beta}^{4}\frac{\gamma(s,\Phi)}{z-\gamma(s,\Phi)}\frac{G_0(s,\Phi)}{s}\,ds.
		\end{align*}
		For $\tilde K_0^{(1)}$, noting that for $s\in\left[0,\frac\beta2\right]$, we have $|z-\gamma(s,\Phi)|\geq \beta^\mu-\left(\frac\beta2\right)^\mu\gtrsim \beta^\mu$, hence
		\[\left|\tilde K_0^{(1)}(\beta,\phi,\Phi)\right|\lesssim\int_{0}^{\frac\beta2}\frac{s^{2\mu}}{\beta^{2\mu}}\frac{s^{1-2\mu}}{s}\,ds\lesssim\beta^{1-2\mu}.\]
		For $\tilde K_0^{(3)}$, noting that for $s\in\left[2\beta,4\right]$ we have $|z-\gamma(s,\Phi)|\geq s^\mu-\beta^\mu\gtrsim s^\mu$, hence (recall that $\mu>1/2$)
		\[\left|\tilde K_0^{(3)}(\beta,\phi,\Phi)\right|\lesssim\int_{2\beta}^{4}\frac{s^{\mu}}{s^{\mu}}\frac{s^{1-2\mu}}{s}\,ds\lesssim\beta^{1-2\mu}.\]
		Now we deal with $\tilde K_0^{(2)}$, which requires the $C^\alpha$ regularity of {$s^{2\mu-1}G_0$}. Define 	$$\tilde G_0(s,\Phi)=\frac{\gamma(s,\Phi)}{\p_s\gamma(s,\Phi)}\frac{G_0(s,\Phi)}s=\frac{G_0(s,\Phi)}{\mu-\ii s},$$
		then
		\begin{equation}\label{tilde_K_0^2}
			\begin{aligned}
				\tilde K_0^{(2)}(\beta,\phi,\Phi)&=\int_{\frac\beta2}^{2\beta}\frac{\p_s\gamma(s,\Phi)}{z-\gamma(s,\Phi)}\tilde G_0(s,\Phi)\,ds\\
				&=\int_{\frac\beta2}^{2\beta}\frac{\p_s\gamma(s,\Phi)}{z-\gamma(s,\Phi)}\left(\tilde G_0(s,\Phi)-\tilde G_0(\beta,\Phi)\right)\,ds\\
				&\qquad\qquad+\tilde G_0(\beta, \Phi)\int_{\frac\beta2}^{2\beta}\frac{\p_s\gamma(s,\Phi)}{z-\gamma(s,\Phi)}ds.
			\end{aligned}
		\end{equation}
		For {$s\in\left(\frac\beta2, 2\beta\right)$} and $0<\beta<2$, we have
		\begin{equation*}
			\begin{aligned}
				\left|\tilde G_0(s,\Phi)-\tilde G_0(\beta,\Phi)\right|&=\left|\frac{G_0(s,\Phi)}{\mu-\ii s}-\frac{G_0(\beta,\Phi)}{\mu-\ii \beta}\right|\\
				&\lesssim\left|G_0(s,\Phi)-G_0(\beta,\Phi)\right|+|s-\beta|\left|G_0(s,\Phi)\right|\\
				&\lesssim{\left|\frac{s^{2\mu-1}G_0(s,\Phi)}{s^{2\mu-1}}-
					\frac{\beta^{2\mu-1}G_0(\beta,\Phi)}{\beta^{2\mu-1}}\right|}+\frac{|s-\beta|}{\beta^{2\mu-1}}\\
				&\lesssim {\frac{\left|s^{2\mu-1}G_0(s,\Phi)-\beta^{2\mu-1}G_0(\beta,\Phi)\right|}{\beta^{2\mu-1}}} \\
				&\ {\  +\left|\frac1{s^{2\mu-1}}-\frac1{\beta^{2\mu-1}}\right|\left|s^{2\mu-1}G_0(s,\Phi)\right|}+\frac{|s-\beta|}{\beta^{2\mu-1}}\\
				&\lesssim\frac{|s-\beta|^\alpha}{\beta^{\alpha+2\mu-1}}+\frac{|s-\beta|}{\beta^{\alpha+2\mu}}\ s^\alpha+\frac{|s-\beta|}{\beta^{2\mu-1}}\\
				&\lesssim\frac{|s-\beta|^\alpha}{\beta^{\alpha+2\mu-1}},
			\end{aligned}	
		\end{equation*}
		and
		\begin{equation}\label{5.21}
			|\p_s\gamma(s,\Phi)|\lesssim s^{\mu-1}\lesssim \beta^{\mu-1}.
		\end{equation}
		Also,
		Substituting \eqref{5.20}$\sim$\eqref{5.23} into \eqref{tilde_K_0^2} yields that
		\begin{align*}
			\left|\tilde K_0^{(2)}(\beta,\phi,\Phi)\right|&\lesssim\int_{\frac\beta2}^{2\beta}\frac{\beta^{\mu-1}}{\beta^{\mu-1}
				\left(|s-\beta|+\beta\|\phi-\Phi\|_\TT\right)}\frac{|s-\beta|^\alpha}{\beta^{\alpha+2\mu-1}}\,ds\\
			 &\qquad\qquad+\beta^{1-2\mu}\left(\left|\ln\frac{\left|z-\gamma\left(\frac\beta2,\Phi\right)\right|}{\left|z-\gamma\left(2\beta,\Phi\right)\right|}\right|+1\right)\\
			&\lesssim \beta^{1-\alpha-2\mu}\int_{\frac\beta2}^{2\beta}|s-\beta|^{\alpha-1}\,ds+\beta^{1-2\mu}\lesssim\beta^{1-2\mu}.
		\end{align*}
		Therefore, we obtain
		$$\left|\tilde K_0(\beta,\phi,\Phi)\right|\lesssim\beta^{1-2\mu}.$$
		
		Now, take the derivative $\frac{\beta}{\beta+\ii\mu}\p_\beta$ in \eqref{Q_0_expression}, then
		\[{\frac{\beta}{\beta+\ii\mu}\p_\beta} Q_0(\beta, \phi)=\frac{\ii z}{2\pi}\int_\TT\int_0^4\frac{\gamma(s,\Phi)}{\left(z-\gamma(s,\Phi)\right)^2}\frac{G_0(s,\Phi)}{s}\,ds\,d\Phi=\frac1{2\pi}\int_\TT K_0(\beta,\phi,\Phi)\,d\Phi,\]        \fi
		and $K_0$ is divided into three parts, using $\hat G_{\pm1}=0$, namely
		\begin{align}\label{K_0}K_0=K_0^{(1)}+K_0^{(2)}+K_0^{(3)},\end{align}
		with
		\begin{align}
			\label{K_01}K_0^{(1)}(\beta,\phi,\Phi)&=\ii z\int_0^{\frac\beta2}\gamma(s,\Phi)\left(\frac{1}{\left(z-\gamma(s,\Phi)\right)^2}-\frac1{z^2}\right)\frac{G_0(s,\Phi)}{s}\,ds\\
			\notag&=\ii z\int_0^{\frac\beta2}\frac{\left(\gamma(s,\Phi)\right)^2\left(2z-\gamma(s,\Phi)\right)}{z^2\left(z-\gamma(s,\Phi)\right)^2}\frac{G_0(s,\Phi)}{s}\,ds,
		\end{align}
		\begin{align}
			\label{K_02}&K_0^{(2)}(\beta,\phi,\Phi)=\ii z\int_{\frac\beta2}^{2\beta}\frac{\gamma(s,\Phi)}{\left(z-\gamma(s,\Phi)\right)^2}\frac{G_0(s,\Phi)}{s}\,ds,
		\end{align}
		\begin{align}
			\label{K_03}K_0^{(3)}(\beta,\phi,\Phi)&=\ii z\int_{2\beta}^{4}\left(\frac{\gamma(s,\Phi)}{\left(z-\gamma(s,\Phi)\right)^2}-\frac1{\gamma(s,\Phi)}\right)\frac{G_0(s,\Phi)}{s}\,ds\\
			\notag&=\ii z\int_{2\beta}^{4}\frac{z\left(2\gamma(s,\Phi)-z\right)}{\left(z-\gamma(s,\Phi)\right)^2\gamma(s,\Phi)}\frac{G_0(s,\Phi)}{s}\,ds.
		\end{align}
		For $K_0^{(1)}$, noting that for $s\in\left[0,\frac\beta2\right]$, we have $|z-\gamma(s,\Phi)|\geq \beta^\mu-\left(\frac\beta2\right)^\mu\gtrsim \beta^\mu$, hence
		\[\left|K_0^{(1)}(\beta,\phi,\Phi)\right|\lesssim \beta^\mu \int_0^{\frac\beta2}\frac{s^{2\mu}\ \beta^\mu}{\beta^{2\mu}\ \beta^{2\mu}}\frac{s^{1-2\mu}}{s}\,ds\lesssim \beta^{1-2\mu}.\]
		For $K_0^{(3)}$, noting that for $s\in\left[2\beta,4\right]$ we have $|z-\gamma(s,\Phi)|\geq s^\mu-\beta^\mu\gtrsim s^\mu$, hence (recall that $\mu>1/2$)
		\[\left|K_0^{(3)}(\beta,\phi,\Phi)\right|\lesssim \beta^\mu \int_{2\beta}^{4}\frac{\beta^\mu\ s^\mu}{s^{2\mu}\ s^\mu}\frac{s^{1-2\mu}}{s}\,ds\lesssim \beta^{1-2\mu}.\]
		Now we deal with $K_0^{(2)}$, which requires the $C_\beta^\alpha$ regularity.
		
		\smallskip
		
		The proof of Proposition \ref{linear_basic_estimate1} is completed.
	\end{proof}
	
	\fi

	\if0
	Next we introduce a lemma regarding the $C^\alpha$ regularity of solutions to ODEs.
	\begin{lem}\label{prop_B4}
		Let $\alpha\in(0,1)$, $A_1, A_2\in \CC$ and $\lambda\in\RR\setminus\{0\}$. Assume that three functions $y, f, g: (0,\infty)\to\CC$ satisfies
		\begin{numcases}{}
			\left(x\frac{d}{dx}-A_1\right)\left(x\frac{d}{dx}-A_2\right)y=f, \label{B.10}\\
			f'+\ii \lambda f=g, \label{B.11}
		\end{numcases}
		and $\langle x\rangle^\alpha y, \langle x\rangle^\alpha xy', \langle x\rangle^{\alpha-1} f, \langle x\rangle^\alpha g, \langle x\rangle^\alpha xg'\in L^\infty$, then $x^{\alpha+1}y', x^\alpha y\in C^\alpha$ with
		\begin{equation}\label{B.12}
			\begin{aligned}
				\|x^{\alpha+1}y'\|_{C^\alpha}+\|x^{\alpha}y\|_{C^\alpha}&\lesssim_{\alpha, A_1, A_2, \lambda} \|\langle x\rangle^\alpha y\|_{L^\infty}+\|\langle x\rangle^\alpha x y'\|_{L^\infty}+\|\langle x\rangle^{\alpha-1} f\|_{L^\infty}\\
				&\qquad+\|\langle x\rangle^\alpha g\|_{L^\infty}+\|\langle x\rangle^\alpha xg'\|_{L^\infty}.
			\end{aligned}
		\end{equation}
	\end{lem}
	\begin{proof}
		First of all, we show that $f$ is bounded. Note that $\langle x\rangle^{\alpha-1} f\in L^\infty$ doesn't imply the boundedness of $f$ directly, we need to use the equation \eqref{B.11}. Indeed, it follows from \eqref{B.11} that $\left(e^{\ii\lambda x}f'\right)'=e^{\ii\lambda x}g'$, hence
		\[e^{\ii\lambda x}f'(x)-e^[\ii\lambda]f'(1)=\int_1^x e^{\ii\lambda t}g'(t)\,dt,\qquad x\geq 1.\]
		Since $\lambda\in\RR$, for $x\geq 1$ we have
		\begin{align*}
			|f'(x)|&\leq |f'(1)|+\int_1^x|g'(t)|\,dt\\
			&\leq |g(1)|+|\lambda||f(1)|+\int_1^\infty\frac1{t^{1+\alpha}}\,dt\  \|\langle x\rangle^\alpha xg'\|_{L^\infty}\\
			&\lesssim_{\lambda,\alpha} \|\langle x\rangle^\alpha g\|_{L^\infty}+\|\langle x\rangle^{\alpha-1} f\|_{L^\infty}+\|\langle x\rangle^\alpha xg'\|_{L^\infty}.
		\end{align*}
		Using again \eqref{B.11} and $\lambda\neq 0$, we get
		\[|f(x)|\lesssim_\lambda|f'(x)|+|g(x)|\lesssim_{\lambda,\alpha} \|\langle x\rangle^\alpha g\|_{L^\infty}+\|\langle x\rangle^{\alpha-1} f\|_{L^\infty}+\|\langle x\rangle^\alpha xg'\|_{L^\infty}, \ x\geq1.\]
		On the other hand, for $x\in(0,1)$, clearly we have $|f(x)|\lesssim_\alpha\|\langle x\rangle^{\alpha-1} f\|_{L^\infty}$. As a result, $f\in L^\infty$ and
		\[\|f\|_{L^\infty}\lesssim_{\lambda,\alpha} \|\langle x\rangle^\alpha g\|_{L^\infty}+\|\langle x\rangle^{\alpha-1} f\|_{L^\infty}+\|\langle x\rangle^\alpha xg'\|_{L^\infty}.\]
		
		Denote $z=xy'-A_2y$, then $\|x^\alpha z\|_{L^\infty}\leq\|\langle x\rangle^\alpha z\|_{L^\infty}\lesssim_{A_2} \|\langle x\rangle^\alpha y\|_{L^\infty}+\|\langle x\rangle^\alpha x y'\|_{L^\infty}$ and $xz'-A_1z=f$. We show that $x^\alpha z\in C^\alpha$. Indeed, since
		\begin{align*}
			\frac d{dx}\left(x^\alpha z\right)=\alpha x^{\alpha-1}z+x^\alpha z'=\alpha x^{\alpha-1}z+x^{\alpha-1}\left(f+A_1z\right)=x^{\alpha-1}\left((\alpha+A_1)z+f\right),
		\end{align*}
		we have
		\begin{align*}
			\left|\frac d{dx}\left(x^\alpha z\right)(x)\right|\lesssim_{\alpha, A_1}x^{\alpha-1}\left(\|z\|_{L^\infty}+\|f\|_{L^\infty}\right).
		\end{align*}
		Therefore, $x^\alpha z\in C^\alpha$ and
		\begin{equation*}\label{B.13}
			\begin{aligned}
				\left\|x^\alpha z\right\|_{C^\alpha}&\lesssim_{\alpha, A_1, A_2}\|\langle x\rangle^\alpha y\|_{L^\infty}+\|\langle x\rangle^\alpha x y'\|_{L^\infty}+\|z\|_{L^\infty}+\|f\|_{L^\infty}\\
				&\lesssim_{\alpha, A_1, A_2, \lambda} \|\langle x\rangle^\alpha y\|_{L^\infty}+\|\langle x\rangle^\alpha x y'\|_{L^\infty}+\|\langle x\rangle^{\alpha-1} f\|_{L^\infty}\\
				&\qquad+\|\langle x\rangle^\alpha g\|_{L^\infty}+\|\langle x\rangle^\alpha xg'\|_{L^\infty}.
			\end{aligned}
		\end{equation*}
		
		Finally, we show that $x^{\alpha+1}y', x^\alpha y\in C^\alpha$ and the estimate \eqref{B.12}. Since $xy'-A_2y=z$, the same argument as the previous paragraph implies that $x^\alpha y\in C^\alpha$ and
		\begin{align*}
			\|x^\alpha y\|_{C^\alpha}&\lesssim_{\alpha, A_2}\|x^\alpha y\|_{L^\infty}+\|y\|_{L^\infty}+\|z\|_{L^\infty}\\
			&\lesssim_{\alpha, A_2}\|\langle x\rangle^\alpha y\|_{L^\infty}+\|\langle x\rangle^\alpha z\|_{L^\infty}\\
			&\lesssim_{\alpha, A_2} \|\langle x\rangle^\alpha y\|_{L^\infty}+\|\langle x\rangle^\alpha x y'\|_{L^\infty}.
		\end{align*}
		Hence, $x^{\alpha+1}y'=A_2 x^\alpha y+x^\alpha z\in C^\alpha$ and
		\begin{align*}
			\|x^{\alpha+1}y'\|_{C^\alpha}&\lesssim_{A_2}\|x^\alpha y\|_{C^\alpha}+\|x^\alpha z\|_{C^\alpha}\\
			&\lesssim_{\alpha, A_1, A_2, \lambda} \|\langle x\rangle^\alpha y\|_{L^\infty}+\|\langle x\rangle^\alpha x y'\|_{L^\infty}+\|\langle x\rangle^{\alpha-1} f\|_{L^\infty}\\
			&\qquad+\|\langle x\rangle^\alpha g\|_{L^\infty}+\|\langle x\rangle^\alpha xg'\|_{L^\infty}.
		\end{align*}
		This completes the proof of Lemma \ref{prop_B4}.
	\end{proof}\fi

	\section*{Acknowledgments}
We would like to thank the referees for the invaluable comments and suggestions, which have helped us improve the paper significantly.	
	D. Wei is partially supported by the National Key R\&D Program of China under the
grant 2021YFA1001500.	Z. Zhang is partially supported by  NSF of China  under Grant 12288101.

\end{document}